\newtheorem{thm}{Theorem}[section]
\newtheorem*{thm*}{Theorem}
\newtheorem{lem}[thm]{Lemma}
\newtheorem*{lem*}{Lemma}
\newtheorem{cor}[thm]{Corollary}
\newtheorem{prop}[thm]{Proposition}
\theoremstyle{definition}
\newtheorem{assump}[thm]{Assumption}
\newtheorem*{case*}{Case}
\newtheorem{defn}[thm]{Definition}
\newtheorem*{defn*}{Definition}
\newtheorem*{exmp*}{Example}
\newtheorem{hyp}[thm]{Hypothesis}
\renewcommand{\thestep}{}
\theoremstyle{remark}
\renewcommand{\thecase}{}
\newtheorem{rmk}[thm]{Remark}
\newtheorem*{rmk*}{Remark}
\def\alphenumi{
  \def\theenumi{\alph{enumi}}
  \def\p@enumi{\theenumi}
  \def\labelenumi{(\@alph\c@enumi)}}
\def\thecase{\@arabic\c@case}
\def\thestep{\@arabic\c@step}
\def\hhmm{\number\hh:\ifnum\mm<10{}0\fi\number\mm}
\let\oldmarginpar\marginpar
\renewcommand\marginpar[1]{\-\oldmarginpar[\raggedleft\footnotesize #1]%
{\raggedright\footnotesize #1}}
\renewcommand\emptyset{\varnothing}
\newcommand\CC{\mathbb{C}}
\newcommand\HH{\mathbb{H}}
\newcommand\NN{\mathbb{N}}
\newcommand\RR{\mathbb{R}}
\newcommand\sA{{\mathscr{A}}}
\newcommand\sD{{\mathscr{D}}}
\newcommand\sO{{\mathscr{O}}}
\newcommand\sS{{\mathscr{S}}}
\newcommand\eps{\varepsilon}
\newcommand\less{\setminus}
\DeclareMathOperator{\Arg}{Arg}
\newcommand\diam{\operatorname{diam}}
\newcommand\dist{\operatorname{dist}}
\DeclareMathOperator{\height}{height}
\DeclareMathOperator{\Int}{int}
\newcommand\Imag{\operatorname{Im}}
\newcommand\loc{\operatorname{loc}}
\DeclareMathOperator{\Log}{Log}
\newcommand\Real{\operatorname{Re}}
\newcommand\supp{\operatorname{supp}}
\newcommand\tr{\operatorname{tr}}
\numberwithin{equation}{section}
\begin{document}

\title[Boundary value and obstacle problems for degenerate-elliptic operators]{A classical Perron method for existence of smooth solutions to boundary value and obstacle problems for degenerate-elliptic operators via holomorphic maps}

\author[Paul M. N. Feehan]{Paul M. N. Feehan}
\address{Department of Mathematics, Rutgers, The State University of New Jersey, 110 Frelinghuysen Road, Piscataway, NJ 08854-8019}
\email{feehan@math.rutgers.edu}
\address{Department of Mathematics, Columbia University, New York, NY 10027}

\date{April 18, 2013}

\begin{abstract}
We prove existence of solutions to boundary value problems and obstacle problems for degenerate-elliptic, linear, second-order partial differential operators with partial Dirichlet boundary conditions using a new version of the Perron method. The elliptic operators considered have a degeneracy along a portion of the domain boundary which is similar to the degeneracy of a model linear operator identified by Daskalopoulos and Hamilton \cite{DaskalHamilton1998} in their study of the porous medium equation or the degeneracy of the Heston operator \cite{Heston1993} in mathematical finance. Existence of a solution to the partial Dirichlet problem on a half-ball, where the operator becomes degenerate on the flat boundary and a Dirichlet condition is only imposed on the spherical boundary, provides the key additional ingredient required for our Perron method. Surprisingly, proving existence of a solution to this partial Dirichlet problem with ``mixed'' boundary conditions on a half-ball is more challenging than one might expect. Due to the difficulty in developing a global Schauder estimate and due to compatibility conditions arising where the ``degenerate'' and ``non-degenerate boundaries'' touch, one cannot directly apply the continuity or approximate solution methods. However, in dimension two, there is a holomorphic map from the half-disk onto the infinite strip in the complex plane and one can extend this definition to higher dimensions to give a diffeomorphism from the half-ball onto the infinite ``slab''. The solution to the partial Dirichlet problem on the half-ball can thus be converted to a partial Dirichlet problem on the slab, albeit for an operator which now has exponentially growing coefficients. The required Schauder regularity theory and existence of a solution to the partial Dirichlet problem on the slab can nevertheless be obtained using previous work of the author and C. Pop \cite{Feehan_Pop_elliptichestonschauder}. Our Perron method relies on weak and strong maximum principles for degenerate-elliptic operators, concepts of continuous subsolutions and supersolutions for  boundary value and obstacle problems for degenerate-elliptic operators, and maximum and comparison principle estimates previously developed by the author \cite{Feehan_maximumprinciple_v1}.
\end{abstract}

%

\subjclass[2000]{Primary 35J70, 35J86, 49J40, 35R35, 35R45; secondary 49J20, 60J60}

\keywords{Comparison principle, degenerate elliptic differential operator, degenerate diffusion process, free boundary problem, non-negative characteristic form, stochastic volatility process, mathematical finance, obstacle problem, Schauder regularity theory, viscosity solution}

\thanks{The author was partially supported by NSF grant DMS-1237722, a visiting faculty appointment in the Department of Mathematics at Columbia University, and the Max Planck Institut f\"ur Mathematik in der Naturwissenschaft, Leipzig.}

\maketitle
\tableofcontents
\listoffigures

\section{Introduction}
\label{sec:Introduction}
Suppose $\sO\subseteqq\HH$ is a domain (possibly unbounded) in the open upper half-space $\HH := \RR^{d-1}\times\RR_+$, where $d\geq 2$ and $\RR_+ := (0,\infty)$, and $\partial_1\sO := \partial\sO\cap\HH$ is the portion of the boundary $\partial\sO$ of $\sO$ which lies in $\HH$, and $\partial_0\sO$ is the interior of $\partial\HH\cap\partial\sO$, where $\partial\HH = \RR^{d-1}\times\{0\}$ is the boundary of $\bar\HH := \RR^{d-1}\times\bar\RR_+$ and $\bar\RR_+ := [0,\infty)$. We assume $\partial\HH\cap\partial\sO$ is non-empty throughout this article, though $\partial_0\sO$ may be empty. We denote $\underline\sO := \sO\cup\partial_0\sO$, while $\bar \sO=\sO\cup\partial\sO$ denotes the usual topological closure of $\sO$ in $\RR^d$.

We consider the elliptic equation with \emph{partial Dirichlet boundary condition},
\begin{align}
\label{eq:Elliptic_equation}
Au &= f \quad \hbox{on }\sO,
\\
\label{eq:Elliptic_boundary_condition}
u &= g \quad \hbox{on } \partial_1\sO,
\end{align}
for a suitably regular function $u$ on $\underline\sO\cup\partial_1\sO$, given a suitably regular source function $f$ on $\underline\sO$ and boundary data $g$ on $\partial_1\sO$, and the obstacle problem,
\begin{equation}
\label{eq:Elliptic_obstacle_problem}
\min\{Au-f, \ u-\psi\} = 0 \quad \hbox{a.e. on }\sO,
\end{equation}
with partial Dirichlet boundary condition \eqref{eq:Elliptic_boundary_condition}, given a suitably regular obstacle function $\psi$ on $\underline\sO\cup\partial_1\sO$ which is compatible with $g$ in the sense that
\begin{equation}
\label{eq:Boundarydata_obstacle_compatibility}
\psi\leq g \quad\hbox{on } \partial_1\sO.
\end{equation}
In particular, \emph{no boundary condition} is prescribed along $\partial\sO\less\partial_1\sO$, or even $\partial_0\sO$, provided a solution $u$ is sufficiently regular up to $\partial_0\sO$ and the coefficients of $A$ have suitable properties. See Figure \ref{fig:domain}. This can be important in applications (whether to the porous medium equation \cite{DaskalHamilton1998}, mathematical biology \cite{Epstein_Mazzeo_2011}, or mathematical finance \cite{Heston1993}), since a Dirichlet condition along $\partial_0\sO$ is often unmotivated by applications. If one elects to impose a Dirichlet condition along $\partial_0\sO$ anyway, regardless of motivation, simple examples (using the Kummer equation \cite{Feehan_Pop_elliptichestonschauder}) illustrate that this limits the regularity of the solution up to $\partial_0\sO$ to being at most continuous. In particular, the boundary value problem would become ill-posed if we attempted to seek a solution which is smooth up to $\partial_0\sO$ while simultaneously imposing a Dirichlet boundary condition on the full boundary, $\partial\sO$.

\begin{figure}
 \centering
 \begin{picture}(210,210)(0,0)
 \put(0,0){\includegraphics[scale=0.6]{./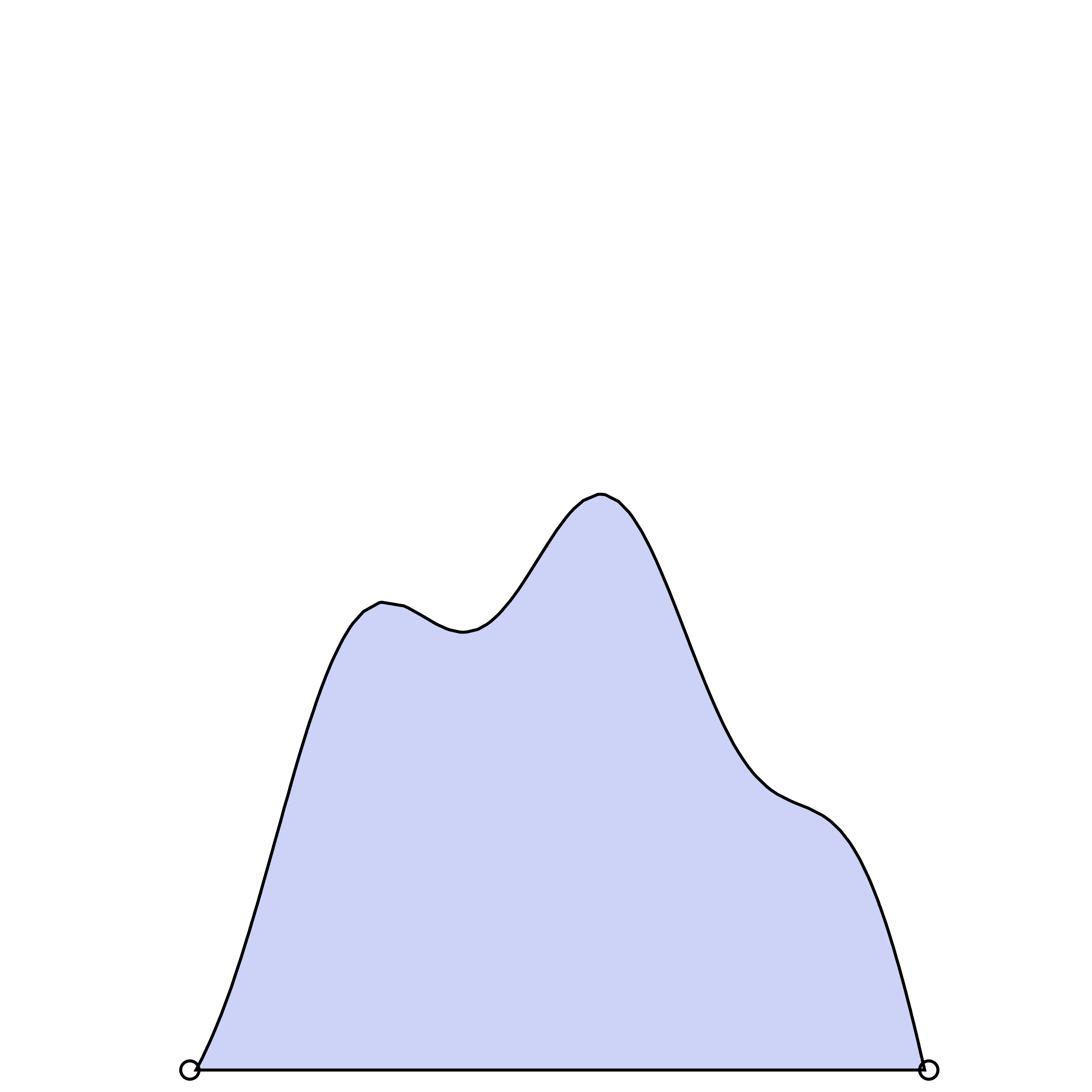}}
 \put(90,10){$\partial_0\sO$}
 \put(105,50){$\sO$}
 \put(110,95){$\partial_1\sO$}
 \put(150,80){$\HH$}
 \end{picture}
 \caption[A domain and its ``degenerate'' and ``non-degenerate'' boundaries.]{A subdomain, $\sO$, of the upper half-space, $\HH$, and the ``degenerate'' and ``non-degenerate'' boundaries, $\partial_0\sO$ and $\partial_1\sO$.}
 \label{fig:domain}
\end{figure}

We allow for the possibility that $\sO=\HH$, in which case $\partial_1\sO$ is empty and the boundary condition \eqref{eq:Elliptic_boundary_condition} and compatibility condition \eqref{eq:Boundarydata_obstacle_compatibility} are omitted.

The operator $A$ will have the form
\begin{equation}
\label{eq:Generator}
Av :=  -x_d\tr(aD^2v) - \langle b, Dv\rangle + cv \quad\hbox{on }\sO, \quad v\in C^\infty(\sO),
\end{equation}
where $x=(x_1,\ldots,x_d)$ are the standard coordinates on $\RR^d$ and the coefficients of $A$ are given by a matrix-valued function $a:\underline\sO\to\RR^{d\times d}$, a vector field $b:\underline\sO\to\RR^d$, and a function $c:\underline\sO\to \RR$. We shall endow these coefficients with additional properties, some of which we enumerate further below for convenience though we only assume them when explicitly invoked.
They will include a requirement that $a:\underline\sO\to\RR^{d\times d}$ be locally bounded on a neighborhood of $\partial_0\sO$ in $\underline\sO$ and that the vector field $b$ obey $\langle b, \vec n\rangle \geq 0$ along $\partial_0\sO$, where $\vec n$ is the \emph{inward}-pointing unit normal vector field. Clearly, $A$ becomes degenerate when $x_d=0$, that is, along $\partial_0\sO$, and thus we refer to $\partial_0\sO$ as the ``degenerate boundary'' portion and, because $A$ will typically be elliptic when $x_d>0$, we refer to $\partial_1\sO$ as the ``non-degenerate boundary'' portion.

When $u\in C^2(\sO)\cap C^1(\underline\sO)$ obeys a \emph{second-order boundary condition} along $\partial_0\sO$, in the sense that $x_dD^2u$ extends continuously to $\partial_0\sO$ with limit zero on $\partial_0\sO$, and solves \eqref{eq:Elliptic_equation}, then $u$ obeys the equivalent \emph{implicit oblique derivative boundary condition},
\begin{equation}
\label{eq:Equivalent_elliptic_oblique_derivative_boundary_condition}
- \langle b, Du\rangle + cu = f  \quad\hbox{on }\partial_0\sO,
\end{equation}
and so we may consider \eqref{eq:Elliptic_equation} as a degenerate-elliptic boundary value problem with accompanying ``mixed'' boundary conditions, \eqref{eq:Elliptic_boundary_condition} along $\partial_1\sO$ and \eqref{eq:Equivalent_elliptic_oblique_derivative_boundary_condition} along $\partial_0\sO$.

\subsection{Connections with previous research}
\label{subsec:Survey}
Before turning to a discussion of the detailed properties of the operator $A$ in \eqref{eq:Generator} and a description of our main results, we first summarize some of previous research on problems of this type and highlight a few of the difficulties which give the ``mixed boundary value'' problems addressed in this article their distinctive character. When the elliptic Dirichlet problem \eqref{eq:Elliptic_equation}, \eqref{eq:Elliptic_boundary_condition} is replaced by a parabolic terminal value problem
\begin{equation}
\label{eq:Parabolic_terminal_value_problem}
-u_t + Au = f \quad\hbox{on }(0,T)\times\sO, \quad u(T,\cdot) = h \quad\hbox{on }\bar\sO,
\end{equation}
on the half-space $\sO=\HH$, where now $d=2$ and $A$ has constant coefficients, $f\in C^\alpha_s([0,T]\times\bar\HH)$ and $h\in C^\alpha_s(\bar\HH)$ have \emph{compact support}, then Daskalopoulos and Hamilton prove existence of a unique solution $u\in C^{2+\alpha}_s([0,T]\times\bar\HH)$ \cite[Theorem I.1.1]{DaskalHamilton1998} (see \S \ref{subsec:Holder} for definitions of the elliptic analogues of their H\"older spaces). Closely related results for this problem are obtained by Koch using variational methods and carefully chosen weighted Sobolev spaces \cite{Koch}. For this purpose, the Daskalopoulos-Hamilton analysis requires an important (and difficult) a priori interior Schauder estimate for a solution $u\in C^{2+\alpha}_s([0,T]\times\underline U)$ for a bounded open subset $U\Subset\underline\HH$ \cite[Theorem I.1.3]{DaskalHamilton1998}, where points in $\partial\HH$ are regarded as ``interior'' (an essential distinction throughout \cite{DaskalHamilton1998}), However, their analysis does \emph{not} use or require an a priori \emph{global} Schauder estimate for a solution $u\in C^{2+\alpha}_s([0,T]\times\bar U)$ with a ``mixed'' boundary condition where $u$ obeys a Dirichlet condition along $(0,T)\times\partial_1U$ and the equivalent implied oblique derivative condition,
\begin{equation}
\label{eq:Equivalent_parabolic_oblique_derivative_boundary_condition}
-u_t - \langle b, Du\rangle + cu = f  \quad\hbox{on }(0,T)\times\partial_0U.
\end{equation}
This is a significant point because, as far as we can tell, the development of such an a priori \emph{global} Schauder estimate --- whether in the parabolic setting of \cite{DaskalHamilton1998} or the elliptic setting of our article --- does not appear to be straightforward, except in the special case that the boundary portions $\partial_0\sO$ and $\partial_1\sO$ lie a positive distance apart and thus $\partial\sO = \partial_0\sO\cup\partial_1\sO$ is smooth if this is true of $\partial_0\sO$ and $\partial_1\sO$. An important example of such a domain, one which we exploit in this article, is the case of an infinite ``slab'', $S = \RR^{d-1}\times(0,\nu)$, with $\partial_0S = \RR^{d-1}\times\{0\}$ and $\partial_1S = \RR^{d-1}\times\{\nu\}$. In the typical applications of our article, however, the closures of the boundary portions $\partial_0\sO$ and $\partial_1\sO$ will touch at corner points as in Figure \ref{fig:domain}. Part of the reason for the difficulties in proofs of existence of solutions caused by these corner points is explained in \S \ref{subsec:Corner_point_compatibility}; see also \cite{Feehan_Pop_higherregularityweaksoln}.

When $\sO\subset\RR^d$ is a bounded domain and $A$ has variable coefficients and takes the form \eqref{eq:Generator} in local coordinates on neighborhoods of boundary points in $\partial\sO$ (see Figure \ref{fig:half_space_solution_and_patching_solutions}) and is elliptic in the interior of $\sO$, then analogues of standard covering arguments, constructions of approximate solutions, and a version of the method of continuity allow Daskalopoulos and Hamilton to obtain an a priori global Schauder estimate together with existence and uniqueness of a solution $u\in C^{2+\alpha}_s([0,T]\times\bar\sO)$ \cite[Theorem II.1.1]{DaskalHamilton1998}. Indeed, they prove \cite[Theorem II.1.1]{DaskalHamilton1998} by exploiting their a priori interior Schauder estimate together with existence and uniqueness of solutions to a degenerate-parabolic terminal value problem on a half-space with compactly supported source function and initial data and standard results for strictly parabolic terminal value problems on bounded domains \cite{Krylov_LecturesHolder}. However, in this setting, the boundary condition is not mixed, since there is no Dirichlet condition along any portion of $\partial\sO$, only the condition \eqref{eq:Equivalent_parabolic_oblique_derivative_boundary_condition} implied by the regularity of $u$ up to $\partial\sO$.

\begin{figure}
 \centering
 \begin{picture}(450,180)(0,0)
 \put(40,-20){\includegraphics[scale=0.3]{./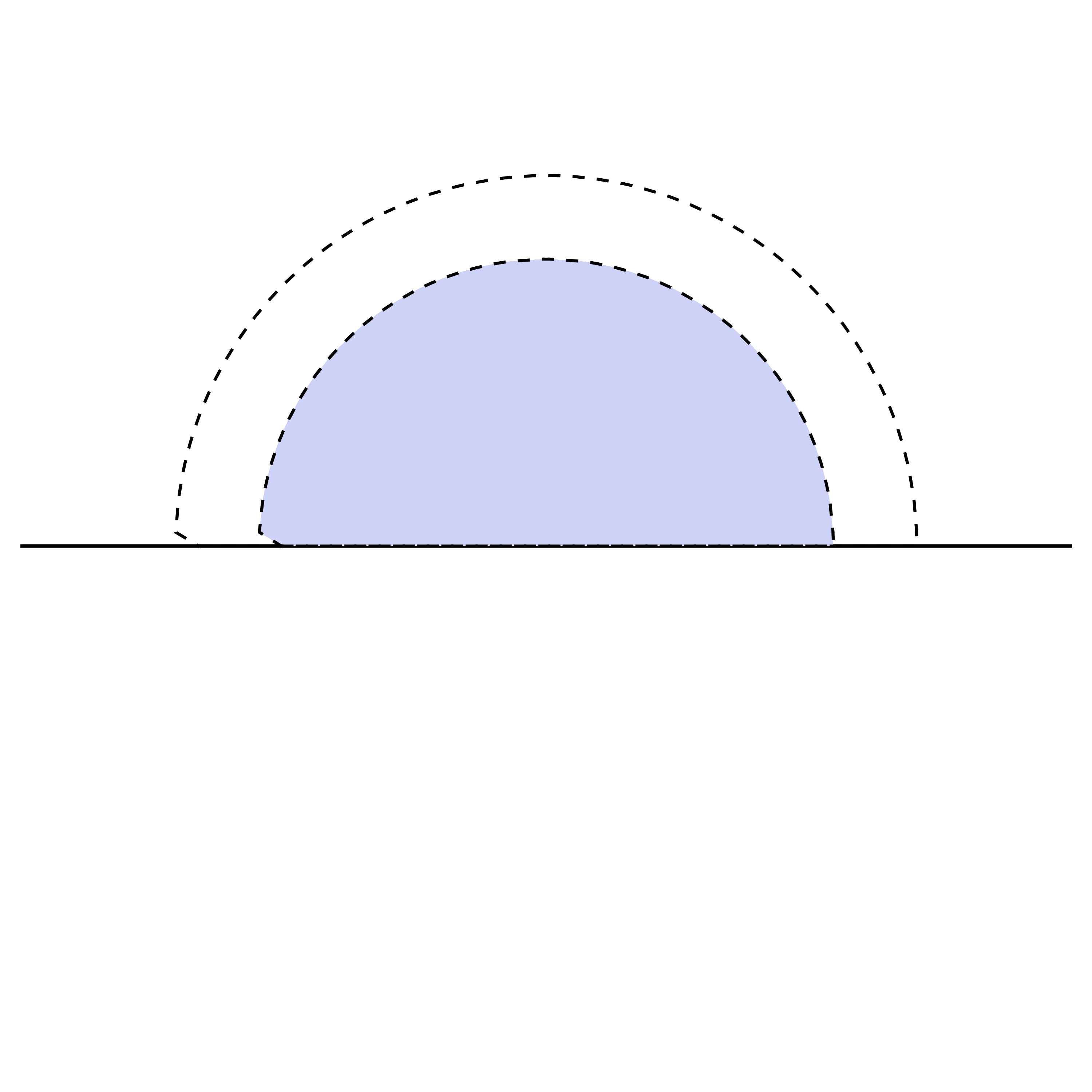}}
 \put(90,40){$B^+$}
 \put(200,10){\includegraphics[scale=0.5]{./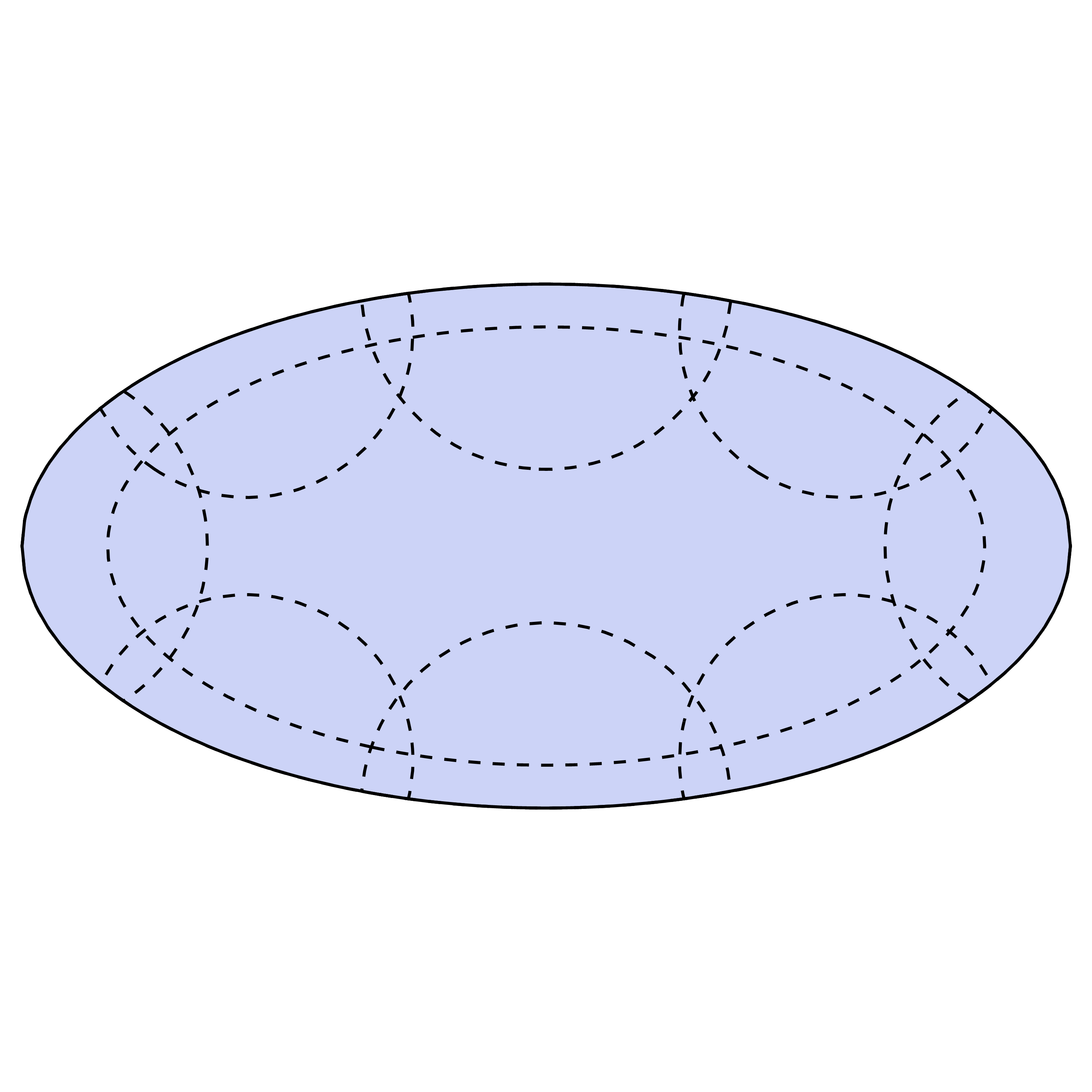}}
 \put(275,95){$\sO'\Subset\sO$}
 \put(265,45){$\sO\cap B_r(x^i)$}
 \put(380,95){$\partial\sO$}
 \end{picture}
 \caption[Construction of approximate solutions on a domain with a fully ``degenerate'' boundary.]{An approximate solution on a domain, $\sO$, with ``fully degenerate'' boundary, $\partial_0\sO=\partial\sO$, obtained by patching solutions on half-balls, $\sO\cap B_r(x^i) \cong B^+\subset\HH$, and a precompact $C^{2,\alpha}$ subdomain $\sO'\Subset\sO$.}
 \label{fig:half_space_solution_and_patching_solutions}
\end{figure}

By extending the methods of Koch \cite{Koch} and restricting to the case where $A$ is the Heston operator \eqref{eq:Heston_generator} and $d=2$, Daskalopoulos, Pop, and the author succeeded in proving existence, uniqueness, and regularity os solutions to the equation \eqref{eq:Elliptic_equation} and obstacle problem \eqref{eq:Elliptic_obstacle_problem} with partial Dirichlet boundary condition \eqref{eq:Elliptic_boundary_condition} by solving the associated variational equation and inequality for solutions, $u$, in weighted Sobolev spaces \cite{Daskalopoulos_Feehan_statvarineqheston}; appealing to \cite{Feehan_maximumprinciple_v1} to obtain uniqueness; proving that the solutions are continuous up to the boundary \cite{Feehan_Pop_regularityweaksoln} using a Moser iteration technique; proving Schauder regularity when $f\in C^\infty_0(\underline\sO)\cap C_b(\sO)$ using a variational method \cite{Feehan_Pop_higherregularityweaksoln}; proving the expected Schauder regularity, $u\in C^{2+\alpha}_s(\underline\sO)\cap C_b(\sO\cup\partial_1\sO)$ for a solution to the equation \eqref{eq:Elliptic_equation}, using elliptic a priori interior Schauder estimates and regularity results in \cite{Feehan_Pop_elliptichestonschauder}; and proving that $u\in C^{1,1}_s(\underline\sO)\cap C_b(\sO\cup\partial_1\sO)$ for a solution to the obstacle problem \eqref{eq:Elliptic_obstacle_problem} by adapting arguments of Caffarelli \cite{Caffarelli_jfa_1998} in \cite{Daskalopoulos_Feehan_optimalregstatheston}.

However, the Perron methods which we use in this article, and which we outline in \S \ref{subsec:Guide}, provide a more direct and elegant path to the desired results, even though they do not establish continuity of the solution at the corner points, although such continuity properties are proved by Pop and the author by a variational method in \cite{Feehan_Pop_regularityweaksoln} for the case of the Heston operator \eqref{eq:Heston_generator}. It is important to note that the Perron methods developed in this article are analogues of their classical counterpart in \cite[Chapters 2 and 6]{GilbargTrudinger} for the existence of smooth solutions to a Dirichlet problem for a linear, second-order, strictly elliptic operator. While one could envisage first proving existence of viscosity solutions to the boundary value and obstacle problems considered in this article by adapting previous work of Barles \cite{Barles_1993} and Ishii \cite{Ishii_1991} for existence of viscosity solutions to fully nonlinear boundary value problems with fully nonlinear boundary conditions and then proving the expected Schauder regularity by adapting the methods of \cite{Feehan_Pop_elliptichestonschauder}, this approach is less straightforward and less direct than it might appear at first glance. In particular, standard comparison theorems \cite{Crandall_Ishii_Lions_1992} for viscosity solutions do not immediately apply to problems such as \eqref{eq:Elliptic_equation} and \eqref{eq:Elliptic_obstacle_problem} with the partial Dirichlet boundary condition \eqref{eq:Elliptic_boundary_condition}, or mixed boundary condition \eqref{eq:Elliptic_boundary_condition} and \eqref{eq:Equivalent_elliptic_oblique_derivative_boundary_condition}.

\subsection{Properties of the coefficients of the operator $A$}
\label{subsec:Properties_A}
We now discuss some of the conditions we shall impose on the coefficients $A$ in \eqref{eq:Generator} in this article, emphasizing that in any specific result we only impose those conditions which are explicitly referenced. We shall often require that $a(x)$ be \emph{symmetric} and \emph{strictly elliptic} for $x\in\underline\sO$ in the sense that \cite[p. 31]{GilbargTrudinger}
\begin{equation}
\label{eq:Strict_ellipticity_domain}
\langle a\xi, \xi\rangle \geq \lambda_0 |\xi|^2\quad\hbox{on } \underline\sO,\quad \forall\,\xi \in \RR^d \quad\hbox{(strict ellipticity)},
\end{equation}
for some positive constant $\lambda_0$. The component $b^d$ of the vector field $b=(b^1,\ldots,b^d)$ or the coefficient $c$ may be constrained by requiring that
\begin{align}
\label{eq:Nonnegative_bd_boundary}
b^d &\geq 0 \quad \hbox{on } \partial_0 \sO  \quad\hbox{(non-negativity of $\langle b, e_d\rangle$ on boundary), or}
\\
\label{eq:Nonnegative_c}
c &\geq 0 \quad \hbox{on } \underline\sO \quad\hbox{(non-negativity of $c$ on domain)},
\end{align}
or that
\begin{align}
\label{eq:Positive_bd_boundary}
b^d &> 0 \quad \hbox{on } \partial_0 \sO  \quad\hbox{(positivity of $\langle b, e_d\rangle$ on boundary), or}
\\
\label{eq:Positive_c_boundary}
c &> 0 \quad \hbox{on }  \partial_0\sO \quad\hbox{(positivity of $c$ on boundary)},
\end{align}
or that there are positive constants $b_0$ or $c_0$ such that
\begin{align}
\label{eq:Positive_lower_bound_bd_boundary}
b^d &\geq b_0 \quad \hbox{on } \partial_0 \sO  \quad\hbox{(positive lower bound for $\langle b, e_d\rangle$ on boundary), or}
\\
\label{eq:Positive_lower_bound_c_domain}
c &\geq c_0 \quad \hbox{on } \underline\sO \quad\hbox{(positive lower bound for $c$ on domain)}.
\end{align}
We may also impose the conditions\footnote{Note that the condition \eqref{eq:Positive_lower_bound_bd_domain} gives a uniform positive lower bound on $b^d$ on $\underline\sO$ and not just $\partial_0\sO$ as in \eqref{eq:Positive_lower_bound_bd_boundary}. While we could alternatively ask that $b^i/a^{ii}$ is bounded below on bounded subdomains of $\sO$ for some $i\in\{1,\ldots,d\}$, the condition is most commonly obeyed when $i=d$; see \S \ref{subsec:Heston} for a well-known example when $d=2$.}
\begin{align}
\label{eq:Finite_upper_bound_add_domain}
a^{dd} &\leq \Lambda \quad\hbox{on }\underline\sO \quad\hbox{(upper bound for $\langle ae_d, e_d\rangle$ on domain), or}
\\
\label{eq:Positive_lower_bound_bd_domain}
b^d &\geq b_0 \quad\hbox{on }\underline\sO   \quad\hbox{(positive lower bound for $\langle b, e_d\rangle$ on domain)},
\end{align}
for some positive constants $b_0$ and $\Lambda$. We say that $\sO$ has finite height,
\begin{equation}
\label{eq:Finite_height_domain}
\height(\sO)\leq \nu,
\end{equation}
when $\sO \subset\RR^d\times[0,\nu]$ for some positive constant $\nu$. When the domain $\sO$ is \emph{unbounded}, we will occasionally appeal to the growth condition,
\begin{equation}
\label{eq:Quadratic_growth}
\tr \left(x_da(x)\right) + \langle b(x),x\rangle \leq K(1+|x|^2), \quad\forall\, x \in \underline\sO  \quad\hbox{(quadratic growth for $a, b$)},
\end{equation}
for some\footnote{The actual values of constants such as $b_0, c_0, K, \lambda_0, \Lambda, \nu$ have no impact on our existence results.} positive constant $K$. We may also require the constants $c_0$ and $K$ in \eqref{eq:Positive_lower_bound_c_domain} and \eqref{eq:Quadratic_growth} to obey
\begin{equation}
\label{eq:Positive_lower_bound_c_geq_2K}
c_0 \geq 2K \quad\hbox{(strong quadratic growth for $a, b$)},
\end{equation}
further strengthening \eqref{eq:Quadratic_growth}. Connectedness of the domain $\sO$ plays an important role in applications of the strong maximum principle but, when $\sO$ is unbounded, we may also need to strengthen this condition to
\begin{equation}
\label{eq:Connectedness_domain_and_ball}
\sO\cap B_R \quad\hbox{is connected for all $R\geq R_0(\sO)$},
\end{equation}
where $B_R$ is the open ball with radius $R$ and center at the origin.

\subsection{Summary of main results}
\label{subsec:Summary}
We shall prove existence of solutions to the elliptic equation \eqref{eq:Elliptic_equation} and obstacle problem \eqref{eq:Elliptic_obstacle_problem} with partial Dirichlet boundary condition \eqref{eq:Elliptic_boundary_condition} using a version of the classical Perron method \cite[\S 2.8 \& 6.3]{GilbargTrudinger}. For that purpose, a crucial role is played by the following analogue for a half-ball $B_r^+(x_0)=\HH\cap B_r(x_0)$, with center $x_0\in\partial\HH$ and radius $r>0$, of the solution to the Dirichlet problem on a ball $B_r(x_0)\subset\RR^d$ \cite[Lemma 6.10]{GilbargTrudinger} for a strictly elliptic operator with coefficients in $C^\alpha(\bar B_r(x_0))$. We refer the reader to Definitions \ref{defn:Calphas} and \ref{defn:DH2spaces} for definitions of the indicated Daskalopoulos-Hamilton-Koch H\"older spaces.

\begin{thm}[Existence of a solution to the partial Dirichlet boundary value problem on a half-ball]
\label{thm:Existence_uniqueness_elliptic_Dirichlet_halfball}
Let $x_0\in\partial\HH$ and $r>0$, and $\alpha\in(0,1)$. Let $A$ be as in \eqref{eq:Generator} with coefficients belonging to $C^\alpha_s(\bar B_r^+(x_0))$, and obeying \eqref{eq:Strict_ellipticity_domain} and \eqref{eq:Nonnegative_c} on $\underline B^+_r(x_0)$ and \eqref{eq:Nonnegative_bd_boundary} on $\partial_0B^+_r(x_0)$. Assume in addition that one of the conditions, \eqref{eq:Positive_lower_bound_c_domain} or \eqref{eq:Positive_lower_bound_bd_domain}, holds on $\underline B^+_r(x_0)$.
\begin{enumerate}
\item (Smooth boundary data.) If
\begin{align*}
f &\in C^\alpha_s(\underline B_r^+(x_0)\cup\partial_1 B_r^+(x_0))\cap C_b(B_r^+(x_0)),
\\
g &\in C^{2+\alpha}_s(\underline B_r^+(x_0)\cup\partial_1 B_r^+(x_0))\cap C_b(B_r^+(x_0)),
\end{align*}
then there is a unique solution,
$$
u \in C^{2+\alpha}_s(\underline B_r^+(x_0)\cup\partial_1 B_r^+(x_0))\cap C_b(B_r^+(x_0)),
$$
to the partial Dirichlet boundary value problem \eqref{eq:Elliptic_equation}, \eqref{eq:Elliptic_boundary_condition} on $B_r^+(x_0)$.

\item (Continuous boundary data.) If
\begin{align*}
f &\in C^\alpha_s(\underline B_r^+(x_0))\cap C_b(B_r^+(x_0)),
\\
g &\in C_b(\partial_1 B_r^+(x_0)),
\end{align*}
then there is a unique solution,
$$
u \in C^{2+\alpha}_s(\underline B_r^+(x_0))\cap C_b(B_r^+(x_0)\cup\partial_1 B_r^+(x_0)),
$$
to the partial Dirichlet boundary value problem \eqref{eq:Elliptic_equation}, \eqref{eq:Elliptic_boundary_condition} on $B_r^+(x_0)$.
\end{enumerate}
\end{thm}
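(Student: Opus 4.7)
The plan is to pull the partial Dirichlet problem back along a diffeomorphism from the half-ball $B_r^+(x_0)$ onto an infinite slab $S := \RR^{d-1}\times(0,\nu)$, where the degenerate boundary $\partial_0 S$ and the non-degenerate boundary $\partial_1 S$ are parallel and separated, and then invoke the Schauder theory for such operators developed in \cite{Feehan_Pop_elliptichestonschauder}. After translating so that $x_0=0$, in dimension $d=2$ one identifies $\HH$ with the upper half-plane in $\CC$ and takes the biholomorphism
\[
\phi(z) := \log\!\left(\frac{r+z}{r-z}\right),
\]
which maps $B_r^+(0)$ onto the strip $\RR\times(0,\pi/2)$, sends $\partial_0 B_r^+(0)$ to $\RR\times\{0\}$ and $\partial_1 B_r^+(0)$ to $\RR\times\{\pi/2\}$, and unfolds the corner points $\pm r$ to $\pm\infty$. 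For $d\geq 3$, one constructs the analogous smooth diffeomorphism $\Phi : B_r^+(0) \to S$ (with $\nu = \pi/2$) that sends $\partial_0 B_r^+(0)$ onto $\partial_0 S$, $\partial_1 B_r^+(0)$ onto $\partial_1 S$, and maps the equatorial corner set $\partial B_r(0)\cap\partial\HH$ to the ends $|y'|=\infty$ of the slab.

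Setting $\tilde u := u\circ\Phi^{-1}$, $\tilde f := f\circ\Phi^{-1}$, and $\tilde g := g\circ\Phi^{-1}$, the pulled-back equation takes the form $\tilde A\tilde u = \tilde f$ on $S$ with $\tilde u = \tilde g$ on $\partial_1 S$, where
\[
\tilde A\tilde v = -y_d\tr(\tilde a\, D^2\tilde v) - \langle\tilde b, D\tilde v\rangle + \tilde c\,\tilde v.
\]
The conformal structure of $\Phi$ in the $(x_1,x_d)$-directions ensures that the degeneracy factor $x_d$ pulls back to $y_d$ times a smooth positive function, so the Daskalopoulos--Hamilton form is preserved; the coefficients $(\tilde a,\tilde b,\tilde c)$ lie in $C^\alpha_s$ on bounded subsets of $\bar S$ and inherit strict ellipticity, non-negativity, and a positive lower bound from $(a,b,c)$ via \eqref{eq:Strict_ellipticity_domain}, \eqref{eq:Nonnegative_bd_boundary}, \eqref{eq:Nonnegative_c}, and \eqref{eq:Positive_lower_bound_c_domain} or \eqref{eq:Positive_lower_bound_bd_domain}, although they may grow exponentially as $|y'|\to\infty$ (these slab ends correspond to the equatorial corner of the half-ball). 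This is precisely the setting treated in \cite{Feehan_Pop_elliptichestonschauder}, which furnishes a global $C^{2+\alpha}_s$ Schauder a priori estimate on $S$; existence of $\tilde u \in C^{2+\alpha}_s(\underline S\cup\partial_1 S)\cap C_b(S)$ then follows by the method of continuity, connecting $\tilde A$ to a simple reference operator (for instance, $\tilde v \mapsto -y_d\Delta\tilde v + \tilde v$) through a family of operators satisfying the same hypotheses, while uniqueness is supplied by the weak maximum principle for degenerate-elliptic operators \cite{Feehan_maximumprinciple_v1}, whose hypothesis is exactly \eqref{eq:Positive_lower_bound_c_domain} or \eqref{eq:Positive_lower_bound_bd_domain}.

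Defining $u := \tilde u\circ\Phi$ completes the proof of part (1): because $\Phi$ is a smooth diffeomorphism on $\underline B_r^+(x_0)\cup\partial_1 B_r^+(x_0)$, with singular locus only along the excluded equator, the Daskalopoulos--Hamilton--Koch regularity transfers and yields $u \in C^{2+\alpha}_s(\underline B_r^+(x_0)\cup\partial_1 B_r^+(x_0))\cap C_b(B_r^+(x_0))$ solving \eqref{eq:Elliptic_equation}--\eqref{eq:Elliptic_boundary_condition}. For part (2) with merely continuous $g\in C_b(\partial_1 B_r^+(x_0))$, one approximates $g$ uniformly by functions $g_n \in C^{2+\alpha}_s$ that vanish in a neighborhood of the equator (automatically satisfying the compatibility needed to apply part (1)), solves for $u_n$ using part (1), and passes to the limit using interior Schauder estimates from \cite{Feehan_Pop_elliptichestonschauder} on compact subsets of $\underline B_r^+(x_0)$ together with the weak maximum principle for uniform convergence on $B_r^+(x_0)\cup\partial_1 B_r^+(x_0)$. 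The main obstacle throughout is controlling the pulled-back operator $\tilde A$ on $S$ in the presence of exponentially growing coefficients, and verifying that the Schauder norms on $S$ correspond, under $\Phi$, to the Daskalopoulos--Hamilton--Koch norms on the half-ball; reconciling these two scales of analysis is where the delicate weighted-H\"older framework of \cite{Feehan_Pop_elliptichestonschauder} is essential.
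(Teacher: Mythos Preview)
Your overall strategy matches the paper's: pull back via the half-ball-to-slab diffeomorphism $\Phi$ and exploit the Schauder theory of \cite{Feehan_Pop_elliptichestonschauder}. However, there is a genuine gap at the step where you invoke that reference.

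You correctly observe that the pulled-back coefficients $(\tilde a,\tilde b,\tilde c)$ grow exponentially as $|y'|\to\infty$, but then assert that ``this is precisely the setting treated in \cite{Feehan_Pop_elliptichestonschauder}, which furnishes a global $C^{2+\alpha}_s$ Schauder a priori estimate on $S$.'' It is not. The global Schauder estimate and the slab existence theorem in \cite{Feehan_Pop_elliptichestonschauder} (Theorems~\ref{thm:Global_elliptic_Schauder_estimate_slab} and~\ref{thm:Elliptic_existence_uniqueness_slab} here) both require the coefficients to belong to $C^\alpha_s(\bar S)$, hence to be \emph{bounded} on the slab. Your operator $\tilde A$ does not satisfy this, so neither the global a priori estimate nor the method of continuity is available to you directly. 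A related issue affects your appeal to the weak maximum principle on $S$: the standard hypothesis for unbounded domains is quadratic growth of the coefficients, which $\tilde A$ violates.

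The paper closes this gap as follows. First, the weak maximum principle for $\tilde A$ on $\underline S$ is obtained not from growth conditions but by transferring it from the bounded half-ball via the diffeomorphism (Lemma~\ref{lem:Generator_slab_weak_maximum_principle_property}); this already gives uniqueness and a uniform $C^0$ estimate. For existence, the paper truncates: it replaces $(\tilde a,\tilde b,\tilde c,\tilde f,\tilde g)$ by functions $(\tilde a_n,\tilde b_n,\tilde c_n,\tilde f_n,\tilde g_n)$ that agree with the originals on the finite cylinder $U_n=B_n^{d-1}\times(0,\pi/2)$ and are frozen radially outside it. These truncated data \emph{do} lie in $C^\alpha_s(\bar S)$, so the slab existence theorem of \cite{Feehan_Pop_elliptichestonschauder} yields $v_n\in C^{2+\alpha}_s(\bar S)$. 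The maximum principle gives $\|v_n\|_{C(\bar S)}$ bounded uniformly in $n$; combining the interior estimate (Theorem~\ref{thm:Elliptic_apriori_Schauder_interior_domain}) near $\partial_0 S$ with the classical boundary estimate \cite[Corollary~6.7]{GilbargTrudinger} near $\partial_1 S$ gives uniform local $C^{2+\alpha}_s$ bounds on each $\bar U_k$. An Arzel\`a--Ascoli diagonal argument then produces $v\in C^{2+\alpha}_{s,\loc}(\bar S)\cap C_b(S)$ solving the untruncated problem. Your treatment of part~(2) by approximation of $g$ is essentially the paper's, but it relies on part~(1), so the truncation step is needed first.
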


\begin{rmk}[Application of Theorem \ref{thm:Existence_uniqueness_elliptic_Dirichlet_halfball}]
\label{rmk:Boundary_property_bd}
In practice, we shall only need to apply Theorem \ref{thm:Existence_uniqueness_elliptic_Dirichlet_halfball} for $r>0$ small enough that the condition \eqref{eq:Positive_lower_bound_bd_domain} on $\underline B^+_r(x_0)$, that is, $b^d\geq b_0$ on $\underline B^+_r(x_0)$, is implied by the weaker condition \eqref{eq:Positive_lower_bound_bd_boundary} on $\partial_0B^+_r(x_0)$, that is, $b^d\geq b_0$  on $\partial_0B^+_r(x_0)$, and the fact that $b^d$ is continuous on $\underline \sO$ and $B^+_r(x_0)\Subset\underline \sO$.
\end{rmk}

Before turning to the partial Dirichlet problem on a domain $\sO\subseteqq \HH$, we provide the

\begin{defn}[Regular boundary point]
\label{defn:Regular_boundary_point_elliptic_equation}
If $u\in C^{2+\alpha}_s(\underline\sO)$ is a bounded solution to the elliptic equation \eqref{eq:Elliptic_equation}, we say that a point $x_0\in \partial_1\sO$ is \emph{regular with respect to $A$, $f$, and $g$} if the point $x_0$ admits a local barrier in the sense of \cite[p. 105]{GilbargTrudinger}; if $x_0$ is a regular point, then \cite[Lemma 6.12]{GilbargTrudinger} implies (see Remark 2 in \cite[p. 105]{GilbargTrudinger}) that
$$
\lim_{\sO\ni x\to x_0}u(x)=g(x_0).
$$
\end{defn}

For example, when $A$ obeys the hypotheses of Theorem \ref{thm:Existence_uniqueness_elliptic_Dirichlet}, a point $x_0\in\partial_1\sO$ will be regular if $\sO$ obeys an exterior sphere condition at $x_0$ \cite[p. 106]{GilbargTrudinger}, or even an exterior cone condition at $x_0$ \cite[Problem 6.3]{GilbargTrudinger}. We have the following analogue of \cite[Theorem 6.13]{GilbargTrudinger}.

\begin{thm}[Existence of a smooth solution to the partial Dirichlet boundary value problem]
\label{thm:Existence_uniqueness_elliptic_Dirichlet}
Let $\sO \subseteqq \HH$ be a domain and $\alpha\in(0,1)$. Let $A$ be as in \eqref{eq:Generator} with coefficients belonging to $C^\alpha(\underline\sO)$, and obeying \eqref{eq:Strict_ellipticity_domain}, \eqref{eq:Nonnegative_c}, and \eqref{eq:Positive_bd_boundary}. Moreover, the coefficients of $A$ should obey either
\begin{enumerate}
\item Condition \eqref{eq:Positive_lower_bound_c_domain}, or
\item Conditions \eqref{eq:Finite_upper_bound_add_domain} and \eqref{eq:Positive_lower_bound_bd_domain},
\end{enumerate}
and, in addition when $\sO$ is unbounded, \eqref{eq:Quadratic_growth}, \eqref{eq:Positive_lower_bound_c_geq_2K}, and $\sO$ should obey \eqref{eq:Connectedness_domain_and_ball}. If
$$
f \in C^\alpha_s(\underline\sO)\cap C_b(\sO) \quad\hbox{and}\quad g \in C_b(\partial_1\sO),
$$
and each point of $\partial_1\sO$ is regular with respect to $A$, $f$, and $g$ in the sense of Definition \ref{defn:Regular_boundary_point_elliptic_equation}, then there is a unique solution,
$$
u \in C^{2+\alpha}_s(\underline\sO)\cap C_b(\sO\cup\partial_1\sO),
$$
to the partial Dirichlet boundary value problem \eqref{eq:Elliptic_equation}, \eqref{eq:Elliptic_boundary_condition}.
\end{thm}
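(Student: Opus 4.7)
The plan is to adapt the classical Perron method of \cite[\S2.8 and \S6.3]{GilbargTrudinger}, using Theorem \ref{thm:Existence_uniqueness_elliptic_Dirichlet_halfball} as the replacement for the solvability of the Dirichlet problem on balls at degenerate boundary points, and the classical strictly elliptic existence theorems of \cite{GilbargTrudinger} at interior points of $\sO$. First I would set up the Perron machinery: define the class of continuous subsolutions (and, dually, supersolutions) of the boundary value problem \eqref{eq:Elliptic_equation}, \eqref{eq:Elliptic_boundary_condition} on $\sO \cup \partial_1\sO$, using either a viscosity formulation or the notion of subsolution developed in the earlier sections of the paper based on the pointwise action of $A$ in \eqref{eq:Generator}. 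Under hypothesis (1) the comparison principle of \cite{Feehan_maximumprinciple_v1} is immediate from $c \geq c_0 > 0$, and under hypothesis (2) the comparison follows from the boundary positivity \eqref{eq:Positive_lower_bound_bd_domain} together with \eqref{eq:Finite_upper_bound_add_domain}; on unbounded domains the additional hypotheses \eqref{eq:Quadratic_growth}, \eqref{eq:Positive_lower_bound_c_geq_2K}, and \eqref{eq:Connectedness_domain_and_ball} ensure that the maximum principle extends globally and that constant functions $\pm K_0$ (with $K_0$ large, depending on $\|f\|_\infty$ and $\|g\|_\infty$) are a global sub/supersolution pair. In particular the class of subsolutions dominated by $g$ on $\partial_1\sO$ is nonempty and uniformly bounded.

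Set $u(x) := \sup\{v(x) : v \text{ is a subsolution with } v \leq g \text{ on } \partial_1\sO\}$. The two classical Perron facts I need are: (i) the pointwise supremum of a bounded family of subsolutions is itself a subsolution, and (ii) subsolutions may be \emph{lifted} on any open set $U$ with $\bar U \subset \underline\sO \cup \partial_1\sO$ on which the partial Dirichlet problem for $A$ is solvable. The geometric key is the choice of $U$: at a point $x_0 \in \sO$ I would take $U = B_r(x_0) \Subset \sO$ and invoke the classical Dirichlet solvability for the strictly elliptic operator $A$ (which is strictly elliptic on each compactly contained subdomain of $\sO$ since $x_d$ is bounded below there); at a point $x_0 \in \partial_0\sO$ I would take $U = B_r^+(x_0)$ with $\bar B_r^+(x_0) \subset \underline\sO$ and invoke Theorem \ref{thm:Existence_uniqueness_elliptic_Dirichlet_halfball}(2) with continuous boundary data. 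The required local hypothesis $b^d \geq b_0$ on $\underline B_r^+(x_0)$ follows from \eqref{eq:Positive_bd_boundary} and continuity of $b^d$, shrinking $r$ if necessary, as in Remark \ref{rmk:Boundary_property_bd}; the strict ellipticity and $C^\alpha$ regularity of the coefficients needed by Theorem \ref{thm:Existence_uniqueness_elliptic_Dirichlet_halfball} are inherited from the hypotheses on $A$.

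The lifting argument now proceeds as in \cite[Lemma 6.11, Theorem 6.13]{GilbargTrudinger}: fix $x \in \underline\sO$, choose a lifting neighborhood $U$ as above, pick a maximizing sequence of subsolutions $v_n$ with $v_n(x) \to u(x)$, replace each $v_n$ on $U$ by its lift $V_n \in C^{2+\alpha}_s(\underline U \cup \partial_1 U) \cap C_b(U)$ solving $A V_n = f$ on $U$ with Dirichlet data $v_n$ on $\partial_1 U$, and pass to a subsequential limit. The interior $C^{2+\alpha}_s$ Schauder estimates up to $\partial_0\sO$ established in \cite{Feehan_Pop_elliptichestonschauder}, applied on a slightly smaller half-ball, provide the compactness needed, yielding a limit $w \in C^{2+\alpha}_s(\underline U')$ on $U' \Subset U$ that still solves $Aw = f$ and satisfies $w(x) = u(x)$, $w \leq u$. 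A standard contradiction --- if $w(y) < u(y)$ at some $y \in U'$, lift a subsolution larger than $w$ at $y$ to contradict maximality of $u$ at the centre $x$ via the strong maximum principle --- forces $w \equiv u$ on $U'$, so $u$ itself lies in $C^{2+\alpha}_s$ near $x$ and solves $Au = f$. Since $x$ was arbitrary in $\underline\sO$, this shows $u \in C^{2+\alpha}_s(\underline\sO) \cap C_b(\sO)$ and $Au = f$.

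Attainment of the boundary data on $\partial_1\sO$ is handled pointwise: at each regular point $x_0 \in \partial_1\sO$ (in the sense of Definition \ref{defn:Regular_boundary_point_elliptic_equation}), local barriers from above and below sandwich $u$ between $g(x_0) - \varepsilon$ and $g(x_0) + \varepsilon$ near $x_0$ by the standard argument of \cite[Lemma 6.12]{GilbargTrudinger}, so $u \in C_b(\sO \cup \partial_1\sO)$ with $u|_{\partial_1\sO} = g$. Uniqueness is then an immediate consequence of the comparison principle of \cite{Feehan_maximumprinciple_v1} applied to the difference of two solutions, together with the fact that no boundary condition on $\partial_0\sO$ is required thanks to the regularity $u \in C^{2+\alpha}_s(\underline\sO)$ and the degeneracy structure of $A$ along $\partial_0\sO$. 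The principal obstacle is step (ii), the lifting step at degenerate boundary points: it is precisely here that Theorem \ref{thm:Existence_uniqueness_elliptic_Dirichlet_halfball}(2) with only continuous boundary data is indispensable, since a maximizing sequence of subsolutions has no a priori smoothness along $\partial_1 B_r^+(x_0)$, and it is the interior Schauder estimate up to $\partial_0\sO$ from \cite{Feehan_Pop_elliptichestonschauder} that glues these local lifts into the globally $C^{2+\alpha}_s$ solution.
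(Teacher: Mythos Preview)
Your proposal is correct and follows essentially the same route as the paper: the proof of Theorem \ref{thm:Existence_uniqueness_elliptic_Dirichlet} in the paper simply verifies that the hypotheses of the abstract Perron result (Theorem \ref{thm:Perron_elliptic_bvp_solution}) hold --- namely, that $A$ has the strong maximum principle property (via Theorem \ref{thm:Elliptic_strong_maximum_principle}) and that local solvability is available on interior balls (via \cite[Lemma 6.10]{GilbargTrudinger}) and on half-balls centred on $\partial_0\sO$ (via Theorem \ref{thm:Existence_uniqueness_elliptic_Dirichlet_halfball} together with Remark \ref{rmk:Boundary_property_bd}) --- and Theorem \ref{thm:Perron_elliptic_bvp_solution} itself is proved by exactly the lifting/compactness/strong-maximum-principle argument you outline. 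Two small points of precision: the paper works with the maximum of \emph{finitely many} continuous subsolutions (Theorem \ref{thm:Elliptic_equation_continuous_sub_supersolution_properties}\,(\ref{item:elliptic_max_set_subsolutions})) rather than asserting that arbitrary pointwise suprema are subsolutions, and the half-ball lifts furnished by Theorem \ref{thm:Existence_uniqueness_elliptic_Dirichlet_halfball}(2) lie only in $C^{2+\alpha}_s(\underline U)\cap C_b(U\cup\partial_1 U)$, not $C^{2+\alpha}_s(\underline U\cup\partial_1 U)$ --- but since only the interior Schauder estimate (Theorem \ref{thm:Elliptic_apriori_Schauder_interior_domain}) on a smaller half-ball is used for compactness, this does not affect your argument.
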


\begin{rmk}[Continuity up to $\partial_1\sO$ and  $\overline{\partial\sO}$]
More generally, given a solution $u \in C^{2+\alpha}_s(\underline\sO)$ to \eqref{eq:Elliptic_equation}, continuity up to $\partial_1\sO$ is assured\footnote{Continuity up to $\partial_0\sO$ is already implied by the fact that $u \in C^{2+\alpha}_s(\underline\sO)$.}
by the existence of a local barrier at each point of $\partial_1\sO$ \cite[pp. 104--106]{GilbargTrudinger}. However, because $A$ becomes degenerate when $x_d=0$, it is unclear how to construct a local barrier at the ``corner points'', $\overline{\partial_0\sO}\cap\overline{\partial_1\sO}$, so regularity of the solution at those points will be considered by a different method in a separate article. However, when $A$ is specialized to the Heston operator \eqref{eq:Heston_generator} and $\sO$ obeys an exterior an interior cone condition along $\overline{\partial_0\sO}\cap\overline{\partial_1\sO}$, then solutions to the boundary value problem \eqref{eq:Elliptic_equation}, \eqref{eq:Elliptic_boundary_condition} are continuous up to boundary $\overline{\partial_1\sO}$ when $g\in C(\overline{\partial_1\sO})$, by consequence of \cite[Theorem 1.12]{Feehan_Pop_regularityweaksoln}.
\end{rmk}

\begin{rmk}[Regularity up to $\partial_1\sO$]
When the coefficients of $A$ and $f$ in Theorem \ref{thm:Existence_uniqueness_elliptic_Dirichlet} belong to $C^\alpha_s(\underline\sO\cup\partial_1\sO)$, and $g$ belongs to $C^{2+\alpha}_s(\underline\sO\cup\partial_1\sO)$, and $\partial_1\sO$ is of class $C^{2,\alpha}$, then standard regularity theory for boundary value problems for strictly elliptic operators \cite[Lemma 6.18]{GilbargTrudinger} implies that the solution $u$ belongs to $C^{2+\alpha}_s(\underline\sO\cup\partial_1\sO)$.
\end{rmk}

We have the following analogue of \cite[Theorems 1.3.2, 1.3.4, 1.4.1, \& 1.4.3]{Friedman_1982}, \cite[Theorems 4.7.7 \& 5.6.1 and Corollary 5.6.3]{Rodrigues_1987}, \cite[4.27 \& 4.38]{Troianiello}.

\begin{thm}[Existence of a smooth solution to the obstacle problem]
\label{thm:Existence_uniqueness_elliptic_obstacle}
Let $\sO \subseteqq \HH$ be a domain, and $d<p<\infty$, and $\alpha\in(0,1)$. Assume the hypotheses for $A$, $f$, $g$, and $\sO$ in Theorem \ref{thm:Existence_uniqueness_elliptic_Dirichlet}. If
$$
\psi \in C^2(\underline\sO)\cap C(\sO\cup\partial_1\sO)
$$
obeys \eqref{eq:Boundarydata_obstacle_compatibility}, so $\psi\leq g$ on $\partial_1\sO$, and
$$
\sup_\sO\psi < \infty,
$$
and each point of $\partial_1\sO$ is regular with respect to $A$, $f$, and $g$ in the sense of Definition \ref{defn:Regular_boundary_point_elliptic_equation}, then there is a unique solution,
$$
u \in C^{2+\alpha}_s(\underline\Omega)\cap W^{2,p}_{\loc}(\sO)\cap C^{1,\alpha}_s(\underline\sO)\cap C_b(\sO\cup\partial_1\sO),
$$
to the obstacle problem \eqref{eq:Elliptic_obstacle_problem}, \eqref{eq:Elliptic_boundary_condition}, where $\Omega = \{x\in\sO:(u-\psi)(x)>0\}$.
\end{thm}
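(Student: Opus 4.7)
The plan is to adapt the classical Perron method for obstacle problems, using Theorem \ref{thm:Existence_uniqueness_elliptic_Dirichlet_halfball} as the essential local existence ingredient in place of the classical Dirichlet-ball solution. I would define the class $\cS_{\psi,g}$ of continuous supersolutions for \eqref{eq:Elliptic_obstacle_problem}, \eqref{eq:Elliptic_boundary_condition}, consisting of functions $v \in C(\sO \cup \partial_1\sO)$ with $v \geq \psi$ on $\sO$, $v \geq g$ on $\partial_1\sO$, and $Av \geq f$ in the appropriate generalized (viscosity or distributional) sense compatible with the degeneracy of $A$ along $\partial_0\sO$. Under the hypotheses of Theorem \ref{thm:Existence_uniqueness_elliptic_Dirichlet} --- in particular \eqref{eq:Positive_lower_bound_c_domain}, or the combination of \eqref{eq:Positive_lower_bound_bd_domain} with \eqref{eq:Positive_lower_bound_c_geq_2K} when $\sO$ is unbounded --- and the hypothesis $\sup_\sO \psi < \8$, an explicit constant or quadratic barrier dominates both $\psi$ and $g$ and serves as a member of $\cS_{\psi,g}$. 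The Perron candidate
\[
u(x) := \inf\{v(x) : v \in \cS_{\psi,g}\}, \quad x \in \sO \cup \partial_1\sO,
\]
is then well-defined and bounded by the maximum and comparison principles of \cite{Feehan_maximumprinciple_v1}.

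Next I would show that $u$ solves the obstacle problem via the standard Perron ``lifting''. For any $v \in \cS_{\psi,g}$ and any half-ball $B = B_r^+(x_0)$ with $\bar B \Subset \underline\sO \cup \partial_1\sO$, shrink $r$ so that \eqref{eq:Positive_lower_bound_bd_domain} holds on $\underline B$ in view of Remark \ref{rmk:Boundary_property_bd}, and invoke Theorem \ref{thm:Existence_uniqueness_elliptic_Dirichlet_halfball} to solve the partial Dirichlet problem $A\tilde v = f$ on $B$ with $\tilde v = v$ on $\partial_1 B$. The lift $V := \max(\tilde v, \psi)$ on $B$, extended by $v$ elsewhere, still belongs to $\cS_{\psi,g}$ and satisfies $V \leq v$. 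A standard diagonal procedure over a countable exhaustion of $\sO$ by such half-balls, combined with the uniqueness assertion of Theorem \ref{thm:Existence_uniqueness_elliptic_Dirichlet_halfball} to ensure consistency of the local replacements, shows that the resulting $u$ coincides on each $B$ with the solution of the local obstacle problem with boundary data $u|_{\partial_1 B}$; hence $u$ satisfies \eqref{eq:Elliptic_obstacle_problem} on the non-contact set $\Omega = \{u > \psi\}$, where $Au = f$, and equals $\psi$ on its complement.

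For the regularity assertions, on $\Omega$ the function $u$ satisfies $Au = f$, and a combination of Theorem \ref{thm:Existence_uniqueness_elliptic_Dirichlet} with the interior weighted Schauder theory of \cite{Feehan_Pop_elliptichestonschauder} yields $u \in C^{2+\alpha}_s(\underline\Omega)$. Boundary continuity $u \in C_b(\sO \cup \partial_1\sO)$ follows at each point of $\partial_1\sO$ by the regular-point barrier construction as in \cite[Lemma 6.12]{GilbargTrudinger}, applied to both lifts from $\cS_{\psi,g}$ and the obstacle $\psi$. On any $\sO' \Subset \sO$, the operator $A$ is uniformly strictly elliptic (since $x_d$ is bounded below), $\psi \in C^2$ is a $C^2$ obstacle, and $f \in L^\infty_{\loc}$; classical $W^{2,p}$ theory for obstacle problems, such as \cite[Theorem 4.38]{Troianiello} combined with \cite[Theorems 4.7.7 and 5.6.1]{Rodrigues_1987}, then delivers $u \in W^{2,p}_{\loc}(\sO)$ for $d < p < \8$, and Morrey's embedding gives $u \in C^{1,\alpha}_{\loc}(\sO)$. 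The upgrade to weighted $C^{1,\alpha}_s(\underline\sO)$ regularity up to $\partial_0\sO$ follows by combining the weighted $C^{2+\alpha}_s$ control on $\underline\Omega$ from Theorem \ref{thm:Existence_uniqueness_elliptic_Dirichlet} with the $C^2$ regularity of $\psi$ on the contact set, matched across the free boundary using the comparison estimates of \cite{Feehan_maximumprinciple_v1}.

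The main obstacle will be this last regularity step: extending $C^{1,\alpha}_s$ control uniformly up to the degenerate boundary $\partial_0\sO$. Away from $\partial_0\sO$ the problem is strictly elliptic and Caffarelli-style arguments \cite{Caffarelli_jfa_1998} apply verbatim, but along $\partial_0\sO$ the vanishing of the $x_d$-factor in \eqref{eq:Generator} prevents direct application of those methods. One must control how the free boundary $\partial\Omega$ meets $\overline{\partial_0\sO}$ and show that the weighted first derivatives of the $C^{2+\alpha}_s$ solution on $\underline\Omega$ match continuously with the ordinary first derivatives of $\psi$ on the contact set in the $C^{1,\alpha}_s$ topology. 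This delicate matching, which is the degenerate analogue of the regularity arguments underlying \cite[Theorem 4.38]{Troianiello} and Caffarelli's approach to the obstacle problem, is where the full force of the Perron construction, the partial-Dirichlet existence theorem on half-balls, and the maximum-principle framework of \cite{Feehan_maximumprinciple_v1} must be brought to bear simultaneously.
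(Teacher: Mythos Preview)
Your overall Perron strategy is correct in spirit, but the lifting step contains a genuine gap, and it is precisely here that the paper's approach diverges from yours in an essential way.

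You propose to lift a supersolution $v$ on a half-ball $B^+$ by solving the \emph{equation} $A\tilde v = f$ on $B^+$ with $\tilde v = v$ on $\partial_1 B^+$, and then setting $V := \max(\tilde v,\psi)$. The claim that $V \in \cS_{\psi,g}$ fails: at points where $\psi > \tilde v$, one has $V = \psi$ and hence $AV = A\psi$, but nothing in the hypotheses forces $A\psi \geq f$ (indeed the obstacle problem is only nontrivial when this fails somewhere). So $V$ need not satisfy $AV \geq f$ in any reasonable sense, and the lift drops out of your supersolution class.

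The paper avoids this by a two-tier lifting scheme. On \emph{interior} balls $B \Subset \sO$, where $A$ is strictly elliptic, the lift is defined by solving the \emph{classical strictly elliptic obstacle problem} $\min\{Au_o - f,\ u_o - \psi\} = 0$ on $B$ with $u_o = v$ on $\partial B$ (this is well-posed by standard theory, developed in the appendix for the case of merely continuous boundary data). This $u_o$ is automatically a supersolution of the obstacle problem and $\geq \psi$, so the lift stays in the class. These interior-ball lifts are used to prove continuity of the Perron function $u$ on $\sO$ and then that $u \in W^{2,p}_{\loc}(\sO)$ solves \eqref{eq:Elliptic_obstacle_problem}. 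The half-ball existence result (Theorem~\ref{thm:Existence_uniqueness_elliptic_Dirichlet_halfball}) is invoked only \emph{inside the continuation region} $\underline\Omega = \{u > \psi\}$, where the obstacle constraint is inactive and the equation $Au = f$ holds; there the Perron argument for the boundary value problem applies verbatim and yields $u \in C^{2+\alpha}_s(\underline\Omega)$.

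Finally, you describe the $C^{1,\alpha}_s(\underline\sO)$ step as the main obstacle, requiring Caffarelli-type free-boundary analysis near $\partial_0\sO$. In fact the paper's argument here is elementary: $u = \psi \in C^2(\underline\sO)$ on the coincidence set, $u \in C^{2+\alpha}_s(\underline\Omega) \subset C^{1,\alpha}_s(\underline\Omega)$ on the continuation set, and $Du = D\psi$ across the free boundary by the Sobolev embedding $W^{2,p}_{\loc} \hookrightarrow C^{1,\gamma}_{\loc}$ for $p > d$. These pieces glue to give $u \in C^{1,\alpha}_s(\underline\sO)$ directly, without any analysis of how the free boundary meets $\partial_0\sO$.
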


By analogy with \cite[Theorem 1.3.4]{Friedman_1982}, we observe that the condition $\psi \in C^2(\underline\sO\cup\partial_1\sO)$ in Theorem \ref{thm:Existence_uniqueness_elliptic_obstacle} may be weakened. We say that an obstacle function $\psi \in C(\underline\sO\cup\partial_1\sO)$ is \emph{Lipschitz} in $\sO$ and has \emph{locally finite concavity} in $\sO$ (in the sense of distributions) if
\begin{gather}
\label{eq:Elliptic_obstaclefunction_lipschitz_condition}
\psi \in C^{0,1}(\sO),
\\
\label{eq:Elliptic_obstaclefunction_locally_finite_concavity_condition}
D^2_\xi\psi \geq - C \quad\hbox{in }\sD'(U), \quad\forall\, \xi\in \RR^d, \ |\xi|=1,
\end{gather}
for every open subset $U\Subset\sO$ and some positive constant $C=C(U)$. We say that
$$
D^2_\xi\psi \geq 0 \quad\hbox{in }\sD'(U), \quad\forall\, \xi\in \RR^d, \ |\xi|=1,
$$
if and only if
$$
\int_U\psi D^2_\xi\varphi\,dx \geq 0, \quad\forall\, \xi\in \RR^d, \ |\xi|=1,
$$
for all $\varphi\in\sD(U) \equiv C^\infty_0(U)$ with $\varphi\geq 0$ on $U$. Condition \eqref{eq:Elliptic_obstaclefunction_locally_finite_concavity_condition} means that
$$
D^2_\xi\left(\psi + \frac{1}{2}C|x|^2\right) \geq 0 \quad\hbox{in }\sD'(U),
$$
and we call $\psi$ \emph{convex} in $U$ (in the sense of distributions) when $C(U)=0$.

\begin{thm}[Existence of a smooth solution to the partial Dirichlet obstacle problem with Lipschitz obstacle function with locally finite concavity]
\label{thm:Existence_uniqueness_elliptic_obstacle_lipschitz}
Assume the hypotheses of Theorem \ref{thm:Existence_uniqueness_elliptic_obstacle}, except that the assumption $\psi\in C^2(\underline\sO)\cap C(\sO\cup\partial_1\sO)$ is relaxed to $\psi \in C(\underline\sO\cup\partial_1\sO)$ obeying \eqref{eq:Elliptic_obstaclefunction_lipschitz_condition} and \eqref{eq:Elliptic_obstaclefunction_locally_finite_concavity_condition}. Then all the conclusions of Theorem \ref{thm:Existence_uniqueness_elliptic_obstacle} again hold, except that now
$$
u \in C^{2+\alpha}_s(\underline\Omega) \cap W^{2,p}_{\loc}(\sO) \cap C_b(\underline\sO).
$$
\end{thm}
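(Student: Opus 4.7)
The plan is to reduce to Theorem \ref{thm:Existence_uniqueness_elliptic_obstacle} via approximation. First I would mollify $\psi$ (after a continuous extension past $\partial_0\sO$ and $\partial_1\sO$) to produce a sequence $\{\psi_n\}\subset C^2(\underline\sO)\cap C(\sO\cup\partial_1\sO)$ with $\psi_n\to\psi$ locally uniformly on $\underline\sO\cup\partial_1\sO$, further adjusted by a small downward perturbation near $\partial_1\sO$ so that the compatibility condition \eqref{eq:Boundarydata_obstacle_compatibility} holds, i.e.\ $\psi_n\leq g$ on $\partial_1\sO$. Both the Lipschitz hypothesis \eqref{eq:Elliptic_obstaclefunction_lipschitz_condition} and the local finite concavity \eqref{eq:Elliptic_obstaclefunction_locally_finite_concavity_condition} are preserved by convolution: for every $U\Subset\sO$ one obtains a uniform local gradient bound on $\psi_n$ and $D^2\psi_n\geq -C(U)I$ with $C(U)$ independent of $n$.

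Applying Theorem \ref{thm:Existence_uniqueness_elliptic_obstacle} then produces solutions $u_n$ to the obstacle problem with data $(f,g,\psi_n)$. The maximum and comparison principles of \cite{Feehan_maximumprinciple_v1} give a uniform $L^\infty$ bound on $u_n$ over $\sO\cup\partial_1\sO$, via barriers depending only on $\|f\|_\infty$, $\|g\|_\infty$, $\sup_\sO\psi$, and $c_0$ (or $b_0$ under the alternative hypothesis of Theorem \ref{thm:Existence_uniqueness_elliptic_Dirichlet}). On any precompact $U\Subset\sO$, the operator $A$ is uniformly strictly elliptic since $x_d$ is bounded below. The complementarity $\min\{Au_n-f,\,u_n-\psi_n\}=0$ gives $Au_n\leq f$ a.e., while $Au_n=A\psi_n$ a.e.\ on the coincidence set $\{u_n=\psi_n\}$; the concavity bound yields a uniform pointwise upper bound $A\psi_n\leq M(U)$ in terms of the uniform gradient bound and $C(U)$, so $|Au_n|$ is uniformly bounded in $L^\infty(U)$. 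Standard Calder\'on--Zygmund estimates then produce a uniform $W^{2,p}(U')$ bound on $u_n$ for every $U'\Subset U$, while the Schauder estimates of \cite{Feehan_Pop_elliptichestonschauder} furnish uniform local H\"older bounds up to $\partial_0\sO$.

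Extracting a subsequence yields $u_n\to u$ locally uniformly on $\underline\sO\cup\partial_1\sO$ and weakly in $W^{2,p}_{\loc}(\sO)$. The limit inherits $W^{2,p}_{\loc}(\sO)\cap C_b(\underline\sO)$ regularity, the boundary condition $u=g$ on $\partial_1\sO$, and the complementarity \eqref{eq:Elliptic_obstacle_problem}. On the open non-coincidence set $\Omega=\{u>\psi\}$, we have $u_n>\psi_n$ eventually on compact subsets, hence $Au_n=f$ there and $Au=f$ classically on $\Omega$; applying Theorem \ref{thm:Existence_uniqueness_elliptic_Dirichlet} on $\Omega$ then upgrades $u|_\Omega$ to $C^{2+\alpha}_s(\underline\Omega)$. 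Uniqueness follows from the obstacle-problem comparison principle in \cite{Feehan_maximumprinciple_v1}.

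The main obstacle is the uniform $L^\infty_{\loc}$ bound on $Au_n$: the classical Bensoussan--Lions--Friedman argument in the strictly elliptic case relies on $A\psi\in L^\infty_{\loc}$, whereas here this holds only as a distributional upper bound via \eqref{eq:Elliptic_obstaclefunction_locally_finite_concavity_condition}, and mollification is precisely what converts it into the uniform pointwise bound needed to control $u_n$. A secondary subtlety is the construction of $\psi_n$ near $\partial_1\sO$ preserving both the concavity bound and the compatibility $\psi_n\leq g$; as in Theorem \ref{thm:Existence_uniqueness_elliptic_Dirichlet}, the corner set $\overline{\partial_0\sO}\cap\overline{\partial_1\sO}$ is avoided by working on relatively compact subsets of $\underline\sO\cup\partial_1\sO$ away from the corners and invoking local uniform convergence.
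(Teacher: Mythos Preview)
Your approach differs genuinely from the paper's. Rather than approximate $\psi$ globally and pass to a limit in Theorem~\ref{thm:Existence_uniqueness_elliptic_obstacle}, the paper reruns the Perron construction of Theorem~\ref{thm:Perron_elliptic_obstacle_problem_solution} with a single modular change: the local strictly elliptic obstacle solver on interior balls $B\Subset\sO$ (Theorem~\ref{thm:Strictly_elliptic_obstacle_problem_continuous_boundarydata}) is replaced by its Lipschitz-obstacle analogue (Theorem~\ref{thm:Strictly_elliptic_obstacle_problem_continuous_boundarydata_lipschitz_obstacle}). The mollification $\psi_\delta$ and the bound $A\psi_\delta\leq C$ from concavity enter only inside the proof of that local solver, on balls where the operator is strictly elliptic and Proposition~\ref{prop:Elliptic_classical_rodrigues} supplies the Cauchy estimate $\|u_\delta-u_{\delta'}\|_{C(\bar B)}\leq\|\psi_\delta-\psi_{\delta'}\|_{C(\bar B)}$ directly. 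There is thus no need to build global $\psi_n\in C^2(\underline\sO)$ obeying $\psi_n\leq g$ on $\partial_1\sO$, and the boundary and corner issues you flag never arise. What your route would buy, if completed, is avoiding a second pass through the Perron lemmas.

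As written, your argument has a gap at $\partial_0\sO$. The assertion that the Schauder estimates of \cite{Feehan_Pop_elliptichestonschauder} give uniform H\"older bounds on $u_n$ up to $\partial_0\sO$ is not justified: Theorem~\ref{thm:Elliptic_apriori_Schauder_interior_domain} applies to functions satisfying the linear equation $Au=f$, but $u_n$ does so only on its continuation region $\Omega_n$, and on the coincidence set $Au_n=A\psi_n$ is controlled only in $L^\infty_{\loc}(\sO)$, not in $C^\alpha_s$ up to $\partial_0\sO$. Your compactness argument therefore yields local uniform convergence only on compacta of $\sO$, leaving $u\in C(\underline\sO)$ unproved. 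To close this you would need a sup-norm Cauchy estimate on $\sO$ for the sequence $\{u_n\}$ (via the degenerate comparison principle of Theorem~\ref{thm:Elliptic_obstacle_problem_comparison_principle}), together with the fact that each $u_n\in C(\underline\sO)$ from Theorem~\ref{thm:Existence_uniqueness_elliptic_obstacle}; the uniform limit then inherits continuity on $\underline\sO$. (Minor: the complementarity condition gives $Au_n\geq f$ a.e., not $\leq$.)
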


\begin{rmk}[Regularity up to $\partial_1\sO$ and optimal regularity in $\sO$ and $\sO\cup\partial_1\sO$]
When the coefficients of $A$ and $f$ in Theorem \ref{thm:Existence_uniqueness_elliptic_obstacle} belong to $C^\alpha_s(\underline\sO\cup\partial_1\sO)$, and $g$ belongs to $C^{2+\alpha}_s(\underline\sO\cup\partial_1\sO)$, and $\psi$ belongs to $C^2(\underline\sO\cup\partial_1\sO)$, and $\partial_1\sO$ is of class $C^{2,\alpha}$, and then standard regularity theory for obstacle problems for strictly elliptic operators \cite[Theorem 1.3.2 or 1.3.5]{Friedman_1982} implies that the solution $u$ also belongs to $W^{2,p}_{\loc}(\sO\cup\partial_1\sO)$. If in addition, $a^{ij} \in C^{2,\alpha}(\sO)$ (respectively, $C^{2,\alpha}(\sO\cup\partial_1\sO)$), then $u\in W^{2,\infty}_{\loc}(\sO)$ (respectively, $u\in W^{2,\infty}_{\loc}(\sO\cup\partial_1\sO)$) \cite[Theorems 1.4.1 and 1.4.3]{Friedman_1982}, \cite{Jensen_1980}, \cite[Theorem 4.38]{Troianiello}. Similar remarks apply to Theorem \ref{thm:Existence_uniqueness_elliptic_obstacle_lipschitz}.
\end{rmk}

\begin{rmk}[Optimal regularity of a solution to the obstacle problem up to $\partial_0\sO$]
Optimal regularity up to the degenerate boundary, $\partial_0\sO$, that is, $u\in C^{1,1}_s(\underline\sO)$ in the sense of \cite[Definition 2.2]{Daskalopoulos_Feehan_optimalregstatheston}, for a solution $u$ to \eqref{eq:Elliptic_obstacle_problem} is proved by Daskalopoulos and the author in \cite{Daskalopoulos_Feehan_optimalregstatheston}. (Recall that $W^{2,\infty}_{\loc}(\sO) = C^{1,1}(\sO)$.)
\end{rmk}

\begin{rmk}[Extension of the results of this article to the case of unbounded functions]
For simplicity, we have stated the main results of this article for case of a bounded solution, $u$, to the boundary value or obstacle problem, given a bounded source function, $f$, partial Dirichlet boundary data, $g$, and obstacle function, $\psi$, which is bounded above. However, these results may be easily extended to case of $f$, $g$, and $\psi$ having controlled growth (including exponential growth) using the simple device described in \cite[Theorem 2.20]{Feehan_maximumprinciple_v1} to convert a boundary value or obstacle problem with controlled growth for $f$, $g$, or $\psi$ to an equivalent problem for a bounded solution, $\hat u$, with bounded $\hat f$, $\hat g$, and $\hat \psi$ bounded above.
\end{rmk}

\subsection{Application to the elliptic Heston operator}
\label{subsec:Heston}
The elliptic Heston operator \cite{Heston1993}
\begin{equation}
\label{eq:Heston_generator}
Av := -\frac{x_2}{2}\left(v_{x_1x_1} + 2\varrho\sigma v_{x_1x_2} + \sigma^2 v_{x_2x_2}\right) - \left(c_0-q-\frac{x_2}{2}\right)v_{x_1} - \kappa(\theta-x_2)v_{x_2} + c_0v,
\end{equation}
where $v \in C^\infty(\HH)$, provides an example of an operator of the form \eqref{eq:Generator} and which has important applications in mathematical finance, where a solution to the obstacle problem \eqref{eq:Elliptic_obstacle_problem}, \eqref{eq:Elliptic_boundary_condition} can be interpreted as the price of a perpetual American-style put option with payoff function $\psi$ and barrier condition $g$. The coefficients defining $A$ in \eqref{eq:Heston_generator} are constants obeying
\begin{gather}
\label{eq:EllipticHeston}
\sigma \neq 0 \quad\hbox{and}\quad -1< \varrho < 1,
\\
\notag
\kappa > 0 \quad\hbox{and}\quad \theta > 0,
\end{gather}
while $c_0, q \in \RR$, though these constants are typically non-negative in financial applications. The financial and probabilistic interpretations of the preceding coefficients are provided in \cite{Heston1993}. Note that the condition \eqref{eq:EllipticHeston} implies that $A$ in \eqref{eq:Heston_generator} is uniformly but not strictly elliptic on $\HH$ in the sense\footnote{The terminology is not universal.} of \cite[p. 31]{GilbargTrudinger}. Indeed, the matrix
$$
(a^{ij}) = \begin{pmatrix}1 & \varrho\sigma \\ \varrho\sigma & \sigma^2\end{pmatrix}
$$
has eigenvalues $(1 + \sigma^2 \pm \sqrt{1 - 2\sigma^2 + 4\varrho^2\sigma^2 + \sigma^4})/2$, which are both bounded below by
\begin{equation}
\label{eq:Defn_Heston_lambda_0}
\lambda_0 = \frac{1}{2}\left(1 + \sigma^2 - \sqrt{1 - 2\sigma^2 + 4\varrho^2\sigma^2 + \sigma^4}\right),
\end{equation}
and $\lambda_0$ is positive when \eqref{eq:EllipticHeston} holds and therefore the strict ellipticity condition \eqref{eq:Strict_ellipticity_domain} is satisfied.

When $\phi(x) = (K-e^{x_1})^+$, for a positive constant $K>0$, then the solution $u$ to \eqref{eq:Elliptic_obstacle_problem}, \eqref{eq:Elliptic_boundary_condition} (with obstacle function $\phi$) can be interpreted as the price of the \emph{perpetual American-style put option} with strike $K$ and asset price $S=e^{x_1}$. Since
$$
\phi_{x_1} = \phi_{x_1x_1} = \begin{cases} -e^{x_1} &\hbox{if } x_1<\ln K, \\ 0 &\hbox{if } x_1 > \ln K, \end{cases}
$$
one can show that the put option payoff satisfies the locally finite concavity condition \eqref{eq:Elliptic_obstaclefunction_locally_finite_concavity_condition} in $\HH$. (The put option payoff, $\Phi(S)=(K-S)^+$, is convex as a function of $S=e^{x_1} \in(0,\infty)$.)

When $\psi(x) = (e^{x_1}-K)^+$, the solution $u$ to \eqref{eq:Elliptic_obstacle_problem}, \eqref{eq:Elliptic_boundary_condition} (with obstacle function $\psi$) can be interpreted as the price of the perpetual American-style \emph{call} option with strike $K$. Since
$$
\psi_{x_1} = \psi_{x_1x_1} = \begin{cases} 0 &\hbox{if } x_1<\ln K, \\ e^{x_1} &\hbox{if } x_1 > \ln K, \end{cases}
$$
one can show that the call option payoff also satisfies the locally finite concavity condition \eqref{eq:Elliptic_obstaclefunction_locally_finite_concavity_condition} in $\HH$. (The call option payoff, $\Psi(S)=(K-S)^+$, is also convex as a function of $S=e^{x_1} \in(0,\infty)$.)

\subsection{Compatibility of mixed boundary conditions where the degenerate and non-degenerate boundary portions touch}
\label{subsec:Corner_point_compatibility}
When $\overline{\partial_0\sO}\cap\overline{\partial_1\sO}$ is non-empty, it appears to be a more challenging problem than one might expect to prove existence of solutions, $u$, in $C^{2+\alpha}_s(\underline\sO)\cap C(\bar\sO)$ or $C^{2+\alpha}_s(\bar\sO)$ to the elliptic equation \eqref{eq:Elliptic_equation} with partial Dirichlet boundary condition \eqref{eq:Elliptic_boundary_condition}. Indeed, complications emerge when one attempts to apply the continuity method to prove existence of solutions $u \in C^{2+\alpha}_s(\bar\sO)$ even in the simplest case, $g=0$ on $\partial_1\sO$. We illustrate the issue when $d=2$ with the aid of the Heston operator $A$ in \eqref{eq:Heston_generator}.

While the reflection principle (across the axis $x=0$) does not hold for the Heston operator $A$ in \eqref{eq:Heston_generator}, it does hold for the simpler model operator,
$$
A_0v := -\frac{x_2}{2}\left(v_{x_1x_1} + \sigma^2 v_{x_2x_2}\right) - \kappa(\theta-x_2)v_{x_2} + c_0v, \quad v\in C^\infty(\HH),
$$
since the $v_{x_1x_2}$ and $v_{x_1}$ terms are absent. If $f(-x_1,x_2)=-f(x_1,x_2)$ for $(x_1,x_2)\in\HH$ (and thus $f(0,\cdot)=0$), one can solve
$$
A_0u_0=f \quad\hbox{on }\sO, \quad u=0 \quad\hbox{on }\partial_1\sO,
$$
for a solution, $u_0$, when the domain, $\sO$, is the quadrant $\RR_+\times\RR_+$.

The continuity method would proceed by showing that, given $f\in C^\alpha_s(\bar\sO)$, the set of $t\in [0,1]$ such that
$$
A_tu = f \quad\hbox{on }\sO, \quad u = g \quad\hbox{on }\partial_1\sO,
$$
has a solution $u\in C^{2+\alpha}_s(\bar\sO)$ is non-empty, open, and closed, where
\begin{align*}
A_tv &:= -\frac{x_2}{2}\left(v_{x_1x_1} + 2t\varrho\sigma v_{x_1x_2} + \sigma^2 v_{x_2x_2}\right) - t\left(c_0-q-\frac{x_2}{2}\right)v_{x_1}
\\
&\qquad - \kappa(\theta-x_2)v_{x_2} + c_0v, \quad v\in C^\infty(\HH),
\end{align*}
is a family of operators, $C^{2+\alpha}_s(\bar\sO) \to C^\alpha_s(\bar\sO)$, connecting $A_0$ to $A$. However, if $u\in C^{2+\alpha}_s(\bar\sO)$ solves $A_0u=f$ on $\sO$, $u=0$ on $\partial_1\sO$, then, letting $x_2\to 0$ in \eqref{eq:Elliptic_equation}, we find
$$
-(c_0-q)u_{x_1}(0,0) = f(0,0),
$$
since $u_{x_2}(0,0)=0$ (because $u(0,\cdot)=0$) and as $u\in C^{2+\alpha}_s(\bar\sO)$ implies $\lim_{(x_1,x_2)\to(0,0)}x_2D^2u=0$. When $t=0$, we see that we can only solve $A_tu=f$ on $\sO$, $u=0$ on $\partial_1\sO$ when $f$ obeys the \emph{compatibility condition}, $f(0,0)=0$, which is not present when $t \neq 0$.

\subsection{Extensions and future work}
\label{subsec:Extensions}
In \cite{Feehan_classical_perron_parabolic}, we describe parabolic analogues of the results discussed in the present article.

We cannot conclude from the analysis in this article that our solutions will obey an a priori global Schauder estimate or be smooth, let alone continuous, up to the domain corner points regardless of the geometry of the domain or regularity of the coefficients of $A$, source $f$, and boundary data, $g$. This appears to us to be an important, albeit overlooked problem, and one to which we plan to return in a subsequent article.

\subsection{Outline and mathematical highlights}
\label{subsec:Guide}
For the convenience of the reader, we provide an outline of the article.

In \S \ref{sec:Maximum_principles}, we review the weak and strong maximum principles in \cite{Feehan_maximumprinciple_v1} and specialize them to the case of an operator $A$ of the form \eqref{eq:Generator} on a subdomain $\sO$ of the upper-half space $\HH\subset\RR^d$. These maximum principles, just as in the case of their classical analogues in \cite{GilbargTrudinger}, play an essential role in our versions of the Perron method.

In \S \ref{sec:Schauder}, we review the construction of the Daskalopoulos-Hamilton-Koch families of H\"older norms and Banach spaces \cite{DaskalHamilton1998}, \cite{Koch} and recall the a priori interior Schauder estimates and the solution to the partial Dirichlet problem on a slab developed with C. Pop in \cite{Feehan_Pop_elliptichestonschauder}.

\begin{figure}
\centering
\begin{picture}(360,180)(0,0)
\put(0,0){\includegraphics[width=360pt,height=180pt,clip=true,trim=0pt 0pt 0pt 270pt]{./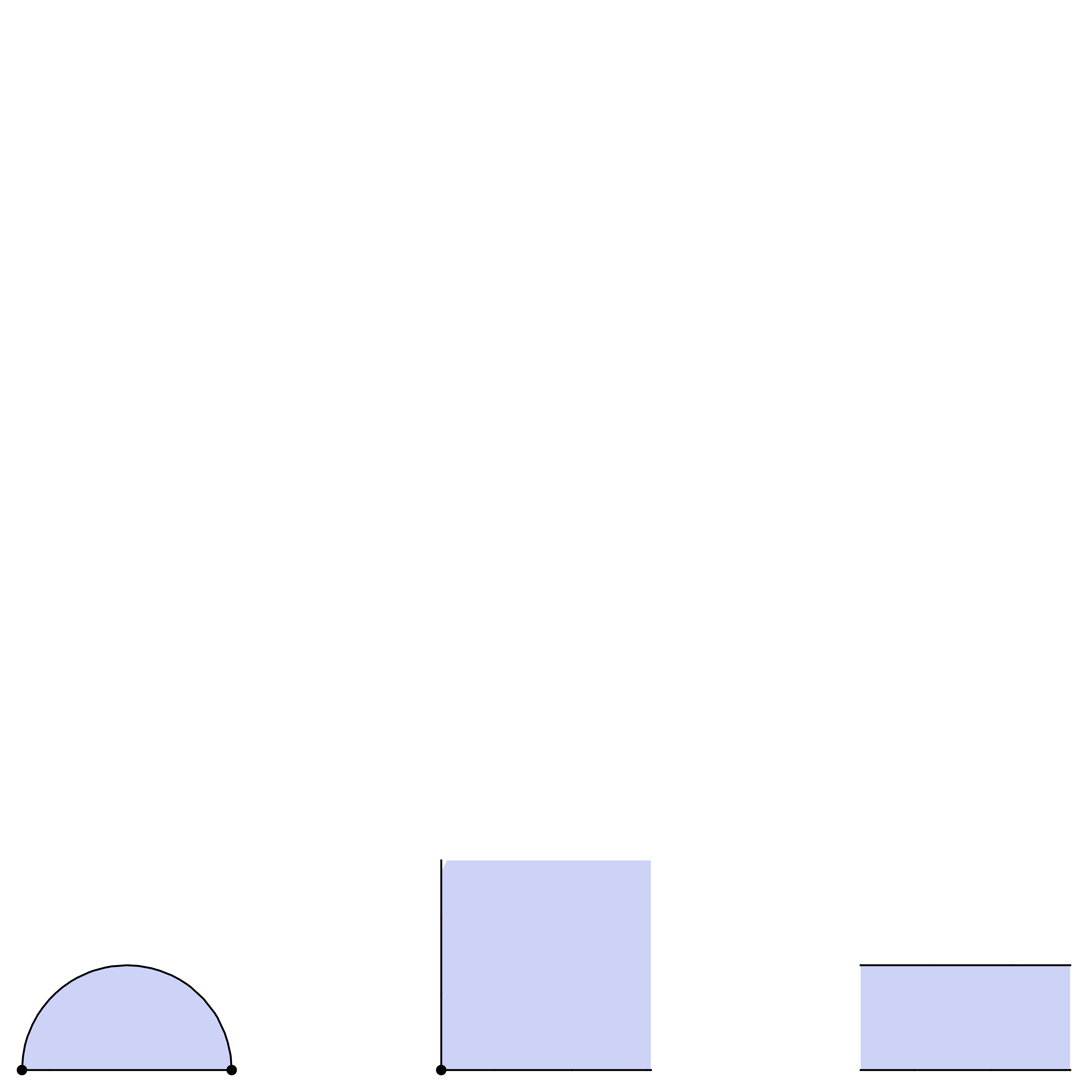}}
\put(-5,-3){$\scriptstyle z=-1$}
\put(16,17){$\scriptstyle |z|<1, \ \Imag z > 0$}
\put(70,-3){$\scriptstyle z=1$}
\put(138,-3){$\scriptstyle \xi=0$}
\put(152,35){$\scriptstyle \Real \xi > 0, \ \Imag \xi > 0$}
\put(294,17){$\scriptstyle 0 < \Imag w < \pi/2$}
\end{picture}
\caption[Conformal equivalence between the half-disk, quadrant, and infinite strip.]{The conformal maps $z\mapsto \xi = (z+1)/(z-1)$ and $\xi\mapsto w = \Log\xi$ identify the half-disk with the quadrant and the quadrant with the infinite horizonal strip. The conformal map $z\mapsto w = \Log((z+1)/(z-1))$ identifies the corner point $z=-1$ with $w=-\infty$, the corner point $z=1$ with $w=\infty$, the interval $\{z\in\CC:|z|<1, \ \Imag z=0\}$ with the line $\{w\in\CC:\Imag w=0\}$, and the semicircle $\{z\in\CC:|z|=1, \ \Imag z>0\}$ with the line $\{w\in\CC:\Imag w=\pi/2\}$.}
\label{fig:conformalmaps}
\end{figure}

Our goal in \S \ref{sec:Diffeomorphism} is to prove existence of a solution to the partial Dirichlet problem on a half-ball centered along the boundary of the upper half-space (Theorem \ref{thm:Existence_uniqueness_elliptic_Dirichlet_halfball}). We accomplish this by generalizing the fact that a function which is harmonic on a region in the plane remains harmonic after composition with a conformal map and apply this observation when the region is a half-disk. One knows from elementary complex analysis that the half-disk is conformally equivalent to the quadrant and that the quadrant is conformally equivalent to the infinite horizontal strip. See Figure \ref{fig:conformalmaps}.

In \S \ref{subsec:Diffeomorphism_halfball_slab}, we generalize the definition of the conformal map from a half-disk to a horizontal strip in the complex plane $\CC$ to a map from the half-ball in to a slab in $\RR^d$. We explore the properties of this map and, in particular, its effect on the coefficients of the operator $A$ in \eqref{eq:Generator}, taking note of the effect of the map on finite cylinders in the slab illustrated in Figure \ref{fig:conformalmaprectangle}.

In \S \ref{subsec:Existence_dirichlet_boundary_value_problem_halfball}, we exploit the existence of a solution to the partial Dirichlet problem on a slab \cite{Feehan_Pop_elliptichestonschauder} and properties of the map from a half-ball to a slab to prove Theorem \ref{thm:Existence_uniqueness_elliptic_Dirichlet_halfball}. The solution to the partial Dirichlet problem on a half-ball with Dirichlet condition on the spherical portion of the boundary (and no boundary condition on the flat portion) provides the key missing ingredient required for ``local solvability'' of the partial Dirichlet problem in our version of the Perron method.

Section \ref{sec:Perron} contains the heart of our proofs of existence of solutions to the boundary value and obstacle problems described in \S \ref{subsec:Summary} using a modification of the classical Perron method \cite{GilbargTrudinger} for smooth solutions, as distinct from Ishii's version of the Perron method for existence of viscosity solutions \cite{Ishii_1989}. In \S \ref{sec:Perron_boundary_value_problem}, we define continuous subsolutions and supersolutions to the boundary value problem \eqref{eq:Elliptic_equation}, \eqref{eq:Elliptic_boundary_condition} and develop their properties using our version of the strong maximum principle \cite{Feehan_maximumprinciple_v1}. We then adapt the classical Perron method for solving the Dirichlet problem for a harmonic function or a solution to a boundary value problem for a strictly elliptic operator to show that the Perron function,
$$
u(x) := \sup_{w \in \sS_{f,g}^-} w(x), \quad x \in \underline\sO,
$$
is a solution to the boundary value problem \eqref{eq:Elliptic_equation}, \eqref{eq:Elliptic_boundary_condition}, where $\sS_{f,g}^-$ is the set of continuous subsolutions. This yields Theorem \ref{thm:Existence_uniqueness_elliptic_Dirichlet}.

\begin{figure}
 \centering
\includegraphics[scale=0.5]{./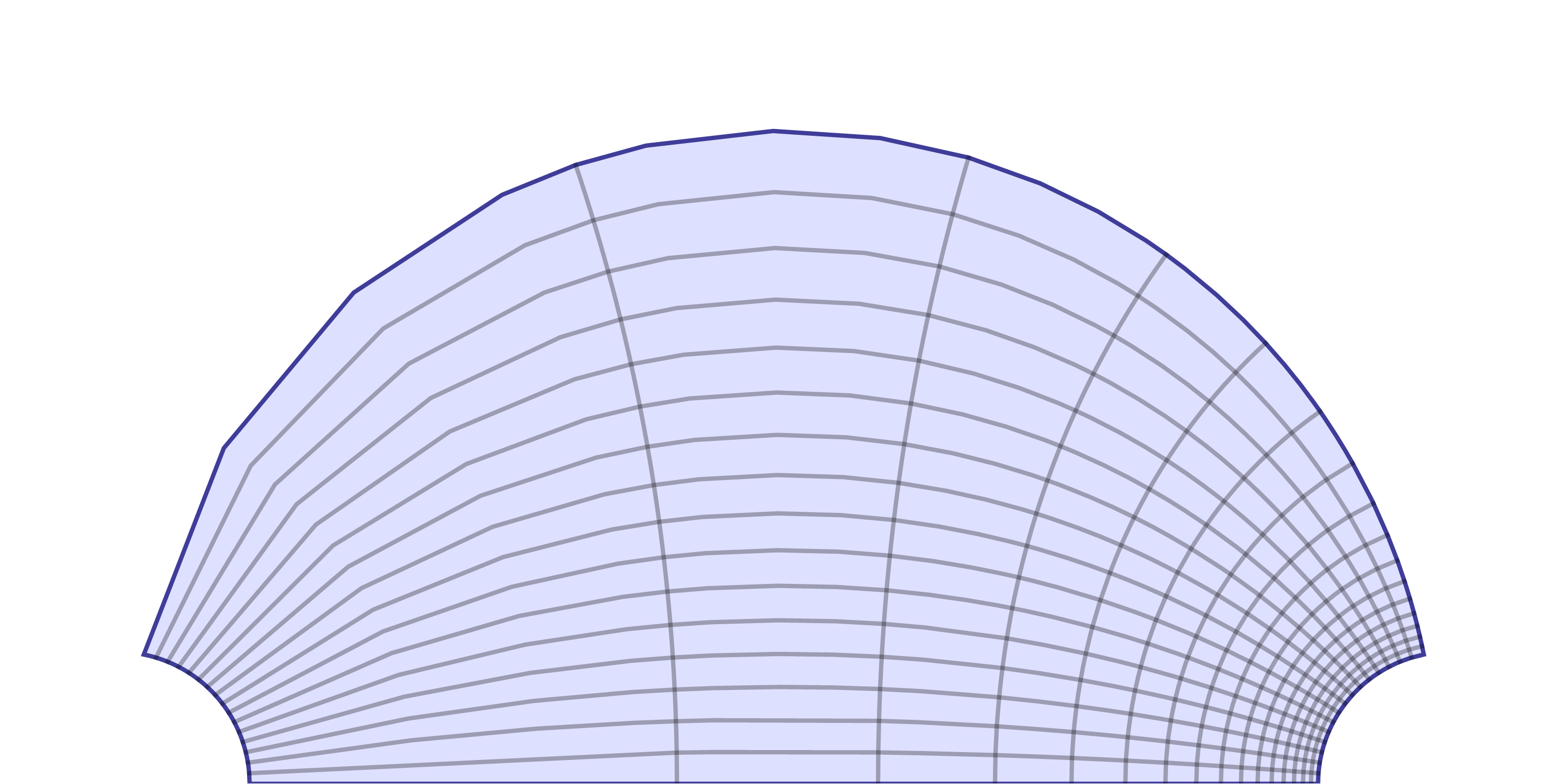}
 \caption[Image of a finite rectangular strip by a conformal map.]{Image of a finite rectangular strip, $\{w\in\CC: -a\leq \Real w\leq a, \ 0 \leq \Imag w\leq \pi/2\}$, of width $2a>0$ in the complex $z$-plane under the conformal map $w\mapsto z = (e^w-1)/(e^w+1)$ from the infinite strip, $\{w\in\CC: 0 \leq \Imag w\leq \pi/2\}$, in the complex $w$-plane to the unit half-disk in the complex $z$-plane without the corner points, $\{z\in\CC: |z| \leq 1, \ \Imag z \geq 0, \ z\neq \pm 1\}$.}
 \label{fig:conformalmaprectangle}
\end{figure}

In \S \ref{sec:Perron_obstacle_problem}, we define continuous supersolutions to the obstacle problem \eqref{eq:Elliptic_obstacle_problem}, \eqref{eq:Elliptic_boundary_condition} and develop their properties via the strong maximum principle. We then prove the essential comparison principle for a solution and continuous supersolution to the obstacle problem (Theorem \ref{thm:Elliptic_obstacle_problem_comparison_principle}). Finally, we adapt the classical Perron method to show that the Perron function,
$$
u(x) := \inf_{w \in \sS_{f,g,\psi}^+} w(x), \quad x \in \underline\sO,
$$
is a solution to the obstacle problem \eqref{eq:Elliptic_obstacle_problem}, \eqref{eq:Elliptic_boundary_condition}, where $\sS_{f,g,\psi}^+$ is the set of continuous supersolutions. Our version of the Perron method for the obstacle problem requires us to solve the boundary value problem \eqref{eq:Elliptic_equation}, \eqref{eq:Elliptic_boundary_condition} for a \emph{degenerate} elliptic operator on half-balls $B^+\Subset\underline\Omega$, where $\Omega = \{x\in\sO:(u-\psi)(x)>0\}$, and the obstacle problem \eqref{eq:Elliptic_obstacle_problem}, \eqref{eq:Elliptic_boundary_condition} for a \emph{strictly} elliptic operator on balls $B\Subset\sO$. See Figure \ref{fig:domain_continuation_region}. This approach requires us to first show that the Perron function, $u$, is at least continuous on $\sO$ (Lemma \ref{lem:Perron_elliptic_problem_solution_continuity}), so we can solve the required obstacle problem on interior balls, $B$, with continuous partial Dirichlet boundary conditions. The Perron method for solving the obstacle problem on $\sO$ is more delicate than its counterpart in the case of the boundary value problem, but ultimately yields Theorems \ref{thm:Existence_uniqueness_elliptic_obstacle} and \ref{thm:Existence_uniqueness_elliptic_obstacle_lipschitz}.

In Appendix \ref{sec:Obstacle_strictly_elliptic_operator}, we develop the results we need for obstacle problems defined by a \emph{strictly} elliptic operator in our application to the Perron of solution to the obstacle problem for a degenerate-elliptic operator \eqref{eq:Generator} and for which there is no complete reference.

In dimension two, the effect of the map in \S \ref{subsec:Diffeomorphism_halfball_slab} from the half-disk to the strip on an operator $A$ of the form \eqref{eq:Generator} can be made explicit and this provides many useful insights, so we describe the calculation in Appendix \ref{sec:Holomorphic_map_model_operator}.

\begin{figure}
 \centering
 \begin{picture}(210,210)(0,0)
 \put(0,0){\includegraphics[scale=0.6]{./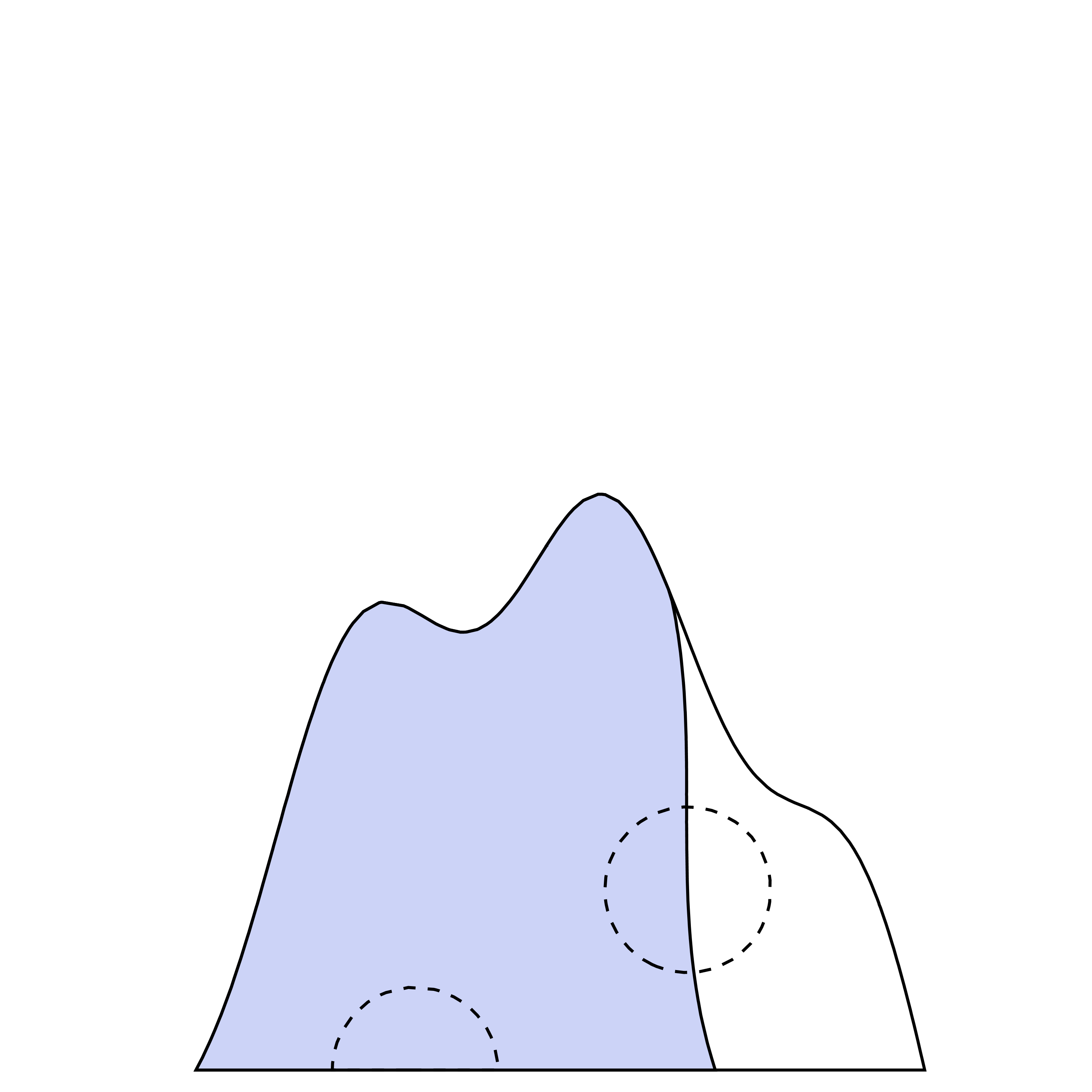}}
 \put(80,70){$\Omega$}
 \put(80,25){$B^+$}
 \put(118,56){$B$}
 \put(150,10){$\sO\less\Omega$}
 \end{picture}
  \caption[Perron method for a solution to the obstacle problem.]{Perron method for a solution to the obstacle problem, $\min\{Au-f, \ u-\psi\}=0$ a.e. on $\sO$, using balls $B\Subset\sO$ and half-balls $B^+\Subset\underline\Omega$ centered on $\partial_0\sO$, where $\Omega = \sO\cap\{u>\psi\}$ and $\sO\less\Omega = \sO\cap\{u=\psi\}$.}
 \label{fig:domain_continuation_region}
\end{figure}

\subsection{Notation and conventions}
\label{subsec:Notation}
In the definition and naming of function spaces, including spaces of continuous functions and H\"older spaces, we follow Adams \cite{Adams_1975} and alert the reader to occasional differences in definitions between \cite{Adams_1975} and standard references such as Gilbarg and Trudinger \cite{GilbargTrudinger} or Krylov \cite{Krylov_LecturesHolder}.

We let $\NN:=\left\{0,1,2,3,\ldots\right\}$ denote the set of non-negative integers. If $X\subset\RR^d$ is a subset, we let $\bar X$ denote its closure with respect to the Euclidean topology and denote $\partial X := \bar X\less X$. For $r>0$ and $x^0\in\RR^d$, we let $B_r(x^0) := \{x\in\RR^d: |x-x^0|<r\}$ denote the open ball with center $x^0$ and radius $r$. We denote $B_r^+(x^0) := B_r(x^0) \cap \HH$ when $x^0\in\partial\HH$. When $x^0$ is the origin, $O\in\RR^d$, we often denote $B_r(x^0)$ and $B_r^+(x^0)$ simply by $B_r$ and $B_r^+$ for brevity and, if $r=1$, we often denote $B_1$ and $B_1^+$ simply by $B$ and $B^+$.

If $V\subset U\subset \RR^d$ are open subsets, we write $V\Subset U$ when $U$ is bounded with closure $\bar U \subset V$. By $\supp\zeta$, for any $\zeta\in C(\RR^d)$, we mean the \emph{closure} in $\RR^d$ of the set of points where $\zeta\neq 0$.

We use $C=C(*,\ldots,*)$ to denote a constant which depends at most on the quantities appearing on the parentheses. In a given context, a constant denoted by $C$ may have different values depending on the same set of arguments and may increase from one inequality to the next.

We denote $x\vee y = \max\{x,y\}$ and $x\wedge y = \min\{x,y\}$, for any $x,y\in\RR$.

\subsection{Acknowledgments} I am very grateful to Ioannis Karatzas and the Department of Mathematics at Columbia University for their generous support during the preparation of this article and also to Emanuele Spadaro, J\"urgen Jost, and the Max Planck Institut f\"ur Mathematik in der Naturwissenschaft, Leipzig, for their support during visits in 2012 and 2013. I warmly thank Panagiota Daskalopoulos and Camelia Pop for many helpful conversations concerning obstacle problems and degenerate partial differential equations and for their comments on an earlier draft of this article. Preliminary versions of the results described in this article were presented at the Research in Options Conference 2012, in B\'uzios, Brazil, at the kind invitation of Jorge Zubelli; I greatly appreciated the comments and questions of Emmanuel Gobet following that presentation. I am very grateful to Guy Barles and Hitoshi Ishii for explanations of technical points arising in their articles \cite{Barles_1993} and \cite{Ishii_1991} and to Eduardo Teixeira for a helpful discussion of regularity theory for obstacle problems.

\section{Weak and strong maximum principles for degenerate-elliptic operators}
\label{sec:Maximum_principles}
In this section, we review the weak and strong maximum principles in \cite{Feehan_maximumprinciple_v1} and specialize them to the case of an operator $A$ of the form \eqref{eq:Generator} on a subdomain $\sO$ of the upper-half space $\HH\subset\RR^d$. We also review the a priori estimates implied by the weak maximum principle provided in \cite{Feehan_maximumprinciple_v1} and \cite[Appendix A]{Feehan_Pop_elliptichestonschauder}.

\begin{defn}[Non-negative definite characteristic form]
\label{defn:Degeneracy_locus}
\cite[Definition 2.1]{Feehan_maximumprinciple_v1}
Suppose $a=(a^{ij})$ is an $\RR^{d\times d}$-valued function on $\underline\sO$ which defines a \emph{non-negative definite characteristic form}, that is
\begin{equation}
\label{eq:Non_degeneracy_near_boundary}
\langle a\xi, \xi\rangle \geq 0 \quad\hbox{on } \underline\sO, \quad\forall \xi\in\RR^d.
\end{equation}
\end{defn}

In this article, we normally assume that $\partial\sO\less\partial_1\sO$ is non-empty, though $\partial_0\sO$ will of course be empty if $\partial\sO\less\partial_1\sO$ has empty interior.

\begin{defn}[Linear, second-order, partial differential operator with non-negative definite characteristic form]
\label{defn:Partial_diff_operator_nonneg_char_form}
\cite[Definition 2.2]{Feehan_maximumprinciple_v1}
We call $A$ in \eqref{eq:Generator} a linear, second-order, partial differential operator with \emph{non-negative definite characteristic form on $\underline\sO$} if the coefficient matrix, $a$, obeys \eqref{eq:Non_degeneracy_near_boundary}.
\end{defn}

\begin{defn}[Second-order boundary condition]
\label{defn:Boundary_regularity_property}
\cite[Definition 2.3]{Feehan_maximumprinciple_v1}
We say that $u\in C^2(\sO)\cap C^1(\underline\sO)$ obeys a \emph{second-order boundary condition} along $\partial_0\sO$ if
\begin{align}
\label{eq:WMP_C2s_regularity}
x_dD^2u &\in C(\underline\sO;\RR^{d\times d}),
\\
\label{eq:WMP_xdsecond_order_derivatives_zero_boundary}
x_dD^2u &= 0 \quad\hbox{on }\partial_0\sO.
\end{align}
\end{defn}

Recall that $\underline\sO = \sO\cup\partial_0\sO$, in contrast with $\bar\sO=\sO\cup\partial\sO$. To simplify the statements of our definitions and results, we shall make the universal

\begin{assump}[Partial differential operator with non-negative characteristic form]
\label{assump:C2_operator_coefficients}
Given a domain, $\sO\subseteqq\RR^d$, the coefficients of the linear, second-order partial differential operator $A:C^2(\sO)\to C(\sO)$ as in \eqref{eq:Generator} obey \eqref{eq:Non_degeneracy_near_boundary} and
\begin{equation}
\label{eq:amatrix_locally_bounded_upto_lower_boundary}
a(x) \hbox{ is a locally bounded function of } x\in \underline\sO.
\end{equation}
\end{assump}

We can now state the key\footnote{In \cite[Definitions 2.3 \& 2.8 and Theorems 5.1 \& 5.3]{Feehan_maximumprinciple_v1}, we assumed that $u$ belongs to $C_{\loc}(\bar\sO)$ rather than just $C(\sO\cup\partial_1\sO)$ as done here, but this stronger assumption was unnecessary and was relaxed in subsequent versions.}

\begin{defn}[Weak maximum principle property for $C^2$ subsolutions]
\label{defn:Weak_max_principle_property}
\cite[Definition 2.8]{Feehan_maximumprinciple_v1}
We say that an operator $A$ in \eqref{eq:Generator} obeys the \emph{weak maximum principle property on $\underline\sO$} if whenever $u\in C^2(\sO)\cap C^1(\underline\sO)\cap C(\sO\cup\partial_1\sO)$ obeys \eqref{eq:WMP_C2s_regularity}, \eqref{eq:WMP_xdsecond_order_derivatives_zero_boundary} and
\begin{align}
\label{eq:C2_subsolution}
Au &\leq 0 \quad \hbox{on }\sO,
\\
\label{eq:u_leq_zero_boundary}
u &\leq 0 \quad \hbox{on }\partial_1\sO,
\end{align}
and $\sup_\sO u < \infty$, then
$$
u \leq 0 \quad\hbox{on }\sO.
$$
\end{defn}

Our preference for abstracting the ``weak maximum principle property'' is well illustrated by the following applications, as discussed in \cite{Feehan_maximumprinciple_v1}.

\begin{defn}[Classical solution, subsolution, and supersolution to a boundary value problem with partial Dirichlet data]
\label{defn:BVPClassicalSolution}
\cite[Definition 2.14]{Feehan_maximumprinciple_v1}
Given functions $f\in C(\underline\sO)$ and $g\in C(\partial_1\sO)$, we call $u\in C^2(\sO)\cap C^1(\underline\sO)\cap C(\sO\cup\partial_1\sO)$ a \emph{subsolution} to a boundary value problem for an operator $A$ in \eqref{eq:Generator} with Dirichlet boundary condition along $\partial_1\sO$, if $u$ obeys \eqref{eq:WMP_C2s_regularity}, \eqref{eq:WMP_xdsecond_order_derivatives_zero_boundary}, and
\begin{align}
\label{eq:BVPC2}
Au &\leq f \quad \hbox{on }\sO,
\\
\label{eq:BVPBoundaryC2}
u&\leq g \quad \hbox{on }\partial_1\sO.
\end{align}
We call $u$ a \emph{supersolution} if $-u$ is a subsolution and we call $u$ a \emph{solution} if it is both a subsolution and supersolution. \qed
\end{defn}

\begin{prop}[Weak maximum principle estimates for classical subsolutions, supersolutions, and solutions]
\label{prop:Elliptic_C2_weak_max_principle_apriori_estimates}
\cite[Proposition 2.19]{Feehan_maximumprinciple_v1}
Let $\sO\subseteqq\HH$ be a possibly unbounded domain. Let $A$ in \eqref{eq:Generator} have the weak maximum principle property on $\underline\sO$ in the sense of Definition \ref{defn:Weak_max_principle_property} and assume $c$ obeys \eqref{eq:Nonnegative_c}. Let $f\in C(\underline\sO)$ and $g\in C(\partial_1\sO)$. Suppose $u\in C^2(\sO)\cap C^1(\underline\sO)\cap C(\sO\cup\partial_1\sO)$ obeys \eqref{eq:WMP_C2s_regularity}, \eqref{eq:WMP_xdsecond_order_derivatives_zero_boundary}.
\begin{enumerate}
\item\label{item:C2_subsolution_f_leq_zero} If $f\leq 0$ on $\sO$ and $u$ is a subsolution for $f$ and $g$, and $\sup_\sO u < \infty$, then
$$
u\leq 0 \vee \sup_{\partial_1\sO}g \quad\hbox{on }\sO.
$$
\item\label{item:C2_subsolution_f_arb_sign} If $f$ has arbitrary sign and $u$ is a subsolution for $f$ and $g$, and $\sup_\sO u < \infty$, but, in addition, $c$ obeys \eqref{eq:Positive_lower_bound_c_domain}, so $c\geq c_0$ on $\underline\sO$ for a positive constant $c_0$, then
$$
u\leq 0 \vee \frac{1}{c_0}\sup_\sO f \vee \sup_{\partial_1\sO}g \quad\hbox{on }\sO.
$$
\item\label{item:C2_supersolution_f_geq_zero} If $f\geq 0$ on $\sO$ and $u$ is a supersolution for $f$ and $g$, and $\inf_\sO u > -\infty$, then
$$
u\geq 0 \wedge \inf_{\partial_1\sO}g \quad\hbox{on }\sO.
$$
\item\label{item:C2_supersolution_f_arb_sign} If $f$ has arbitrary sign, $u$ is a supersolution for $f$ and $g$, and $\inf_\sO u > -\infty$, and $c$ obeys \eqref{eq:Positive_lower_bound_c_domain}, then
$$
u\geq 0 \wedge \frac{1}{c_0}\inf_ \sO f \wedge \inf_{\partial_1\sO}g \quad\hbox{on }\sO.
$$
\item\label{item:C2_solution_f_zero} If $f=0$ on $\sO$ and $u$ is a solution for $f$ and $g$, and $\sup_\sO |u| < \infty$, then
$$
\|u\|_{C(\bar\sO)} \leq \|g\|_{C(\overline{\partial_1\sO})}.
$$
\item\label{item:C2_solution_f_arb_sign} If $f$ has arbitrary sign, $u$ is a solution for $f$ and $g$, and $\sup_\sO |u| < \infty$, and $c$ obeys \eqref{eq:Positive_lower_bound_c_domain}, then
$$
\|u\|_{C(\bar\sO)} \leq \frac{1}{c_0}\|f\|_{C(\bar\sO)}\vee\|g\|_{C(\overline{\partial_1\sO})}.
$$
\end{enumerate}
The terms $\sup_{\partial_1\sO}g$, and $\inf_{\partial_1\sO}g$, and $\|g\|_{C(\overline{\partial_1\sO})}$ in the preceding inequalities are omitted when $\partial_1\sO$ is empty.
\end{prop}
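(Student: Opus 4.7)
The plan is to deduce all six parts from the abstract weak maximum principle of Definition \ref{defn:Weak_max_principle_property} by applying it to auxiliary functions of the form $v = u - M$ (or $v = M - u$) for suitably chosen constants $M$, exactly in the spirit of the classical Gilbarg--Trudinger derivation of $C(\bar\sO)$ a priori estimates from the maximum principle. Parts \eqref{item:C2_subsolution_f_leq_zero} and \eqref{item:C2_subsolution_f_arb_sign} are the substantive cases; parts \eqref{item:C2_supersolution_f_geq_zero} and \eqref{item:C2_supersolution_f_arb_sign} follow by applying the subsolution statements to $-u$ (which is a subsolution for $-f, -g$, and which lies under a constant since $\inf_\sO u > -\infty$); and parts \eqref{item:C2_solution_f_zero} and \eqref{item:C2_solution_f_arb_sign} follow by combining the subsolution and supersolution bounds.

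For part \eqref{item:C2_subsolution_f_leq_zero}, set $M := 0 \vee \sup_{\partial_1\sO}g$ (interpreted as $0$ when $\partial_1\sO$ is empty) and $v := u - M$. Since $M$ is a constant, $v$ inherits from $u$ the regularity and the second-order boundary condition \eqref{eq:WMP_C2s_regularity}--\eqref{eq:WMP_xdsecond_order_derivatives_zero_boundary}. On $\sO$ we have
\[
Av = Au - cM \leq f - cM \leq 0,
\]
since $f\leq 0$, $M \geq 0$, and $c\geq 0$ by \eqref{eq:Nonnegative_c}. On $\partial_1\sO$ we have $v = u - M \leq g - M \leq 0$ by definition of $M$. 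Finally $\sup_\sO v = \sup_\sO u - M < \infty$. Definition \ref{defn:Weak_max_principle_property} then yields $v \leq 0$ on $\sO$, i.e.\ $u \leq M$, as required.

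Part \eqref{item:C2_subsolution_f_arb_sign} is the same idea with $M := 0 \vee c_0^{-1}\sup_\sO f \vee \sup_{\partial_1\sO}g$: now $M \geq 0$ combined with \eqref{eq:Positive_lower_bound_c_domain} gives $cM \geq c_0 M$, so
\[
Av = Au - cM \leq f - c_0 M \leq f - \sup_\sO f \leq 0,
\]
while $v \leq g - M \leq 0$ on $\partial_1\sO$ as before; the hypothesis $\sup_\sO u < \infty$ again supplies $\sup_\sO v < \infty$, and the weak maximum principle property closes the argument. The supersolution statements \eqref{item:C2_supersolution_f_geq_zero}, \eqref{item:C2_supersolution_f_arb_sign} reduce to the subsolution statements applied to $\tilde u := -u$, $\tilde f := -f$, $\tilde g := -g$, using that $\tilde u$ inherits the regularity and the boundary condition \eqref{eq:WMP_xdsecond_order_derivatives_zero_boundary} and satisfies $\sup_\sO \tilde u = -\inf_\sO u < \infty$. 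Parts \eqref{item:C2_solution_f_zero}, \eqref{item:C2_solution_f_arb_sign} are immediate by taking the maximum of the bounds from the sub- and supersolution cases (with $g$ and $-g$, respectively $f$ and $-f$); the omissions when $\partial_1\sO$ is empty are handled by the convention that $\sup$ over the empty set equals $-\infty$ and $\inf$ equals $+\infty$, so those terms drop out of the max/min.

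The only genuinely delicate point is that the auxiliary functions $v = u - M$ (and $M - u$) must satisfy all hypotheses of Definition \ref{defn:Weak_max_principle_property}, in particular the second-order boundary condition \eqref{eq:WMP_xdsecond_order_derivatives_zero_boundary} and the one-sided boundedness $\sup_\sO v < \infty$. Shifting by a constant trivially preserves the former, and the one-sided boundedness of $u$ (or $-u$) assumed in each case produces the latter; after that the proof is purely algebraic bookkeeping with signs and the constants $c_0$, $\sup_\sO f$, $\sup_{\partial_1\sO}g$.
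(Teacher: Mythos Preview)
Your proof is correct and follows the standard route: subtract a well-chosen constant $M$, verify that $v=u-M$ satisfies all hypotheses of Definition~\ref{defn:Weak_max_principle_property} (in particular $Av\leq 0$ and $v\leq 0$ on $\partial_1\sO$), and conclude. The paper itself does not prove this proposition but cites it from \cite[Proposition~2.19]{Feehan_maximumprinciple_v1}; your argument is exactly the expected one and matches the classical Gilbarg--Trudinger derivation of maximum principle estimates.
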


\begin{thm}[Weak maximum principle on bounded domains]
\label{thm:Elliptic_weak_maximum_principle_bounded_domain}
\cite[Theorem 5.1]{Feehan_maximumprinciple_v1}
Let $\sO\subset\HH$ be a \emph{bounded} domain and $A$ be as in \eqref{eq:Generator}. Require that the coefficients of $A$ be defined everywhere on $\bar\sO$, obey \eqref{eq:Non_degeneracy_near_boundary} and
\begin{align}
\label{eq:CoeffBD}
b^d &\geq 0 \quad\hbox{on }\partial_0\sO,
\\
\label{eq:NonnegativecDomainCupSigma}
c &\geq 0 \quad\hbox{on }\underline\sO.
\end{align}
Assume further that at least \emph{one} of the following holds,
\begin{equation}
\label{eq:NonDegeneracyInteriorRatio}
\begin{cases}
c > 0 \hbox{ on }\underline\sO,\hbox{ or}
\\
b^d > 0 \hbox{ on }\partial_0\sO \hbox{ and } \displaystyle{\inf_{\sO}\frac{b^d}{x_da^{dd}}} > -\infty.
\end{cases}
\end{equation}
Suppose that $u \in C^2(\sO)\cap C^1(\underline\sO)\cap C(\sO\cup\partial_1\sO)$ obeys \eqref{eq:WMP_C2s_regularity}, \eqref{eq:WMP_xdsecond_order_derivatives_zero_boundary}. If $u$ obeys \eqref{eq:C2_subsolution}, that is, $Au \leq 0$ on $\sO$, and is bounded above, $\sup_\sO u < \infty$, then
\begin{equation}
\label{eq:C2MaxPrincipleBound}
\sup_\sO u \leq 0\vee\sup_{\partial_1\sO} u,
\end{equation}
and, if $c=0$ on $\underline\sO$, then
\begin{equation}
\label{eq:C2MaxPrincipleBoundcZero}
\sup_\sO u = \sup_{\partial_1\sO} u.
\end{equation}
Moreover, $A$ has the \emph{weak maximum principle property on $\underline\sO$} in the sense of Definition \ref{defn:Weak_max_principle_property}.
\end{thm}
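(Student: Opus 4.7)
The plan is first to establish the ``weak maximum principle property'' of Definition \ref{defn:Weak_max_principle_property}; the pointwise bound \eqref{eq:C2MaxPrincipleBound} then follows by applying that property to the shifted subsolution $u-\bigl(0\vee\sup_{\partial_1\sO}u\bigr)$, which still obeys $A(\,\cdot\,)\le 0$ because $c\ge 0$, and the equality \eqref{eq:C2MaxPrincipleBoundcZero} in the case $c=0$ follows from the same property applied to $u-\sup_{\partial_1\sO}u$ together with the trivial inequality $\sup_\sO u\ge\sup_{\partial_1\sO}u$ coming from continuity of $u$ on $\sO\cup\partial_1\sO$. Thus the task reduces to: if $u$ satisfies the regularity hypotheses together with $Au\le 0$ on $\sO$, $u\le 0$ on $\partial_1\sO$, and $\sup_\sO u<\infty$, then $\sup_\sO u\le 0$.

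Suppose, for contradiction, that $M:=\sup_\sO u>0$. The pointwise core of the argument is classical. If $u$ attains $M$ at a point $x_0\in\sO$, then $Du(x_0)=0$, $D^2u(x_0)\le 0$, and $x_d(x_0)>0$, so non-negative definiteness \eqref{eq:Non_degeneracy_near_boundary} of $a$ gives $-x_d\tr(aD^2u)(x_0)\ge 0$, whence $Au(x_0)\ge c(x_0)M$. If $u$ attains $M$ at a point $x_0\in\partial_0\sO$, the tangential first derivatives of $u$ vanish there and $u_{x_d}(x_0)\le 0$ because $\sO\subset\HH$, the second-order boundary condition \eqref{eq:WMP_xdsecond_order_derivatives_zero_boundary} together with \eqref{eq:amatrix_locally_bounded_upto_lower_boundary} makes $-x_d\tr(aD^2u)(x_0)=0$, and \eqref{eq:CoeffBD} forces $-\langle b,Du\rangle(x_0)=-b^d(x_0)u_{x_d}(x_0)\ge 0$, so again $Au(x_0)\ge c(x_0)M$. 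Under the first alternative in \eqref{eq:NonDegeneracyInteriorRatio}, $c>0$ on $\underline\sO$ gives $Au(x_0)>0$, directly contradicting $Au\le 0$.

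Under the second alternative in \eqref{eq:NonDegeneracyInteriorRatio}, where $c$ may vanish, I will manufacture the missing strict inequality by perturbing $u$. Take $w(x):=e^{\gamma x_d}$, for which
\begin{equation*}
Aw = e^{\gamma x_d}\bigl(-x_d a^{dd}\gamma^2 - b^d\gamma + c\bigr);
\end{equation*}
the hypothesis $\inf_\sO b^d/(x_d a^{dd})>-\infty$ controls the bracket on the region where $x_d a^{dd}>0$, while $b^d>0$ on the compact set $\overline{\partial_0\sO}$, combined with continuity of $b^d$, handles a neighborhood of $\partial_0\sO$ where $a^{dd}$ may vanish, and together with boundedness of $c$ on $\bar\sO$ one can choose $\gamma$ so large that $Aw\le -\delta<0$ on $\bar\sO$ for some $\delta>0$. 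Replacing $u$ by $\tilde u:=u+\varepsilon(w-\sup_{\bar\sO}w)$ for $\varepsilon>0$ small preserves all of the regularity and sign hypotheses (in particular $\tilde u\le 0$ on $\partial_1\sO$) and keeps $\sup_\sO\tilde u>0$, while, using $c\ge 0$, it upgrades $Au\le 0$ to the strict inequality $A\tilde u\le -\varepsilon\delta<0$; the pointwise argument of the previous paragraph applied to $\tilde u$ then gives $A\tilde u(x_0)\ge c(x_0)\tilde u(x_0)\ge 0$, a contradiction.

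The main obstacle I anticipate is that the supremum $M$ need not be \emph{attained}: the regularity hypothesis provides continuity of $u$ only on $\sO\cup\partial_0\sO\cup\partial_1\sO$, so a maximizing sequence in $\sO$ could, a priori, accumulate at a corner point of $\overline{\partial_0\sO}\cap\overline{\partial_1\sO}$, where $u$ is not assumed continuous. I expect to address this with a further perturbation (or an exhaustion argument) that forces any positive maximizing sequence of $\tilde u$ to stay a positive distance from the corner set, reducing to the attained-maximum case already handled; carrying this out consistently with both alternatives of \eqref{eq:NonDegeneracyInteriorRatio}, and with the possibility that $x_d a^{dd}$ vanishes in the interior of $\sO$ so that the ratio $b^d/(x_d a^{dd})$ degenerates there, is the delicate technical point.
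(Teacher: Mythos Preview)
The paper does not give its own proof of this theorem: it is quoted verbatim from the author's earlier work \cite[Theorem~5.1]{Feehan_maximumprinciple_v1} in the review Section~\ref{sec:Maximum_principles}, and no argument is supplied here. So there is no in-paper proof to compare your proposal against.

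That said, your outline follows the expected classical route (interior-maximum analysis plus a Hopf-type barrier perturbation to manufacture a strict inequality), and you have correctly identified the two genuine obstructions. First, the corner set $\overline{\partial_0\sO}\cap\overline{\partial_1\sO}$: since $u$ is only assumed continuous on $\sO\cup\partial_0\sO\cup\partial_1\sO$, a maximizing sequence can accumulate there, and your sketch does not yet explain how to rule this out; a second auxiliary barrier that blows down near the corner set, or an exhaustion by subdomains whose $\partial_0$ and $\partial_1$ portions are separated, is the standard cure. Second, your barrier $w=e^{\gamma x_d}$ gives $Aw=e^{\gamma x_d}(-x_d a^{dd}\gamma^2-b^d\gamma+c)$, and the hypothesis $\inf_\sO b^d/(x_d a^{dd})>-\infty$ only yields $b^d\ge -Kx_d a^{dd}$, so on the set where $x_d a^{dd}=0$ you get $Aw=e^{\gamma x_d}(-b^d\gamma+c)$, which need not be strictly negative if $b^d=0$ there (the ratio hypothesis does force $b^d\ge 0$ on that set, but not $b^d>0$). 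You flag this yourself as ``the delicate technical point,'' and indeed a complete argument has to confront it --- typically by a more careful case split or a different auxiliary function near such degeneracy. Both issues are resolvable, but they are not yet resolved in your proposal.
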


\begin{rmk}[Continuity and boundedness conditions]
\label{rmk:Continuity_boundedness_conditions}
The hypotheses in Theorem \ref{thm:Elliptic_weak_maximum_principle_bounded_domain} assert that $u$ belongs to $C(\underline\sO\cup\partial_1\sO) = C(\sO\cup\partial_0\sO\cup\partial_1\sO)$, but not necessarily $C_{\loc}(\bar\sO)$, so $u$ may be discontinuous at the ``corner points'', $\overline{\partial_0\sO}\cup\overline{\partial_1\sO}$. Recall that the classical weak maximum principle \cite[Theorem 3.1]{GilbargTrudinger} for a strictly elliptic operator $L$ on a bounded domain $\sO\subset\RR^d$ does not require that $u\in C^2(\sO)$ be continuous on $\bar\sO$ and asserts more generally that, when $c=0$ on $\sO$ and $Lu \leq 0$ on $\sO$ (see \cite[Equation (3.6)]{GilbargTrudinger}),
$$
\sup_\sO u = \limsup_{x\to\partial\sO}u(x)
$$
and, when $c\geq 0$ (and replacing the assertion of \cite[Theorem 3.1]{GilbargTrudinger} by the preceding generalization in the proof of \cite[Corollary 3.2]{GilbargTrudinger})
$$
\sup_\sO u \leq \limsup_{x\to\partial\sO}u^+(x).
$$
We may write
$$
\limsup_{x\to\partial\sO}u(x) = \sup_{x_0\in\partial\sO}\left(\limsup_{x\to x_0}u(x)\right) = \sup_{\partial\sO}u^*,
$$
where $u^*:\bar\sO\to\RR$ is the upper semicontinuous envelope of $u$ on $\sO$, where $u^*$ necessarily attains its maximum on $\bar\sO$ when $\sO$ is bounded, and $\sup_\sO u^*<\infty$ since $\sup_\sO u^* = \sup_\sO u$ and $\sup_\sO u<\infty$ by hypothesis.

Hence, the conclusion in Theorem \ref{thm:Elliptic_weak_maximum_principle_bounded_domain} can be strengthened to $u\leq 0$ on $\underline\sO\cup\partial_1\sO$ but not necessarily on $\bar\sO$; the conclusion could be replaced by the equivalent statement, $u^*\leq 0$ on $\bar\sO$.
\qed
\end{rmk}

Next, we consider the case of bounded $C^2$ functions on \emph{unbounded} domains and recall the following version of \cite[Theorem 5.3]{Feehan_maximumprinciple_v1}.

\begin{lem}[Extension of the weak maximum principle property for bounded functions to unbounded domains]
\label{lem:Elliptic_weak_maximum_principle_extension_unbounded_domain}
\cite[Theorem 5.3]{Feehan_maximumprinciple_v1}
Let $\sO\subseteqq\HH$ be a possibly unbounded domain. Suppose $A$ in \eqref{eq:Generator} has the weak maximum principle property on \emph{bounded} subdomains of $\underline\sO$ in the sense of Definition \ref{defn:Weak_max_principle_property} and assume, in addition, that its coefficients obey \eqref{eq:Positive_lower_bound_c_domain} and \eqref{eq:Quadratic_growth}. Then $A$ has the weak maximum principle property on $\underline\sO$ in the sense of Definition \ref{defn:Weak_max_principle_property}.
\end{lem}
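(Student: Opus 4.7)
The plan is to construct a positive auxiliary function $w$ on $\underline\sO$ with $Aw \geq 0$ and $w(x)\to\infty$ as $|x|\to\infty$, and then run the standard barrier argument: exhaust $\sO$ by bounded subdomains $\sO_R := \sO\cap B_R$, apply the hypothesized weak maximum principle on each $\sO_R$ to $u - \eps w$, and let $R\to\infty$ then $\eps\downarrow 0$.

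For the auxiliary function, I would try $w(x) := (1 + |x|^2)^p$ with the exponent $p \in (0,1)$ to be chosen small. A direct computation gives
\begin{equation*}
w_i = 2p(1+|x|^2)^{p-1}x_i, \qquad w_{ij} = 2p(1+|x|^2)^{p-1}\de_{ij} + 4p(p-1)(1+|x|^2)^{p-2}x_i x_j,
\end{equation*}
so multiplying $Aw$ through by $(1+|x|^2)^{2-p}$ yields
\begin{equation*}
(1+|x|^2)^{2-p}Aw = -2p(1+|x|^2)\bigl[x_d \tr(a) + \langle b, x\rangle\bigr] - 4p(p-1)x_d\langle ax, x\rangle + c(1+|x|^2)^2.
\end{equation*}
The second term on the right is non-negative because $p(p-1) < 0$, $x_d \geq 0$ on $\underline\sO$, and $\langle ax,x\rangle \geq 0$ by \eqref{eq:Non_degeneracy_near_boundary}. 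The bracket in the first term is bounded above by $K(1+|x|^2)$ by the quadratic growth hypothesis \eqref{eq:Quadratic_growth}, while $c \geq c_0$ by \eqref{eq:Positive_lower_bound_c_domain}. Therefore
\begin{equation*}
Aw \geq (c_0 - 2pK)(1+|x|^2)^p \quad\hbox{on }\underline\sO,
\end{equation*}
and choosing any $p \in (0, c_0/(2K))$ makes $Aw > 0$ on $\underline\sO$ and $w\geq 1$ with $w(x)\to\infty$ as $|x|\to\infty$. Since $w\in C^\infty(\RR^d)$, the second-order boundary property \eqref{eq:WMP_C2s_regularity}, \eqref{eq:WMP_xdsecond_order_derivatives_zero_boundary} holds trivially for $w$.

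Now suppose $u$ satisfies the hypotheses of Definition \ref{defn:Weak_max_principle_property}, so in particular $M := \sup_\sO u < \infty$. For $\eps > 0$ set $v_\eps := u - \eps w$. Then $v_\eps$ inherits the required regularity from $u$ and $w$; moreover $Av_\eps \leq -\eps Aw \leq 0$ on $\sO$, and $v_\eps \leq -\eps < 0$ on $\partial_1\sO$ since $w \geq 1$. For each $R > 0$ large enough that $\eps(1+R^2)^p > M$, consider the bounded subdomain $\sO_R := \sO \cap B_R \subset \underline\sO$. On the portion $\partial_1\sO \cap B_R$ of $\partial_1(\sO_R)$ in $\HH$ we still have $v_\eps < 0$, and on the new spherical portion $\sO\cap\partial B_R$ of $\partial_1(\sO_R)$ we have $v_\eps \leq M - \eps(1+R^2)^p < 0$; $v_\eps$ is continuous on $\sO_R \cup \partial_1(\sO_R)$. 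By the hypothesis that $A$ has the weak maximum principle property on bounded subdomains, $v_\eps \leq 0$ on $\sO_R$.

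Given any $x\in\sO$, take $R$ large enough that $x \in B_R$ and $\eps(1+R^2)^p > M$; then $u(x) \leq \eps w(x)$, and letting $\eps\downarrow 0$ yields $u(x)\leq 0$. The main technical point is the construction of $w$, which exploits the concavity gained from the exponent $p<1$ to absorb the indefinite $\langle ax,x\rangle$ contribution and balances the quadratic growth of $a,b$ against the strict positivity of $c$; the remainder of the argument is standard exhaustion.
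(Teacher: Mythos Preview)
Your proof is correct. The paper does not prove this lemma in-text (it cites an external reference), but the paper's own later adaptations of the argument---in the proofs of Theorem~\ref{thm:Elliptic_equation_continuous_sub_supersolution_properties}\,\eqref{item:elliptic_weak_maximum_principle} and Theorem~\ref{thm:Elliptic_obstacle_problem_comparison_principle} for unbounded domains---use the simpler auxiliary function $v_0(x)=1+|x|^2$, i.e.\ your $w$ with $p=1$. That choice only yields $Av_0\geq 0$ under the \emph{additional} hypothesis $c_0\geq 2K$ (condition~\eqref{eq:Positive_lower_bound_c_geq_2K}), which is indeed assumed in those later theorems but is \emph{not} among the hypotheses of the present lemma.

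Your choice $w=(1+|x|^2)^p$ with $p\in\bigl(0,\min(1,\,c_0/(2K))\bigr)$ is therefore a genuine refinement: the concavity coming from $p<1$ makes the cross term $-4p(p-1)x_d\langle ax,x\rangle$ non-negative (rather than having to bound it), and taking $p$ small shrinks the $2pK$ loss from the quadratic-growth term until $c_0$ dominates. This delivers $Aw>0$ under only \eqref{eq:Positive_lower_bound_c_domain} and \eqref{eq:Quadratic_growth}, matching the lemma as stated. The exhaustion-by-$\sO\cap B_R$ step is identical in spirit to the paper's; one minor point worth stating explicitly is that if $\sO\cap B_R$ fails to be connected you apply the bounded-subdomain hypothesis to each component.
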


Combining Theorem \ref{thm:Elliptic_weak_maximum_principle_bounded_domain} and Lemma \ref{lem:Elliptic_weak_maximum_principle_extension_unbounded_domain} yields

\begin{thm}[Weak maximum principle for bounded functions on unbounded domains]
\label{thm:Elliptic_weak_maximum_principle_unbounded_domain}
\cite[Theorem 5.3]{Feehan_maximumprinciple_v1}
Let $\sO\subseteqq\HH$ be a possibly unbounded domain. Assume that the coefficients of $A$ in \eqref{eq:Generator} obey the hypotheses of Theorem \ref{thm:Elliptic_weak_maximum_principle_bounded_domain}, except that the condition \eqref{eq:NonDegeneracyInteriorRatio} on $a, b$ or $c$ is replaced by the condition \eqref{eq:Positive_lower_bound_c_domain} on $c$ for some positive constant, $c_0$, and, in addition, we require that the coefficients $a, b$ obey the growth condition \eqref{eq:Quadratic_growth}. Suppose that $u \in C^2(\sO)\cap C^1(\underline\sO)\cap C(\sO\cup\partial_1\sO)$ obeys \eqref{eq:WMP_C2s_regularity} and \eqref{eq:WMP_xdsecond_order_derivatives_zero_boundary}. If $\sup_\sO u < \infty$ and $u\leq 0$ on $\partial_1\sO$ (when non-empty), then
$$
\sup_\sO u \leq 0\vee\frac{1}{c_0}\sup_\sO Au.
$$
Moreover, $A$ has the \emph{weak maximum principle property on $\underline\sO$} in the sense of Definition \ref{defn:Weak_max_principle_property}.
\end{thm}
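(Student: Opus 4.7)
My plan is to derive Theorem \ref{thm:Elliptic_weak_maximum_principle_unbounded_domain} in two steps: first establish the qualitative weak maximum principle property on $\underline\sO$ by combining the bounded-domain result with the extension lemma, and then deduce the quantitative estimate by a translation trick.

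For the qualitative statement, I would first verify that the hypotheses of Theorem \ref{thm:Elliptic_weak_maximum_principle_bounded_domain} are satisfied on every bounded subdomain of $\underline\sO$. The assumption \eqref{eq:Positive_lower_bound_c_domain}, namely $c\geq c_0>0$ on $\underline\sO$, implies in particular that $c>0$ on $\underline\sO$, so the first alternative in \eqref{eq:NonDegeneracyInteriorRatio} holds. Together with the hypotheses \eqref{eq:Non_degeneracy_near_boundary}, \eqref{eq:CoeffBD}, and \eqref{eq:NonnegativecDomainCupSigma}, which are directly assumed to hold on all of $\underline\sO$, Theorem \ref{thm:Elliptic_weak_maximum_principle_bounded_domain} applies on every bounded $\sO'\subset\underline\sO$. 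The quadratic growth assumption \eqref{eq:Quadratic_growth} together with \eqref{eq:Positive_lower_bound_c_domain} then permits invocation of Lemma \ref{lem:Elliptic_weak_maximum_principle_extension_unbounded_domain}, yielding the weak maximum principle property on all of $\underline\sO$.

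For the explicit estimate, set $M := \sup_\sO Au$. If $M = +\infty$ there is nothing to prove, and if $M \leq 0$, then $Au \leq 0$ on $\sO$ and $u \leq 0$ on $\partial_1\sO$ with $\sup_\sO u < \infty$, so the weak maximum principle property just established gives $u \leq 0$ on $\sO$. Otherwise $0 < M < \infty$, and I define the shifted function $v := u - M/c_0$ on the common domain of definition. Since $c \geq c_0 > 0$ on $\underline\sO$ and $M > 0$, one computes
\[
Av = Au - c\,\frac{M}{c_0} \leq M - c_0\cdot\frac{M}{c_0} = 0 \quad\hbox{on }\sO,
\]
and $v \leq -M/c_0 < 0$ on $\partial_1\sO$. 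The regularity conditions \eqref{eq:WMP_C2s_regularity} and \eqref{eq:WMP_xdsecond_order_derivatives_zero_boundary} pass from $u$ to $v$ since $v$ differs from $u$ by a constant, and $\sup_\sO v = \sup_\sO u - M/c_0 < \infty$. The weak maximum principle property applied to $v$ then gives $v \leq 0$ on $\sO$, that is, $u \leq M/c_0$ on $\sO$. Combining both cases yields the claimed bound $\sup_\sO u \leq 0 \vee \frac{1}{c_0}\sup_\sO Au$.

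The essential content of the argument lies entirely in the first step: verifying that \eqref{eq:Positive_lower_bound_c_domain} fulfills the alternative hypothesis \eqref{eq:NonDegeneracyInteriorRatio} required by Theorem \ref{thm:Elliptic_weak_maximum_principle_bounded_domain}, and that Lemma \ref{lem:Elliptic_weak_maximum_principle_extension_unbounded_domain} can then be chained onto it without any further regularity or growth hypotheses. The shift by $M/c_0$ in the second step is the standard device used to reduce an inhomogeneous subsolution inequality to a homogeneous one in the presence of a positive lower bound on the zeroth-order coefficient, and presents no additional difficulty.
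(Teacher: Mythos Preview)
Your proposal is correct and matches the paper's approach: the paper simply states that the theorem follows by combining Theorem \ref{thm:Elliptic_weak_maximum_principle_bounded_domain} with Lemma \ref{lem:Elliptic_weak_maximum_principle_extension_unbounded_domain}, and you have supplied precisely those details, including the observation that \eqref{eq:Positive_lower_bound_c_domain} fulfills the first alternative of \eqref{eq:NonDegeneracyInteriorRatio}. Your translation-by-$M/c_0$ argument for the quantitative bound is exactly the content of Proposition \ref{prop:Elliptic_C2_weak_max_principle_apriori_estimates} \eqref{item:C2_subsolution_f_arb_sign}, which the paper would invoke once the weak maximum principle property is in hand.
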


It will be useful to have a version of the weak maximum principle and the corresponding estimate for operators which include those of the form $A$ in \eqref{eq:Generator} when $\inf_\sO c = 0$ and the domain, $\sO$, is unbounded though of finite height.  Notice that when the coefficient $c$ obeys \eqref{eq:Nonnegative_c} but not \eqref{eq:Positive_lower_bound_c_domain}, so does not have a uniform positive lower bound on $\sO$, the weak maximum principle, Theorem \ref{thm:Elliptic_weak_maximum_principle_unbounded_domain}, does not immediately apply when $\sO$ is unbounded. For this situation, we recall the following special cases of \cite[Lemma A.1 \& Corollary A.2]{Feehan_Pop_elliptichestonschauder}.

\begin{lem}[Weak maximum principle for bounded functions on a domain of finite height]
\label{lem:Elliptic_weak_maximum_principle_finite_height_domain}
\cite[Lemma A.1]{Feehan_Pop_elliptichestonschauder}
Let $\sO\subseteqq\HH$ be a domain with finite height, $\height(\sO)\leq \nu$, for some positive constant $\nu$. Let $A$ be as in \eqref{eq:Generator} and require that its coefficients, $a, b, c$, obey \eqref{eq:Nonnegative_c}, \eqref{eq:Finite_upper_bound_add_domain} (with $\Lambda$ replaced by $\Lambda/\nu$) and \eqref{eq:Positive_lower_bound_bd_domain} for some positive constants $b_0$ and $\Lambda$, and \eqref{eq:Non_degeneracy_near_boundary}. Then $A$ has the \emph{weak maximum principle property on $\underline\sO$} in the sense of Definition \ref{defn:Weak_max_principle_property}.
\end{lem}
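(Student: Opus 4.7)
The plan is to reduce the lemma to the already-established weak maximum principle on bounded subdomains (Theorem \ref{thm:Elliptic_weak_maximum_principle_bounded_domain}) by performing an exponential change of dependent variable that trades the positive lower bound on the drift coefficient $b^d$ against a positive lower bound on the zeroth-order coefficient. Once this is achieved, I can then extend to the (possibly) unbounded $\sO$ along the lines of Lemma \ref{lem:Elliptic_weak_maximum_principle_extension_unbounded_domain}, with the finite-height hypothesis $\height(\sO)\leq \nu$ substituting for an explicit growth condition.

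Concretely, set $\hat u := e^{\lambda x_d}\,u$ with $\lambda := b_0/(2\Lambda)$. A direct calculation using the product and chain rules, together with the fact that multiplication by the smooth positive function $e^{\lambda x_d}$ preserves the regularity and the second-order boundary condition \eqref{eq:WMP_C2s_regularity}--\eqref{eq:WMP_xdsecond_order_derivatives_zero_boundary}, shows that $e^{-\lambda x_d}\,Au = \hat A\hat u$, where $\hat A$ is of the form \eqref{eq:Generator} with
\begin{equation*}
\hat a = a,\quad \hat b^i = b^i - 2\lambda x_d a^{id}\ (i<d),\quad \hat b^d = b^d - 2\lambda x_d a^{dd},\quad \hat c = c + \lambda b^d - \lambda^2 x_d a^{dd}.
\end{equation*}
Because $x_d\leq \nu$ forces $x_d a^{dd}\leq \Lambda$ via \eqref{eq:Finite_upper_bound_add_domain}, the choice of $\lambda$ combined with \eqref{eq:Nonnegative_c} and \eqref{eq:Positive_lower_bound_bd_domain} yields
\begin{equation*}
\hat c \;\geq\; \lambda b_0 - \lambda^2\Lambda \;=\; \frac{b_0^2}{4\Lambda} \;=:\; \hat c_0 \;>\; 0 \ \text{on }\underline\sO,\qquad \hat b^d\big|_{\partial_0\sO}=b^d\big|_{\partial_0\sO}\geq b_0>0,
\end{equation*}
while $\hat a=a$ inherits the non-negative characteristic form \eqref{eq:Non_degeneracy_near_boundary}. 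Since $e^{\lambda x_d}$ is bounded above and below by positive constants on $\sO$, boundedness of $u$ transfers to $\hat u$, the condition $Au\leq 0$ becomes $\hat A\hat u\leq 0$, and the condition $u\leq 0$ on $\partial_1\sO$ becomes $\hat u\leq 0$ on $\partial_1\sO$. Thus the conclusion $u\leq 0$ on $\sO$ is equivalent to $\hat u\leq 0$ on $\sO$.

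For the conjugated operator, $\hat c\geq \hat c_0>0$ on $\underline\sO$ verifies the first alternative of \eqref{eq:NonDegeneracyInteriorRatio}, so Theorem \ref{thm:Elliptic_weak_maximum_principle_bounded_domain} applies to $\hat A$ on every bounded subdomain of $\underline\sO$. When $\sO$ is itself bounded, this immediately gives $\hat u\leq 0$ on $\sO$; when $\sO$ is unbounded, the plan is to extend by the argument underlying Lemma \ref{lem:Elliptic_weak_maximum_principle_extension_unbounded_domain}, using the finite-height constraint $x_d\leq\nu$ together with Assumption \ref{assump:C2_operator_coefficients} to verify the needed growth input for $\hat a=a$ and $\hat b^i=b^i-2\lambda x_d a^{id}$.

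The principal technical obstacle is precisely this last step: the lemma's hypotheses impose no explicit bound on $a^{ij}$ and $b^i$ for $i<d$, so one cannot directly invoke the quadratic-growth condition \eqref{eq:Quadratic_growth} of Lemma \ref{lem:Elliptic_weak_maximum_principle_extension_unbounded_domain}. The key observation rescuing the argument is that the barrier strategy used there only requires a function $\hat w\in C^2(\underline\sO)$ with $\hat A\hat w\geq \sigma_0>0$ and appropriate boundedness on $\underline\sO$, and such a barrier can be built depending only on $x_d$ — for example, $\hat w(x)=e^{\gamma(\nu-x_d)}$ with $\gamma$ chosen so that the contributions $-x_d a^{dd}\gamma^2$ and $(\hat b^d)\gamma$ combine with $\hat c\geq \hat c_0$ to yield $\hat A\hat w\geq \hat c_0/2$ — so the horizontal coefficients never enter the estimate. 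Applying the bounded-domain principle to $\hat u-\eta\hat w$ on an exhaustion $\sO\cap B_R(x^0)$ and letting $R\to\infty$ followed by $\eta\downarrow 0$ completes the proof.
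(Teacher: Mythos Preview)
Your exponential conjugation $\hat u = e^{\lambda x_d}u$ with $\lambda = b_0/(2\Lambda)$, the computation of $\hat a,\hat b,\hat c$, and the verification that $\hat c\geq b_0^2/(4\Lambda)>0$ match the paper's argument (Remark~\ref{rmk:Elliptic_weak_maximum_principle_finite_height_domain}) exactly. The paper then simply invokes Theorem~\ref{thm:Elliptic_weak_maximum_principle_unbounded_domain} for $\hat A$; you are right that this theorem carries the quadratic-growth hypothesis \eqref{eq:Quadratic_growth}, which is not listed among the lemma's hypotheses but is tacitly present in the paper's applications (and is added explicitly in Corollary~\ref{cor:Elliptic_strong_implies_weak_maximum_principle_property_unbounded_domain} when $\sO$ is unbounded).

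Your attempt to sidestep this by building an $x_d$-only barrier, however, does not work. Any function $\hat w=\phi(x_d)$ with $\phi$ continuous on $[0,\nu]$ is \emph{bounded} on $\underline\sO$, so on the artificial boundary $\sO\cap\partial B_R$ you only get $\hat u-\eta\hat w\leq M-\eta$, where $M=\sup_\sO\hat u$. Applying the bounded-domain principle on $\sO\cap B_R$ then yields nothing better than $\hat u\leq 0\vee(M-\eta)$, which after $R\to\infty$ and $\eta\downarrow 0$ returns only the tautology $\hat u\leq M$. The whole point of the barrier $v_0=1+|x|^2$ in Lemma~\ref{lem:Elliptic_weak_maximum_principle_extension_unbounded_domain} is that it \emph{blows up} on $\partial B_R$ as $R\to\infty$, forcing $\hat u-\eta v_0\leq 0$ there; a barrier depending only on $x_d\in[0,\nu]$ can never do this, regardless of the sign of $\hat A\hat w$. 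Since the horizontal coefficients $a^{ij},b^i$ ($i<d$) carry no hypothesis here, no barrier involving $x'$ is available either. The honest resolution is the paper's: after conjugation, appeal to Theorem~\ref{thm:Elliptic_weak_maximum_principle_unbounded_domain} and accept that the growth condition \eqref{eq:Quadratic_growth} is part of the package when $\sO$ is unbounded.
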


\begin{cor}[Weak maximum principle estimate for bounded functions on a domain of finite height]
\label{cor:Elliptic_weak_maximum_principle_finite_height_domain}
\cite[Corollary A.2]{Feehan_Pop_elliptichestonschauder}
Assume the hypotheses of Lemma \ref{lem:Elliptic_weak_maximum_principle_finite_height_domain}. Let $f \in C(\underline\sO)$, and $g \in C(\partial_1 \sO)$. If $u \in C^2(\sO)\cap C^1(\underline\sO)\cap C_b(\sO\cup\partial_1\sO)$ obeys \eqref{eq:WMP_C2s_regularity} and \eqref{eq:WMP_xdsecond_order_derivatives_zero_boundary} and is a solution to the boundary value problem \eqref{eq:Elliptic_equation}, \eqref{eq:Elliptic_boundary_condition}, then
\begin{equation}
\label{eq:Elliptic_weak_maximum_principle_finite_height_domain}
\|u\|_{C(\bar \sO)}
\leq e^{b_0\nu/2\Lambda}\left(\frac{4\Lambda}{b_0^2}\|f\|_{C(\bar \sO)} \vee \|g\|_{C(\overline{\partial_1 \sO})}\right).
\end{equation}
\end{cor}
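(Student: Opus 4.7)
The plan is to apply the weak maximum principle of Lemma \ref{lem:Elliptic_weak_maximum_principle_finite_height_domain} to the difference between $u$ and an explicit barrier, so the main task is the construction of a suitable supersolution $W$ whose size on $\bar\sO$ matches the right-hand side of the claimed inequality.

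\textbf{Construction of the barrier.} Set $\alpha := b_0/(2\Lambda)$ and consider
$$
w(x) := e^{\alpha(\nu - x_d)}, \qquad x\in\bar\sO.
$$
Since $0\leq x_d\leq \nu$ on $\bar\sO$, one has $1\leq w\leq e^{\alpha\nu} = e^{b_0\nu/(2\Lambda)}$. A direct computation gives $Dw = -\alpha w\, e_d$ and $D^2 w = \alpha^2 w\, e_d\otimes e_d$, and so, using \eqref{eq:Nonnegative_c}, the bound $x_d a^{dd}\leq \Lambda$ coming from \eqref{eq:Finite_upper_bound_add_domain} (with $\Lambda$ replaced by $\Lambda/\nu$) together with $\height(\sO)\leq \nu$, and the bound $b^d\geq b_0$ from \eqref{eq:Positive_lower_bound_bd_domain},
\begin{equation*}
Aw = \bigl(-x_d\alpha^2 a^{dd} + \alpha b^d + c\bigr)w \;\geq\; \bigl(\alpha b_0 - \alpha^2\Lambda\bigr)w \;=\; \frac{b_0^2}{4\Lambda}\,w \;\geq\; \frac{b_0^2}{4\Lambda} \quad\hbox{on }\sO,
\end{equation*}
because the choice $\alpha = b_0/(2\Lambda)$ maximizes $\alpha b_0 - \alpha^2\Lambda$.

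\textbf{Application of the weak maximum principle.} Set
$$
K := \frac{4\Lambda}{b_0^2}\,\|f\|_{C(\bar\sO)} \;\vee\; \|g\|_{C(\overline{\partial_1\sO})}, \qquad W := K\,w.
$$
Then $W\in C^\infty(\RR^d)$, and $x_d D^2W$ extends continuously to $\underline\sO$ and vanishes on $\partial_0\sO$, so the function $v^{\pm} := \pm u - W$ inherits from $u$ the regularity \eqref{eq:WMP_C2s_regularity}, \eqref{eq:WMP_xdsecond_order_derivatives_zero_boundary} and lies in $C^2(\sO)\cap C^1(\underline\sO)\cap C_b(\sO\cup\partial_1\sO)$. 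By the barrier inequality above,
$$
Av^{\pm} = \pm f - AW \;\leq\; \|f\|_{C(\bar\sO)} - K\cdot\tfrac{b_0^2}{4\Lambda} \;\leq\; 0 \quad\hbox{on }\sO,
$$
and on $\partial_1\sO$,
$$
v^{\pm} = \pm g - Kw \;\leq\; \|g\|_{C(\overline{\partial_1\sO})} - K \;\leq\; 0,
$$
using $w\geq 1$ on $\bar\sO$. Lemma \ref{lem:Elliptic_weak_maximum_principle_finite_height_domain} supplies the weak maximum principle property on $\underline\sO$ under exactly the hypotheses assumed, so $v^{\pm}\leq 0$ on $\sO$, i.e., $|u|\leq W$ on $\sO$. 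Since $W\leq K e^{b_0\nu/(2\Lambda)}$ on $\bar\sO$ and $u\in C_b(\sO\cup\partial_1\sO)$, we conclude
$$
\|u\|_{C(\bar\sO)} \leq e^{b_0\nu/(2\Lambda)}\left(\frac{4\Lambda}{b_0^2}\|f\|_{C(\bar\sO)} \vee \|g\|_{C(\overline{\partial_1\sO})}\right),
$$
which is \eqref{eq:Elliptic_weak_maximum_principle_finite_height_domain}.

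\textbf{Main obstacle.} The only nontrivial ingredient is the selection of the barrier: one must pick an exponential decaying in $x_d$ so that the positivity of $b^d$ dominates the degenerate term $x_d a^{dd}$, and then optimize the decay rate $\alpha$ so that the resulting constant $\alpha b_0 - \alpha^2\Lambda$ produces exactly the factors $b_0^2/(4\Lambda)$ and $e^{b_0\nu/(2\Lambda)}$ in the stated estimate. Once the barrier is found, checking the regularity required by Definition \ref{defn:Weak_max_principle_property} and applying Lemma \ref{lem:Elliptic_weak_maximum_principle_finite_height_domain} to $\pm u - W$ is routine.
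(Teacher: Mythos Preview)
Your proof is correct and rests on the same core computation as the paper's, namely that with the exponential rate $\alpha=\sigma=b_0/(2\Lambda)$ one obtains the lower bound $c+\alpha b^d-\alpha^2 x_d a^{dd}\geq b_0^2/(4\Lambda)$. The paper (see Remark~\ref{rmk:Elliptic_weak_maximum_principle_finite_height_domain}) packages this as a multiplicative change of dependent variable $v:=e^{\sigma x_d}u$, so that the transformed operator $\tilde A$ acquires a zeroth-order coefficient $\tilde c\geq b_0^2/(4\Lambda)$, and then applies Proposition~\ref{prop:Elliptic_C2_weak_max_principle_apriori_estimates}~\eqref{item:C2_solution_f_arb_sign} directly to $v$; you instead package the same exponential as an additive barrier $W=K e^{\alpha(\nu-x_d)}$ and apply the weak maximum principle property of Lemma~\ref{lem:Elliptic_weak_maximum_principle_finite_height_domain} to $\pm u-W$. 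The two routes are equivalent reformulations of one another (your $Aw/w$ is exactly the paper's $\tilde c$), and both produce the same constants; your version has the minor advantage of avoiding the computation of the full transformed coefficients $\tilde a^{ij},\tilde b^i$.
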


\begin{rmk}[On the proofs of the weak maximum principle for bounded functions on domains of finite height]
\label{rmk:Elliptic_weak_maximum_principle_finite_height_domain}
It will also be useful to recall the proofs of Lemma \ref{lem:Elliptic_weak_maximum_principle_finite_height_domain} and Corollary \ref{cor:Elliptic_weak_maximum_principle_finite_height_domain}. We define
$$
v(x) := e^{\sigma x_d} u(x) \quad\hbox{and}\quad \tilde f(x) := e^{\sigma x_d} f(x), \quad\forall\, x \in \sO, \quad\hbox{and}\quad \tilde g(x) := e^{\sigma x_d} g(x), \quad\forall\, x \in \partial_1\sO,
$$
and observe that $v$ solves the boundary value problem
$$
\tilde Av = \tilde f \quad\hbox{on } \sO, \quad v = \tilde g  \quad\hbox{on } \partial_1\sO,
$$
where $\tilde A$ has coefficients obeying $\tilde a^{ij} := a^{ij}$ on $\bar\sO$, and $\tilde b^i := b^i-2\sigma x_da^{id}$ with $\tilde b^d = b^d\geq b_0$ on $\partial_0\sO$ and $\tilde c := c+\sigma b^d - \sigma^2 x_da^{dd} \geq \tilde c_0 := b_0^2/4\Lambda$ on $\underline\sO$, provided $\sigma \geq b_0/2\Lambda$, while
$$
\sup_\sO|\tilde f| \leq e^{\sigma\nu}\sup_\sO|f| \quad\hbox{and}\quad \sup_{\partial_1\sO}|\tilde g| \leq e^{\sigma\nu}\sup_{\partial_1\sO}|g|.
$$
The conclusions in Lemma \ref{lem:Elliptic_weak_maximum_principle_finite_height_domain} and Corollary \ref{cor:Elliptic_weak_maximum_principle_finite_height_domain} now follow by applying to Theorem \ref{thm:Elliptic_weak_maximum_principle_unbounded_domain} and Proposition \ref{prop:Elliptic_C2_weak_max_principle_apriori_estimates} \eqref{item:C2_solution_f_arb_sign} to the boundary value problem for $v$ with $c_0$ replaced by $b_0^2/4\Lambda$.

Note also that if the coefficients of $A$ had the properties $c\geq c_0$ on $\sO$ for some positive constant $c_0$ but $\inf_{\partial_0\sO}b^d = 0$, then we could apply the preceding change of dependent variable in reverse to produce an equivalent operator $\tilde A$ with $\tilde b^d \geq \tilde b_0$ on $\partial_0\sO$ for some positive constant $b_0$. In summary, if $\sO\subset\HH$ is a domain with finite height and at least one of $\inf_{\partial_0\sO}b^d$ or $\inf_\sO c$ is positive, then we may assume without loss of generality (by virtue of the change of dependent variable) that both are positive.\qed
\end{rmk}

\begin{rmk}[On the hypotheses of Theorem \ref{thm:Elliptic_weak_maximum_principle_bounded_domain} and Lemma \ref{lem:Elliptic_weak_maximum_principle_finite_height_domain}.]
The condition \eqref{eq:Positive_bd_boundary} ensures that $b^d>0$ on $\partial_0\sO$ while the conditions \eqref{eq:Finite_upper_bound_add_domain} (with $\Lambda$ replaced by $\Lambda/\nu$) and \eqref{eq:Positive_lower_bound_bd_domain} ensure that
$$
\inf_\sO \frac{b^d}{x_d a^{dd}} \geq \frac{b_0}{x_d a^{dd}} \geq \frac{b_0}{\nu (\Lambda/\nu)} = \frac{b_0}{\Lambda} > 0,
$$
and hence that \eqref{eq:NonDegeneracyInteriorRatio} (second alternative) holds.
\end{rmk}

We introduce an analogue of Definition \ref{defn:Weak_max_principle_property}.

\begin{defn}[Strong maximum principle property]
\label{defn:Strong_max_principle_property}
Let $\sO\subset\HH$ be a domain. We say that an operator $A$ in \eqref{eq:Generator} has the \emph{strong maximum principle property on $\underline\sO$} if whenever $u\in C^2(\sO)\cap C^1(\underline\sO)\cap C(\sO\cup\partial_1\sO)$ obeys \eqref{eq:WMP_C2s_regularity} and \eqref{eq:WMP_xdsecond_order_derivatives_zero_boundary} and $Au \leq 0$ on $\sO$, then one of the following holds.
\begin{enumerate}
\item If $c=0$ on $\underline\sO$ and $u$ attains a maximum in $\underline\sO$, then $u$ is constant on $\sO$.
\item If $c\geq 0$ on $\underline\sO$ and $u$ attains a non-negative maximum in $\underline\sO$, then $u$ is constant on $\sO$.
\end{enumerate}
\end{defn}

\begin{lem}[Strong implies weak maximum principle property on bounded domains]
\label{lem:Elliptic_strong_implies_weak_maximum_principle_property_bounded_domain}
Let $\sO\subset\HH$ be a \emph{bounded} domain. Suppose $A$ in \eqref{eq:Generator} has the strong maximum principle property on $\underline\sO$ in the sense of Definition \ref{defn:Strong_max_principle_property}. Then $A$ has the weak maximum principle property on $\underline\sO$ in the sense of Definition \ref{defn:Weak_max_principle_property}.
\end{lem}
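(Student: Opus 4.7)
The plan is to argue by contradiction via the upper semicontinuous envelope on the compact set $\bar\sO$, reducing the weak maximum principle property to the strong one. Suppose $u\in C^2(\sO)\cap C^1(\underline\sO)\cap C(\sO\cup\partial_1\sO)$ satisfies the hypotheses of Definition \ref{defn:Weak_max_principle_property}, and assume toward contradiction that $M:=\sup_\sO u>0$. Because $u$ is continuous on $\underline\sO=\sO\cup\partial_0\sO$ (since $u\in C^1(\underline\sO)$) and points of $\partial_0\sO$ are accumulation points of $\sO$, we will have $\sup_{\underline\sO}u=M$. Introduce the upper semicontinuous envelope
$$
u^*(x) := \limsup_{\sO\ni y\to x} u(y), \quad x\in\bar\sO,
$$
which, being upper semicontinuous on the compact set $\bar\sO$, attains its supremum $M$ at some $x_0\in\bar\sO$.

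The heart of the proof will be a case analysis following the decomposition $\bar\sO = \underline\sO\,\cup\,\partial_1\sO\,\cup\,(\overline{\partial_0\sO}\cap\overline{\partial_1\sO})$. If $x_0\in\partial_1\sO$, continuity of $u$ on $\sO\cup\partial_1\sO$ forces $u^*(x_0)=u(x_0)\leq 0$, contradicting $u^*(x_0)=M>0$. If $x_0\in\underline\sO$, then continuity of $u$ on $\underline\sO$ gives $u(x_0)=M>0$; assuming $c\geq 0$ on $\underline\sO$ (without such a sign condition the strong maximum principle property in Definition \ref{defn:Strong_max_principle_property} is vacuous, and constant positive functions would be obvious counterexamples to the weak one), Definition \ref{defn:Strong_max_principle_property}(2) applies and yields $u\equiv M$ on $\sO$. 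Continuity of $u$ on $\sO\cup\partial_1\sO$ then propagates this identity to $\partial_1\sO$, contradicting $u\leq 0$ there.

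The main obstacle will be the remaining case $x_0 \in \overline{\partial_0\sO}\cap\overline{\partial_1\sO}$, where $u$ is in general neither defined nor continuous. The plan here is to rule this case out by a continuity/compactness argument: for every $y\in\partial_1\sO$, continuity of $u$ on $\sO\cup\partial_1\sO$ together with $u(y)\leq 0$ produces a neighborhood $U_y$ of $y$ in which $u<M/2$ on $U_y\cap\sO$. A finite-cover argument over compact pieces of $\overline{\partial_1\sO}$ near $x_0$, combined with the continuity of $u$ up to $\partial_0\sO$ (which permits replacing a maximizing sequence $x_n\in\sO$, $x_n\to x_0$, $u(x_n)\to M$, by a new sequence extracted from balls centered at nearby points of $\partial_0\sO$), then forces the accumulation point of a maximizing subsequence to lie in $\underline\sO$ rather than in the corner set, reducing to the previous case and completing the contradiction.
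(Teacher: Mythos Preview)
Your architecture matches the paper's proof: pass to the upper semicontinuous envelope $u^*$ on the compact set $\bar\sO$, locate a maximizer $x_0$, and split into cases according to whether $x_0$ lies in $\underline\sO$ or on the remaining boundary. Your handling of $x_0\in\partial_1\sO$ and $x_0\in\underline\sO$ is correct and coincides with the paper's argument.

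The gap is in your third case, $x_0\in\overline{\partial_0\sO}\cap\overline{\partial_1\sO}$. The neighborhoods $U_y$ (with $u<M/2$ on $U_y\cap\sO$) that you build around points $y\in\partial_1\sO$ cover $\partial_1\sO$, but since $x_0\notin\partial_1\sO$ no finite subcollection of them need cover any neighborhood of $x_0$; the ``finite-cover argument over compact pieces of $\overline{\partial_1\sO}$ near $x_0$'' therefore does not exclude the maximizing sequence $x_n\to x_0$ from approaching through the gap left near the corner. Your next move---using continuity of $u$ up to $\partial_0\sO$ to replace $(x_n)$ by a sequence drawn from nearby points of $\partial_0\sO$---does not help either: any such replacement sequence still accumulates at the same corner point $x_0$, which lies outside $\underline\sO$, so you have not ``forced the accumulation point of a maximizing subsequence to lie in $\underline\sO$.'' The reduction to the previously handled case does not follow.

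For comparison, the paper's own proof does not isolate the corner case at all: it treats only the two alternatives $x_0\in\overline{\partial_1\sO}$ and $x_0\in\underline\sO$, disposing of the first with the bare assertion that $u^*(x_0)\leq 0$ for such points and handling the second via the strong maximum principle exactly as you do. Your instinct that the corner points deserve separate scrutiny is sound, but the argument you sketch does not resolve them.
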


\begin{proof}
Suppose that $u$ obeys the conditions (though not necessarily the conclusion) of Definition \ref{defn:Weak_max_principle_property}. Since $\bar\sO$ is compact, the upper semicontinuous envelope, $u^*:\bar\sO\to\RR$, of $u$ attains its maximum, $u^*(x_0) = \sup_\sO u^* = \sup_\sO u$, at some point $x_0\in \bar\sO$. If $x_0\in\overline{\partial_1\sO}$, we are done, since $u^*(x_0)=0$ for such points. If $x_0\in \underline\sO$, then $u^*(x_0)=u(x_0)$ (since $u$ is continuous on $\underline\sO$) and thus $u$ is constant on $\sO$ by Definition \ref{defn:Strong_max_principle_property} and hence constant on $\sO\cup\partial_1\sO$. But $u\leq 0$ on $\partial_1\sO$ by Definition \ref{defn:Weak_max_principle_property} and so we again obtain $u\leq 0$ on $\sO$.
\end{proof}

By combining Lemmas \ref{lem:Elliptic_weak_maximum_principle_extension_unbounded_domain}, \ref{lem:Elliptic_weak_maximum_principle_finite_height_domain}, and \ref{lem:Elliptic_strong_implies_weak_maximum_principle_property_bounded_domain}, we obtain

\begin{cor}[Strong implies weak maximum principle property on unbounded domains]
\label{cor:Elliptic_strong_implies_weak_maximum_principle_property_unbounded_domain}
Let $\sO\subseteqq\HH$ be a possibly unbounded domain. Suppose $A$ in \eqref{eq:Generator} has the strong maximum principle property on $\underline\sO$ in the sense of Definition \ref{defn:Strong_max_principle_property} and assume, in addition, that its coefficients obey
\begin{enumerate}
\item when $\height(\sO)=\infty$, then \eqref{eq:Positive_lower_bound_c_domain}, or
\item when $\height(\sO)<\infty$, then \eqref{eq:Positive_lower_bound_c_domain} or both \eqref{eq:Finite_upper_bound_add_domain} and \eqref{eq:Positive_lower_bound_bd_domain},
\end{enumerate}
and, in addition, \eqref{eq:Quadratic_growth} when $\sO$ is unbounded. Then $A$ has the weak maximum principle property on $\underline\sO$ in the sense of Definition \ref{defn:Weak_max_principle_property}.
\end{cor}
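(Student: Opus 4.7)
The plan is a case analysis based on whether $\sO$ is bounded and which coefficient alternative is in force. When $\sO$ is bounded, Lemma \ref{lem:Elliptic_strong_implies_weak_maximum_principle_property_bounded_domain} yields the conclusion immediately from the hypothesis that $A$ has the strong maximum principle property on $\underline\sO$. When $\sO$ is unbounded with $\height(\sO)<\infty$ and the coefficients obey \eqref{eq:Finite_upper_bound_add_domain} and \eqref{eq:Positive_lower_bound_bd_domain}, I would simply appeal to Lemma \ref{lem:Elliptic_weak_maximum_principle_finite_height_domain}; notice that in this subcase the strong MP hypothesis of the corollary is not actually invoked, because Lemma \ref{lem:Elliptic_weak_maximum_principle_finite_height_domain} requires only the stated coefficient conditions together with \eqref{eq:Nonnegative_c} and \eqref{eq:Non_degeneracy_near_boundary}, both of which are contained in the standing hypotheses on $A$.

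It remains to treat the case that $\sO$ is unbounded and condition \eqref{eq:Positive_lower_bound_c_domain} holds together with the growth bound \eqref{eq:Quadratic_growth}; this covers both the $\height(\sO)=\infty$ alternative and the remaining finite-height alternative. Here the plan is to invoke Lemma \ref{lem:Elliptic_weak_maximum_principle_extension_unbounded_domain}, whose hypothesis asks exactly for the weak maximum principle property of $A$ on bounded subdomains of $\underline\sO$. To supply this ingredient, I would first argue that the strong maximum principle property of $A$ on $\underline\sO$ restricts, for each bounded open subdomain $\sO'\subset\sO$ (say of the form $\sO\cap B_R$ intersected with a connected component), to the strong maximum principle property of $A$ on $\underline{\sO'}$. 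An application of Lemma \ref{lem:Elliptic_strong_implies_weak_maximum_principle_property_bounded_domain} to each such $\sO'$ then promotes the restricted strong MP to the weak MP on $\underline{\sO'}$, and Lemma \ref{lem:Elliptic_weak_maximum_principle_extension_unbounded_domain} finally promotes these bounded-subdomain weak maximum principles to the weak MP on $\underline\sO$.

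The main obstacle is the restriction step, since Definition \ref{defn:Strong_max_principle_property} is formulated axiomatically on a fixed domain and a candidate function $v\in C^2(\sO')\cap C^1(\underline{\sO'})\cap C(\sO'\cup\partial_1\sO')$ on $\sO'$ satisfying $Av\leq 0$ cannot in general be extended to $\sO$ while preserving this inequality and the boundary regularity \eqref{eq:WMP_C2s_regularity}, \eqref{eq:WMP_xdsecond_order_derivatives_zero_boundary}. The resolution is that the strong maximum principle is established by purely local, Hopf-type barrier arguments concentrated in small balls or half-balls around a candidate maximum point $x_0\in\underline{\sO'}$: given such a $v$ attaining a (non-negative) maximum $M$ at $x_0$, one considers the connected component $V$ of $\{x\in\sO':v(x)=M\}$ containing $x_0$, and the local barrier construction used to prove the strong MP on $\underline\sO$ applies verbatim inside $\sO'$ to force $V$ to coincide with the connected component of $\sO'$ containing $x_0$. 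Thus the strong MP property passes to $\underline{\sO'}$, and the three-step chain above delivers the weak MP on $\underline\sO$, completing the proof.
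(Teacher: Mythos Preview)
Your case analysis is exactly how the paper proceeds: it simply says the corollary follows ``by combining Lemmas \ref{lem:Elliptic_weak_maximum_principle_extension_unbounded_domain}, \ref{lem:Elliptic_weak_maximum_principle_finite_height_domain}, and \ref{lem:Elliptic_strong_implies_weak_maximum_principle_property_bounded_domain}'', without further detail. The restriction issue you flag is a fair observation---the abstract Definition~\ref{defn:Strong_max_principle_property} is stated for functions on a fixed $\sO$ and does not obviously pass to bounded subdomains---and the paper does not address it explicitly. Your resolution via the local, Hopf-type nature of the strong maximum principle is the right intuition; in practice the paper only ever verifies the strong maximum principle property through Theorem~\ref{thm:Elliptic_strong_maximum_principle}, whose hypotheses are manifestly local and therefore restrict to any subdomain automatically, so the issue does not arise in the applications.
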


Finally, we recall the following special case of \cite[Theorem 4.6]{Feehan_maximumprinciple_v1}.

\begin{thm}[Strong maximum principle]
\label{thm:Elliptic_strong_maximum_principle}
\cite[Theorem 4.6]{Feehan_maximumprinciple_v1}
Suppose that $\sO\subset\HH$ is a domain\footnote{Recall that by a ``domain'' in $\RR^d$, we always mean a \emph{connected}, open subset.}. Require that the operator $A$ in \eqref{eq:Generator} obey \eqref{eq:Nonnegative_c}, \eqref{eq:amatrix_locally_bounded_upto_lower_boundary}, and
\begin{enumerate}
\item For each $\xi\in\RR^d\less\{0\}$ and $B\Subset\sO$, one has
$$
\langle a\xi, \xi\rangle > 0 \hbox{ on }B, \quad \inf_B\frac{\langle b, \xi\rangle}{\langle a\xi, \xi\rangle} > -\infty, \quad \sup_B\frac{c}{\langle a\xi, \xi\rangle} < \infty;
$$
\item For each $x_0 \in \partial_0\sO$, there is a ball $B\Subset\underline\sO$ such that $\partial B\cap\partial_0\sO=\{x_0\}$ and\footnote{For example, the coefficient $b^d$ has this property if it is continuous and positive along $\partial_0\sO$.}
$$
\inf_{B\cup\{x_0\}} b^d > 0 \quad\hbox{and}\quad \sup_B c < \infty.
$$
\end{enumerate}
Then $A$ has the strong maximum principle property on $\underline\sO$ in the sense of Definition \ref{defn:Strong_max_principle_property}.
\end{thm}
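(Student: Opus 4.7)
The plan is to argue by contradiction, combining the classical Hopf strong maximum principle in the interior of $\sO$ with a degenerate-boundary Hopf-type barrier argument at $\partial_0\sO$. Set $M := \sup_{\underline\sO}u$, which is attained by hypothesis, and suppose for contradiction $u \not\equiv M$ on $\sO$. Because $-A$ has positive-definite principal-part matrix $x_d(a^{ij})$ wherever $x_d > 0$, hypothesis (1) together with \eqref{eq:amatrix_locally_bounded_upto_lower_boundary} supplies all the coefficient-ratio bounds required by \cite[Theorem 3.5]{GilbargTrudinger} on every ball $B' \Subset \sO$. Hence, if the maximum were attained at some $x_\ast \in \sO$, the classical Hopf strong maximum principle applied to $-A$ on the connected open set $\sO$ would force $u \equiv M$ on $\sO$, contradicting our assumption. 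Thus it suffices to treat the case in which the maximum is attained only on $\partial_0\sO$; fix $x_0 \in \partial_0\sO$ with $u(x_0) = M$ and $u < M$ throughout $\sO$.

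I would then set up a Hopf barrier adapted to the degeneracy. By hypothesis (2), select $B = B_r(y_0) \Subset \underline\sO$ with $\partial B \cap \partial_0\sO = \{x_0\}$; the tangency forces $y_0 = x_0 + r e_d$ and $B \subset \sO$. Define
\begin{equation*}
h(x) := e^{-\alpha|x-y_0|^2} - e^{-\alpha r^2},
\end{equation*}
so $h > 0$ in $B$, $h = 0$ on $\partial B$, and $\partial_d h(x_0) = 2\alpha r e^{-\alpha r^2} > 0$. On the annulus $\Sigma := B \setminus \overline{B_{r/2}(y_0)}$, one verifies that for $\alpha$ sufficiently large $Ah < 0$ throughout $\overline\Sigma$: where $x_d$ is bounded below, the strictly negative $\alpha^2$-contribution from $-x_d\tr(a D^2 h)$ (with $\langle a(x-y_0),x-y_0\rangle > 0$ by hypothesis (1)) dominates all lower-order terms; where $x_d$ is small, the principal part is quenched, but the first-order term $2\alpha\langle b,x-y_0\rangle e^{-\alpha|\cdot|^2}$ is strictly negative because $\langle b,x-y_0\rangle$ is close to $-r b^d(x_0) \leq -r b_0 < 0$ by hypothesis (2).

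Since $u < M$ on the compact set $\overline{B_{r/2}(y_0)} \subset \sO$, choose $\eta > 0$ with $u \leq M - \eta$ there and then $\epsilon > 0$ with $\epsilon \max_{\overline B}h < \eta/2$, so that $w := u + \epsilon h$ satisfies $w \leq M$ on $\partial B$ (equality only at $x_0$), $w \leq M - \eta/2$ on $\partial B_{r/2}(y_0)$, and $Aw \leq Au + \epsilon Ah < 0$ on $\Sigma$. An interior maximum of $w$ at some $x_\ast \in \Sigma$ would give $-x_{\ast,d}\tr(a D^2 w)(x_\ast) \geq 0$ (from $a \geq 0$, $D^2 w(x_\ast) \leq 0$, $x_{\ast,d} > 0$), $-\langle b, Dw\rangle(x_\ast) = 0$, and $c(x_\ast) w(x_\ast) \geq 0$ (using $c \equiv 0$ in the first alternative of Definition \ref{defn:Strong_max_principle_property}, or $M \geq 0$ in the second), hence $Aw(x_\ast) \geq 0$, contradicting $Aw < 0$. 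Therefore $\max_{\overline\Sigma}w = M$, attained only at $x_0$, and since $x_0 + t e_d \in \Sigma$ for small $t > 0$ one obtains $\partial_d w(x_0) \leq 0$. On the other hand, letting $x \to x_0$ in $Au \leq 0$ via \eqref{eq:WMP_C2s_regularity}--\eqref{eq:WMP_xdsecond_order_derivatives_zero_boundary} yields $-b^d(x_0)\partial_d u(x_0) + c(x_0)M \leq 0$; combined with $\partial_i u(x_0) = 0$ for $i<d$, $\partial_d u(x_0) \leq 0$, and $c(x_0)M \geq 0$ (all forced by $u$ attaining its maximum over $\underline\sO$ at $x_0$), this produces $\partial_d u(x_0) = 0$, whence $\partial_d w(x_0) = 2\alpha r \epsilon e^{-\alpha r^2} > 0$, contradicting $\partial_d w(x_0) \leq 0$.

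The main obstacle is the verification that $Ah < 0$ uniformly on $\overline\Sigma$. Unlike the strictly elliptic case, no choice of $\alpha$ can make the second-order part of $Ah$ dominant uniformly, since the factor $x_d$ vanishes along $\partial_0\sO$ and annihilates the second-order contribution there. The argument must therefore be bifurcated into an elliptic regime bounded away from $\partial_0\sO$, in which one exploits hypothesis (1), and a degenerate regime near $\partial_0\sO$, in which one relies crucially on the first-order term $-\langle b, Dh\rangle$, whose strict negativity and dominance are the precise content of hypothesis (2). Carefully matching the cutoff $\delta$ separating these regimes with the choice of $\alpha$ so as to make $Ah$ uniformly strictly negative on $\overline\Sigma$ is the only genuinely computational step in the proof.
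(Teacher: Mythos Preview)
The paper does not supply its own proof of this statement; Theorem~\ref{thm:Elliptic_strong_maximum_principle} is quoted from \cite[Theorem~4.6]{Feehan_maximumprinciple_v1} and used as a black box. So there is no in-paper proof to compare your attempt against.

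Your overall strategy is the natural one and almost certainly matches the cited reference: handle the interior $\sO$ by the classical Hopf strong maximum principle (hypothesis~(1) supplies exactly the coefficient-ratio bounds required for that argument), and at a point $x_0\in\partial_0\sO$ run a Hopf boundary-point lemma in which the negativity of $Ah$ near $\{x_d=0\}$ is driven by the first-order term $-\langle b,Dh\rangle$ (via $b^d\geq b_0>0$) rather than by the degenerating second-order part. Your last paragraph correctly isolates the one genuinely nontrivial computation.

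One technical point you should tighten. Both your claim that $\langle b(x),x-y_0\rangle$ is close to $-r\,b^d(x_0)$ in the regime $x_d<\delta$, and your passage to the limit in $Au\leq 0$ to obtain $-b^d(x_0)\partial_d u(x_0)+c(x_0)M\leq 0$, implicitly use that the \emph{tangential} drift components $b^1,\ldots,b^{d-1}$ are locally bounded up to $x_0$. Hypothesis~(2) constrains only $b^d$ and $c$ near $x_0$; hypothesis~(1) applies only on balls $B'\Subset\sO$, and its constants could in principle blow up as $B'$ approaches $\partial_0\sO$. Concretely, for $x\in\overline\Sigma$ with $x_d<\delta$ one has $|x'-x_0'|\leq\sqrt{2r\delta}$, so $\langle b,x-y_0\rangle = b^d(x_d-r)+\sum_{i<d}b^i(x_i-x_{0,i})$; the first summand is $\leq b_0(\delta-r)<0$, but the second can only be made small if $|b^i|$ stays bounded. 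In every application in this paper the coefficients lie in $C^\alpha_s(\underline\sO)$ and are therefore locally bounded on $\underline\sO$, so the issue never arises in practice; but for the theorem \emph{as stated} you should either check that the argument in \cite{Feehan_maximumprinciple_v1} avoids this (for example via a barrier depending only on $x_d$ near $\partial_0\sO$, so that only $a^{dd},b^d,c$ enter), or make explicit the mild extra hypothesis that $b$ is locally bounded on $\underline\sO$.
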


\section{Schauder theory for degenerate-elliptic operators}
\label{sec:Schauder}
In \S \ref{subsec:Holder}, we review the construction of the Daskalopoulos-Hamilton-Koch families of H\"older norms and Banach spaces \cite{DaskalHamilton1998}, \cite{Koch}. In \S \ref{subsec:Apriori_Schauder_Dirichlet_solution_slab}, we recall the a priori interior Schauder estimates and the solution to the partial Dirichlet problem on a slab given in \cite{Feehan_Pop_elliptichestonschauder}.

\subsection{Daskalopoulos-Hamilton-Koch H\"older spaces}
\label{subsec:Holder}
We review the construction of the
Dask-alopoulos-Hamilton-Koch families of H\"older norms and Banach spaces \cite[p. 901 \& 902]{DaskalHamilton1998}, \cite[Definition 4.5.4]{Koch}. For standard H\"older spaces, we follow the notation of Gilbarg and Trudinger \cite[\S 4.1]{GilbargTrudinger} for the Banach spaces but then use the Banach space to label the corresponding norms following the pattern we describe here.

Given a constant $\alpha\in(0,1)$ and function $u$ on an open subset $U\subset\HH$, the Daskalopoulos-Hamilton-Koch H\"older seminorm, $[u]_{C^\alpha_s(\bar U)}$, is defined by mimicking the definition of the standard H\"older seminorm, $[u]_{C^\alpha(\bar U)}$ in \cite[p. 52]{GilbargTrudinger}, except that the usual Euclidean distance between points, $|x-y| = (\sum_{i=1}^d|x_i-y_i|^2)^{1/2}$ for $s, y\in\bar\HH$, corresponding to the standard Riemannian metric $ds^2 = \sum_{i=1}^d dx_i^2$ on $\HH$ is replaced by the distance function, $s(x,y)$ for $x,y\in\HH$, corresponding to the \emph{cycloidal} Riemannian metric\footnote{The Riemannian metric is called cycloidal in \cite{DaskalHamilton1998} since its geodesics, when $d=2$, are the standard cycloidal curve, $(x_1(t), x_2(t))=(t-\sin t, 1-\cos t)$ for $t\in\RR$, curves obtained from the standard cycloid by translations $(x_1,x_2)\mapsto (x_1+b,x_2)$, $b\in\RR$, or dilations $(x_1,x_2)\mapsto (cx_1, x_2)$, $c\in\RR_+$, or are vertical lines, $x_1=a$, $a\in\RR$ \cite[Proposition I.2.1]{DaskalHamilton1998}.}
on $\HH$ defined by \cite[pp. 901--905]{DaskalHamilton1998}, \cite[p. 62]{Koch},
\begin{equation}
\label{eq:Cycloidal_metric}
ds^2 = \frac{1}{x_d}\sum_{i=1}^d dx_i^2,
\end{equation}
where $x = (x_1,\ldots,x_d)$. When the coefficient matrix $(a^{ij}(x))$, for $x\in\HH$, in the definition \eqref{eq:Generator} of the operator $A$ is strictly and uniformly elliptic on $\HH$ in the sense of \cite[p. 31]{GilbargTrudinger}, then the cycloidal metric \eqref{eq:Cycloidal_metric} is equivalent to that defined by the inverse $(g_{ij}(x))$ of the coefficient matrix $(g^{ij}(x)) = (x_da^{ij}(x))$, for $x\in\HH$. It is convenient to choose a more explicit \emph{cycloidal distance function} on $\bar\HH$ which is equivalent to the distance function defined by the Riemannian metric \eqref{eq:Cycloidal_metric}, such as that in \cite[p. 901]{DaskalHamilton1998},
$$
s(x,y) = \frac{\sum_{i=1}^d|x_i-y_i|}{\sqrt{x_d} + \sqrt{y_d} + \sqrt{|x-y|}},\quad\forall\, x, y \in \bar\HH,
$$
or the equivalent choice on $\bar\HH$ given in \cite[p. 11 \& \S 4.3]{Koch} and which we adopt in this article,
\begin{equation}
\label{eq:Cycloidal_distance}
s(x, y) := \frac{|x-y|}{\sqrt{x_d+y_d+|x-y|}},\quad\forall\, x, y \in \bar\HH.
\end{equation}
Following \cite[\S 1.26]{Adams_1975}, for a domain $U\subset\HH$, we let $C(U)$ denote the vector space of continuous functions on $U$ and let $C(\bar U)$ denote the Banach space of functions in $C(U)$ which are bounded and uniformly continuous on $U$, and thus have unique bounded, continuous extensions to $\bar U$, with norm
$$
\|u\|_{C(\bar U)} := \sup_{U}|u|.
$$
Noting that $U$ may be \emph{unbounded}, we let $C_{\loc}(\bar U)$ denote the linear subspace of functions $u\in C(U)$ such that $u\in C(\bar V)$ for every precompact open subset $V\Subset \bar U$. Following \cite[\S 5.2]{Adams_1975}, we let $C_b(U) \subset C(U)$ denote the Banach space of bounded, continuous functions on $U$ with norm again denoted by $\|u\|_{C(\bar U)}$, without implying that the function $u$ is uniformly continuous on $U$.

We note the distinction between $C(\bar\HH)$ (respectively, $C(\bar\RR^d)$), the Banach space of functions which are uniformly continuous and bounded on $\bar\HH$ (respectively, $\RR^d$), and $C(\underline\HH)=C_{\loc}(\bar\HH)$ (respectively, $C(\RR^d)$), the vector space of functions which are continuous on $\bar\HH$ (respectively, $\RR^d$).

By $C(U\cup T)$, for $T\subseteqq\partial U$, we shall always mean $C_{\loc}(U\cup T)$ and thus we may have $C(\bar U)\subsetneqq C(U\cup T)=C_{\loc}(\bar U)$ when $T=\partial U$, unless $U$ is bounded in which case, as usual, $C(\bar U) = C(U\cup \partial U)=C_{\loc}(\bar U)$.

Daskalopoulos and Hamilton provide the

\begin{defn}[$C^\alpha_s$ norm and Banach space]
\label{defn:Calphas}
\cite[p. 901]{DaskalHamilton1998}
Given $\alpha \in (0,1)$ and an open subset $U\subset\HH$, we say that $u\in C^\alpha_s(\bar U)$ if $u\in C(\bar U)$ and
$$
\|u\|_{C^\alpha_s(\bar U)} < \infty,
$$
where
\begin{equation}
\label{eq:CalphasNorm}
\|u\|_{C^\alpha_s(\bar U)} := [u]_{C^\alpha_s(\bar U)} + \|u\|_{C(\bar U)},
\end{equation}
and
\begin{equation}
\label{eq:CalphasSeminorm}
[u]_{C^\alpha_s(\bar U)} := \sup_{\stackrel{x, y\in U}{x\neq y}}\frac{|u(x)-u(y)|}{s(x, y)^\alpha}.
\end{equation}
We say that $u\in C^\alpha_s(\underline{U})$ if $u\in C^\alpha_s(\bar V)$ for all precompact open subsets $V\Subset \underline{U}$, recalling that $\underline{U} := U\cup\partial_0 U$. We let $C^\alpha_{s,\loc}(\bar U)$ denote the linear subspace of functions $u \in C^\alpha_s(U)$ such that $u\in C^\alpha_s(\bar V)$ for every precompact open subset $V\Subset \bar U$.
\end{defn}

It is known that $C^\alpha_s(\bar U)$ is a Banach space \cite[\S I.1]{DaskalHamilton1998} with respect to the norm \eqref{eq:CalphasNorm}. We recall the definition of the higher-order $C^{k,\alpha}_s$ H\"older norms and Banach spaces.

\begin{defn}[$C^{k,\alpha}_s$ norms and Banach spaces]
\label{defn:DHspaces}
\cite[p. 902]{DaskalHamilton1998}
Given an integer $k\geq 0$, $\alpha \in (0,1)$, and an open subset $U\subset\HH$, we say that $u\in C^{k,\alpha}_s(\bar U)$ if
$u \in C^k(\bar U)$ and
$$
\|u\|_{C^{k,\alpha}_s(\bar U)} < \infty,
$$
where
\begin{equation}
\label{eq:CkalphasNorm}
\|u\|_{C^{k,\alpha}_s(\bar U)} := \sum_{|\beta|\leq k}\|D^\beta u\|_{C^\alpha_s(\bar U)},
\end{equation}
where $\beta := (\beta_1,\ldots,\beta_d) \in \NN^d$, and $|\beta| := \beta_1 + \cdots + \beta_d$, and
$$
D^\beta u := \frac{\partial^{|\beta|}u}{\partial_{x_1}^{\beta_1}\cdots \partial_{x_d}^{\beta_d}}.
$$
When $k=0$, we denote $C^{0,\alpha}_s(\bar U) = C^\alpha_s(\bar U)$.
\end{defn}

For example, when $k=1$ one has
$$
\|u\|_{C^{1,\alpha}_s(\bar U)} = \|u\|_{C^\alpha_s(\bar U)} + \|Du\|_{C^\alpha_s(\bar U)}.
$$
Finally, we recall the definition of the higher-order $C^{k,2+\alpha}_s$ H\"older norms and Banach spaces.

\begin{defn}[$C^{k,2+\alpha}_s$ norms and Banach spaces]
\label{defn:DH2spaces}
\cite[pp. 901--902]{DaskalHamilton1998}
Given an integer $k\geq 0$, a constant $\alpha \in (0,1)$, and an open subset $U\subset\HH$, we say that $u \in C^{k,2+\alpha}_s(\bar U)$ if $u\in C^{k+1,\alpha}_s(\bar U)$, the derivatives, $D^\beta u$, $\beta\in\NN^d$ with $|\beta| = k+2$, of order $k+2$ are continuous on $U$, and the functions, $x_dD^\beta u$, $\beta\in\NN^d$ with $|\beta|= k+2$, extend continuously up to the boundary, $\partial U$, and those extensions belong to $C^\alpha_s(\bar U)$. We define
\begin{equation}
\label{eq:Ck+2alphasNorm}
\|u\|_{C^{k, 2+\alpha}_s(\bar U)} :=  \|u\|_{C^{k+1,\alpha}_s(\bar U)} + \sum_{|\beta|=k+2}\|x_dD^\beta u\|_{C^{\alpha}_s(\bar U)}.
\end{equation}
We say that\footnote{In \cite[pp. 901--902]{DaskalHamilton1998}, when defining the spaces $C^{k,\alpha}_s(\sA)$ and $C^{k,2+\alpha}_s(\sA)$, it is assumed that $\sA$ is a compact subset of the \emph{closed} upper half-space, $\bar\HH$.} $u\in C^{k,2+\alpha}_s(\underline{U})$ if $u\in C^{k,2+\alpha}_s(\bar V)$ for all precompact open subsets $V\Subset \underline{U}$. When $k=0$, we denote $C^{0,2+\alpha}_s(\bar U) = C^{2+\alpha}_s(\bar U)$.
\end{defn}

For example, when $k=0$ one has
$$
\|u\|_{C^{2+\alpha}_s(\bar U)} = \|u\|_{C^\alpha_s(\bar U)} + \|Du\|_{C^\alpha_s(\bar U)} + \|x_dD^2u\|_{C^\alpha_s(\bar U)}.
$$
A parabolic version of following result is included in \cite[Proposition I.12.1]{DaskalHamilton1998} when $d=2$ and proved in \cite{Feehan_Pop_mimickingdegen_pde} when $d\geq 2$ for parabolic weighted H\"older spaces. We restate the result here for the elliptic weighted H\"older spaces used in this article.

\begin{lem}[Boundary properties of functions in weighted H{\"o}lder spaces]
\label{lem:Properties_second_order_derivatives}
\cite[Lemma 3.1]{Feehan_Pop_mimickingdegen_pde}
If $u \in C^{2+\alpha}_s(\underline{\HH})$ then, for all $x_0 \in\partial \HH$,
\begin{equation}
\label{eq:PropSecondOrderDeriv}
\lim_{\HH \ni x \rightarrow x_0} x_d D^2u(x) = 0.
\end{equation}
\end{lem}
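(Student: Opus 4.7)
The plan is to extract the boundary value $L^\beta := \lim_{\HH\ni x\to x_0} x_d D^\beta u(x)$ of the continuous extension of $\phi^\beta := x_d D^\beta u$ to $\bar V$ (for some precompact $V\Subset\underline\HH$ containing $x_0$), which exists by Definition \ref{defn:DH2spaces}, and to play it off against the cycloidal H\"older bound $|\partial_j u(y)-\partial_j u(z)| \leq [\partial_j u]_{C^\alpha_s(\bar V)}\, s(y,z)^\alpha$ that is also built into the definition. The goal is to show $L^\beta = 0$ for every multi-index $\beta = e_i + e_j$ with $|\beta|=2$.

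The key construction is a pair of interior test points $y_h, z_h$, $h > 0$ small, whose connecting segment is parallel to $e_i$, lies in $\HH$, has Euclidean length $h$, and has the favourably small cycloidal distance $s(y_h,z_h) = O(\sqrt h)$ because both endpoints sit at $x_d$-height comparable to $h$. Concretely, when $i < d$ I take $z_h := x_0 + h e_d$ and $y_h := z_h + h e_i$, so that the segment $\gamma_h(t) := z_h + t\, h e_i$, $t\in[0,1]$, has $(\gamma_h(t))_d \equiv h$; when $i = d$ I take $z_h := x_0 + h e_d$ and $y_h := x_0 + 2h e_d$, so $(\gamma_h(t))_d = h(1+t)$. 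A direct computation from \eqref{eq:Cycloidal_distance} gives $s(y_h, z_h) = O(\sqrt h)$ in both cases, and $\gamma_h(t) \to x_0$ in Euclidean distance uniformly in $t\in[0,1]$ as $h\to 0^+$.

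Applying the fundamental theorem of calculus to $\partial_j u$ along the interior segment $\gamma_h$, where $u$ is classically $C^2$, gives
\begin{equation*}
\partial_j u(y_h) - \partial_j u(z_h) \;=\; h\int_0^1 \partial_i\partial_j u(\gamma_h(t))\,dt \;=\; \int_0^1 \frac{h}{(\gamma_h(t))_d}\, \phi^\beta(\gamma_h(t))\,dt.
\end{equation*}
The left-hand side is $O(h^{\alpha/2})$ by the cycloidal H\"older bound for $\partial_j u$. In the right-hand side the factor $h/(\gamma_h(t))_d$ equals $1$ when $i<d$ and $1/(1+t)$ when $i=d$, both independent of $h$; so continuity of $\phi^\beta$ at $x_0$, applied uniformly on the shrinking segment $\gamma_h([0,1])$, forces the right-hand side to converge as $h\to 0^+$ to $L^\beta$ when $i<d$ and to $L^\beta\log 2$ when $i=d$. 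Passing $h\to 0^+$ in the resulting two-sided estimate yields $L^\beta = 0$.

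The main technical content is the interplay between the two sides of the identity: the cycloidal metric produces the favourable rate $O(h^{\alpha/2}) \to 0$ on the left, while the $h$-independent non-vanishing factor $h/(\gamma_h(t))_d$ guarantees that the right-hand side retains a non-trivial multiple of $L^\beta$. Had $y_h, z_h$ been chosen at a fixed positive height, or with the segment emanating from the boundary itself rather than from $x_d$-height comparable to $h$, one of these two features would have been lost and no contradiction would ensue; tuning the height of the segment to be comparable to $h$ is exactly what makes the two estimates match.
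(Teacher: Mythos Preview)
Your argument is correct. The paper itself does not give a proof of this lemma; it merely restates the elliptic version of \cite[Lemma 3.1]{Feehan_Pop_mimickingdegen_pde} (and \cite[Proposition I.12.1]{DaskalHamilton1998} for $d=2$), so there is no in-paper proof to compare against. Your approach --- applying the fundamental theorem of calculus to $\partial_j u$ along a short segment at height comparable to its length, so that the cycloidal H\"older control on $Du$ forces the left side to vanish while the normalized weight $h/(\gamma_h(t))_d$ stays bounded away from zero on the right --- is exactly the natural mechanism, and is in the spirit of the proofs in those references. The case split $i<d$ versus $i=d$ is handled cleanly and covers all second-order multi-indices by symmetry of mixed partials.
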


For an open subset $U\Subset\HH$, the standard and Daskalopoulos-Hamilton-Koch H\"older spaces coincide by Definitions \ref{defn:Calphas}, \ref{defn:DHspaces}, and \ref{defn:DH2spaces}, and thus, for example, $C^\alpha_s(\bar U) = C^\alpha(\bar U)$ and $C^{2+\alpha}_s(\bar U) = C^{2,\alpha}(\bar U)$. More refined relationships between these H\"older spaces arise from the observation that, by \eqref{eq:Cycloidal_distance},
\begin{equation}
\label{eq:CycloidLessEuclidDistance}
s(x,y) \leq |x - y|^{1/2}, \quad \forall\, x, \, y \in \bar\HH.
\end{equation}
The reverse inequality, on a domain $V\subset\HH$ with $\height(V)=\nu$ and $\diam(V)=D$, takes the form
\begin{equation}
\label{eq:SimpleEuclidLessCycloidDistance}
|x-y| \leq \sqrt{2\nu+D}\,s(x,y), \quad \forall\, x, \,y \in\bar V.
\end{equation}
Therefore, as noted in \cite[Remark 2.4]{Daskalopoulos_Feehan_optimalregstatheston}, we have
$$
\|u\|_{C^{\alpha/2}(U)} \leq \|u\|_{C^\alpha_s(U)} \quad\hbox{and}\quad \|u\|_{C^\alpha_s(V)} \leq C\|u\|_{C^\alpha(V)},
$$
for any open subset $U\subseteqq\HH$ and any open subset $V\subset\HH$ with $\diam(V)\leq D$ and $\height(V)\leq\nu$, with constant $C=C(D,\alpha,\nu)$, and thus
\begin{equation}
\label{eq:DH_and_standard_Holder_relationships}
C^\alpha_s(\bar U) \subset C^{\alpha/2} (\bar U) \quad\hbox{and}\quad C^\alpha(\bar V) \subset  C^\alpha_s(\bar V).
\end{equation}
With the preceding background in place, we can now proceed to recall the key Schauder regularity and existence results we shall need from \cite{Feehan_Pop_elliptichestonschauder}.

\subsection{A priori Schauder estimates and solution to the partial Dirichlet problem on a slab}
\label{subsec:Apriori_Schauder_Dirichlet_solution_slab}
If $d\geq 2$ and $A$ is as in \eqref{eq:Generator} with coefficients $a,b,c$, and $U\subseteqq \HH$ is any subset, we denote
\begin{equation}
\label{eq:CoefficientNorms}
\|a\|_{C^{k, \alpha}_s(\bar U)} := \sum_{i,j=1}^d\|a^{ij}\|_{C^{k, \alpha}_s(\bar U)}
\quad\hbox{and}\quad
\|b\|_{C^{k, \alpha}_s(\bar U)} := \sum_{i=1}^d\|b^i\|_{C^{k, \alpha}_s(\bar U)}.
\end{equation}
From \cite{Feehan_Pop_elliptichestonschauder}, we recall the

\begin{thm}[A priori interior Schauder estimate]
\label{thm:Elliptic_apriori_Schauder_interior_domain}
\cite[Theorem 1.1]{Feehan_Pop_elliptichestonschauder}
For any $\alpha\in(0,1)$ and positive constants $b_0$, $d_0$, $\lambda_0$, $\Lambda$, $\nu$, there is a positive constant, $C=C(b_0,d,d_0,\alpha,\lambda_0,\Lambda,\nu)$, such that the following holds. Suppose $\height(\sO)\leq \nu$ and the coefficients $a,b,c$ of $A$ in \eqref{eq:Generator} belong to $C^{\alpha}_s(\underline\sO)$ and obey \eqref{eq:Strict_ellipticity_domain}, \eqref{eq:Positive_lower_bound_bd_boundary}, and
\begin{equation}
\label{eq:Coeff_holder_continuity_estimate_domain}
\|a\|_{C^{\alpha}_s(\bar\sO)} + \|b\|_{C^{\alpha}_s(\bar\sO)} + \|c\|_{C^{\alpha}_s(\bar\sO)} \leq \Lambda.
\end{equation}
If $u \in C^{2+\alpha}_s(\underline\sO)$ and $\sO'\subset\sO$ is a subdomain such that $\dist(\partial_1\sO',\partial_1\sO)\geq d_0$, then
\begin{equation}
\label{eq:Elliptic_apriori_Schauder_interior_domain}
\|u\|_{C^{2+\alpha}_s(\bar\sO')} \leq C\left(\|Au\|_{C^{\alpha}_s(\bar\sO)} + \|u\|_{C(\bar\sO)}\right).
\end{equation}
\end{thm}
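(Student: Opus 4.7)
The plan is to follow the classical Schauder bootstrap strategy of freezing coefficients plus perturbation, but adapted to the cycloidal geometry that underlies the $C^{k,\alpha}_s$ spaces. The main simplification here is that ``interior'' is meant with respect to $\partial_1\sO$ only: points of $\partial_0\sO$ are treated as interior points in the Daskalopoulos--Hamilton sense, because $A$ becomes degenerate but the $C^{2+\alpha}_s$ norm is the \emph{intrinsic} norm for this degeneracy. Accordingly, I would split $\sO'$ into two overlapping regions:
\begin{equation*}
\sO'_{\mathrm{far}} := \sO'\cap\{x_d\geq \eps\}
\qquad\text{and}\qquad
\sO'_{\mathrm{near}} := \sO'\cap\{x_d< 2\eps\},
\end{equation*}
for $\eps>0$ to be chosen depending only on $b_0,\lambda_0,\Lambda,\nu,d_0,\alpha$, and handle the two pieces by complementary methods.

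On $\sO'_{\mathrm{far}}$ the operator $A$ is \emph{uniformly} strictly elliptic with $C^\alpha$ coefficients (with respect to the Euclidean metric) because $x_d$ is bounded away from $0$, and the cycloidal and Euclidean distances are equivalent by \eqref{eq:SimpleEuclidLessCycloidDistance}; here the classical interior Schauder estimate \cite[Theorem 6.2]{GilbargTrudinger} applied on Euclidean balls $B_{r_0}(x^0)\Subset\sO$ of radius $r_0$ comparable to $\min(\eps,d_0)$ gives the desired bound, after using \eqref{eq:DH_and_standard_Holder_relationships} to convert between $C^\alpha$ and $C^\alpha_s$ on such bounded regions.

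The substantive work is on $\sO'_{\mathrm{near}}$. For each $x^0\in\underline\sO$ with $x^0_d<2\eps$, freeze the coefficients at $x^0$ and consider the constant-coefficient Daskalopoulos--Hamilton model operator
\begin{equation*}
A_{x^0}v := -x_d\tr(a(x^0)D^2v) - \langle b(x^0),Dv\rangle + c(x^0)v.
\end{equation*}
On the cycloidal ball $B_s(x^0,\rho) := \{x: s(x,x^0)<\rho\}$, the H\"older continuity of the coefficients forces $\|A-A_{x^0}\|\lesssim \rho^\alpha$ as an operator $C^{2+\alpha}_s\to C^\alpha_s$, after the $x_d$-weighting in the second-order part is taken into account. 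The key ingredient, which I expect to be the main obstacle, is an a priori estimate of Schauder type for $A_{x^0}$ itself, of the form
\begin{equation*}
\|v\|_{C^{2+\alpha}_s(B_s(x^0,\rho/2))}
\leq C\bigl(\|A_{x^0}v\|_{C^\alpha_s(B_s(x^0,\rho))}+\|v\|_{C(B_s(x^0,\rho))}\bigr),
\end{equation*}
uniform in the frozen point $x^0$ and in $\rho$. This is exactly the interior Schauder estimate for the Daskalopoulos--Hamilton model, and it can be established via an explicit parametrix/fundamental-solution construction for $A_{x^0}$ (analogous to \cite[Theorems I.1.2 and I.1.3]{DaskalHamilton1998}), exploiting the positivity \eqref{eq:Positive_lower_bound_bd_boundary} of $b^d$ so that the model is non-degenerate relative to the cycloidal geometry. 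A scaling argument in the intrinsic $(x_1,\ldots,x_{d-1},\sqrt{x_d})$-coordinates reduces this to a single estimate on a standard cycloidal ball.

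Finally, I would combine the local estimates by a patching argument: cover $\sO'$ by a locally finite family of cycloidal balls $B_s(x^i,\rho)$ with $\rho>0$ small and fixed so that (i) the perturbation $\|A-A_{x^i}\|\le \eta$ is small enough to be absorbed into the model estimate, and (ii) each ball stays at distance $\ge d_0/2$ from $\partial_1\sO$. Adding the local inequalities, using a standard interpolation between $C^{k,\alpha}_s$ norms (which follows from the equivalence with $C^{k,\alpha}$ on bounded pieces plus a straightforward rescaling on cycloidal balls) to absorb the arising $\|u\|_{C^{1,\alpha}_s}$ terms into $\|u\|_{C^{2+\alpha}_s}$ and $\|u\|_{C(\bar\sO)}$, yields \eqref{eq:Elliptic_apriori_Schauder_interior_domain}. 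The finite-height hypothesis $\height(\sO)\le\nu$ enters precisely to keep the equivalence constants in \eqref{eq:SimpleEuclidLessCycloidDistance} and the patching argument uniform.
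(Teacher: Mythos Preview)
This theorem is not proved in the present paper: it is quoted verbatim from \cite[Theorem 1.1]{Feehan_Pop_elliptichestonschauder} and used here as a black box (see the beginning of \S\ref{subsec:Apriori_Schauder_Dirichlet_solution_slab}, ``From \cite{Feehan_Pop_elliptichestonschauder}, we recall the\ldots''). So there is no ``paper's own proof'' to compare against.

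That said, your outline is a faithful high-level summary of the strategy actually carried out in \cite{Feehan_Pop_elliptichestonschauder}, which in turn adapts the Daskalopoulos--Hamilton parabolic argument \cite[\S I]{DaskalHamilton1998} to the elliptic setting: split into a strictly elliptic region $\{x_d\gtrsim\eps\}$ handled by \cite[Theorem 6.2]{GilbargTrudinger}, and a degenerate region near $\partial_0\sO$ handled by freezing coefficients at a point, invoking the constant-coefficient model estimate on cycloidal balls, and perturbing. The step you correctly flag as the ``main obstacle'' --- the uniform interior $C^{2+\alpha}_s$ estimate for the frozen model operator $A_{x^0}$ --- is indeed the substantive content of \cite{Feehan_Pop_elliptichestonschauder}; there it is obtained not via a fundamental-solution construction in the elliptic setting directly, but by first establishing the analogous parabolic estimate (following \cite{DaskalHamilton1998}) and then passing to the elliptic case, together with careful rescaling in the intrinsic coordinates. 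Your sketch is therefore correct in spirit, but the phrase ``can be established via an explicit parametrix/fundamental-solution construction'' understates the technical work: the elliptic model estimate in $d\geq 2$ with the $C^\alpha_s$ scale is genuinely delicate and occupies most of the cited paper.
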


It is considerably more difficult to prove a global a priori estimate for a solution, $u\in C^{k, 2+\alpha}_s(\bar\sO)$, when the intersection $\overline{\partial_0\sO}\cap\overline{\partial_1\sO}$ is non-empty. However, the global estimate in Theorem \ref{thm:Global_elliptic_Schauder_estimate_slab} has useful applications when $\partial_1\sO$ does not meet $\partial_0\sO$. For a constant $\nu>0$, we call
\begin{equation}
\label{eq:Slab}
S^d_\nu = \RR^{d-1}\times(0,\nu),
\end{equation}
a \emph{slab} of height $\nu$ (in the terminology of \cite[\S 3.3]{GilbargTrudinger}). We often simply write $S$ for $S^d_\nu$ when there is no ambiguity and note that $\partial_0 S =\RR^{d-1}\times\{0\}$ and $\partial_1 S =\RR^{d-1}\times\{\nu\}$.

\begin{thm}[A priori global Schauder estimate on a slab]
\label{thm:Global_elliptic_Schauder_estimate_slab}
\cite[Corollary 1.3]{Feehan_Pop_elliptichestonschauder}
For any $\alpha\in(0,1)$ and positive constants $b_0$, $\lambda_0$, $\Lambda$, $\nu$, there is a positive constant, $C=C(b_0,d,\alpha,\lambda_0,\Lambda,\nu)$, such that the following holds. Suppose the coefficients of $A$ in \eqref{eq:Generator} belong to $C^\alpha_s(\bar S)$, where $S=\RR^{d-1}\times(0, \nu)$ as in
\eqref{eq:Slab}, and obey
\begin{gather}
\label{eq:Elliptic_coeff_Holder_continuity_slab}
\|a\|_{C^\alpha_s(\bar S)} + \|b\|_{C^\alpha_s(\bar S)} + \|c\|_{C^{\alpha}_s(\bar S)} \leq \Lambda,
\\
\label{eq:Strict_ellipticity_slab}
\langle a\xi, \xi\rangle \geq \lambda_0 |\xi|^2\quad\hbox{on } \bar S,\quad \forall\,\xi \in \RR^d,
\\
\label{eq:Elliptic_coeff_b_d_slab}
b^d \geq b_0 \quad \hbox{on } \partial_0 S.
\end{gather}
If $u\in C^{2+\alpha}_s(\bar S)$ and $u = 0$ on $\partial_1 S$, then
\begin{equation}
\label{eq:Global_ellipticSchauder_estimate_slab}
\| u\|_{C^{2+\alpha}_s(\bar S)} \leq C\left(\|A u\|_{C^\alpha_s(\bar S)} + \|u\|_{C(\bar S)}\right),
\end{equation}
and, when $c\geq 0$ on $S$,
\begin{equation}
\label{eq:Global_ellipticSchauder_estimate_slab_nonnegc}
\| u\|_{C^{2+\alpha}_s(\bar S)} \leq C \|A u\|_{C^\alpha_s(\bar S)},
\end{equation}
\end{thm}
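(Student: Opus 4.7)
The plan is to prove \eqref{eq:Global_ellipticSchauder_estimate_slab} by decomposing the slab $S=\RR^{d-1}\times(0,\nu)$ into two overlapping horizontal strips, one on which the operator degenerates and Theorem \ref{thm:Elliptic_apriori_Schauder_interior_domain} applies directly, and one on which $A$ becomes uniformly elliptic in the classical sense so that standard boundary Schauder theory near a smooth Dirichlet boundary is available. The crucial geometric fact that makes this feasible is that in a slab the degenerate boundary $\partial_0 S$ and the Dirichlet boundary $\partial_1 S$ lie a positive distance $\nu$ apart, so the corner-point obstructions discussed in \S \ref{subsec:Corner_point_compatibility} never appear. Translation invariance of $S$ in the horizontal variables then ensures the local constants are uniform, which is precisely what one needs to patch to a global bound.

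Explicitly I would set $S' := \RR^{d-1}\times(0,2\nu/3)$ and $S'' := \RR^{d-1}\times(\nu/3,\nu)$, so $\bar S = \bar S' \cup \bar S''$. On $S'$ the condition $\dist(\partial_1 S',\partial_1 S)=\nu/3>0$ lets Theorem \ref{thm:Elliptic_apriori_Schauder_interior_domain} produce
\[
\|u\|_{C^{2+\alpha}_s(\bar S')} \leq C\left(\|Au\|_{C^\alpha_s(\bar S)} + \|u\|_{C(\bar S)}\right).
\]
On $S''$, since $x_d\geq \nu/3$, the leading coefficient matrix $(x_d a^{ij})$ of $A$ is strictly and uniformly elliptic in the classical sense with ellipticity constant at least $\nu\lambda_0/3$, and by \eqref{eq:DH_and_standard_Holder_relationships} its standard $C^\alpha$ norm is controlled by $\Lambda$ and $\nu$. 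Because $\partial_0 S''$ is a free face carrying no boundary condition, I would apply the classical boundary Schauder estimate not to $u$ directly but to $v := \zeta u$, where $\zeta(x_d)$ is a smooth cutoff equal to $1$ for $x_d\geq\nu/2$ and vanishing for $x_d\leq \nu/3$. Then $v\in C^{2,\alpha}$ vanishes on both horizontal boundaries of the strip $\{\nu/3<x_d<\nu\}$ and satisfies $Av = \zeta Au + [A,\zeta]u$, whose right-hand side has a $C^\alpha$-norm controlled by $\|Au\|_{C^\alpha_s(\bar S)}+\|u\|_{C^1(\bar S'')}$. The classical global Schauder estimate for a strictly elliptic Dirichlet problem on this strip (via standard localisation in the horizontal variables, \cite[Chapter 6]{GilbargTrudinger}) gives a bound on $\|v\|_{C^{2,\alpha}}$, which I would convert to a bound on $\|u\|_{C^{2+\alpha}_s(\bar S'')}$ using \eqref{eq:DH_and_standard_Holder_relationships} and absorb the $C^1$ term by interpolation.

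Combining the two local bounds yields \eqref{eq:Global_ellipticSchauder_estimate_slab}: for seminorm pairs $|u(x)-u(y)|/s(x,y)^\alpha$ with $x\in\bar S'\less \bar S''$ and $y\in\bar S''\less \bar S'$, one connects them through a point in the overlap $\RR^{d-1}\times[\nu/3,2\nu/3]$ and applies the triangle inequality for seminorms. For \eqref{eq:Global_ellipticSchauder_estimate_slab_nonnegc}, the extra hypothesis $c\geq 0$ allows Corollary \ref{cor:Elliptic_weak_maximum_principle_finite_height_domain} (whose hypotheses are met thanks to \eqref{eq:Elliptic_coeff_b_d_slab}, possibly after the exponential change of dependent variable of Remark \ref{rmk:Elliptic_weak_maximum_principle_finite_height_domain}) to give $\|u\|_{C(\bar S)}\leq C\|Au\|_{C(\bar S)}\leq C\|Au\|_{C^\alpha_s(\bar S)}$, absorbing the $\|u\|_{C(\bar S)}$ term in \eqref{eq:Global_ellipticSchauder_estimate_slab}. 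The main obstacle in this scheme is the $S''$ step: ensuring that the cutoff-and-commutator argument is uniform in horizontal translation, and carefully tracking the equivalence between standard and cycloidal H\"older norms on a region where $x_d$ is bounded away from zero. The rest is routine patching.
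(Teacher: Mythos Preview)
The paper does not actually prove this theorem: it is quoted verbatim as \cite[Corollary 1.3]{Feehan_Pop_elliptichestonschauder}, so there is no proof in the present paper to compare against. Your outline is the natural one and is almost certainly what the cited reference does --- the whole point, emphasised repeatedly in \S\ref{subsec:Survey} and \S\ref{subsec:Corner_point_compatibility}, is that the slab is the one domain where $\overline{\partial_0 S}$ and $\overline{\partial_1 S}$ are a positive distance apart, so one can simply glue the degenerate interior estimate (Theorem~\ref{thm:Elliptic_apriori_Schauder_interior_domain}) near $\partial_0 S$ to the classical boundary Schauder estimate near $\partial_1 S$.

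One small correction: the commutator $[A,\zeta]u$ contains first derivatives of $u$ multiplied by $C^\alpha$ coefficients, so its $C^\alpha$ norm is controlled by $\|u\|_{C^{1,\alpha}}$ on $\supp D\zeta$, not merely $\|u\|_{C^1}$. This causes no trouble, since $\supp D\zeta \subset \bar S'$ and you have already bounded $\|u\|_{C^{2+\alpha}_s(\bar S')}$ in the first step; alternatively one absorbs it by interpolation as you indicate. With that adjustment the argument is complete.
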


\begin{thm}[Existence and uniqueness of a smooth solution to an boundary value problem on a slab]
\label{thm:Elliptic_existence_uniqueness_slab}
\cite[Theorem 1.6]{Feehan_Pop_elliptichestonschauder}
Let $\alpha\in(0,1)$ and let $\nu>0$ and $S=\RR^{d-1}\times(0, \nu)$ be as in \eqref{eq:Slab}. Let $A$ be an operator as in \eqref{eq:Generator}. If $f$ and the coefficients of $A$ in \eqref{eq:Generator} belong to $C^\alpha_s(\bar S)$ and obey \eqref{eq:Strict_ellipticity_slab} and \eqref{eq:Elliptic_coeff_b_d_slab} and $g \in C^{2+\alpha}_s(\bar S)$, then there is a unique solution, $u \in C^{2+\alpha}_s(\bar S)$, to the boundary value problem,
\begin{align}
\label{eq:Elliptic_equation_slab}
Au &= f \quad \hbox{on } S,
\\
\label{eq:Equation_slab_boundarycondition}
u &= g \quad \hbox{on } \partial_1 S.
\end{align}
\end{thm}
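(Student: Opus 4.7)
The plan is to apply the classical method of continuity, with the a priori global Schauder estimate of Theorem \ref{thm:Global_elliptic_Schauder_estimate_slab} furnishing the key uniform bound.

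\emph{Uniqueness and reductions.} Suppose $u_1, u_2 \in C^{2+\alpha}_s(\bar S)$ are solutions; then $w := u_1 - u_2$ satisfies $Aw = 0$ on $S$ with $w = 0$ on $\partial_1 S$. The change of dependent variable $\tilde w := e^{\sigma x_d} w$ of Remark \ref{rmk:Elliptic_weak_maximum_principle_finite_height_domain} (with $\sigma \geq b_0/(2\Lambda)$) transforms $A$ into an equivalent operator whose zero-order coefficient is bounded below by $b_0^2/(4\Lambda) > 0$ on $\bar S$, whereupon Lemma \ref{lem:Elliptic_weak_maximum_principle_finite_height_domain} forces $\tilde w \equiv 0$. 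For existence, I first reduce to the homogeneous boundary condition by setting $\tilde u := u - g$: the new right-hand side $\tilde f := f - Ag$ lies in $C^\alpha_s(\bar S)$ since the only nontrivial term, $x_d\tr(aD^2g)$, belongs to $C^\alpha_s(\bar S)$ by the very definition of $C^{2+\alpha}_s(\bar S)$. Applying the same change of variable once more reduces further to the case where, in addition, $c \geq c_0 > 0$ on $\bar S$, so that Theorem \ref{thm:Global_elliptic_Schauder_estimate_slab} delivers the clean uniform estimate $\|v\|_{C^{2+\alpha}_s(\bar S)} \leq C\|Av\|_{C^\alpha_s(\bar S)}$ for $v \in C^{2+\alpha}_s(\bar S)$ with $v|_{\partial_1 S} = 0$.

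\emph{Continuity method.} Select a reference operator $A_0$ of the form \eqref{eq:Generator} whose Dirichlet problem on $S$ is already known to be solvable; a natural candidate is $A_0 v := -x_d \Delta v - b_0 \partial_{x_d} v + c_0 v$, which is translation-invariant in $x' = (x_1,\ldots,x_{d-1})$ and therefore amenable to solution by Fourier transform in $x'$. Define $A_t := (1-t)A_0 + tA$ for $t \in [0,1]$. The coefficients of $A_t$ satisfy the hypotheses of Theorem \ref{thm:Global_elliptic_Schauder_estimate_slab} uniformly in $t$, so the estimate above holds for every $A_t$ with a single constant $C$. Let
\begin{equation*}
T := \{t \in [0,1] : A_t u = \tilde f, \ u|_{\partial_1 S} = 0 \text{ admits a solution in } C^{2+\alpha}_s(\bar S)\}.
\end{equation*}
Openness of $T$ is a Neumann series argument, exploiting the bounded inverse at $t_0 \in T$ together with the boundedness of $A - A_0$ as a map from $\{v \in C^{2+\alpha}_s(\bar S) : v|_{\partial_1 S} = 0\}$ into $C^\alpha_s(\bar S)$. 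Closedness of $T$ is obtained by combining the uniform estimate with Arzel\`a--Ascoli applied on an exhausting sequence of precompact subdomains $S \cap B_R$ and a diagonal argument: one extracts a subsequence converging locally in $C^2$ to a limit whose $C^{2+\alpha}_s(\bar S)$ norm is controlled by lower semicontinuity, and which therefore solves the limiting equation.

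\emph{Main obstacle.} The most delicate step is verifying the base case $0 \in T$, i.e., solvability for the reference operator $A_0$. The plan is to Fourier-transform in $x'$, reducing the problem to a one-parameter family of second-order ODEs on $(0, \nu)$ of confluent hypergeometric (Kummer) type near the degenerate endpoint $x_d = 0$; one then reassembles the fiberwise solutions and verifies, using the explicit Kummer function asymptotics, that the result lies in $C^{2+\alpha}_s(\bar S)$ and vanishes on $\partial_1 S$. This is precisely where the weighted regularity analysis of \cite{Feehan_Pop_elliptichestonschauder} enters, and adapting it faithfully is the technical heart of the argument.
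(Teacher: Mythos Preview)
The paper does not actually prove this theorem: it is quoted verbatim from \cite[Theorem 1.6]{Feehan_Pop_elliptichestonschauder}, and no argument is supplied here. The only guidance the present paper offers is the Remark immediately following the statement, which sketches the constant-coefficient Heston case in $d=2$ via Fourier transform in $x_1$ and solution of the resulting Kummer ODE in $x_2$.

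Your proposal is consistent with that hint and with the standard architecture one would expect in \cite{Feehan_Pop_elliptichestonschauder}: reduce to $g=0$ and $c\geq c_0>0$, invoke the global Schauder estimate \eqref{eq:Global_ellipticSchauder_estimate_slab_nonnegc} to run the method of continuity, and handle the base case by Fourier analysis in the tangential variables coupled with Kummer-type ODE theory near $x_d=0$. This is almost certainly the route taken in the cited reference. One point worth tightening: for closedness you do not need Arzel\`a--Ascoli and a diagonal extraction at all, since the a priori bound $\|u_t\|_{C^{2+\alpha}_s(\bar S)}\leq C\|\tilde f\|_{C^\alpha_s(\bar S)}$ already makes $\{u_t\}$ Cauchy in the Banach space $C^{2+\alpha}_s(\bar S)$ whenever $t_n\to t_\infty$ (apply the estimate to $u_{t_n}-u_{t_m}$ and use continuity of $t\mapsto A_t$ in operator norm). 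Your compactness-plus-diagonalization argument works but is heavier than necessary on a genuinely unbounded domain.
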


\begin{rmk}[Existence and uniqueness of a solution to the elliptic Heston equation on a slab or half-plane]
When $d=2$ and $A$ is the Heston operator as in \eqref{eq:Heston_generator} on a strip $S=\RR\times(0, \nu)$ as in \eqref{eq:Slab} and $f\in C^\alpha_s(\bar S)$, one may prove existence and uniqueness of a solution $u\in C^{2+\alpha}_s(\bar S)$ to \eqref{eq:Elliptic_equation_slab}, \eqref{eq:Equation_slab_boundarycondition} by a slight modification of the proof of \cite[Theorem B.3 \& Corollary B.4]{Feehan_Pop_elliptichestonschauder}, where the coefficients $a,b,c$ of $A$ in \eqref{eq:Generator} are assumed to be constant. Indeed, the proof of \cite[Theorem B.3 \& Corollary B.4]{Feehan_Pop_elliptichestonschauder} proceeds by taking the Fourier transform of equation \eqref{eq:Elliptic_equation_slab} with respect to $x$ and then solving the resulting Kummer ordinary differential equation in $y\in(0,\nu)$ in terms of confluent hypergeometric functions \cite[\S 13]{AbramStegun}.
The same procedure (Fourier transform with respect to $x$ and solving the Kummer equation in $y$) yields a solution $u\in C^{2+\alpha}_s(\underline \HH)$ when $f\in C^\alpha_s(\underline\HH)$ and, for example, is bounded on $\HH$.
\end{rmk}

\section{Partial Dirichlet problems on a half-ball and a slab}
\label{sec:Diffeomorphism}
Our goal in this section is to prove existence of a solution to the partial Dirichlet problem on a half-ball centered along the boundary of the upper half-space (Theorem \ref{thm:Existence_uniqueness_elliptic_Dirichlet_halfball}). In \S \ref{subsec:Diffeomorphism_halfball_slab}, we generalize the definition of the conformal map from a half-disk to a horizontal strip in the complex plane to higher dimensions and explore its properties and effect on the coefficients of the operator $A$ in \eqref{eq:Generator}. In \S \ref{subsec:Existence_dirichlet_boundary_value_problem_halfball}, we exploit the existence of a solution to the partial Dirichlet problem on a slab (Theorem \ref{thm:Elliptic_existence_uniqueness_slab}) and properties of the map from a half-ball to a slab to prove Theorem \ref{thm:Existence_uniqueness_elliptic_Dirichlet_halfball}.

\subsection{A diffeomorphism from the unit half-ball to the slab}
\label{subsec:Diffeomorphism_halfball_slab}
We begin with the case $d=2$. A conformal map from the half-disk without the corner points, $\{z\in\CC: |z|\leq 1, \ \Imag z \geq 0, \ z\neq \pm 1\}$, onto the quadrant, $\{\xi\in\CC: \Real\xi\geq 0, \ \Imag\xi \geq 0, \ \xi\neq 0\}$ without the corner, can be achieved with
\begin{equation}
\label{eq:Defn_z_to_xi_complex_plane}
\CC\less\{1\} \ni z \mapsto \xi := \frac{1+z}{1-z} \in \CC.
\end{equation}
A conformal map from the quadrant, $\{\xi\in\CC: \Real\xi\geq 0, \ \Imag\xi \geq 0, \ \xi\neq 0\}$ without the corner, onto the infinite horizonal strip, $\{w\in\CC:0\leq \Imag w\leq \pi/2\}$, can be achieved with
\begin{equation}
\label{eq:Defn_xi_to_w_complex_plane}
\CC\less\{\xi\in\CC:\Real\xi \leq 0, \ \Imag \xi = 0\} \ni \xi \mapsto w :=\Log\xi \in \CC,
\end{equation}
where the principal value of the complex logarithm is defined as usual with
$$
\Log \xi := \ln |\xi| + i\Arg \xi,
$$
and $\Arg \xi \in (-\pi, \pi]$. Hence, noting that $\xi = (1-|z|^2+2\Imag z)/|1-z|^2$, the composition,
\begin{equation}
\label{eq:Defn_z_to_w_complex_plane}
\CC\less\{z\in\CC: |z| \geq 1, \ \Imag z = 0\} \ni z \mapsto w = \Log\left(\frac{1+z}{1-z}\right) \in \CC,
\end{equation}
maps the
\begin{enumerate}
\item open disk, $\{z\in\CC: |z|<1, \ \Imag z>0\}$, onto the open strip $0 < \Imag w < \pi/2$;
\item point $z=1$ in the closed unit half-disk onto the point at infinity, $w=\infty$;
\item semicircle, $\{z\in\CC: |z|=1, \ \Imag z>0\}$, onto the line, $\Imag w = \pi/2$;
\item unit interval, $\{z\in\CC: |z|<1, \ \Imag z=0\}$, onto the line, $\Imag w = 0$;
\item point $z=-1$ in the closed unit half-disk onto the point at infinity, $w=-\infty$.
\end{enumerate}
See Figure \ref{fig:conformalmaps} for an illustration of these maps.

The relationship between a finite-width rectangle in the strip of height $\pi/2$ in the $w$-plane and corresponding region in the unit half-disk in the $z$-plane defined by the map $w \mapsto z$ will be especially important in our analysis. For a fixed $k\in\RR$, the line segment, $w = k + i\theta$, $\theta \in (0, \pi/2)$, in the $w$-plane is mapped from the quarter-circle, $\xi = e^ke^{i\theta}$, $\theta \in (0, \pi/2)$, in the $\xi$-plane. Moreover, for a fixed $r\in\RR_+$, the quarter-circle, $\xi = re^{i\theta}$, $\theta \in (0, \pi/2)$, in the $\xi$-plane is mapped from the curve in the $z$-plane given by
$$
z = \frac{re^{i\theta}-1}{re^{i\theta}+1} = \frac{r^2-1 + i2r\sin\theta}{(r\cos\theta+1)^2 + r^2\sin^2\theta},
\quad \theta \in (0, \pi/2),
$$
as we see by solving the equation $(1+z)/(1-z) = re^{i\theta}$ for $z$. See Figure \ref{fig:conformalmaprectangle} for an illustration of the effect of this map.

We now generalize the holomorphic map defined in \eqref{eq:Defn_z_to_w_complex_plane}, identifying the half-disk with a strip in dimension $d=2$, to a map identifying the half-ball with a slab in dimension $d\geq 2$. First observe that, by analogy with our definition in \eqref{eq:Defn_z_to_xi_complex_plane},
$$
\xi = \frac{1+z}{1-z} = \frac{(1+z)(1-\bar z)}{|1-z|^2} = \frac{(1-|z|^2) + 2\Imag z}{|1-z|^2},
$$
we now set
\begin{equation}
\label{eq:Defn_xi_dgeq2_explicit}
\xi(x) := \frac{(1-|x|^2)e_1 + 2(x-\langle x,e_1 \rangle e_1)}{|e_1-x|^2}, \quad x\in B^+,
\end{equation}
where $B^+$ is the unit half-ball,
$$
B^+ := \{x\in \RR^d: |x|<1, \ x_d > 0\} \subset\RR^{d-1}\times\RR_+,
$$
and observe that the definition \eqref{eq:Defn_xi_dgeq2_explicit} of $\xi(x)$ gives a diffeomorphism,
\begin{equation}
\label{eq:Defn_xi_dgeq2}
\xi: B^+ \cong \RR_+ \times \RR^{d-2} \times \RR_+, \quad x \mapsto \xi(x).
\end{equation}
Indeed, if $x=e_1+\vec h\in B^+$ for $\vec h\in\RR^d$ with $h=|\vec h|$ and we write $x=(x_1,x'')$, then as $B^+ \ni x\to e_1$, so $h\downarrow 0$, while $x''\neq 0$ we see that
$$
\xi(x) = \frac{h^2 e_1 + 2x''}{h^2} = e_1 + \frac{2x''}{h^2}, \quad x'' \in \RR^{d-2} \times \RR_+.
$$
The transformation \eqref{eq:Holomorphic_map_subquadrant_onto_substrip} extends continuously to map the
\begin{enumerate}
\item point $e_1$ in $\bar B^+$ onto a point at infinity labeled as $(e_1,\infty)\in\{e_1\}\times ((\RR^{d-2} \times \RR_+) \cup\{\infty\})$;
\item points $x$ in the open $(d-1)$-dimensional hemisphere, $\{x\in\bar B^+: |x|=1, \ x_d>0\}$, onto points $\xi(x) \in \{0\}\times \RR^{d-2} \times \RR_+$;
\item points $x$ in the punctured, open $(d-1)$-dimensional unit ball, $\{x\in\bar B^+: |x|<1, \ x_d=0, \ x\neq e_1\}$,
onto points $\xi(x) \in \RR_+\times \RR^{d-2} \times \{0\}$;
\item points $x$ in the punctured $(d-2)$-dimensional unit sphere, $\{x\in\bar B^+: |x|=1, \ x_d=0, \ x\neq e_1\}$, onto points $\xi(x) \in \{0\}\times\RR^{d-2}\times\{0\}$.
\end{enumerate}
We now set
\begin{equation}
\label{eq:Defn_wk}
w_k:=\xi_k, \quad \hbox{for } k=2,\ldots,d-1,
\end{equation}
and, identifying
$$
\RR_+\times\RR_+ = \{(\xi_1, \xi_d)\in\RR^2: \xi_1>0, \ \xi_d > 0\}
$$
with an open quadrant in the complex plane,
$$
Q := \{\xi_1 + i\xi_d: \xi_1>0, \ \xi_d > 0\} \subset \CC,
$$
we set
\begin{equation}
\label{eq:Defn_w1_plus_iwd}
w_1 + iw_d := \ln|\xi_1+i\xi_d| + i \Arg(\xi_1+i\xi_d),
\end{equation}
and thus,
$$
w_1 + iw_d = \frac{1}{2}\ln\left(\xi_1^2 + \xi_d^2\right) + i\arctan\left(\frac{\xi_d}{\xi_1}\right).
$$
As in \eqref{eq:Slab}, we denote the open two-dimensional strip of height $\pi/2$ by
$$
S^2_{\pi/2} = \{w_1+iw_d: w_1\in\RR, \ 0<w_d<\pi/2\} = \RR\times (0,\pi/2) \subset \CC,
$$
with closure, $\bar S^2_{\pi/2} = \RR\times [0,\pi/2]$. The transformation \eqref{eq:Defn_w1_plus_iwd} defines a holomorphic map,
\begin{equation}
\label{eq:Holomorphic_map_subquadrant_onto_substrip}
Q \cong S^2_{\pi/2}, \quad \xi_1 + i\xi_d\mapsto w_1+iw_d,
\end{equation}
which extends continuously to map the
\begin{enumerate}
\item point $(\xi_1,\xi_d)=(0,0)$ onto the interval at infinity, $\{-\infty\}\times (0,\pi/2)$;
\item open quarter-circle, $\{(\xi_1,\xi_d)\in\RR^2: \xi_1^2+\xi_d^2=1, \ \xi_1>0, \ \xi_d>0\}$, onto the open interval, $\{0\}\times (0,\pi/2)$;
\item open half-line, $\{(\xi_1,\xi_d)\in\RR^2: \xi_1>0, \ \xi_d=0\}$, onto the line, $\RR \times \{0\}$;
\item open half-line, $\{(\xi_1,\xi_d)\in\RR^2: \xi_1=0, \ \xi_d>0\}$, onto the line, $\RR \times \{\pi/2\}$;
\item quarter-circle at infinity, $\{(\xi_1,\xi_d)\in\RR^2: \xi_1^2+\xi_d^2=\infty, \ \xi_1>0, \ \xi_d>0\}$ onto the interval at infinity, $\{\infty\}\times (0,\pi/2)$.
\end{enumerate}
As in \eqref{eq:Slab}, we denote the $d$-dimensional open slab of height $\pi/2$ by
$$
S \equiv S^d_{\pi/2} = \{w\in\RR^d: 0<w_d<\pi/2\} = \RR^{d-1}\times (0,\pi/2) \subset\RR^{d-1}\times\RR_+,
$$
with closure, $\bar S = \RR^{d-1}\times [0,\pi/2]$. The composite transformation induced by \eqref{eq:Defn_xi_dgeq2}, \eqref{eq:Defn_wk}, and \eqref{eq:Defn_w1_plus_iwd},
\begin{equation}
\label{eq:Defn_Phi}
\Phi: B^+ \cong S, \quad x \mapsto w(x),
\end{equation}
maps the $d$-dimensional open unit half-ball, $B^+$, onto the $d$-dimensional open slab, $S$, of height $\pi/2$ and extends continuously to map the
\begin{enumerate}
\item point $e_1\in \bar B^+$ onto a point at infinity labeled as $\{\infty\}\times\RR^{d-2}\times(0,\pi/2)$;
\item the open $(d-1)$-dimensional hemisphere, $\{x\in\bar B^+: |x|=1, \ x_d>0\}$, onto the $(d-1)$-dimensional hyperplane, $\RR^{d-1} \times \{\pi/2\}$;
\item the open $(d-1)$-dimensional unit ball, $\{x\in\bar B^+: |x|<1, \ x_d=0\}$, onto the $(d-1)$-dimensional hyperplane, $\RR^{d-1} \times \{0\}$;
\item the punctured $(d-2)$-dimensional unit sphere, $\{x\in\bar B^+: |x|=1, \ x_d=0, \ x\neq e_1\}$, onto a point at infinity labeled as $\{-\infty\}\times\RR^{d-2}\times(0,\pi/2)$.
\end{enumerate}
The expression \eqref{eq:Defn_xi_dgeq2_explicit} for $\xi(x)$ yields
$$
\xi_1 = \frac{1-|x|^2}{|e_1-x|^2}, \quad \xi_j = \frac{2x_j}{|e_1-x|^2}, \quad 2\leq j\leq d,
$$
an so \eqref{eq:Defn_xi_dgeq2_explicit}, \eqref{eq:Defn_wk}, and \eqref{eq:Defn_w1_plus_iwd} give an expression for the components of $w=\Phi(x)$ in \eqref{eq:Defn_Phi},
\begin{equation}
\label{eq:Defn_Phi_explicit}
\begin{aligned}
w_1 &= \frac{1}{2}\ln\left(\frac{(1-|x|^2)^2 + 4x_d^2}{|e_1-x|^4}\right)
= \frac{1}{2}\ln\left(\frac{1 + (\sum_{j=1}^dx_j^2)^2 + 2x_d^2 - 2\sum_{j=1}^{d-1}x_j^2}{((1-x_1)^2 + \sum_{j=2}^dx_j^2)^2}\right),
\\
w_d &= \arctan\left(\frac{2x_d}{1-|x|^2}\right) = \arctan\left(\frac{2x_d}{1-\sum_{j=1}^dx_j^2}\right),
\\
w_j &= \frac{2x_j}{|e_1-x|^2} = \frac{2x_j}{(1-x_1)^2 + \sum_{i=2}^dx_i^2}, \quad 2\leq j\leq d-1.
\end{aligned}
\end{equation}
We can partially solve for $x_d\in(0,1)$ in terms of $w\in\RR^{d-1}\times(0,\pi/2)$ using the expression for $w_d$ in \eqref{eq:Defn_Phi_explicit} to give
$$
x_d = \frac{1}{2}\left(1-|x|^2\right)\tan w_d.
$$
Note that $w_d\downarrow 0$ if and only if $\tan w_d \downarrow 0$ and thus, when $|x|<1$, the preceding expression shows that $w_d\downarrow 0$ if and only if $x_d\downarrow 0$. Moreover, when $|x|<1$, we see that $w_d=0$ if and only if $x_d=0$.

On the other hand, observe that $w_d\uparrow \pi/2$ if and only if $\tan w_d\uparrow\infty$. But
$$
\tan w_d = \frac{2x_d}{1-|x|^2},
$$
and hence, when $x_d>0$, the preceding expression shows that $w_d\uparrow \pi/2$ if and only if $|x|\uparrow 1$. Moreover, when $x_d>0$, we see that $w_d=\pi/2$ if and only if $|x|=1$.

We now record the properties of the map from the half-ball to the slab, and consequently its regularity-preserving properties for functions defined on the half-ball or slab. The proof is clear by inspection\footnote{When $d=2$, the effect of the map $\Phi^{-1}:S\to B^+$ on finite-width rectangles within the strip can be visualized in Figure \ref{fig:conformalmaprectangle}. For our application in this article, it suffices to know that $\Phi$ is a $C^3$-diffeomorphism, a fact which may also be verified by direct, if tedious, calculation.} of the definition of the map and the preceding discussion.

\begin{lem}[Properties of the map from the half-ball to the slab]
\label{lem:Properties_map_halfball_slab}
The map $\Phi$ in \eqref{eq:Defn_Phi_explicit} is a $C^\infty$-diffeomorphism from the open unit half-ball, $B^+=\{x\in\RR^d:|x|<1, \ x_d>0\}$, onto the open slab, $S=\RR^{d-1}\times (0, \pi/2)\}$, of height $\pi/2$. Moreover, the map $\Phi$ extends to a $C^\infty$-diffeomorphism of open $C^\infty$ manifolds with boundary which identifies the following open portions of the boundary, $\partial B^+$, with the corresponding open portions of the boundary, $\partial S$:
\begin{enumerate}
\item The $(d-1)$-dimensional open hemisphere, $\{x\in\bar B^+: |x|=1, \ x_d>0\}$, with the $(d-1)$-dimensional hyperplane, $\RR^{d-1} \times \{\pi/2\}\subset\RR^d$, and
\item The $(d-1)$-dimensional open unit ball, $\{x\in\bar B^+: |x|<1, \ x_d=0\}$, with the $(d-1)$-dimensional hyperplane, $\RR^{d-1} \times \{0\}\subset\RR^d$.
\end{enumerate}
Moreover, $\Phi$ extends to a homeomorphism from $\bar B^+$ onto $\bar S\cup\{\pm\infty\}$ which identifies
\begin{enumerate}
\item[(3)]\setcounter{enumi}{3} The point $e_1\in \bar B^+$ with a point at infinity labeled as $\{\infty\}\times\RR^{d-2}\times(0,\pi/2)$;
\item The punctured $(d-2)$-dimensional unit sphere, $\{x\in\bar B^+: |x|=1, \ x_d=0, \ x\neq e_1\}$, with a point at infinity labeled as $\{-\infty\}\times\RR^{d-2}\times(0,\pi/2)$.
\end{enumerate}
\end{lem}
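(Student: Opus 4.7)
The plan is to exhibit $\Phi$ as a composition of three maps, each of which is easy to analyze, and then read off all the stated properties from the composition.

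First, I would treat the ``stereographic'' piece $x\mapsto\xi(x)$ defined in \eqref{eq:Defn_xi_dgeq2_explicit}. On $\RR^d\setminus\{e_1\}$, the denominator $|e_1-x|^2$ is smooth and nonvanishing, so $\xi$ is real-analytic there. I would verify injectivity on $B^+$ and exhibit an explicit smooth inverse by solving the equation
\[
\xi_1|e_1-x|^2 = 1-|x|^2, \qquad \xi_j|e_1-x|^2 = 2x_j \ (2\le j\le d),
\]
which gives $x$ as a rational function of $\xi$ with denominator $(\xi_1+1)^2+\sum_{j=2}^d\xi_j^2$ (nonvanishing on the image). This shows that $\xi$ is a $C^\infty$-diffeomorphism from $B^+$ onto $\RR_+\times\RR^{d-2}\times\RR_+$, and, by continuity, extends smoothly to identify the flat face $\{|x|<1,x_d=0\}$ of $\partial B^+$ with $\RR_+\times\RR^{d-2}\times\{0\}$ and the open hemisphere $\{|x|=1,x_d>0\}$ with $\{0\}\times\RR^{d-2}\times\RR_+$. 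The singular point $e_1$ and the equatorial sphere $\{|x|=1,x_d=0,x\neq e_1\}$ are then examined separately using the explicit formula \eqref{eq:Defn_xi_dgeq2_explicit}: as $x\to e_1$, $|\xi(x)|\to\infty$; as $x$ approaches the punctured equator, $\xi(x)\to 0$.

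Second, I would dispatch the middle coordinates $w_j=\xi_j$ ($2\leq j\leq d-1$) as the trivial identity map, and handle the two-dimensional piece $(\xi_1,\xi_d)\mapsto (w_1,w_d)=\Log(\xi_1+i\xi_d)$ from \eqref{eq:Defn_w1_plus_iwd}. The complex logarithm is a biholomorphism from $\CC\setminus(-\infty,0]$ to $\RR\times(-\pi,\pi]$; restricted to the open first quadrant $Q=\RR_+\times\RR_+$ it yields a $C^\omega$-diffeomorphism onto the open two-dimensional strip $\RR\times(0,\pi/2)$, extending continuously so that the positive $\xi_1$-axis and positive $\xi_d$-axis map diffeomorphically onto $\RR\times\{0\}$ and $\RR\times\{\pi/2\}$ respectively. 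The only degeneracy is at the corner $\xi_1=\xi_d=0$ (mapped to $w_1=-\infty$) and at infinity in $Q$ (mapped to $w_1=+\infty$).

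Composing the three ingredients gives $\Phi:B^+\to S$; smoothness in the interior and on the two open boundary faces follows directly from the smoothness of each factor on the relevant regions, together with the fact that $\xi$ takes the flat face into $\RR_+\times\RR^{d-2}\times\{0\}$ (on which $\Log$ is smooth down to $w_d=0$) and the hemisphere into $\{0\}\times\RR^{d-2}\times\RR_+$ (on which $\Log$ is smooth up to $w_d=\pi/2$). This proves parts (1) and (2). Direct verification using the explicit formulas \eqref{eq:Defn_Phi_explicit}, which I have already in hand, cross-checks the interior diffeomorphism claim and the boundary identifications (one can also solve \eqref{eq:Defn_Phi_explicit} for $x$ in terms of $w$: set $\xi_1+i\xi_d=e^{w_1+iw_d}$ and $\xi_j=w_j$, then invert $\xi$).

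Finally, for part (3) and (4), I would compactify by tracking limits. Using \eqref{eq:Defn_Phi_explicit}, as $x\to e_1$ in $\bar B^+$ one has $|e_1-x|\to 0$ while $1-|x|^2$ and $x_d$ remain bounded, so $w_1\to+\infty$; along the punctured equator $\{|x|=1,x_d=0,x\neq e_1\}$ the numerator $(1-|x|^2)^2+4x_d^2\to 0$ while $|e_1-x|$ stays bounded below, so $w_1\to-\infty$. Introducing the two ideal points $\pm\infty$ to $\bar S$ and defining $\Phi(e_1)=+\infty$, $\Phi(\text{punctured equator})=-\infty$, the continuity of the extension and of its inverse follows from these limit computations together with the explicit formulas for $\Phi^{-1}$ derived above. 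The main (modest) obstacle is bookkeeping of the boundary behavior at the two ``corner strata'' $\{e_1\}$ and the punctured equator; all genuine analytic content is contained in the explicit formulas, so no delicate estimates are required.
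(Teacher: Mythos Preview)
Your proposal is correct and follows essentially the same approach as the paper: the paper's ``proof'' is simply that the result is clear by inspection of the definition of $\Phi$ and the preceding discussion, which is precisely the decomposition of $\Phi$ into the stereographic-type map $x\mapsto\xi(x)$ followed by the complex logarithm in the $(\xi_1,\xi_d)$ coordinates (with the identity on the middle coordinates). You have supplied considerably more detail than the paper does, but the underlying route is identical.
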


By defining $v(w) := u(x)$, for $x\in\underline B^+\cup\partial_1 B^+$, and substituting into the expression \eqref{eq:Generator} for $Au(x)$ and using
$$
u_{x_i} = v_{w_k}\frac{\partial w_k}{\partial x_i} \quad\hbox{and}\quad
u_{x_ix_j} = v_{w_kw_l}\frac{\partial w_k}{\partial x_i}\frac{\partial w_l}{\partial x_j}
+ v_{w_k}\frac{\partial^2 w_k}{\partial x_i\partial x_j},
$$
we obtain
\begin{align*}
Au(x) &= -x_da^{ij}(x)u_{x_ix_j}(x) - b^i(x)u_{x_i} + c(x)u(x)
\\
&= -x_d(w)a^{ij}(x(w))v_{w_kw_l}\frac{\partial w_k}{\partial x_i}\frac{\partial w_l}{\partial x_j}
- \left(b^k(x(w)) + x_d(w)a^{ij}(x(w))\frac{\partial^2 w_k}{\partial x_i\partial x_j}\right)v_{w_k} + c(x(w))v
\\
&= f(x(w)).
\end{align*}
We now define
\begin{equation}
\label{eq:Generator_slab}
\tilde Av(w) := -w_d\tilde a^{ij}(w)v_{w_kw_l} - \tilde b^k(w)v_{w_k} + \tilde c(w)v, \quad w \in S,
\end{equation}
where, for $1\leq k\leq d$,
\begin{align}
\label{eq:Generator_slab_aij}
\tilde a^{ij}(w) &:= \frac{x_d(w)}{w_d}a^{ij}(x(w))\frac{\partial w_k}{\partial x_i}\frac{\partial w_l}{\partial x_j},
\\
\label{eq:Generator_slab_bi}
\tilde b^k(w) &:= b^k(x(w)) + x_d(w)a^{ij}(x(w))\frac{\partial^2 w_k}{\partial x_i\partial x_j},
\\
\label{eq:Generator_slab_c}
\tilde c(w) &:= c(x(w)), \quad w\in S.
\end{align}
We record the properties of the coefficients of $\tilde A$ which we shall need in the following

\begin{lem}[Properties of the coefficients of the operator on the slab obtained by pull-back of the operator on a half-ball]
\label{lem:Generator_slab_coefficient_properties}
Let $A$ be as in \eqref{eq:Generator} with coefficients, $a^{ij}$, $b^i$, $c$, defined on $B^+$ and let $\tilde A$ be as in \eqref{eq:Generator_slab}, with corresponding coefficients, $\tilde a^{kl}$, $\tilde b^k$, $\tilde c$, defined on $S$. Then the following hold.
\begin{enumerate}
\item\label{item:Ellipticity_generator_slab} (Ellipticity of the matrix $(\tilde a^{kl})$.) The matrix $(a^{kl})$ obeys
$$
\langle a\xi, \xi\rangle \geq 0 \quad\left(\hbox{respectively,}\  \langle a\xi, \xi\rangle > 0\right)
\quad\hbox{on }B^+, \quad\forall\,\xi\in\RR^d,
$$
if and only if the matrix $(\tilde a^{kl})$ obeys
$$
\langle \tilde a\xi, \xi\rangle \geq 0 \quad\left(\hbox{respectively,}\  \langle a\xi, \xi\rangle > 0\right)
\quad\hbox{on }S, \quad\forall\,\xi\in\RR^d.
$$
Moreover, the matrix $(a^{kl})$ is symmetric if and only the matrix $(\tilde a^{kl})$ is symmetric.
\item\label{item:Local_strict_ellipticity_generator_slab} (Local strict ellipticity of the matrix $(\tilde a^{kl})$ on bounded subsets.)
If the matrix $(a^{kl})$ is strictly elliptic on $\underline B^+$, that is, obeys \eqref{eq:Strict_ellipticity_domain} with $\sO=B^+$ and positive constant of ellipticity $\lambda_0$, then for every $U\Subset \bar S$, the matrix $(\tilde a^{kl})$ is strictly elliptic on $\underline U$, that is, obeys \eqref{eq:Strict_ellipticity_domain} with $\sO=U$ and some positive constant of ellipticity, $\lambda_U$.
\item\label{item:Generator_slab_lower_bound_bd} (Lower bound for $\tilde b^d$ on $\partial_0S$.) One has $b^d\geq 0$ on $\partial_0B^+$ (respectively, $b^d\geq b_0$ on $\partial_0B^+$ for some positive constant, $b_0$) if and only if $\tilde b^d\geq 0$ on $\partial_0S$ (respectively, $\tilde b^d\geq b_0$ on $\partial_0S$).
\item\label{item:Generator_slab_lower_bound_c} (Lower bound for $\tilde c$ on $S$.) One has $c\geq 0$ on $B^+$ (respectively, $c\geq c_0$ on $B^+$ for some positive constant, $c_0$) if and only if  $\tilde c\geq 0$ on $S$ (respectively, $\tilde c\geq c_0$ on $S$).
\item\label{item:Generator_slab_holder_continuous_coefficients} (H\"older continuity of the coefficients.) The coefficients $a^{ij}$, $b^i$, $c$ belong to $C^\alpha_s(\underline B^+)$ (respectively, $C^\alpha_s(\underline B^+\cup\partial_1 B^+)$) if and only if $\tilde a^{ij}$, $\tilde b^i$, $\tilde c$ belong to $C^\alpha_s(\underline S)$ (respectively, $C^\alpha_{s,\loc}(\bar S)$).
\end{enumerate}
\end{lem}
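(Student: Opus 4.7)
The plan is to derive all five items from the explicit formulas \eqref{eq:Generator_slab_aij}--\eqref{eq:Generator_slab_c} together with the smooth diffeomorphism structure of $\Phi$ recorded in Lemma \ref{lem:Properties_map_halfball_slab}. The single analytic fact that drives items \eqref{item:Ellipticity_generator_slab}--\eqref{item:Local_strict_ellipticity_generator_slab} is that the prefactor $x_d(w)/w_d$ in \eqref{eq:Generator_slab_aij} extends smoothly across $\partial_0 S$ and is bounded above and away from zero on compact subsets of $\bar S$ avoiding the corner locus at infinity. First I would invert the expression for $w_d$ in \eqref{eq:Defn_Phi_explicit} to obtain $x_d=\tfrac12(1-|x|^2)\tan w_d$, so that
\[
\frac{x_d(w)}{w_d} \;=\; \tfrac12\bigl(1-|x(w)|^2\bigr)\,\frac{\tan w_d}{w_d},
\]
which extends smoothly to $\{w_d=0\}$ with value $\tfrac12(1-|x(w)|^2)$, strictly positive except at the locus $|x|=1$ that corresponds under Lemma \ref{lem:Properties_map_halfball_slab} to $w_1=\pm\infty$. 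Hence for every $U\Subset\bar S$ there exists $c_U>0$ with $x_d/w_d\ge c_U$ on $U$.

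Given this, items \eqref{item:Ellipticity_generator_slab}, \eqref{item:Generator_slab_lower_bound_bd}, and \eqref{item:Generator_slab_lower_bound_c} follow by direct inspection. For \eqref{item:Ellipticity_generator_slab}, writing $J_{ki}:=\partial w_k/\partial x_i$, formula \eqref{eq:Generator_slab_aij} yields $\tilde a^{kl}\xi_k\xi_l=(x_d/w_d)\,a^{ij}\eta_i\eta_j$ with $\eta=J^T\xi$; since $J$ is invertible and $x_d/w_d>0$, non-negative (respectively positive) definiteness and symmetry of $(a^{ij})$ transfer to $(\tilde a^{kl})$. For \eqref{item:Generator_slab_lower_bound_bd}, the second summand of \eqref{eq:Generator_slab_bi} carries a factor of $x_d(w)$, which vanishes on $\partial_0 S$, so $\tilde b^d=b^d\circ\Phi^{-1}$ there; combined with the boundary identification of Lemma \ref{lem:Properties_map_halfball_slab}, both directions follow. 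Item \eqref{item:Generator_slab_lower_bound_c} is immediate from \eqref{eq:Generator_slab_c}. For item \eqref{item:Local_strict_ellipticity_generator_slab}, on $U\Subset\bar S$ both $J$ and $J^{-1}$ are uniformly bounded, so $|\eta|\ge C_U^{-1}|\xi|$ for some $C_U>0$; combined with \eqref{eq:Strict_ellipticity_domain} for $(a^{ij})$, this gives strict ellipticity of $(\tilde a^{kl})$ on $U$ with constant $c_U\lambda_0/C_U^2$.

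The main obstacle is item \eqref{item:Generator_slab_holder_continuous_coefficients}, the preservation of the weighted H\"older regularity. My plan is to first use Lemma \ref{lem:Properties_map_halfball_slab} to set up a bijective correspondence between precompact sets $V\Subset\underline B^+$ and precompact sets $\tilde V\Subset\underline S$ (and likewise between precompact subsets of $\underline B^+\cup\partial_1 B^+$ and of $\bar S$), and then to establish comparability of cycloidal distances \eqref{eq:Cycloidal_distance} on such corresponding sets: there exist $c_V,C_V>0$ with
\[
c_V\,s(x,y) \;\le\; s(\Phi(x),\Phi(y)) \;\le\; C_V\,s(x,y),\qquad \forall\,x,y\in V.
\]
The estimate reduces to two ingredients: bi-Lipschitzness of $\Phi|_V$ in the Euclidean sense, which controls the numerators via smoothness of $\Phi$ and $\Phi^{-1}$ on the compact set $V$, and comparability of $\sqrt{x_d+y_d+|x-y|}$ with $\sqrt{w_d+w'_d+|w-w'|}$, which reduces to the bounds $x_d/w_d\in[c_U,C_U]$ established in the first paragraph (applied on $\Phi(V)$).

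Once this comparability is in hand, the seminorm \eqref{eq:CalphasSeminorm} is equivalent under pull-back by $\Phi$, and since each coefficient of $\tilde A$ in \eqref{eq:Generator_slab_aij}--\eqref{eq:Generator_slab_c} is a product of $\Phi$-pullbacks of the coefficients of $A$ with $C^\infty$ functions (the Jacobian entries $\partial w_k/\partial x_i$, their derivatives $\partial^2 w_k/\partial x_i\partial x_j$, and the smooth factor $x_d/w_d$), H\"older regularity of $a^{ij},b^i,c$ transfers to H\"older regularity of $\tilde a^{kl},\tilde b^k,\tilde c$ and vice versa; for the $\tilde b^k$ term, the $x_d\cdot a^{ij}\cdot \partial^2 w_k/\partial x_i\partial x_j$ contribution is harmless since multiplication by the smooth factor $x_d\,\partial^2 w_k/\partial x_i\partial x_j$ preserves $C^\alpha_s$ on compact subsets.
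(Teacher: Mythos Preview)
Your proof is correct and follows essentially the same approach as the paper, though you supply considerably more detail than the paper does. The paper's proof declares items \eqref{item:Ellipticity_generator_slab} and \eqref{item:Generator_slab_lower_bound_c} ``clear'', handles item \eqref{item:Generator_slab_lower_bound_bd} by the same limit $w_d\downarrow 0$ that you use, derives item \eqref{item:Local_strict_ellipticity_generator_slab} from the fact that $\Phi^{-1}$ restricted to a finite cylinder $U_R=B_R^{d-1}\times(0,\pi/2)$ is a diffeomorphism onto its image (which is your Jacobian-bound argument in concrete form), and for item \eqref{item:Generator_slab_holder_continuous_coefficients} simply asserts that it ``follows from Lemma \ref{lem:Properties_map_halfball_slab}.'' Your explicit comparability-of-cycloidal-distances argument is exactly what the paper leaves to the reader here, and your derivation of the key lower bound on $x_d/w_d$ via $x_d=\tfrac12(1-|x|^2)\tan w_d$ is the analytic content that makes everything work.

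One minor imprecision worth correcting: you write that $x_d/w_d$ is strictly positive ``except at the locus $|x|=1$ that corresponds \ldots\ to $w_1=\pm\infty$.'' In fact $|x|=1$ with $x_d>0$ corresponds to $\partial_1 S=\{w_d=\pi/2\}$, where $x_d/w_d=2x_d/\pi>0$. The ratio only degenerates to zero along the corner $|x|=1,\ x_d=0$, which is precisely the locus sent to infinity under $\Phi$. This does not affect your argument, since any $U\Subset\bar S$ stays uniformly away from that locus, but the parenthetical should be narrowed accordingly.
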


\begin{proof}
Items \eqref{item:Ellipticity_generator_slab} and \eqref{item:Generator_slab_lower_bound_c} are clear. Item \eqref{item:Local_strict_ellipticity_generator_slab} follows from the fact that, for any $R>0$ and cylinder $U_R := B_R^{d-1}\times (0,\pi/2) \subset \RR^{d-1}\times(0,\pi/2)$ (where $B_R^{d-1} := \{x'\in\RR^{d-1}:|x'|<R\}$ is the open ball in $\RR^{d-1}$ with center at the origin and radius $R$), Lemma \ref{lem:Properties_map_halfball_slab} implies that $\Phi^{-1}$ is a diffeomorphism from $\bar U_R\subset\bar S$ onto its image in $\underline B^+\cup\partial_1B^+$ which identifies $\partial_0U_R$ with a subset of $\partial_0B^+$ and $\partial_1U_R$ with a subset of $\partial_1B^+$ (see Figure \ref{fig:conformalmaprectangle}). For Item \eqref{item:Generator_slab_lower_bound_bd}, we use the definition \eqref{eq:Generator_slab_bi} of $\tilde b^k$ to write
$$
\tilde b^d(w) = b^d(x(w)) + w_d\frac{x_d(w)}{w_d} a^{ij}(x(w))\frac{\partial^2 w_d}{\partial x_i\partial x_j},
$$
and, denoting $w = (w',w_d)\in \RR^{d-1}\times\RR_+$, we see that Lemma \ref{lem:Properties_map_halfball_slab} implies, by taking limits as $w_d\downarrow 0$, that
$$
\lim_{w_d\downarrow 0}\tilde b^d(w', w_d) = b^d(x(w',0)), \quad w' \in \partial_0S,
$$
by the same reasoning as for Item \eqref{item:Local_strict_ellipticity_generator_slab}, and thus Item \eqref{item:Generator_slab_lower_bound_bd} follows immediately. Finally, Item \eqref{item:Generator_slab_holder_continuous_coefficients} also follows from Lemma \ref{lem:Properties_map_halfball_slab}.
\end{proof}

Lemma \ref{lem:Generator_slab_weak_maximum_principle_property} below circumvents the fact that the coefficients $\tilde a$ and $\tilde b$ of $\tilde A$ need not obey the quadratic growth condition \eqref{eq:Quadratic_growth} normally required for the maximum principle to hold on unbounded domains.

\begin{lem}[Weak maximum principle property for an operator on a slab]
\label{lem:Generator_slab_weak_maximum_principle_property}
Let $A$ be as in \eqref{eq:Generator}, with coefficients defined on $B^+$, and let $\tilde A$ be the corresponding operator in \eqref{eq:Generator_slab}, with coefficients defined on $S$. Then $A$ has the weak maximum principle property on $\underline B^+$ in the sense of Definition \ref{defn:Weak_max_principle_property} if and only if this is true for $\tilde A$ on $\underline S$.
\end{lem}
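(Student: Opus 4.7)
The plan is to use the $C^\infty$-diffeomorphism $\Phi: B^+ \cong S$ from Lemma \ref{lem:Properties_map_halfball_slab} to set up a bijective correspondence $u \leftrightarrow v := u \circ \Phi^{-1}$, and then verify that every hypothesis and conclusion in Definition \ref{defn:Weak_max_principle_property} transfers across $\Phi$. The coefficients of $\tilde A$ in \eqref{eq:Generator_slab}--\eqref{eq:Generator_slab_c} were constructed precisely so that $Au(x) = \tilde A v(\Phi(x))$ for all $x \in B^+$, as the chain-rule computation preceding \eqref{eq:Generator_slab} records; thus $Au \leq 0$ on $B^+$ if and only if $\tilde A v \leq 0$ on $S$. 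Since the argument is symmetric in $A$ and $\tilde A$, it suffices to prove one direction.

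The second step is to verify that the regularity and boundary conditions in Definition \ref{defn:Weak_max_principle_property} are preserved under $u \leftrightarrow v$. By Lemma \ref{lem:Properties_map_halfball_slab}, $\Phi$ restricts to a $C^\infty$-diffeomorphism of manifolds-with-boundary identifying $B^+ \cong S$, $\partial_0 B^+ \cong \partial_0 S$, and $\partial_1 B^+ \cong \partial_1 S$ (with the ``corner set'' $\overline{\partial_0 B^+}\cap\overline{\partial_1 B^+}$ and the point $e_1$ going to $\pm\infty$ in $\bar S$). Hence $v \in C^2(S) \cap C^1(\underline S) \cap C(S\cup\partial_1 S)$ iff $u \in C^2(B^+) \cap C^1(\underline{B^+}) \cap C(B^+\cup\partial_1 B^+)$; $\sup_S v < \infty$ iff $\sup_{B^+} u < \infty$; and $v \leq 0$ on $\partial_1 S$ iff $u \leq 0$ on $\partial_1 B^+$. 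Once $v \leq 0$ on $S$ is established, it transfers back to $u \leq 0$ on $B^+$ by bijectivity of $\Phi$.

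The one non-routine step is to check that the second-order boundary condition \eqref{eq:WMP_C2s_regularity}--\eqref{eq:WMP_xdsecond_order_derivatives_zero_boundary} transfers across $\Phi$. From the explicit formula \eqref{eq:Defn_Phi_explicit} we have $w_d = \arctan(2x_d/(1-|x|^2))$, so on any precompact subset of $\underline{B^+}\cup\partial_1 B^+$ avoiding the corners, the ratio $x_d/w_d$ extends continuously with positive bounded limit $(1-|x|^2)/2$ on $\partial_0 B^+$. The chain rule gives
\[
x_d\, u_{x_i x_j}(x) \;=\; \frac{x_d}{w_d}\bigl[w_d v_{w_k w_l}(\Phi(x))\bigr]\frac{\partial w_k}{\partial x_i}\frac{\partial w_l}{\partial x_j} \;+\; x_d\, v_{w_k}(\Phi(x))\,\frac{\partial^2 w_k}{\partial x_i\partial x_j},
\]
where the second term vanishes on $\partial_0 B^+$ since $x_d=0$ there and the remaining factors are bounded on the relevant precompact pieces by Lemma \ref{lem:Properties_map_halfball_slab}. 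It follows that $x_d D^2 u$ extends continuously to $\partial_0 B^+$ with limit zero if and only if $w_d D^2 v$ extends continuously to $\partial_0 S$ with limit zero. A symmetric argument using the inverse diffeomorphism $\Phi^{-1}$ (whose Jacobian and Hessian, together with the ratio $w_d/x_d$, are likewise smooth and bounded on precompact subsets of $\underline S$) supplies the reverse implication.

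With the correspondence of hypotheses established, the lemma follows at once: applying the weak maximum principle property of $A$ on $\underline{B^+}$ to $u := v\circ\Phi$ yields $u \leq 0$ on $B^+$, hence $v \leq 0$ on $S$; the reverse direction is identical. The main obstacle anticipated is purely bookkeeping at the corners $\overline{\partial_0 B^+}\cap\overline{\partial_1 B^+}$, but because Definition \ref{defn:Weak_max_principle_property} makes no requirement on $u$ at these corner points (continuity is only demanded on $B^+\cup\partial_1 B^+$, not on $\bar B^+$), and because $\Phi$ identifies these corners with $\pm\infty$ in $\bar S$ where again no uniform condition on $v$ is imposed, the correspondence is clean.
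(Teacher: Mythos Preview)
Your proof is correct and takes essentially the same approach as the paper, which dispatches the lemma in one sentence by invoking the diffeomorphism $\Phi$ and Lemma \ref{lem:Properties_map_halfball_slab}. You have simply made explicit the bookkeeping---in particular the transfer of the second-order boundary condition \eqref{eq:WMP_C2s_regularity}--\eqref{eq:WMP_xdsecond_order_derivatives_zero_boundary} via the chain rule and the smooth positive limit of $x_d/w_d$ on $\partial_0 B^+$---that the paper leaves implicit.
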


\begin{proof}
The conclusion is immediate because of the relationship between $A$ and a function $u$ on $B^+$ and $\tilde A$ and a function $v=u\circ\Phi^{-1}$ on $S$ via the diffeomorphism $\Phi$ in \eqref{eq:Defn_Phi} and its properties in Lemma \ref{lem:Properties_map_halfball_slab}.
\end{proof}

\subsection{Existence of solutions to the partial Dirichlet boundary value problem on a half-ball}
\label{subsec:Existence_dirichlet_boundary_value_problem_halfball}
We first use Lemmas \ref{lem:Properties_map_halfball_slab}, \eqref{lem:Generator_slab_coefficient_properties}, and \eqref{lem:Generator_slab_weak_maximum_principle_property} to provide us with a partial Dirichlet problem on the slab which is equivalent to one on the half-ball and then proceed to prove Theorem \ref{thm:Existence_uniqueness_elliptic_Dirichlet_halfball}.

\begin{lem}[Equivalence between the partial Dirichlet problems on the half-ball and the slab]
\label{lem:Existence_uniqueness_elliptic_Dirichlet_halfball_slab_equivalence}
Assume the hypotheses of Theorem \ref{thm:Existence_uniqueness_elliptic_Dirichlet_halfball} with\footnote{The restrictions $r=1$ and $x_0=O\in\RR^d$ are included merely to simplify notation; naturally, the result holds for any $r>0$ and $x_0\in\partial\HH$.} $r=1$ and $x_0=O\in\RR^d$. Then the following hold.
\begin{enumerate}
\item (Partial Dirichlet problem with $C^{2+\alpha}_s$ boundary data)
A function $u \in C^{2+\alpha}_s(\underline B^+\cup \partial_1 B^+)\cap C_b(B^+)$ is a solution to the Dirichlet problem on the unit half-ball, $B^+$,
\begin{align}
\label{eq:Elliptic_equation_unithalfball}
Au &= f \quad\hbox{on } B^+,
\\
\label{eq:Elliptic_boundary_condition_unithalfball}
u &= g \quad\hbox{on }\partial_1B^+,
\end{align}
for
$$
f \in C^\alpha_s(\underline B^+\cup\partial_1B^+)\cap C_b(B^+) \quad\hbox{and}\quad g \in C^{2+\alpha}_s(\underline B^+\cup\partial_1B^+)\cap C_b(B^+),
$$
if and only if $v=u\circ\Phi^{-1}$ is a solution to the partial Dirichlet problem on the slab, $S=\RR^{d-1}\times (0,\pi/2)$,
\begin{align}
\label{eq:Elliptic_equation_halfpislab}
\tilde Av &= \tilde f \quad\hbox{on } S,
\\
\label{eq:Elliptic_boundary_condition_halfpislab}
v &= \tilde g \quad\hbox{on }\partial_1S,
\end{align}
with $\tilde A$ in \eqref{eq:Generator_slab}, and $v$ belonging to $C^{2+\alpha}_{s,\loc}(\bar S)\cap C_b(S)$, and
$$
\tilde f = f\circ\Phi^{-1} \in C^\alpha_{s,\loc}(\bar S)\cap C_b(S) \quad\hbox{and}\quad \tilde g = g\circ\Phi^{-1} \in C^{2+\alpha}_{s,\loc}(\bar S)\cap C_b(S).
$$
\item (Partial Dirichlet problem with continuous boundary data)
A function $u \in C^{2+\alpha}_s(\underline B^+)\cap C_b(B^+\cup\partial_1 B^+)$ is a solution to the partial Dirichlet problem \eqref{eq:Elliptic_equation_unithalfball}, \eqref{eq:Elliptic_boundary_condition_unithalfball} on the unit half-ball, $B^+$, for
$$
f \in C^\alpha_s(\underline B^+)\cap C_b(B^+) \quad\hbox{and}\quad g \in C_b(\partial_1B^+),
$$
if and only if $v = u\circ\Phi^{-1}$ is a solution to the partial Dirichlet problem \eqref{eq:Elliptic_equation_halfpislab}, \eqref{eq:Elliptic_boundary_condition_halfpislab} on the slab, $S$, and\footnote{In particular, $u$ belongs to $C_b(\underline B^+\cup\partial_1 B^+)$ but not necessarily $C(\bar B^+)$.} $v$ belongs to $C^{2+\alpha}_s(\underline S)\cap C_b(S\cup\partial_1S)$, and
$$
\tilde f = f\circ\Phi^{-1} \in C^\alpha_s(\underline S)\cap C_b(S) \quad\hbox{and}\quad \tilde g = g\circ\Phi^{-1} \in C_b(\partial_1S).
$$
\end{enumerate}
\end{lem}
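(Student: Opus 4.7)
The plan is to exploit the diffeomorphism $\Phi:B^+\to S$ constructed in \eqref{eq:Defn_Phi} together with the chain rule computation carried out in \S \ref{subsec:Diffeomorphism_halfball_slab} immediately preceding the definition \eqref{eq:Generator_slab} of $\tilde A$. Setting $v(w):=u(\Phi^{-1}(w))$ for $w\in S$, that computation yields the pointwise identity $(\tilde Av)(w)=(Au)(\Phi^{-1}(w))$ on $S$, so $Au=f$ on $B^+$ is equivalent to $\tilde Av=\tilde f$ on $S$ with $\tilde f=f\circ\Phi^{-1}$. By Lemma \ref{lem:Properties_map_halfball_slab}(1), $\Phi$ maps $\partial_1B^+$ diffeomorphically onto $\partial_1S=\RR^{d-1}\times\{\pi/2\}$, hence the Dirichlet condition $u=g$ on $\partial_1B^+$ translates into $v=\tilde g$ on $\partial_1S$ with $\tilde g=g\circ\Phi^{-1}$. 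Thus the equivalence of the PDE and boundary condition is immediate, and only the equivalence of function-space regularity requires genuine verification.

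For that, I would invoke Lemma \ref{lem:Properties_map_halfball_slab} once more, which shows that $\Phi$ is a $C^\infty$-diffeomorphism between the manifolds-with-boundary obtained from $\bar B^+$ and $\bar S$ by removing, respectively, the corner set $\{|x|=1,\ x_d=0\}\cup\{e_1\}$ and the two points at infinity of $\bar S$. Since the $C^{2+\alpha}_s$ and $C^\alpha_s$ norms of Definitions \ref{defn:Calphas} and \ref{defn:DH2spaces} are built from the cycloidal distance \eqref{eq:Cycloidal_distance}, I need to verify that these seminorms are preserved, up to multiplicative constants, on every compact subset $K$ of the half-ball or slab that avoids the corner/infinity locus. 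This reduces to two observations: smoothness of $\Phi$ and $\Phi^{-1}$ on such $K$ gives uniform bi-Lipschitz control of Euclidean distances; and the identity $x_d=\frac{1}{2}(1-|x|^2)\tan w_d$ derived after \eqref{eq:Defn_Phi_explicit} shows that $x_d$ and $w_d$ vanish simultaneously and have uniformly equivalent orders on any such $K$, so the cycloidal distances $s(x,y)$ and $s(\Phi(x),\Phi(y))$ are also comparable on $K$. Combining this with the chain-rule expressions for derivatives of $v$ in terms of derivatives of $u$ (which already appear in the computation yielding \eqref{eq:Generator_slab}), together with Lemma \ref{lem:Generator_slab_coefficient_properties}(5), yields the space equivalences $C^{2+\alpha}_s(\underline B^+\cup\partial_1B^+)\leftrightarrow C^{2+\alpha}_{s,\loc}(\bar S)$ and $C^{2+\alpha}_s(\underline B^+)\leftrightarrow C^{2+\alpha}_s(\underline S)$, and similarly for $f$ and $\tilde f$ in the $C^\alpha_s$ spaces.

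Boundedness and continuity transfer directly, since $\Phi$ is a homeomorphism of $B^+$ onto $S$ that sends $\partial_1B^+$ homeomorphically onto $\partial_1S$; this gives $C_b(B^+)\leftrightarrow C_b(S)$ in part (1) and $C_b(B^+\cup\partial_1B^+)\leftrightarrow C_b(S\cup\partial_1S)$ in part (2). Combined with the H\"older equivalences of the previous paragraph and the fact that the statements are symmetric in $(u,f,g)$ versus $(v,\tilde f,\tilde g)$ (since $\Phi^{-1}$ plays the same role as $\Phi$), both directions of both equivalences follow.

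The main subtlety is not any single step but keeping the two global/local distinctions straight: in part (1) the $C^{2+\alpha}_s$-regularity on $\underline B^+\cup\partial_1B^+$ extends globally up to the spherical boundary of $B^+$, but that boundary touches the corner set and so the corresponding regularity on $\bar S$ can only be stated locally, producing the subscript ``$\loc$''; in part (2) the space $C^{2+\alpha}_s(\underline B^+)$ does not include $\partial_1B^+$ at all, so the corresponding slab regularity is global on $\underline S$ and no ``$\loc$'' appears. No step in the argument is genuinely hard once this bookkeeping is in place and the cycloidal-distance comparability on compact sets avoiding the corner/infinity locus has been verified.
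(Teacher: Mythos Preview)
Your proposal is correct and follows the same approach as the paper: invoke Lemma~\ref{lem:Properties_map_halfball_slab} for the diffeomorphism properties of $\Phi$, use the chain-rule computation preceding \eqref{eq:Generator_slab} to transfer the PDE, and read off the function-space correspondences from the boundary identifications. The paper's own proof is in fact much terser than yours---it simply declares the function-space identifications ``immediate from Lemma~\ref{lem:Properties_map_halfball_slab}'' and the rest a consequence of $\Phi$ being a $C^\infty$-diffeomorphism---so your careful discussion of the cycloidal-distance comparability via $x_d=\tfrac12(1-|x|^2)\tan w_d$ and of the $\loc$/non-$\loc$ bookkeeping supplies detail the paper leaves to the reader.
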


\begin{proof}
The identifications of the function spaces for $u$, $f$, $g$ (and coefficients of $A$) with their counterparts for $v$, $\tilde f$, $\tilde g$ (and coefficients of $\tilde A$) are immediate from Lemma \ref{lem:Properties_map_halfball_slab}. The conclusions now follow from the fact that $\Phi:\underline B^+\cup \partial_1B^+ \to \bar S$ is a $C^\infty$-diffeomorphism of (open) $C^\infty$-manifolds with boundary.
\end{proof}

We can now give the

\begin{proof}[Proof of Theorem \ref{thm:Existence_uniqueness_elliptic_Dirichlet_halfball}]
To simplify notation, we shall assume that $r=1$ and $x_0=O\in\RR^d$ in Theorem \ref{thm:Existence_uniqueness_elliptic_Dirichlet_halfball}, so $B_r(x_0)=B^+$ is the unit half-ball; clearly this simplification makes no difference to the logic of the proof.

By Remark \ref{rmk:Elliptic_weak_maximum_principle_finite_height_domain} we may assume without loss of generality that both $b\geq b_0$ on $B^+$ and $c\geq c_0$ on $B^+$, for some positive constants $b_0$ and $c_0$, since $\sup_{B^+}a^{dd} >0$ (we have $a^{dd}\in C^\alpha_s(\bar B^+)$) and either $\inf_{\partial_0B^+}b^d >0$ or $\inf_{B^+}c >0$ by hypothesis and thus, if $c$ does not already have a uniform positive lower bound on $B^+$, this may be achieved by a simple change of dependent variable whenever $\inf_{B^+}b^d >0$ or $\inf_{B^+}c >0$ since $\height(B^+)$ is obviously finite.

Lemma \ref{lem:Existence_uniqueness_elliptic_Dirichlet_halfball_slab_equivalence} assures us that it is enough to consider uniqueness and existence of solutions to the partial Dirichlet problem \eqref{eq:Elliptic_equation_halfpislab}, \eqref{eq:Elliptic_boundary_condition_halfpislab} on a slab, $S$. We first consider the case $\tilde f\in C^\alpha_{s,\loc}(\bar S)\cap C_b(S)$ and $\tilde g \in C^{2+\alpha}_{s,\loc}(\bar S)\cap C_b(S)$ and verify existence of a solution $v\in C^{2+\alpha}_{s,\loc}(\bar S)\cap C_b(S)$ to \eqref{eq:Elliptic_equation_halfpislab}, \eqref{eq:Elliptic_boundary_condition_halfpislab}. By Definition \ref{defn:DH2spaces}, any such function, $v$, belongs to $C^2(S)\cap C^1(\underline S)\cap C_b(S)$ and Lemma \ref{lem:Properties_second_order_derivatives} implies that $v$ obeys \eqref{eq:WMP_C2s_regularity}, \eqref{eq:WMP_xdsecond_order_derivatives_zero_boundary} with $\sO=S$, that is, $w_dD^2v \in C(\underline S)$ and $w_dD^2v=0$ on $\partial_0S$.

By Lemma \ref{lem:Generator_slab_weak_maximum_principle_property}, the operator $\tilde A$ has the weak maximum principle property on $\underline S$ in the sense of Definition \ref{defn:Weak_max_principle_property}. The weak maximum principle (Theorem \ref{thm:Elliptic_weak_maximum_principle_unbounded_domain}) implies that there is at most one function $v \in C^2(S)\cap C^1(\underline S)\cap C_b(S)$ obeying \eqref{eq:WMP_C2s_regularity}, \eqref{eq:WMP_xdsecond_order_derivatives_zero_boundary} with $\sO=S$, that is, $w_dD^2v \in C(\underline S)$ and $w_dD^2v=0$ on $\partial_0S$, and solving the partial Dirichlet problem  \eqref{eq:Elliptic_equation_halfpislab}, \eqref{eq:Elliptic_boundary_condition_halfpislab} on the slab, $S$. Thus it remains to consider existence of solutions to \eqref{eq:Elliptic_equation_halfpislab}, \eqref{eq:Elliptic_boundary_condition_halfpislab}.

Proposition \ref{prop:Elliptic_C2_weak_max_principle_apriori_estimates} \eqref{item:C2_solution_f_arb_sign}, together with Theorem \ref{thm:Elliptic_weak_maximum_principle_unbounded_domain} and Lemma \ref{lem:Generator_slab_weak_maximum_principle_property}, provides an a priori estimate for $v$ on the slab, $S$,
\begin{equation}
\label{eq:Elliptic_weak_maximum_principle_estimate_slab}
\|v\|_{C(\bar S)}
\leq \frac{1}{c_0}\|\tilde f\|_{C(\bar S)} \vee \|\tilde g\|_{C(\overline{\partial_1S})}.
\end{equation}
Set $U_n:=B_n^{d-1}(O)\times(0,\pi/2)\subset S$, for integers $n\geq 1$, where $B_r^{d-1}(O)\subset\RR^{d-1}$ is the ball of radius $r>0$ and center at the origin. Given coefficients $a^{kl}$, $b^k$, $c$ of $A$ belonging to $C_s^\alpha(\underline B^+\cup\partial_1 B^+)$ as in the hypotheses of Theorem \ref{thm:Existence_uniqueness_elliptic_Dirichlet_halfball}
and corresponding coefficients $\tilde a^{kl}$, $\tilde b^k$, $\tilde c$ of $\tilde A$ belonging to $C_{s,\loc}^\alpha(\bar S)$, as provided by Lemma \ref{lem:Generator_slab_coefficient_properties}, we choose sequences $\tilde a^{kl}_n$, $\tilde b^k_n$, $\tilde c_n$, for $n\geq 1$, belonging to $C_s^\alpha(\bar S)$ and which obey the hypotheses of Theorem \ref{thm:Elliptic_existence_uniqueness_slab} and which coincide with $\tilde a^{kl}$, $\tilde b^k$, $\tilde c$ upon restriction to the bounded subsets $U_n \subset S$ for $n \geq 1$. For example, writing $w=(w',w_d)\in\RR^{d-1}\times\RR$, we may define
$$
\tilde a^{kl}_n(w) := \begin{cases} \tilde a^{kl}(w), & w \in U_n, \\ \tilde a^{kl}(nw'/|w'|,w_d), & w \in S\less U_n,\end{cases}
$$
and similarly for $\tilde b^k_n$ and $\tilde c_n$.

While the functions $\tilde f$ and $\tilde g$ are bounded on $S$, their respective global H\"older norms need not be finite. Therefore, we define a sequence $\{\tilde f_n\}_{n\geq 1}\subset C_s^\alpha(\bar S)$ which coincides with $\tilde f$ upon restriction to the bounded subsets $U_n \subset S$ for $n \geq 1$ in the same way that we defined $\tilde a^{kl}_n$. To define a sequence $\{\tilde g_n\}_{n\geq 1}\subset C_s^{2+\alpha}(\bar S)$, we first choose a cutoff function $\zeta \in C^\infty(\RR)$ such that $\zeta(t)=1$ for $t\leq 1$ and $\zeta(t)=0$ for $t\geq 2$ and $0\leq \zeta\leq 1$ on $\RR$ and set
$$
\tilde g_n(w) := \zeta(|w'|/n)\tilde g(w) + (1-\zeta(|w'|/n))\tilde g(nw'/|w'|, w_d), \quad w\in S.
$$
The sequence $\{\tilde g_n\}_{n\geq 1}\subset C_s^{2+\alpha}(\bar S)$ coincides with $\tilde g$ upon restriction to the bounded subsets $U_n \subset S$ for $n \geq 1$.

We can now apply Theorem \ref{thm:Elliptic_existence_uniqueness_slab} to find $v_n \in C^{2+\alpha}_s(\bar S)$ satisfying \eqref{eq:Elliptic_equation_halfpislab}, \eqref{eq:Elliptic_boundary_condition_halfpislab}, with $\tilde A$, $\tilde f$, and $\tilde g$ replaced by $\tilde A_n$, $\tilde f_n$, and $\tilde g_n$, that is
\begin{align}
\label{eq:Elliptic_equation_halfpislab_n}
\tilde A_nv_n &= \tilde f_n \quad\hbox{on } S,
\\
\label{eq:Elliptic_boundary_condition_halfpislab_n}
v_n &= \tilde g_n \quad\hbox{on }\partial_1S,
\end{align}
for all integers $n\geq 1$, where
$$
\tilde A_nv_n := -w_d\tr(\tilde a_n D^2v_n) - \langle\tilde b_n, v_n\rangle + \tilde c v_n \quad\hbox{on }\underline S.
$$
Lemma \ref{lem:Elliptic_weak_maximum_principle_finite_height_domain} implies that $\tilde A_n$ has the weak maximum principle property on $\underline S$ in the sense of Definition \ref{defn:Weak_max_principle_property} and so Proposition \ref{prop:Elliptic_C2_weak_max_principle_apriori_estimates} \eqref{item:C2_solution_f_arb_sign} implies that the solutions $v_n$ obey the weak maximum principle estimate \eqref{eq:Elliptic_weak_maximum_principle_estimate_slab}, that is,
\begin{equation}
\label{eq:Elliptic_weak_maximum_principle_estimate_slab_sequence}
\|v_n\|_{C(\bar S)}
\leq \frac{1}{c_0}\|\tilde f_n\|_{C(\bar S)} \vee \|\tilde g_n\|_{C(\overline{\partial_1S})}, \quad\forall\, n\geq 1,
\end{equation}
where we note that the right-hand side of the preceding inequality is in turn uniformly bounded, independently of $n$, since
$$
\|\tilde f_n\|_{C(\bar S)} \leq \|\tilde f\|_{C(\bar S)}
\quad\hbox{and}\quad
\|\tilde g_n\|_{C(\overline{\partial_1S})} \leq \|\tilde g\|_{C(\overline{\partial_1S})}, \quad\forall\, n\geq 1,
$$
by construction of these sequences. By appealing to\cite[Corollary 6.7]{GilbargTrudinger} for a ball $B_{2\rho}(x_0)\Subset\HH$ with $x_0\in\partial_1S$ and $\rho>0$, we obtain an a priori Schauder estimate of the form,
$$
\|v_n\|_{C^{2,\alpha}(\overline{B_\rho(x_0)\cap S})} \leq C\left(\|f_n\|_{C^\alpha(\overline{B_\rho(x_0)\cap S})} + \|g_n\|_{C^{2,\alpha}(\overline{B_\rho(x_0)\cap S})} + \|v_n\|_{C(\overline{B_\rho(x_0)\cap S})}\right),
$$
for all $n\geq 1$ and constant $C$ independent of $n$, and appealing to Theorem \ref{thm:Elliptic_apriori_Schauder_interior_domain} for precompact open subsets $U\Subset \underline S$ and $U'\Subset \underline U$, we obtain an a priori Schauder estimate of the form,
$$
\|v_n\|_{C^{2+\alpha}_s(\bar U')} \leq C\left(\|f_n\|_{C^\alpha_s(\bar U)} + \|v_n\|_{C(\bar U)}\right),
$$
again for all $n\geq 1$ and constant $C$ independent of $n$. Hence, for precompact open subsets $V\Subset\bar S$ and $V'\Subset\bar V$, we may combine the preceding estimates via a standard covering argument to give
\begin{equation}
\label{eq:Elliptic_apriori_Schauder_interior_slab}
\|v_n\|_{C^{2+\alpha}_s(\bar V')} \leq C\left(\|f_n\|_{C^\alpha_s(\bar V)} + \|g_n\|_{C^{2+\alpha}_s(\bar V')} + \|v_n\|_{C(\bar V)}\right),
\end{equation}
for all $n\geq 1$ and constant $C$ independent of $n$. We have $C^\alpha_s(\bar U_k) \subset C^{\alpha/2}(\bar U_k)$ by \eqref{eq:DH_and_standard_Holder_relationships} and so, using
$$
\|v_n\|_{C^{\alpha/2}\bar U_k)} \leq \|v_n\|_{C^\alpha_s(\bar U_k)}
\quad\hbox{and}\quad
\|v_n\|_{C^\alpha_s(\bar U_k)} \leq \|v_n\|_{C^{2+\alpha}_s(\bar U_k)},
$$
and the a priori estimate \eqref{eq:Elliptic_apriori_Schauder_interior_slab} (with $U_k\subset U_{k+1}$ replacing $V'\subset V$), the Arzel\`a-Ascoli Theorem implies that, after passing to a subsequence, $v_{n_k}$ converges in $C(\bar U_k)$ to a limit in $C(\bar U_k)$. Therefore, the diagonal subsequence, relabeled as $\{v_n\}_{n\geq 1}$, converges to a limit $v\in C_{\loc}(\bar S)$ on precompact subsets $U_k\Subset\bar S$ as $n\to\infty$, for each fixed $k\geq 1$, and because of the bound \eqref{eq:Elliptic_weak_maximum_principle_estimate_slab_sequence}, we must have $v\in C_{\loc}(\bar S) \cap C_b(S) = C(\bar S)$.

In particular, the subsequence, $\{v_n\}_{n\geq 1}$, is Cauchy in $C(\bar V)$ for precompact open subsets $V\Subset\bar S$ and the a priori estimate \eqref{eq:Elliptic_apriori_Schauder_interior_slab} then implies that the subsequence, $\{v_n\}_{n\geq 1}$, is Cauchy in $C^{2+\alpha}_s(\bar V')$ for precompact open subsets $V'\Subset\bar S$. Thus, $\{v_n\}_{n\geq 1}$ converges in $C^{2+\alpha}_{s,\loc}(\bar S)$ as $n\to\infty$ to a limit $v\in C^{2+\alpha}_{s,\loc}(\bar S)$, necessarily coinciding with the limit $v\in C(\bar S)$ already discovered, and so $v\in C^{2+\alpha}_{s,\loc}(\bar S)\cap C_b(S)$. (Compare, the proof of \cite[Lemma 6.10]{GilbargTrudinger}.) In addition, by taking limits in \eqref{eq:Elliptic_equation_halfpislab_n}, \eqref{eq:Elliptic_boundary_condition_halfpislab_n}, we also see that $v$ solves the partial Dirichlet problem \eqref{eq:Elliptic_equation_halfpislab}, \eqref{eq:Elliptic_boundary_condition_halfpislab} on the slab, $S$.

It remains to consider the case $\tilde f\in C^\alpha_s(\underline S)\cap C_b(S)$ and $\tilde g \in C_b(\partial_1S)$ and verify existence of $v\in C^{2+\alpha}_s(\underline S)\cap C_b(S\cup\partial_1S)$. We accomplish this by adapting the proof of \cite[Theorem 8.30]{GilbargTrudinger}. By Definitions \ref{defn:Calphas} and \ref{defn:DH2spaces}, we have $C^1_{\loc}(\bar S) \subset C^\alpha_{s,\loc}(\bar S)$ and $C^3_{\loc}(\bar S) \subset C^{2+\alpha}_{s,\loc}(\bar S)$. Choose a sequence $\{\tilde f_m\}_{m\in\NN}\subset C^1_{\loc}(\bar S)\cap C_b(S)$ which converges in $C^\alpha_s(\underline S)\cap C_b(S)$ to $\tilde f \in C^\alpha_s(\underline S)\cap C_b(S)$ and a sequence $\{\tilde g_m\}_{m\in\NN}\subset C^3_{\loc}(\bar S)\cap C_b(S)$ which converges in $C_b(\partial_1S)$ to $\tilde g \in C_b(\partial_1S)$. (For example, one may apply \cite[Corollary 1.29]{Adams_1975} and a partition of unity.)

Let $\{v_m\}_{m\in\NN}\subset C^{2+\alpha}_{s,\loc}(\bar S)\cap C_b(S)$ be the corresponding sequence of solutions to the partial Dirichlet problem
\eqref{eq:Elliptic_equation_halfpislab}, \eqref{eq:Elliptic_boundary_condition_halfpislab} on the slab, $S$, with $\tilde f_m$ and $\tilde g_m$, that is
$$
\tilde Av_m = \tilde f_m \quad\hbox{on }S, \quad v_m = \tilde g_m \quad\hbox{on }\partial_1 S,
$$
provided by our preceding analysis.

The weak maximum principle estimate \eqref{eq:Elliptic_weak_maximum_principle_estimate_slab} implies that $\{v_m\}_{m\in\NN}$ is a Cauchy sequence in the Banach space $C_b(\bar S):=C_{\loc}(\bar S)\cap C_b(S)$ \cite[\S 5.2]{Adams_1975} and thus converges to $v$ in $C_b(\bar S)$. Moreover, by appealing to our a priori interior local Schauder estimate, Theorem \ref{thm:Elliptic_apriori_Schauder_interior_domain}, the sequence $\{v_m\}_{m\in\NN}$ necessarily converges in $C^{2+\alpha}_s(\underline S)$ to the limit $v$, in the sense that the sequence converges in $C^{2+\alpha}_s(\bar V)$ for each precompact open subset $V\Subset\underline S$. Therefore, $v$ belongs to $C^{2+\alpha}_s(\underline S)\cap C_b(S\cup\partial_1S)$ and solves the \eqref{eq:Elliptic_equation_halfpislab}, \eqref{eq:Elliptic_boundary_condition_halfpislab} on the slab, $S$, when we only assume $f\in C^\alpha_s(\underline S)\cap C_b(S)$ and $g\in C_b(\partial_1S)$. This completes the proof of Theorem \ref{thm:Existence_uniqueness_elliptic_Dirichlet_halfball}.
\end{proof}

\section{Perron methods for existence of solutions to degenerate-elliptic boundary value and obstacle problems}
\label{sec:Perron}
We apply the results of the preceding sections to prove existence of solutions to the boundary value and obstacle problems described in \S \ref{sec:Introduction} using a modification of the classical Perron method \cite[\S 2.8 \& \S 6.3]{GilbargTrudinger} for smooth solutions, as distinct from Ishii's version of the Perron method for existence of viscosity solutions \cite[\S 4]{Crandall_Ishii_Lions_1992}, \cite{Ishii_1989}. In \S \ref{sec:Perron_boundary_value_problem}, we develop a Perron method to prove existence of a solution to the boundary value problem \eqref{eq:Elliptic_equation}, \eqref{eq:Elliptic_boundary_condition}, yielding Theorem \ref{thm:Perron_elliptic_bvp_solution} and hence Theorem \ref{thm:Existence_uniqueness_elliptic_Dirichlet}. In \S \ref{sec:Perron_obstacle_problem}, we develop a Perron method to prove existence of a solution to the obstacle problem \eqref{eq:Elliptic_obstacle_problem}, \eqref{eq:Elliptic_boundary_condition}, yielding Theorems \ref{thm:Perron_elliptic_obstacle_problem_solution} and \ref{thm:Perron_elliptic_obstacle_problem_solution_lipschitz_obstacle} and hence Theorems \ref{thm:Existence_uniqueness_elliptic_obstacle} and \ref{thm:Existence_uniqueness_elliptic_obstacle_lipschitz}.

\subsection{A Perron method for existence of solutions to a degenerate-elliptic boundary value problem}
\label{sec:Perron_boundary_value_problem}
By analogy with the definitions of continuous subharmonic and superharmonic functions \cite[\S 2.8]{GilbargTrudinger} or continuous subsolutions and supersolutions to linear, second-order elliptic partial differential equations \cite[pp. 102--103]{GilbargTrudinger}, we make the

\begin{defn}[Continuous subsolution and supersolution to an elliptic equation and boundary problem]
\label{defn:Continuous_super_sub_solution_bvp}
Let $\sO\subseteqq\HH$ be a domain and $A$ be as in \eqref{eq:Generator}. Given $f\in C(\underline\sO)$, we call $u:\underline\sO\to\RR$ a \emph{continuous subsolution} to the elliptic equation \eqref{eq:Elliptic_equation} if $u$ is continuous on $\sO$, locally bounded on $\underline\sO$, and for every open ball $B\Subset\sO$ or half-ball $B^+\Subset\underline\sO$ with center in $\partial_0\sO$ and for every $\bar u\in C^2(U)\cap C^1(\underline U)$, with $U=B$ or $B^+$, obeying \eqref{eq:WMP_C2s_regularity}, \eqref{eq:WMP_xdsecond_order_derivatives_zero_boundary}, and $\inf_U\bar u > -\infty$, and
$$
\begin{cases}
A\bar u \geq f &\hbox{on } U,
\\
\bar u \geq u &\hbox{on } \partial_1U,
\end{cases}
$$
we then have
$$
u\leq \bar u \quad \hbox{on } U.
$$
Given $g\in C(\partial_1\sO)$, we call $u\in C(\sO\cup\partial_1\sO)$ a \emph{continuous subsolution} to the boundary value problem \eqref{eq:Elliptic_equation}, \eqref{eq:Elliptic_boundary_condition} if $u$ is a continuous subsolution to \eqref{eq:Elliptic_equation} and $u\leq g$ on $\partial_1\sO$.

We call $v\in C(\sO)$ a \emph{continuous supersolution} to the elliptic equation \eqref{eq:Elliptic_equation} if $-v$ is a subsolution to \eqref{eq:Elliptic_equation}; we call $v\in C(\sO\cup\partial_1\sO)$ a \emph{continuous supersolution} to the boundary value problem \eqref{eq:Elliptic_equation}, \eqref{eq:Elliptic_boundary_condition} if $-v$ is a continuous subsolution to \eqref{eq:Elliptic_equation}, \eqref{eq:Elliptic_boundary_condition}. \qed
\end{defn}

\begin{rmk}[Continuity of subsolutions and supersolutions]
It is important to note that in Definition \ref{defn:Continuous_super_sub_solution_bvp} subsolutions and supersolutions to \eqref{eq:Elliptic_equation} are defined to only be continuous on $\sO$ and not $\underline\sO$, since continuity is not necessarily preserved along $\partial_0\sO$ by the ``harmonic lifting'' process. Fortunately, this has no impact on the application of Perron's method.\qed
\end{rmk}

It will be convenient to isolate local solvability of the partial Dirichlet problem from specific conditions on $A$ which ensure solvability.

\begin{hyp}[Local existence and uniqueness of solutions to the partial Dirichlet problem]
\label{hyp:Dirichlet_problem_local_solvability}
Suppose that $U\subset\HH$ is a ball $B\Subset\HH$ with center in $\HH$ or half-ball $B^+\Subset\underline\HH$ with center in $\partial\HH$, the coefficients of $A$ in \eqref{eq:Generator} belong to $C^\alpha_s(\underline U)$, and $f\in C^\alpha_s(\underline U)\cap C_b(U)$ for some $\alpha\in (0,1)$, and $g\in C_b(\partial_1 U)$, and $A$ has the weak maximum principle property on $\underline U$ in the sense of Definition \ref{defn:Weak_max_principle_property}. Then there is a unique solution $u\in C^{2+\alpha}_s(\underline U)\cap C_b(U\cup\partial_1U)$ to the partial Dirichlet problem\footnote{Existence and uniqueness of $\bar u$ is given by Theorem \ref{thm:Existence_uniqueness_elliptic_Dirichlet_halfball} when $U=B^+$ and \cite[Lemma 6.10]{GilbargTrudinger} when $U=B$.},
$$
\begin{cases}
Au=f &\hbox{on } U,
\\
u = g &\hbox{on } \partial_1 U.
\end{cases}
$$
\end{hyp}

\begin{defn}[Local solvability of the partial Dirichlet problem]
\label{defn:Dirichlet_problem_local_solvability}
Let $\sO\subseteqq\HH$ be a domain, $\alpha\in (0,1)$, and let $A$ be as in \eqref{eq:Generator} with coefficients belonging to $C^\alpha_s(\underline \sO)$. We say that the \emph{partial Dirichlet problem for $A$ is locally solvable in $\underline\sO$} if Hypothesis \ref{hyp:Dirichlet_problem_local_solvability} holds for each ball $B\Subset\sO$ with center in $\sO$ or half-ball $B^+\Subset\underline\sO$ with center in $\partial_0\sO$.
\end{defn}

We have the following analogue of \cite[Properties (i)--(iv), p. 103]{GilbargTrudinger} for continuous subsolutions and supersolutions in the sense of Definition \ref{defn:Continuous_super_sub_solution_bvp}. Properties \eqref{item:elliptic_regular_continuous_subsolution_is_classical_subsolution} and \eqref{item:elliptic_classical_subsolution_is_continuous_subsolution} are included in Theorem \ref{thm:Elliptic_equation_continuous_sub_supersolution_properties} for completeness but are not used subsequently.

\begin{thm}[Properties of continuous subsolutions and supersolutions]
\label{thm:Elliptic_equation_continuous_sub_supersolution_properties}
If $\sO\subseteqq\HH$ is a domain\footnote{Recall that by a ``domain'' in $\HH$, we always mean a \emph{connected}, open subset.}, $A$ is as in \eqref{eq:Generator}, and $f\in C(\underline\sO)$, then the following hold.
\begin{enumerate}
\item \label{item:elliptic_regular_continuous_subsolution_is_classical_subsolution} (A continuous subsolution with sufficient regularity is a smooth subsolution) Suppose $f$ and the coefficients of $A$ belong to $C^\alpha_s(\underline\sO)$ and $A$ obeys Hypothesis \ref{hyp:Dirichlet_problem_local_solvability} for balls $B\Subset\sO$. If $u\in C^2(\sO)\cap C^1(\underline\sO)$ obeys \eqref{eq:WMP_C2s_regularity}, \eqref{eq:WMP_xdsecond_order_derivatives_zero_boundary} and is a continuous subsolution to \eqref{eq:Elliptic_equation}, then $Au\leq f$ on $\sO$, that is, $u$ is a smooth subsolution to \eqref{eq:Elliptic_equation}.

\item \label{item:elliptic_classical_subsolution_is_continuous_subsolution} (A smooth subsolution is a continuous subsolution) Assume $A$ has the weak maximum principle property on $\underline\sO$ in the sense of Definition \ref{defn:Weak_max_principle_property}. If $u\in C^2(\sO)\cap C^1(\underline\sO)$ obeys \eqref{eq:WMP_C2s_regularity}, \eqref{eq:WMP_xdsecond_order_derivatives_zero_boundary} and is a subsolution to \eqref{eq:Elliptic_equation}, that is, $Au\leq f$ on $\sO$, then $u$ is a continuous subsolution to \eqref{eq:Elliptic_equation}.

\item \label{item:elliptic_strong_maximum_principle} (Continuous subsolutions obey the strong maximum principle) Assume that $A$ has the strong maximum principle property on $\underline\sO$ in the sense of Definition \ref{defn:Strong_max_principle_property}. If $u\in C(\sO)$ is a continuous subsolution to \eqref{eq:Elliptic_equation} with $f = 0$, then $u$ obeys the strong maximum principle, in the sense of the conclusion to Definition \ref{defn:Strong_max_principle_property};

\item \label{item:elliptic_weak_maximum_principle} (Continuous subsolutions obey the weak maximum principle) Assume that $A$ has the strong maximum principle property on $\underline\sO$ in the sense of Definition \ref{defn:Strong_max_principle_property}. If $\sO$ is unbounded, assume in addition that the coefficients of $A$ obey \eqref{eq:Positive_lower_bound_c_domain}, \eqref{eq:Quadratic_growth}, and \eqref{eq:Positive_lower_bound_c_geq_2K} and that $\sO$ obeys \eqref{eq:Connectedness_domain_and_ball}. If $u\in C(\sO)$ is a continuous subsolution to \eqref{eq:Elliptic_equation} with $f = 0$ and $\sup_\sO u < \infty$, then $u$ obeys the weak maximum principle: If $u \leq 0$ on $\partial_1\sO$, then\footnote{Our hypotheses allow for the possibility that $\sO=\HH$, in which case $\partial_1\sO=\emptyset$ and the boundary condition is omitted.}
    $$
    u\leq 0 \quad\hbox{on } \sO.
    $$
\item \label{item:elliptic_strong_comparison_principle} (Strong comparison principle for continuous subsolutions and supersolutions) Assume the conditions on $A$ and $\sO$ for Property \eqref{item:elliptic_weak_maximum_principle}. If $u\in C(\sO)$ is a continuous subsolution to \eqref{eq:Elliptic_equation} with $\sup_\sO u < \infty$ and $v\in C(\sO)$ is a continuous supersolution to \eqref{eq:Elliptic_equation} with $\inf_\sO v > -\infty$ such that $u \leq v$ on $\partial_1\sO$, then\footnotemark[\value{footnote}]
    either $u<v$ throughout $\sO$ or $u\equiv v$ on $\sO$;

\item \label{item:elliptic_subsolution_harmonic_lift} (Construction of larger continuous subsolutions) Suppose $U\subset\sO$ is an open ball $B\Subset\sO$ or half-ball $B^+\Subset\underline\sO$ with center in $\partial_0\sO$ and $f$ and the coefficients of $A$ belong to $C^\alpha_s(\underline U)$ and $A$ obeys Hypothesis \ref{hyp:Dirichlet_problem_local_solvability} on $U$. Let $u\in C(\sO)$ be a continuous subsolution to \eqref{eq:Elliptic_equation}. Define $\bar u \in C^{2+\alpha}_s(\underline U)\cap C_b(U\cup\partial_1U)$ to be the unique solution to
$$
A\bar u = f \quad\hbox{on } U, \quad \bar u = u \quad\hbox{on }\partial_1 U.
$$
Then the function $\hat v\in C(\sO)$ defined by
\begin{equation}
\label{eq:Elliptic_equation_harmonic_lift}
\hat u(x) := \begin{cases} \bar u(x), & x\in U, \\ u(x), & x \in \sO\less U,\end{cases}
\end{equation}
is a continuous subsolution to \eqref{eq:Elliptic_equation}.

\item \label{item:elliptic_max_set_subsolutions} (Pointwise maximum of a finite set of continuous subsolutions is a continuous subsolution) If $u_1,\ldots, u_N\in C(\sO)$ are continuous subsolutions to \eqref{eq:Elliptic_equation}, then the function $u\in C(\sO)$ defined by $u(x) := \max_{1\leq i\leq N}u_i(x)$, $x\in \underline\sO$, is a continuous subsolution to \eqref{eq:Elliptic_equation}.
\end{enumerate}
With the exception of Property \eqref{item:elliptic_strong_comparison_principle}, the analogous properties for continuous supersolutions, $v$, are obtained by substituting $u=-v$.
\end{thm}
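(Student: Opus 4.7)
The plan is to verify the seven properties in an order that lets each build on earlier ones, beginning with the formal consequences of Definition \ref{defn:Continuous_super_sub_solution_bvp} and ending with the maximum and comparison principles. Properties \eqref{item:elliptic_classical_subsolution_is_continuous_subsolution} and \eqref{item:elliptic_max_set_subsolutions} are immediate: for (2), given a smooth subsolution $u$ and any admissible test $\bar u$ on a ball or half-ball $U$ with $A\bar u \geq f$ on $U$ and $\bar u \geq u$ on $\partial_1 U$, the difference $u - \bar u$ inherits the $C^{2+\alpha}_s$ boundary behavior, satisfies $A(u - \bar u) \leq 0$ on $U$ and $u - \bar u \leq 0$ on $\partial_1 U$, so the weak maximum principle property of $A$ on $\underline U$ yields $u \leq \bar u$; for (7), any admissible $\bar u$ with $\bar u \geq \max_i u_i$ on $\partial_1 U$ satisfies $\bar u \geq u_i$ there for each $i$, hence $\bar u \geq u_i$ on $U$ by the continuous-subsolution property of each $u_i$, and therefore $\bar u \geq \max_i u_i$ on $U$; continuity of the pointwise maximum of finitely many continuous functions is standard.

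Next I would establish the harmonic lift \eqref{item:elliptic_subsolution_harmonic_lift} and use it to prove \eqref{item:elliptic_regular_continuous_subsolution_is_classical_subsolution}. For (6), Hypothesis \ref{hyp:Dirichlet_problem_local_solvability} supplies $\bar u \in C^{2+\alpha}_s(\underline U)\cap C_b(U\cup\partial_1 U)$ with $A\bar u = f$ on $U$ and $\bar u = u$ on $\partial_1 U$; the continuous-subsolution property of $u$ tested against $\bar u$ forces $u \leq \bar u$ on $U$, and the glued $\hat u$ is continuous on $\sO$ because the two pieces agree on the shared interface $\partial_1 U \subset \sO$. To check that $\hat u$ is itself a continuous subsolution, I would argue by contradiction against $\sup_{U'}(\hat u - \bar w) > 0$ for an admissible test $\bar w$ on a ball or half-ball $U'$: on $U' \cap U$, both pieces are classical and $A(\bar u - \bar w) \leq 0$, so any positive interior maximum is forbidden by the strong maximum principle property of $A$; on $U' \setminus \bar U$, the continuous-subsolution property of $u$ itself excludes a positive maximum; and continuity across the interface $U' \cap \partial_1 U$ then forces $\hat u \leq \bar w$ on $U'$. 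Property (1) follows by contradiction: if $Au(x_0) > f(x_0)$, choose a ball $B \Subset \sO$ around $x_0$ on which $Au > f$ throughout, solve $A\bar u = f$ on $B$ with $\bar u = u$ on $\partial B$, use the continuous-subsolution property of $u$ to get $u \leq \bar u$ on $B$, and note that $\bar u - u \geq 0$ has the required regularity, vanishes on $\partial B$, and satisfies $A(\bar u - u) < 0$ on $B$; the weak maximum principle property of $A$ then forces $\bar u - u \leq 0$, hence $\bar u \equiv u$ on $B$, contradicting $Au > f = A\bar u$.

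For the strong maximum principle \eqref{item:elliptic_strong_maximum_principle}, suppose $u$ attains a maximum $M$ (respectively a non-negative maximum under \eqref{eq:Nonnegative_c}) at some $x_0 \in \underline\sO$. Pick a ball $B \Subset \sO$ centered at $x_0 \in \sO$ or a half-ball $B^+ \Subset \underline\sO$ centered at $x_0 \in \partial_0\sO$, and let $\bar u$ be its harmonic lift from (6). Then $A\bar u = 0$ classically, $\bar u \geq u$ pointwise on $B$, and $\bar u = u \leq M$ on $\partial_1 B$; the weak maximum principle property of $A$ gives $\bar u \leq M$ on $B$, while $\bar u(x_0) \geq u(x_0) = M$ makes $x_0$ an interior (or $\partial_0$-boundary) maximum point of $\bar u$. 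Theorem \ref{thm:Elliptic_strong_maximum_principle} and Definition \ref{defn:Strong_max_principle_property} then force $\bar u \equiv M$ on $B$, so $u = M$ on $\partial_1 B$, showing $\{u = M\}$ is open; closedness by continuity together with connectedness of $\sO$ yields $u \equiv M$. Property \eqref{item:elliptic_weak_maximum_principle} is then deduced from (3) via the growth-barrier and exhaustion device in the proof of Lemma \ref{lem:Elliptic_weak_maximum_principle_extension_unbounded_domain}, where \eqref{eq:Quadratic_growth}, \eqref{eq:Positive_lower_bound_c_geq_2K}, and \eqref{eq:Connectedness_domain_and_ball} handle the unbounded case. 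Finally, the strong comparison principle \eqref{item:elliptic_strong_comparison_principle} follows the same pattern applied to $w := u - v$: if $M := \sup_\sO w > 0$ were attained at some $x_0 \in \sO$, harmonic lifts $\bar u \geq u$ of $u$ and $\bar v \leq v$ of $v$ on a small ball around $x_0$ produce classical $C^{2+\alpha}_s$ solutions of $A(\bar u - \bar v) = 0$ with $(\bar u - \bar v)(x_0) \geq M$, so the strong maximum principle of $A$ forces $\bar u - \bar v \equiv M$ on the lifting ball; openness and connectedness propagate $w \equiv M$ throughout $\sO$, contradicting the boundary hypothesis $u \leq v$ on $\partial_1\sO$.

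The principal obstacle is the verification of (6)---that the glued function $\hat u$ remains a continuous subsolution across the interface $\partial_1 U$ when the test ball $U'$ straddles $\partial U$---because the degeneracy of $A$ along $\partial_0\sO$ precludes direct appeal to Hopf-type boundary-point arguments on the degenerate portion, and the case analysis on $U' \cap U$ versus $U' \setminus \bar U$ must be executed entirely via the strong and weak maximum principle properties of $A$ rather than through classical barrier constructions; a secondary technical point is ensuring that harmonic lifts on half-balls meeting $\partial_0\sO$ interact correctly with the boundary regularity encoded in $C^{2+\alpha}_s(\underline U)$ when the same lift is used both to produce $\hat u$ in (6) and to drive the strong-maximum-principle propagation in (3) and (5).
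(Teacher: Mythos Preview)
Your overall strategy matches the paper's, and properties (1), (2), and (7) are handled essentially the same way. The main divergence is in (6), which you correctly identify as the crux but resolve less cleanly than the paper does. The paper does not argue by contradiction or invoke the strong maximum principle for the harmonic lift. Instead, given a test $w$ on $U'$ with $Aw \geq f$ and $w \geq \hat u$ on $\partial_1 U'$, it first observes that $u \leq \hat u \leq w$ on $\partial_1 U'$, so the continuous-subsolution property of $u$ yields $u \leq w$ on all of $U'$; hence $\hat u = u \leq w$ on $U' \setminus U$, and in particular on $U' \cap \partial_1 U$. Combined with the hypothesis $\hat u \leq w$ on $U \cap \partial_1 U'$, this gives $\hat u \leq w$ on the full boundary $\partial_1(U \cap U')$. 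On $U \cap U'$ both $\hat u = \bar u$ and $w$ are classical with $A(\hat u - w) \leq 0$, so the \emph{weak} maximum principle on $U \cap U'$ finishes directly. Your contradiction argument would need to establish exactly this boundary control anyway, and as sketched it does not clearly do so; the appeal to the strong maximum principle is unnecessary here.

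For (5) the paper takes a shorter route: it observes that $u - v$ is itself a continuous subsolution to the homogeneous equation, then simply applies (4) to obtain $u - v \leq 0$ and (3) to obtain the dichotomy. Your simultaneous-lift argument is valid but more laborious once (3) and (4) are already in hand. For (3), your open--closed argument has a small gap: concluding $\bar u \equiv M$ on $B$ gives $u = M$ only on $\partial_1 B$, not on the interior of $B$; you must vary the radius of $B$ to fill in, or, as the paper does, choose $x_0 \in \underline\sO \cap \partial\{u < M\}$ and take $U$ small enough that $u \not\equiv M$ on $\partial_1 U$, whereupon $u = \bar u = M$ on $\partial_1 U$ is an immediate contradiction.
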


\begin{proof}
For \eqref{item:elliptic_regular_continuous_subsolution_is_classical_subsolution}, suppose that $u$ is a continuous subsolution. If there is a point $z_0\in \sO$ such that $Au(z_0)>f(z_0)$, choose $r>0$ small enough that $Au\geq f$ on the ball $B=B_r(z_0)\Subset\sO$. By \cite[Theorem 6.13]{GilbargTrudinger}, we may define $\bar u \in C^{2,\alpha}(B)\cap C(\bar B)$ by the solution to $A\bar u = f$ on $B$, $\bar u = u$ on $\partial B$, and so $u\leq\bar u$ on $B$ by Definition \ref{defn:Continuous_super_sub_solution_bvp}. But $Au \geq f = A\bar u$ on $B$ and thus $u\geq\bar u$ on $B$ by the weak maximum principle \cite[Theorem 3.3]{GilbargTrudinger}. Hence, $u=\bar u$ on $B$ and $Au=A\bar u=f$ on $B$, contradicting the definition of $z_0$, and therefore we must have $Au\leq f$ on $\sO$.

For \eqref{item:elliptic_classical_subsolution_is_continuous_subsolution}, suppose $U\subset\sO$ is an open ball $B\Subset\sO$ or half-ball $B^+\Subset\underline\sO$ with center in $\partial_0\sO$ and that $\bar u\in C^2(U)\cap C^1(\underline U)$ obeys \eqref{eq:WMP_C2s_regularity}, \eqref{eq:WMP_xdsecond_order_derivatives_zero_boundary}, and $\inf_U\bar u > -\infty$, and
$$
A\bar u \geq f \quad\hbox{on } U, \quad \bar u \geq u \quad\hbox{on } \partial_1U.
$$
But then $A(\bar u - u) \geq 0$ on $U$ and because $A$ has the weak maximum principle property on $\underline\sO$, then $\bar u \geq u$ on $U$. Thus, $u$ is a continuous subsolution in the sense of Definition \ref{defn:Continuous_super_sub_solution_bvp}.

For \eqref{item:elliptic_strong_maximum_principle}, suppose the contrary and that $M := \sup_\sO u = u(x_0)$ at some $x_0\in\underline\sO$, but $u\not\equiv M$ throughout $\sO$. The constant $M$ may have arbitrary sign in the case $c=0$ on $\underline\sO$ while for the case $c\geq 0$ on $\underline\sO$, we assume $M\geq 0$. We first consider the case $c=0$ on $\underline\sO$. We may then choose $U\subset\sO$, where $U$ is an open ball $B_r(x_0)\Subset\sO$ or half-ball $B_r^+(x_0)\Subset\underline\sO$ with $x_0\in\partial_0\sO$, such that $u\not\equiv M$ on $\partial_1U$; to see that $U$ exists, observe that we may suppose $x_0\in \underline\sO\cap\partial\{u<M\}$. Let $\bar u \in C^{2+\alpha}_s(\underline U)\cap C_b(U\cup\partial_1 U)$ be the ``harmonic lift'' of $u$ on $U$ described in Definition \ref{defn:Continuous_super_sub_solution_bvp}, so $A\bar u = 0$ on $U$ and $\bar u=u$ on $\partial_1 U$, and thus $u\leq \bar u$ on $U$. Because $A$ has the strong maximum principle property on $\underline\sO$ by hypothesis, Lemma \ref{lem:Elliptic_strong_implies_weak_maximum_principle_property_bounded_domain} implies that it also has the weak maximum principle property --- in the sense of Definition \ref{defn:Weak_max_principle_property} --- on bounded subdomains of $\sO$. Hence, the weak maximum principle applied to $A\bar u=0$ on $U$ ensures that $\bar u \leq \sup_{\partial_1U}\bar u$ on $U$ and so (by Proposition \ref{prop:Elliptic_C2_weak_max_principle_apriori_estimates} \eqref{item:C2_subsolution_f_leq_zero})
$$
M \geq \sup_{\partial_1U}u = \sup_{\partial_1U}\bar u \geq \bar u(x_0) \geq u(x_0) = M.
$$
Thus equality holds throughout and $\bar u(x_0)=M$. Because $A$ has the strong maximum principle property on $\underline\sO$ in the sense of Definition \ref{defn:Strong_max_principle_property} by hypothesis, the strong maximum principle applied to $A\bar u=0$ on $U$ ensures that $\bar u \equiv M$ on $U$ and hence $u = \bar u = M$ on $\partial_1U$, which contradicts the choice of $U$.

For the case $c\geq 0$ on $\underline\sO$ and $M\geq 0$, we again observe that the weak maximum principle applied to $A\bar u=0$ on $U$ ensures that $\bar u \leq \sup_{\partial_1U}\bar u^+$ on $U$ and thus,
$$
M \geq 0\vee\sup_{\partial_1U}u = \sup_{\partial_1U}u^+ = \sup_{\partial_1U}\bar u^+ \geq \bar u(x_0) \geq u(x_0) = M,
$$
and so the argument proceeds just as before.

For \eqref{item:elliptic_weak_maximum_principle}, we first consider the case where $\sO$ is \emph{bounded}, so $\partial_1\sO$ is non-empty and $u\leq 0$ on $\partial_1\sO$ by hypothesis. Let
$$
M := \sup_{\sO}u = \sup_{\sO}u^*,
$$
noting that $M<\infty$ by our hypothesis on $u$.
Here, $u^*:\bar\sO\to\RR$ denotes the upper semicontinuous envelope of $u$ and necessarily obeys $u^* \geq u$ on $\underline\sO\cup\partial_1\sO$ and $u^* = u$ on $\sO\cup\partial_1\sO$. We consider
$$
\bar\sO_M := \{x\in\bar\sO: u^*(x)=M\} \quad\hbox{and}\quad \underline\sO_M := \underline\sO\cap \bar\sO_M = \{x\in\underline\sO: u^*(x)=M\}.
$$
Since $\bar\sO$ is compact and $u^*$ is upper semicontinuous on $\bar\sO$, then $\bar\sO_M$ is non-empty.
If $\underline\sO_M$ is empty, then we must have $\bar\sO_M = \{x\in\overline{\partial_1\sO}:u^*(x)=M\}$ and thus
$$
M = \sup_{\sO}u = \sup_{\partial_1\sO}u \leq 0,
$$
where the final inequality follows from our hypothesis on $u$ on $\partial_1\sO$.
On the other hand, if $\underline\sO_M$ is non-empty, then $\underline\sO_M=\underline\sO$ by the strong maximum principle for continuous subsolutions (Property \eqref{item:elliptic_strong_maximum_principle}).
But then $u\equiv M$ is constant on $\sO$ and thus on $\sO\cup \partial_1\sO$ (since $u$ belongs to $C(\sO\cup \partial_1\sO)$ by hypothesis). Since $\partial_1\sO$ is non-empty, then $M = \sup_\sO u=\sup_{\partial_1\sO} u\leq 0$.

When $\sO$ is unbounded, we adapt the proof of Lemma \ref{lem:Elliptic_weak_maximum_principle_extension_unbounded_domain}. Define
\begin{equation}
\label{eq:Defn_vzero}
v_0(x) := 1+|x|^2, \quad\forall\, x \in\HH,
\end{equation}
and observe that
\begin{align*}
Av_0(x) &= -2\tr (x_da(x)) - 2\langle b(x),x\rangle + c(x)\left(1+|x|^2\right)
\\
&\geq -2K\left(1+|x|^2\right) + c_0\left(1+|x|^2\right) \quad\hbox{(by \eqref{eq:Positive_lower_bound_c_domain} and \eqref{eq:Quadratic_growth})}
\\
&\geq 0, \quad\forall\, x\in\HH \quad\hbox{(by \eqref{eq:Positive_lower_bound_c_geq_2K})},
\end{align*}
and thus
\begin{equation}
\label{eq:A_vzero_geq_zero}
Av_0 \geq 0 \quad\hbox{on } \HH.
\end{equation}
Suppose $\delta>0$. We claim that
\begin{equation}
\label{eq:Defn_w}
w := u - \delta v_0
\end{equation}
is a continuous subsolution to the elliptic equation \eqref{eq:Elliptic_equation}, when $f=0$, in the sense of Definition \ref{defn:Continuous_super_sub_solution_bvp}. If $u$ were a smooth subsolution, in the sense of having the regularity properties in Definition \ref{defn:Weak_max_principle_property} and obeying $Au \leq 0$ on $\sO$, then
$$
Aw = A\left(u - \delta v_0\right) = Au - \delta Av_0 \leq 0 \quad\hbox{on }\sO,
$$
and thus $w$ would be a smooth subsolution for \eqref{eq:Elliptic_equation} with $f=0$. When $u$ is only a continuous subsolution, suppose $U=B\Subset\sO$ or $U=B^+$, a half-ball with center in $\partial_0\sO$, and that $\bar w \in C^2(U)\cap C^1(\underline U)$ obeys \eqref{eq:WMP_C2s_regularity}, \eqref{eq:WMP_xdsecond_order_derivatives_zero_boundary}, and $\inf_U\bar w > -\infty$, and
$$
A\bar w \geq 0  \quad\hbox{on } U, \quad \bar w \geq w \quad\hbox{on } \partial_1U.
$$
We claim that $\bar w \geq w$ on $U$. To see this, observe that
$$
A\left(\bar w + \delta v_0\right) \geq 0 \quad\hbox{on } U\quad\hbox{(by \eqref{eq:A_vzero_geq_zero} and the definition of $\bar w$)}.
$$
Moreover, by \eqref{eq:Defn_w},
$$
\bar w + \delta v_0 \geq u \quad\hbox{on } \partial_1U,
$$
and because $u$ is a continuous subsolution in the sense of Definition \ref{defn:Continuous_super_sub_solution_bvp} with $f=0$, we must have
$$
\bar w + \delta v_0 \geq u \quad\hbox{on } U,
$$
that is,
$$
\bar w \geq u - \delta v_0 = w \quad\hbox{on } U.
$$
Therefore, $w$ is a continuous subsolution to the elliptic equation \eqref{eq:Elliptic_equation} with $f=0$ in the sense of Definition \ref{defn:Continuous_super_sub_solution_bvp}.

Because $\sup_\sO u < \infty$ by hypothesis, we must have
$$
w \leq 0 \quad\hbox{on }\sO\cap\partial_1B_R^+,
$$
by \eqref{eq:Defn_w} for all large enough $R>0$ while our hypothesis $u \leq 0$ on $\partial_1\sO$ implies that
$$
w = u - \delta v_0 \leq 0 \quad\hbox{on }B_R^+\cap\partial_1\sO,
$$
and thus, since $\partial_1(\sO\cap B_R^+) = (\bar\sO\cap\partial_1B_R^+) \cup (\bar B_R^+\cap\partial_1\sO)$,
$$
w \leq 0 \quad\hbox{on }\partial_1(\sO\cap B_R^+).
$$
Consequently, noting that $\sO\cap B_R^+$ is connected for large enough $R$ by hypothesis, Property \eqref{item:elliptic_weak_maximum_principle} for the case of bounded subdomains of $\HH$ (namely, $\sO\cap B_R^+$ in this case) implies that $w \leq 0$ on $\sO\cap B_R^+$, for any sufficiently large $R>0$. Thus, $w = u - \delta v_0 \leq 0$ on $\sO$ for all $\delta>0$ and taking the limit as $\delta \downarrow 0$, we obtain
$$
u \leq 0  \quad\hbox{on } \sO,
$$
as desired. This completes the proof of Property \eqref{item:elliptic_weak_maximum_principle} for the case of unbounded domains.

For \eqref{item:elliptic_strong_comparison_principle}, observe that $u-v$ is a subsolution to \eqref{eq:Elliptic_equation}, \eqref{eq:Elliptic_boundary_condition} with $f = 0$ and $g = 0$. The weak maximum principle for continuous subsolutions (Property \eqref{item:elliptic_weak_maximum_principle}) implies that $u-v\leq 0$ on $\sO$. The strong maximum principle for continuous subsolutions (Property \eqref{item:elliptic_strong_maximum_principle}) implies that if $\sup_\sO(u-v)=(u-v)(x_0)=0$ for some $x_0\in\underline\sO$, then $u-v\equiv 0$ throughout $\sO$. Therefore, if $u-v\not\equiv 0$ throughout $\sO$, we must have $u-v<0$ on $\sO$.

For \eqref{item:elliptic_subsolution_harmonic_lift}, observe that $\hat u\in C(\sO)$ by construction. Let $U'\subset\sO$ denote an arbitrary ball $B'\Subset\sO$ or half-ball ${B'}^+\Subset\underline\sO$ with center in $\partial_0\sO$ and suppose $w\in C^2(U')\cap C^1(\underline U')$ obeys \eqref{eq:WMP_C2s_regularity}, \eqref{eq:WMP_xdsecond_order_derivatives_zero_boundary}, and $\inf_{U'}w>-\infty$, and
$$
Aw \geq f\quad\hbox{on }U', \quad w \geq \hat u \quad\hbox{on }\partial_1U'.
$$
According to Definition \ref{defn:Continuous_super_sub_solution_bvp}, it remains for us to show that $\hat u \leq w$ on $U'$. Since $u$ is a continuous subsolution to \eqref{eq:Elliptic_equation} on $\sO$, then $u\leq \bar u$ on $U$, thus $u\leq\hat u$ on $\sO$ and, in particular, $u \leq \hat u$ on $U'$. Because $u$ is a continuous subsolution to \eqref{eq:Elliptic_equation} on $\sO$ and $u\leq \hat u \leq w$ on $\partial_1U'$, we must have $u\leq w$ on $U'$ and hence $\hat u \leq w$ in $U'\less U$ since $\hat u = u$ in $U'\less U$. Furthermore, $\partial_1(U\cap U') = (U\cap\partial_1U') \cup (U'\cap\partial_1U)$ and $U'\cap\partial_1U \subset U'\less U$, so
$$
\hat u \leq w \quad\hbox{on }\partial_1(U\cap U').
$$
Moreover, as $A\hat u = A\bar u = f$ on $U$, then
$$
A(\hat u - w) \leq 0\quad\hbox{on }U\cap U',
$$
so the weak maximum principle implies that $\hat u \leq w$ in $U\cap U'$. Consequently $\hat u \leq w$ on $U'$ and therefore $\hat u$ is a continuous subsolution to \eqref{eq:Elliptic_equation} on $\sO$.

For \eqref{item:elliptic_max_set_subsolutions}, let $U$ denote an arbitrary ball $B\Subset\sO$ or half-ball $B^+\Subset\underline\sO$ with center in $\partial_0\sO$ and suppose that $\bar u\in C^2(U)\cap C^1(\underline U)$ obeys \eqref{eq:WMP_C2s_regularity}, \eqref{eq:WMP_xdsecond_order_derivatives_zero_boundary}, and $\inf_U\bar u > -\infty$, and
$$
A\bar u \geq f\quad\hbox{on }U, \quad \bar u \geq u \quad\hbox{on }\partial_1U.
$$
Therefore, $u_i \leq \bar u$ on $\partial_1U$ for $i=1,\ldots,N$ and Definition \ref{defn:Continuous_super_sub_solution_bvp} implies that $u_i \leq \bar u$ on $U$ for $i=1,\ldots,N$, so we must have $u \leq \bar u$ on $U$.
\end{proof}

We can provide a priori estimates for continuous supersolutions and subsolutions via the

\begin{prop}[Maximum principle estimates for continuous subsolutions and supersolutions]
\label{prop:Continuous_super_sub_solution_bvp_maximum_principle_estimates}
\cite[Propositions 2.19 and 3.18 and Remark 3.19]{Feehan_maximumprinciple_v1}
Let $\sO\subset\HH$ be a domain, $\alpha\in(0,1)$, and let $A$ in \eqref{eq:Generator} have the strong maximum principle property on $\underline\sO$ in the sense of Definition \ref{defn:Strong_max_principle_property}, obey Hypothesis \ref{hyp:Dirichlet_problem_local_solvability} for all balls $B\Subset\sO$ and half-balls $B^+\Subset\underline\sO$ with centers in $\partial_0\sO$ (Definition \ref{defn:Dirichlet_problem_local_solvability}), and have $c$ obeying \eqref{eq:Nonnegative_c}. Let $f\in C(\underline\sO)$ and $g\in C(\partial_1\sO)$. Suppose $u$ is a continuous subsolution and $v$ a continuous supersolution to the boundary value problem \eqref{eq:Elliptic_equation}, \eqref{eq:Elliptic_boundary_condition} for $f$ and $g$ in the sense of Definition \ref{defn:Continuous_super_sub_solution_bvp}.
\begin{enumerate}
\item\label{item:Continuous_subsolution_f_leq_zero} If $f\leq 0$ on $\sO$ and $u$ is a continuous subsolution for $f$ and $g$ and $\sup_\sO u < \infty$, then
$$
u\leq 0 \vee \sup_{\partial_1\sO}g \quad\hbox{on }\sO.
$$
\item\label{item:Continuous_subsolution_f_arb_sign} If $f$ has arbitrary sign and $u$ is a continuous subsolution for $f$ and $g$ and $\sup_\sO u < \infty$ but, in addition, there is a constant $c_0>0$ such that $c$ obeys \eqref{eq:Positive_lower_bound_c_domain}, then
$$
u\leq 0 \vee \frac{1}{c_0}\sup_\sO f \vee \sup_{\partial_1\sO}g \quad\hbox{on }\sO.
$$
\item\label{item:Continuous_supersolution_f_geq_zero} If $f\geq 0$ on $\sO$ and $v$ is a continuous supersolution for $f$ and $g$ and $\inf_\sO v > -\infty$, then
$$
v\geq 0 \wedge \inf_{\partial_1\sO}g \quad\hbox{on }\sO.
$$
\item\label{item:Continuous_supersolution_f_arb_sign} If $f$ has arbitrary sign, $v$ is a continuous supersolution for $f$ and $g$ and $\inf_\sO v > -\infty$, and $c$ obeys \eqref{eq:Positive_lower_bound_c_domain}, then
$$
v\geq 0 \wedge \frac{1}{c_0}\inf_ \sO f \wedge \inf_{\partial_1\sO}g \quad\hbox{on }\sO.
$$
\end{enumerate}
The terms $\sup_{\partial_1\sO}g$ and $\inf_{\partial_1\sO}g$ in the preceding items are omitted when $\partial_1\sO$ is empty.
\end{prop}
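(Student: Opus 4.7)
The plan is to reduce all four estimates to the weak maximum principle for continuous subsolutions, namely Property \eqref{item:elliptic_weak_maximum_principle} of Theorem \ref{thm:Elliptic_equation_continuous_sub_supersolution_properties}, by subtracting from $u$ (respectively, adding to $v$) an appropriate nonnegative constant $M$. The central algebraic fact that makes this work is the following shift-compatibility statement: if $u$ is a continuous subsolution to \eqref{eq:Elliptic_equation} with source $f$ in the sense of Definition \ref{defn:Continuous_super_sub_solution_bvp} and $M \geq 0$ is a constant, then $w := u - M$ is a continuous subsolution with source $f - cM$. Indeed, given any ball $B \Subset \sO$ or half-ball $B^+ \Subset \underline\sO$ centered on $\partial_0\sO$, and any $\bar w \in C^2(U) \cap C^1(\underline U)$ (with $U = B$ or $B^+$) obeying \eqref{eq:WMP_C2s_regularity}, \eqref{eq:WMP_xdsecond_order_derivatives_zero_boundary}, $\inf_U\bar w > -\infty$, $A\bar w \geq f - cM$ on $U$, and $\bar w \geq u - M$ on $\partial_1 U$, the shifted function $\bar w + M$ satisfies $A(\bar w + M) = A\bar w + cM \geq f$ on $U$ and $\bar w + M \geq u$ on $\partial_1 U$. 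The subsolution property of $u$ then yields $u \leq \bar w + M$ on $U$, i.e., $w \leq \bar w$ on $U$, as required.

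For item \eqref{item:Continuous_subsolution_f_leq_zero}, set $M := 0 \vee \sup_{\partial_1\sO} g$, which we may assume finite (otherwise the bound is vacuous), and let $w := u - M$. Since $f \leq 0$, $c \geq 0$, and $M \geq 0$, the shifted source obeys $f - cM \leq 0$ on $\sO$. Moreover $w \leq 0$ on $\partial_1\sO$ by the definition of $M$ and $u \leq g$ on $\partial_1\sO$, and $\sup_\sO w < \infty$. Applying Theorem \ref{thm:Elliptic_equation_continuous_sub_supersolution_properties}\eqref{item:elliptic_weak_maximum_principle} to $w$ with source $f - cM \leq 0$ gives $w \leq 0$ on $\sO$, which is the desired estimate.

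For item \eqref{item:Continuous_subsolution_f_arb_sign}, set $M := 0 \vee c_0^{-1}\sup_\sO f \vee \sup_{\partial_1\sO} g$. The hypothesis \eqref{eq:Positive_lower_bound_c_domain} together with the definition of $M$ forces $cM \geq f$ pointwise on $\sO$: if $f(x) \geq 0$ then $cM \geq c_0 \cdot c_0^{-1}\sup_\sO f \geq f(x)$, while if $f(x) < 0$ then $cM \geq 0 > f(x)$. Hence $f - cM \leq 0$ on $\sO$, and the proof of item \eqref{item:Continuous_subsolution_f_leq_zero} applies verbatim to $w := u - M$. Items \eqref{item:Continuous_supersolution_f_geq_zero} and \eqref{item:Continuous_supersolution_f_arb_sign} follow from items \eqref{item:Continuous_subsolution_f_leq_zero} and \eqref{item:Continuous_subsolution_f_arb_sign} respectively, applied to the continuous subsolution $\tilde u := -v$ (with source $-f$ and boundary data $-g$), as guaranteed by Definition \ref{defn:Continuous_super_sub_solution_bvp}.

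The only delicate point is the shift-compatibility verification in the opening paragraph, which must be done against the local comparison formulation of Definition \ref{defn:Continuous_super_sub_solution_bvp} rather than against a pointwise differential inequality. Once this is in hand, the rest is purely algebraic bookkeeping: the sign of $f - cM$, the boundary bound $w \leq 0$ on $\partial_1\sO$, and finiteness of $\sup_\sO w$ reduce each case to a direct application of Theorem \ref{thm:Elliptic_equation_continuous_sub_supersolution_properties}\eqref{item:elliptic_weak_maximum_principle}, whose required hypotheses (strong maximum principle property, nonnegativity of $c$, and --- when $\sO$ is unbounded --- the growth and connectedness conditions) are already built into the hypotheses of the proposition.
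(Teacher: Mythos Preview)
The paper does not supply its own proof of this proposition; it simply imports the result from \cite{Feehan_maximumprinciple_v1}. Your argument is the natural one and is correct in substance: the shift-compatibility verification against Definition \ref{defn:Continuous_super_sub_solution_bvp} is exactly the point that needs care, and you handle it properly. Once $w = u - M$ is a continuous subsolution with nonpositive source, it is automatically a continuous subsolution with source zero (any test function $\bar w$ with $A\bar w \geq 0$ has $A\bar w \geq f - cM$), so Theorem \ref{thm:Elliptic_equation_continuous_sub_supersolution_properties}\eqref{item:elliptic_weak_maximum_principle} applies. This mirrors how Proposition \ref{prop:Elliptic_C2_weak_max_principle_apriori_estimates} is proved in the smooth setting.

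One inaccuracy: your closing sentence asserts that the growth and connectedness conditions \eqref{eq:Quadratic_growth}, \eqref{eq:Positive_lower_bound_c_geq_2K}, \eqref{eq:Connectedness_domain_and_ball} needed for Theorem \ref{thm:Elliptic_equation_continuous_sub_supersolution_properties}\eqref{item:elliptic_weak_maximum_principle} on unbounded domains ``are already built into the hypotheses of the proposition.'' They are not --- the proposition as stated omits them. This is not a flaw in your mathematics; it reflects a looseness in the proposition's hypotheses that the paper silently resolves at the point of use (for instance, in the proof of Theorem \ref{thm:Perron_elliptic_bvp_solution}, where those conditions are assumed). Your proof goes through verbatim for bounded $\sO$, and for unbounded $\sO$ once those additional hypotheses are appended.
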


Let $\sS_{f,g}^-$ (respectively, $\sS_{f,g}^+$) denote the set of all subsolutions (respectively, supersolutions) to the boundary value problem \eqref{eq:Elliptic_equation}, \eqref{eq:Elliptic_boundary_condition} defined by $f$ and $g$ in the sense of Definition \ref{defn:Continuous_super_sub_solution_bvp}.

We have the following analogue of \cite[Theorems 6.11 \& 6.13]{GilbargTrudinger}.

\begin{thm}[A Perron method for existence of solutions to a degenerate-elliptic boundary value problem]
\label{thm:Perron_elliptic_bvp_solution}
Let $\sO\subseteqq\HH$ be a possibly unbounded domain, $\alpha\in(0,1)$, and $A$ be as in \eqref{eq:Generator} with coefficients belonging to $C^\alpha_s(\underline\sO)$, having the strong maximum principle property on $\underline\sO$ in the sense of Definition \ref{defn:Strong_max_principle_property}, local solvability of the partial Dirichlet problem in $\underline\sO$ in the sense of Definition \ref{defn:Dirichlet_problem_local_solvability}, and coefficient $a(x)$ obeying \eqref{eq:Strict_ellipticity_domain} together with $b^d$ obeying \eqref{eq:Positive_bd_boundary} or $c$ obeying \eqref{eq:Positive_c_boundary}. Moreover, the coefficients of $A$ should obey either
\begin{enumerate}
\item Condition \eqref{eq:Positive_lower_bound_c_domain}, or
\item Conditions \eqref{eq:Finite_upper_bound_add_domain} and \eqref{eq:Positive_lower_bound_bd_domain},
\end{enumerate}
and, in addition when $\sO$ is unbounded, \eqref{eq:Quadratic_growth}, \eqref{eq:Positive_lower_bound_c_geq_2K}, and $\sO$ should obey \eqref{eq:Connectedness_domain_and_ball}.
If $f\in C^\alpha_s(\underline\sO)\cap C_b(\sO)$ and $g\in C_b(\partial_1\sO)$, then the function
\begin{equation}
\label{eq:Perron_elliptic_equation_supsubsolution}
u(x) = \sup_{w\in \sS_{f,g}^-}w(x), \quad x\in\underline\sO,
\end{equation}
belongs to $C^{2+\alpha}_s(\underline\sO)\cap C_b(\sO)$ and is a solution to the elliptic equation \eqref{eq:Elliptic_equation}. Furthermore, if each point of $\partial_1\sO$ is \emph{regular} with respect to $A$, $f$, and $g$ in the sense of Definition \ref{defn:Regular_boundary_point_elliptic_equation}, then in addition $u$ belongs to $C(\sO\cup\partial_1\sO)$ and $u$ obeys the boundary condition \eqref{eq:Elliptic_boundary_condition}.
\end{thm}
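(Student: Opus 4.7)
The plan is to follow the classical Perron scheme of \cite[\S 2.8 and \S 6.3]{GilbargTrudinger}, but replacing interior ball liftings by the mixed-boundary half-ball liftings provided by Theorem \ref{thm:Existence_uniqueness_elliptic_Dirichlet_halfball} whenever the lifting ball touches $\partial_0\sO$. First I would show $\sS_{f,g}^-$ is non-empty and the Perron function in \eqref{eq:Perron_elliptic_equation_supsubsolution} is well-defined and bounded. Under the condition \eqref{eq:Positive_lower_bound_c_domain}, a suitably large negative constant $w_0\equiv -M$ is a smooth (hence continuous, by Theorem \ref{thm:Elliptic_equation_continuous_sub_supersolution_properties} \eqref{item:elliptic_classical_subsolution_is_continuous_subsolution}) subsolution once $-c_0 M \leq \inf_\sO f$ and $-M\leq \inf_{\partial_1\sO}g$; under \eqref{eq:Finite_upper_bound_add_domain} and \eqref{eq:Positive_lower_bound_bd_domain}, the change of dependent variable $u\mapsto e^{\sigma x_d}u$ from Remark \ref{rmk:Elliptic_weak_maximum_principle_finite_height_domain} reduces this case to the case with a positive lower bound on $c$. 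An upper bound on every $w\in\sS_{f,g}^-$ follows from Proposition \ref{prop:Continuous_super_sub_solution_bvp_maximum_principle_estimates} \eqref{item:Continuous_subsolution_f_arb_sign} (or the strong comparison principle, Theorem \ref{thm:Elliptic_equation_continuous_sub_supersolution_properties} \eqref{item:elliptic_strong_comparison_principle}) together with the existence of at least one smooth supersolution constructed by the same constant/reduction device. Thus $u$ in \eqref{eq:Perron_elliptic_equation_supsubsolution} is finite and bounded on $\underline\sO$.

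Next I would prove that $u$ solves \eqref{eq:Elliptic_equation} classically in $\underline\sO$ by the standard ``harmonic lifting plus diagonal sequence'' argument. Fix $x_0\in\underline\sO$ and let $U$ denote either an open ball $B\Subset\sO$ centered at $x_0$ (when $x_0\in\sO$) or a half-ball $B^+\Subset\underline\sO$ centered at a point of $\partial_0\sO$ (when $x_0\in\partial_0\sO$), chosen small enough that Remark \ref{rmk:Boundary_property_bd} applies so Theorem \ref{thm:Existence_uniqueness_elliptic_Dirichlet_halfball} is available. Pick a sequence $\{w_n\}\subset\sS_{f,g}^-$ with $w_n(x_0)\to u(x_0)$, replace it by the bounded increasing sequence $v_n:=\max\{w_1,\dots,w_n,w_0\}$ (still in $\sS_{f,g}^-$ by Theorem \ref{thm:Elliptic_equation_continuous_sub_supersolution_properties} \eqref{item:elliptic_max_set_subsolutions}), and form its harmonic lifting $V_n$ on $U$ by solving the partial Dirichlet problem on $U$ with boundary data $v_n$ on $\partial_1 U$ (using \cite[Lemma 6.10]{GilbargTrudinger} in the interior case, and Theorem \ref{thm:Existence_uniqueness_elliptic_Dirichlet_halfball} (ii) in the degenerate-boundary case). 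Each $V_n\in\sS_{f,g}^-$ by Theorem \ref{thm:Elliptic_equation_continuous_sub_supersolution_properties} \eqref{item:elliptic_subsolution_harmonic_lift} and is monotone increasing in $n$, with $V_n\leq u$ and $V_n(x_0)\to u(x_0)$. The a priori interior Schauder estimate Theorem \ref{thm:Elliptic_apriori_Schauder_interior_domain} together with the uniform $C(\bar U)$ bound passes, via Arzel\`a--Ascoli and a standard diagonal argument, to a limit $V\in C^{2+\alpha}_s(\underline U)$ satisfying $AV=f$ on $U$ and $V\leq u$ on $U$ with $V(x_0)=u(x_0)$. A second sequence argument at any $y\in U$ combined with the strong comparison principle (Theorem \ref{thm:Elliptic_equation_continuous_sub_supersolution_properties} \eqref{item:elliptic_strong_comparison_principle}) for the pair $V$ and the corresponding lift then forces $V\equiv u$ on $U$, so $u\in C^{2+\alpha}_s(\underline U)$ and $Au=f$ on $U$. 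Covering $\underline\sO$ by such $U$ gives $u\in C^{2+\alpha}_s(\underline\sO)\cap C_b(\sO)$ solving \eqref{eq:Elliptic_equation}.

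Finally, for boundary continuity at $y\in\partial_1\sO$ when $y$ is regular in the sense of Definition \ref{defn:Regular_boundary_point_elliptic_equation}, I would construct upper and lower barriers at $y$ in the usual Gilbarg--Trudinger fashion \cite[pp.~105--106]{GilbargTrudinger}: given the local barrier $w$ at $y$, the functions
\[
w^\pm(x) := g(y)\pm\varepsilon \pm k\,w(x)
\]
are, for suitable $\varepsilon,k$, a smooth supersolution/subsolution to \eqref{eq:Elliptic_equation} near $y$ and may be glued to the global supersolution/subsolution obtained in the first paragraph (by a truncation with the constants $\pm M$) to yield members of $\sS_{f,g}^+$ and $\sS_{f,g}^-$ that squeeze $u$ between $g(y)-2\varepsilon$ and $g(y)+2\varepsilon$ in a neighborhood of $y$; here the upper squeeze uses the strong comparison principle Theorem \ref{thm:Elliptic_equation_continuous_sub_supersolution_properties} \eqref{item:elliptic_strong_comparison_principle} to see $w\leq \bar w$ for any $w\in\sS_{f,g}^-$ and any supersolution $\bar w\in\sS_{f,g}^+$.

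The main obstacle is the lifting step at points of $\partial_0\sO$: classical Perron relies on \cite[Lemma 6.10]{GilbargTrudinger} which does not accommodate the degenerate boundary, and it is precisely for this that Theorem \ref{thm:Existence_uniqueness_elliptic_Dirichlet_halfball} was established. A secondary technical point is verifying that $V_n$ produced by the half-ball lifting is genuinely an element of $\sS_{f,g}^-$ (so that $V\leq u$); here Theorem \ref{thm:Elliptic_equation_continuous_sub_supersolution_properties} \eqref{item:elliptic_subsolution_harmonic_lift}, whose proof already covers both interior balls and half-balls centered on $\partial_0\sO$, does the work and removes what would otherwise be the core difficulty.
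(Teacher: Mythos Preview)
Your proposal is correct and follows essentially the same approach as the paper: both adapt the classical Perron scheme of \cite[\S 2.8 and \S 6.3]{GilbargTrudinger} by using the half-ball solvability (Theorem \ref{thm:Existence_uniqueness_elliptic_Dirichlet_halfball}) in place of \cite[Lemma 6.10]{GilbargTrudinger} at points of $\partial_0\sO$, combined with the harmonic-lift and strong-maximum-principle machinery of Theorem \ref{thm:Elliptic_equation_continuous_sub_supersolution_properties} and the interior Schauder estimate Theorem \ref{thm:Elliptic_apriori_Schauder_interior_domain}. The only cosmetic differences are that the paper does not bother to make the lifting sequence monotone (it simply takes $u_n\vee M^-$), and for boundary continuity it just cites \cite[Lemma 6.12]{GilbargTrudinger} rather than writing out the barrier construction you sketch.
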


\begin{rmk}[Perron solution as the pointwise infimum of supersolutions]
\label{rmk:Perron_elliptic_bvp_solution}
A argument which is symmetric to the proof of Theorem \ref{thm:Perron_elliptic_bvp_solution} shows that
\begin{equation}
\label{eq:Perron_elliptic_equation_infsupsolution}
v(x) = \inf_{w\in \sS_{f,g}^+}w(x), \quad x\in\underline\sO,
\end{equation}
is also a solution to the elliptic equation \eqref{eq:Elliptic_equation} and thus coincides with $u$ in \eqref{eq:Perron_elliptic_equation_infsupsolution} by the weak maximum principle (Theorems \ref{thm:Elliptic_weak_maximum_principle_bounded_domain} or \ref{thm:Elliptic_weak_maximum_principle_unbounded_domain}) when $u=v$ on $\partial_1\sO$.\qed
\end{rmk}

\begin{proof}
The proof that $u$ in \eqref{eq:Perron_elliptic_equation_supsubsolution} is a solution to \eqref{eq:Elliptic_equation} is similar to that of \cite[Theorem 6.11]{GilbargTrudinger} in the case of a strictly elliptic operator, or \cite[Theorem 2.12]{GilbargTrudinger} in the case of the Laplace operator, with the
\begin{enumerate}
\item Compactness of solutions provided by the a priori interior Schauder estimate in our Theorem \ref{thm:Elliptic_apriori_Schauder_interior_domain}, augmenting \cite[Corollary 6.3]{GilbargTrudinger} (or \cite[Theorem 2.10]{GilbargTrudinger} in the case of the Laplace operator);
\item Strong and weak maximum principles for subsolutions provided by Definition \ref{defn:Strong_max_principle_property}, Lemma \ref{lem:Elliptic_strong_implies_weak_maximum_principle_property_bounded_domain} and Corollary \ref{cor:Elliptic_strong_implies_weak_maximum_principle_property_unbounded_domain}
(replacing the roles of \cite[Corollary 3.2 \& Theorem 3.5]{GilbargTrudinger}, or \cite[Theorems 2.2 \& 2.3]{GilbargTrudinger} in the case of the Laplace operator);
\item Local solvability, provided by Hypothesis \ref{hyp:Dirichlet_problem_local_solvability}, on balls $B\Subset\sO$ (replacing the roles of \cite[Lemma 6.10]{GilbargTrudinger}, or \cite[Theorem 2.6]{GilbargTrudinger} in the case of the Laplace operator) and half-balls $B^+\Subset\underline\sO$ with centers in $\partial_0\sO$.
\end{enumerate}
Theorem \ref{thm:Elliptic_equation_continuous_sub_supersolution_properties} (for continuous subsolutions and supersolutions) provides analogues of the remaining ingredients employed in the proof of \cite[Theorem 2.12]{GilbargTrudinger}.

We first check, by analogy with the construction in \cite[Equation (6.44)]{GilbargTrudinger}, that there exists at least one subsolution, $v^-$, so the set $\sS_{f,g}^-$ is non-empty, and one subsolution, $v^+$, which thus provides an upper bound for all subsolutions in $\sS_{f,g}^-$ by Theorem \ref{thm:Elliptic_equation_continuous_sub_supersolution_properties} \eqref{item:elliptic_strong_comparison_principle}.

If $\sO\subseteqq\HH$ is a possibly unbounded domain and the coefficient $c$ of $u$ in \eqref{eq:Generator} obeys \eqref{eq:Positive_lower_bound_c_domain}, so $c\geq c_0>0$ on $\sO$, and $u\in C(\sO)$ is a continuous subsolution to the boundary value problem \eqref{eq:Elliptic_equation}, \eqref{eq:Elliptic_boundary_condition} in the sense of Definition \ref{defn:Continuous_super_sub_solution_bvp}, then Proposition \ref{prop:Continuous_super_sub_solution_bvp_maximum_principle_estimates} \eqref{item:Continuous_subsolution_f_arb_sign} implies that
$$
u \leq M_1^+ \quad\hbox{on }\sO,
\quad\hbox{for } M_1^+ := 0\vee \frac{1}{c_0}\sup_\sO f \vee \sup_{\partial_1\sO} g,
$$
and so each $u \in \sS_{f,g}^-$ is bounded above by the constant $M_1^+$. If $\sO\subset\HH$ has finite $\height(\sO)\leq\nu$ and the coefficients $b^d$ and $c$ in \eqref{eq:Generator} obey \eqref{eq:Positive_lower_bound_bd_domain} and \eqref{eq:Nonnegative_c}, so $c\geq 0$ but $b^d\geq b_0>0$ on $\underline\sO$, and $u\in C(\sO)$ is a continuous subsolution to \eqref{eq:Elliptic_equation}, \eqref{eq:Elliptic_boundary_condition}, then Lemma \ref{lem:Elliptic_weak_maximum_principle_finite_height_domain}, Remark \ref{rmk:Elliptic_weak_maximum_principle_finite_height_domain}, and Proposition \ref{prop:Continuous_super_sub_solution_bvp_maximum_principle_estimates} \eqref{item:Continuous_subsolution_f_arb_sign} imply that (compare Corollary \ref{cor:Elliptic_weak_maximum_principle_finite_height_domain})
$$
u \leq M_0^+ \quad\hbox{on }\sO,
\quad\hbox{for } M_0^+ := e^{b_0\nu/2\Lambda}\left(0\vee \frac{4\Lambda}{b_0^2} \sup_\sO f \vee \sup_{\partial_1\sO} g\right),
$$
and so each $u \in \sS_{f,g}^-$ is bounded above by the constant $M_0^+$. We let $M^+$ denote $M_1^+$ or $M_0^+$, depending on whether $c\geq c_0>0$ or $c\geq 0$ on $\sO$.

If $\sO\subseteqq\HH$ is a possibly unbounded domain and the coefficient $c$ of $u$ in \eqref{eq:Generator} obeys \eqref{eq:Positive_lower_bound_c_domain}, so $c\geq c_0>0$ on $\underline\sO$, then Proposition \ref{prop:Continuous_super_sub_solution_bvp_maximum_principle_estimates} \eqref{item:Continuous_supersolution_f_arb_sign} implies that the constant function
$$
M_1^- := 0\wedge \frac{1}{c_0}\inf_\sO f \wedge \inf_{\partial_1\sO} g,
$$
is a continuous subsolution to the boundary value problem \eqref{eq:Elliptic_equation}, \eqref{eq:Elliptic_boundary_condition} in the sense of Definition \ref{defn:Continuous_super_sub_solution_bvp}. If $\sO\subset\HH$ has finite $\height(\sO)\leq\nu$ and the coefficients $b^d$ and $c$ in \eqref{eq:Generator} obey \eqref{eq:Positive_lower_bound_bd_domain} and \eqref{eq:Nonnegative_c}, so $c\geq 0$ but $b^d\geq b_0>0$ on $\underline\sO$, then Lemma \ref{lem:Elliptic_weak_maximum_principle_finite_height_domain}, Remark \ref{rmk:Elliptic_weak_maximum_principle_finite_height_domain}, and Proposition \ref{prop:Continuous_super_sub_solution_bvp_maximum_principle_estimates} \eqref{item:Continuous_supersolution_f_arb_sign} imply that (compare Corollary \ref{cor:Elliptic_weak_maximum_principle_finite_height_domain}) the constant function
$$
M_0^- := e^{b_0\nu/2\Lambda}\left(0\wedge \frac{4\Lambda}{b_0^2}\inf_\sO f \wedge \inf_{\partial_1\sO} g\right),
$$
is a continuous subsolution to the boundary value problem \eqref{eq:Elliptic_equation}, \eqref{eq:Elliptic_boundary_condition} in the sense of Definition \ref{defn:Continuous_super_sub_solution_bvp}. We let $M^-$ denote $M_1^-$ or $M_0^-$, depending on whether $c\geq c_0>0$ or $c\geq 0$ on $\sO$.

To show that $u$ in \eqref{eq:Perron_elliptic_equation_supsubsolution} is a solution to \eqref{eq:Elliptic_equation}, we adapt the proof of \cite[Theorem 2.12]{GilbargTrudinger}. Fix $x_0 \in \underline\sO$. By definition of $u$, there is a sequence $\{u_n\}_{n\in\NN}\subset\sS_{f,g}^-$ such that $u_n(x_0)\to u(x_0)$ as $n\to\infty$. By replacing $u_n$ with $u_n\vee M^-$, which we may do by Theorem \ref{thm:Elliptic_equation_continuous_sub_supersolution_properties} \eqref{item:elliptic_max_set_subsolutions}, and noting that $u_n\leq M^+$ on $\sO$ for all $n\in\NN$ by Theorem \ref{thm:Elliptic_equation_continuous_sub_supersolution_properties} \eqref{item:elliptic_strong_comparison_principle}, we may assume that the sequence $\{u_n\}_{n\in\NN}$ is bounded on $\sO$. Now choose $r>0$ small enough such that if $U=B_r(x_0)$ when $x_0\in\sO$ or $U=B_r^+(x_0)$ when $x_0\in\partial_0\sO$, then $U\Subset\underline\sO$ and define $\hat u_n\in \sS_{f,g}^-$ to be the ``harmonic lift'' of $u_n$ on $U$ according
to Theorem \ref{thm:Elliptic_equation_continuous_sub_supersolution_properties} \eqref{item:elliptic_subsolution_harmonic_lift}. Then $\hat u_n(x_0)\to u(x_0)$ as $n\to\infty$ since
$$
u_n(x_0) \leq \hat u_n(x_0) \leq u(x_0), \quad\forall\, n\in\NN,
$$
where the first inequality follows from Theorem \ref{thm:Elliptic_equation_continuous_sub_supersolution_properties} \eqref{item:elliptic_subsolution_harmonic_lift} and the second by definition of $u$ in \eqref{eq:Perron_elliptic_equation_supsubsolution}. We have
\begin{equation}
\label{eq:Elliptic_apriori_max_principle_estimate_ballorhalfball_sequence}
M^- \leq \hat u_n \leq M^+ \quad\hbox{on }U, \quad n\in\NN,
\end{equation}
and, for any open subset $U'\Subset\underline U$, we have the a priori interior Schauder estimate,
\begin{equation}
\label{eq:Elliptic_apriori_Schauder_interior_ballorhalfball_sequence}
\|\hat u_n\|_{C^{2+\alpha}_s(\bar U')} \leq C\left(\|f\|_{C^{\alpha}_s(\bar U)} + \|\hat u_n\|_{C(\bar U)}\right),
\end{equation}
for a positive constant $C$ depending at most on $b_0, d, d_0, r, \alpha, \lambda_0, \Lambda$ (by \cite[Corollary 6.3]{GilbargTrudinger} when $U=B_r(x_0)$ or Theorem \ref{thm:Elliptic_apriori_Schauder_interior_domain} when $U=B_r^+(x_0)$, where $\dist(\partial_1U',\partial_1U) \geq d_0>0$, together with Remark \ref{rmk:Elliptic_weak_maximum_principle_finite_height_domain} in the case $b^d=0$ but $c>0$ on $\partial_0\sO$). In particular, \eqref{eq:Elliptic_apriori_max_principle_estimate_ballorhalfball_sequence} and \eqref{eq:Elliptic_apriori_Schauder_interior_ballorhalfball_sequence} yield
a uniform bound on $\|\hat u_n\|_{C^\alpha_s(\bar U')}$ which is independent of $n\in\NN$ and thus a uniform bound on $\|\hat u_n\|_{C^{\alpha/2}(\bar U')}$ which is independent of $n\in\NN$ by \eqref{eq:DH_and_standard_Holder_relationships}. The Arzel\`a-Ascoli Theorem implies, after passing to a subsequence, that $\hat u_n\to v$ in $C(\bar U')$ as $n\to\infty$, for some $v\in C(\bar U')$, and in particular, the sequence $\hat u_n$ is Cauchy in $C(\bar U')$. But then \eqref{eq:Elliptic_apriori_Schauder_interior_ballorhalfball_sequence} implies, for any open subset $U''\Subset\underline U'$, that
$$
\|\hat u_n - \hat u_m\|_{C^{2+\alpha}_s(\bar U'')} \leq C\|\hat u_n - \hat u_m\|_{C(\bar U')}, \quad m, n \in \NN.
$$
Hence, the sequence $\hat u_n$ is Cauchy in $C^{2+\alpha}_s(\bar U'')$ and thus converges in $C^{2+\alpha}_s(\bar U'')$ to a limit $v\in C^{2+\alpha}_s(\bar U'')$ which necessarily coincides with the limit $v\in C(\bar U')$ already discovered. As $U''$ and $U'$ were arbitrary, we obtain $v\in C^{2+\alpha}_s(\underline U)$. Since $\hat u_n = \bar u_n$ on $U$ and thus by construction in Theorem \ref{thm:Elliptic_equation_continuous_sub_supersolution_properties} \eqref{item:elliptic_subsolution_harmonic_lift} obeys
$$
A\hat u_n = f \quad\hbox{on } U, \quad n\in\NN,
$$
we may take the limit in the preceding equation as $n\to\infty$ to give
$$
Av = f \quad\hbox{on } U.
$$
Clearly, $v \leq u$ on $U$ and $v(x_0) = u(x_0)$. To see that $v=u$ on $U$, suppose $v(z) < u(z)$ at some point $z\in U$. Then there exists a ``harmonic lift'' $\hat u \in \sS^-_{f,g}$ such that $v(z) < \bar u(z)$. Defining $w_n := \max\{\bar u, \hat u_n\}$ and its ``harmonic lifts'' $\hat w_n$ as in \eqref{eq:Elliptic_equation_harmonic_lift}, we obtain as before a subsequence of the sequence $\{w_n\}_{n\in\NN}$ which converges to a function $w\in C^{2+\alpha}_s(\underline U)$ and solves $Aw=f$ on $U$ and satisfies $v\leq w \leq u$ on $U$ and $v(x_0)=w(x_0)=u(x_0)$. Because $A$ has the strong maximum principle property on $\underline\sO$ in the sense of Definition \ref{defn:Strong_max_principle_property}, and $A(w-v)$ = 0 on $U$ with $(v-w)(x_0) = 0$ and $c \geq 0$ on $\underline U$, we must have $v\equiv w$ on $U$. This contradicts the definition of $\bar u$, since $v(z) < \bar u(z)$ while $\bar u(z) \leq w(z)$, and hence $u$ is a solution to \eqref{eq:Elliptic_equation} on $U$ and thus on $\sO$.

Finally, the proof that $u$ also belongs to $C(\sO\cup\partial_1\sO)$ and obeys the boundary condition \eqref{eq:Elliptic_boundary_condition} when each point of $\partial_1\sO$ is \emph{regular} with respect to $A$, $f$, and $g$ in the sense of Definition \ref{defn:Regular_boundary_point_elliptic_equation} is identical to that of \cite[Lemma 6.12]{GilbargTrudinger}.
\end{proof}

It remains to give the

\begin{proof}[Proof of Theorem \ref{thm:Existence_uniqueness_elliptic_Dirichlet}]
We first observe that $A$ has the strong maximum principle property on $\underline\sO$ in the sense of Definition \ref{defn:Strong_max_principle_property} thanks to Theorem \ref{thm:Elliptic_strong_maximum_principle}. Local solvability of the partial Dirichlet problem in $\underline\sO$, in the sense of Definition \ref{defn:Dirichlet_problem_local_solvability}, on balls $B\Subset\sO$ or half-balls $B^+\Subset\underline\sO$ centered on $\partial_0\sO$, is provided by \cite[Lemma 6.10]{GilbargTrudinger} and Theorem \ref{thm:Existence_uniqueness_elliptic_Dirichlet_halfball} (taking note of Remark \ref{rmk:Boundary_property_bd}), respectively. The conclusion now follows from Theorem \ref{thm:Perron_elliptic_bvp_solution}.
\end{proof}

\subsection{A Perron method for existence of solutions to a degenerate-elliptic obstacle problem}
\label{sec:Perron_obstacle_problem}
The Sobolev Embedding Theorem implies that $W^{2,p}(V) \subset C_b(V)$, when $2<p<\infty$ and $V\subset\RR^d$ is a domain which obeys an interior cone condition \cite[\S 4.3 \& Theorem 5.4 (C)]{Adams_1975}, while $W^{2,p}(V) \hookrightarrow C^{1,\gamma}(\bar V)$ for any $\gamma\in(0,1-d/p]$, when $d<p<\infty$, and $V\subset\RR^d$ is a domain whose boundary $\partial V$ has the strong local Lipschitz property \cite[\S 4.5 \& Theorem 5.4 ($\hbox{C}'$)]{Adams_1975}. To formulate our results for uniqueness and existence of solutions, we begin with the

\begin{defn}[Solution to an obstacle problem]
\label{defn:Solution_obstacle_problem}
Let $\sO\subseteqq\HH$ be a domain, $2<p<\infty$, and $A$ be as in \eqref{eq:Generator}. Given $f\in C(\underline\sO)$ and $\psi\in C(\underline\sO)$, we call $u\in W^{2,p}_{\loc}(\sO)\cap C(\underline\sO)$ a \emph{solution} to the obstacle problem if $u$ obeys \eqref{eq:Elliptic_obstacle_problem}, that is,
$$
\min\{Au-f,u-\psi\} = 0 \quad \hbox{a.e. on }\sO,
$$
and if $\Omega = \{x\in\sO: (u-\psi)(x)>0\}$, then $u\in C^2(\Omega)\cap C^1(\underline\Omega)$ and obeys \eqref{eq:WMP_C2s_regularity}, \eqref{eq:WMP_xdsecond_order_derivatives_zero_boundary} on $\Omega$.

Furthermore, given $g\in C(\partial_1\sO)$ and $\psi$ also belonging to $C(\sO\cup\partial_1\sO)$ and obeying the compatibility condition \eqref{eq:Boundarydata_obstacle_compatibility}, that is, $\psi\leq g$ on $\partial_1\sO$, we call $u$ a \emph{solution} to the obstacle problem with partial Dirichlet boundary condition if in addition $u$ belongs to $C(\sO\cup\partial_1\sO)$ and obeys  \eqref{eq:Elliptic_boundary_condition}, that is,
$$
u = g \quad\hbox{on } \partial_1\sO.
$$
\end{defn}

\begin{rmk}[Implied regularity for the obstacle function]
While not explicitly stated in Definition \ref{defn:Solution_obstacle_problem}, there is an implicit regularity condition, $\psi \in W^{2,p}_{\loc}(\sO\less\Omega)$, on the coincidence set (or ``exercise region''), $\sO\less\Omega = \{x\in\sO: (u-\psi)(x)=0\}$, where
$\Omega$ is as in \eqref{eq:Continuation_region}. While we could impose the stronger condition, $\psi \in W^{2,p}_{\loc}(\sO)$, it is convenient not to do this initially, in part because there are important examples of obstacle functions which only belong to $W^{1,\infty}_{\loc}(\sO)$, although they may have higher regularity on the complement of subsets of measure zero in $\sO$. The requirement that $u$ belong to $W^{2,p}_{\loc}(\sO)$ is imposed in order to be consistent with \cite[Theorem 1.3.2]{Friedman_1982}.
However, if $a^{ij} \in C^{1,1}(\underline\sO)$ and $b^i \in C^{0,1}(\underline\sO)$, we could instead have asked that $u\in C(\underline\sO)$ obey \eqref{eq:Elliptic_obstacle_problem} in the sense of distributions\footnote{By
integrating by parts in the variational inequality \cite[Equation (1.3.24)]{Friedman_1982} associated with \eqref{eq:Elliptic_obstacle_problem}, so derivatives are only applied to the test function $\varphi$.} with respect to test functions $\varphi\in C^\infty_0(\underline\sO)$ with $\varphi\geq \psi$ on $\sO$. In this second approach there is no implied regularity for $\psi\in C(\underline\sO)$.\qed
\end{rmk}

In order to motivate our definition of a continuous supersolution to the obstacle problem, we shall first give the simpler\footnote{Definition \ref{defn:Smooth_supersolution_obstacle_problem} is a refinement of \cite[Definition 3.1]{Feehan_maximumprinciple_v1}.}

\begin{defn}[Smooth supersolution to an obstacle problem]
\label{defn:Smooth_supersolution_obstacle_problem}
Let $\sO\subseteqq\HH$ be a domain and $A$ be as in \eqref{eq:Generator}. Given $f\in C(\underline\sO)$ and $\psi\in C(\underline\sO)$, we call $v\in C^2(\sO)\cap C^1(\underline\sO)$ obeying \eqref{eq:WMP_C2s_regularity}, \eqref{eq:WMP_xdsecond_order_derivatives_zero_boundary}
a \emph{smooth supersolution} to the obstacle problem \eqref{eq:Elliptic_obstacle_problem} if $v$ satisfies
$$
v \geq \psi \quad\hbox{on }\sO,
$$
and is a supersolution to \eqref{eq:Elliptic_equation}, that is,
$$
Av \geq f \quad \hbox{on }\sO.
$$
Furthermore, given $g\in C(\partial_1\sO)$ and $\psi$ also belonging to $C(\sO\cup\partial_1\sO)$ and obeying \eqref{eq:Boundarydata_obstacle_compatibility}, that is, $\psi\leq g$ on $\partial_1\sO$, we call $v$ a \emph{smooth supersolution} to the obstacle problem \eqref{eq:Elliptic_obstacle_problem} with partial Dirichlet boundary condition \eqref{eq:Elliptic_boundary_condition} if $v$ is a smooth supersolution to the obstacle problem \eqref{eq:Elliptic_obstacle_problem} and in addition $v$ belongs to $C(U\cup\partial_1\sO)$ and is a supersolution to \eqref{eq:Elliptic_boundary_condition}, that is,
$$
v \geq g \quad\hbox{on } \partial_1\sO.
$$
\end{defn}

We now give the definition of a continuous supersolution\footnote{Definition \ref{defn:Continuous_supersolution_obstacle_problem} is a refinement of \cite[Definition 3.9]{Feehan_maximumprinciple_v1}.} to an obstacle problem which we use in our comparison and existence theorems.

\begin{defn}[Continuous supersolution to an obstacle problem]
\label{defn:Continuous_supersolution_obstacle_problem}
Let $\sO\subseteqq\HH$ be a domain and $A$ be as in \eqref{eq:Generator}. Given $f\in C(\underline\sO)$ and $\psi\in C(\underline\sO)$, we call $v:\underline\sO\to\RR$ a \emph{continuous supersolution} to the obstacle problem \eqref{eq:Elliptic_obstacle_problem} if $v$ is continuous on $\sO$, locally bounded on $\underline\sO$, satisfies
$$
v \geq \psi \quad\hbox{on }\underline\sO,
$$
and is a \emph{continuous supersolution} to the elliptic equation \eqref{eq:Elliptic_equation} in the sense of the following refinement of Definition \ref{defn:Continuous_super_sub_solution_bvp}: for every open ball $B\Subset\sO$ or, for $\Omega=\{x\in\sO: (v-\psi)(x)>0\}$, every half-ball $B^+\Subset\underline\Omega$ with center in $\partial_0\sO$ and for every $\underline v\in C^2(U)\cap C^1(\underline U)$, with $U=B$ or $B^+$, obeying \eqref{eq:WMP_C2s_regularity}, \eqref{eq:WMP_xdsecond_order_derivatives_zero_boundary} when $U=B^+$, and $\sup_U \underline v < \infty$, and
$$
\begin{cases}
A\underline v \leq f &\hbox{on } U,
\\
\underline v \leq v &\hbox{on } \partial_1U,
\end{cases}
$$
we then have\footnote{Note that we do \emph{not} assert that $\underline v \geq \psi$ on $U$.}
$$
v \geq \underline v \quad \hbox{on } U.
$$
Furthermore, given $g\in C(\partial_1\sO)$ and $\psi$ also belonging to $C(\sO\cup\partial_1\sO)$ and obeying \eqref{eq:Boundarydata_obstacle_compatibility}, that is, $\psi\leq g$ on $\partial_1\sO$, we call $v\in C(\sO\cup\partial_1\sO)$ a \emph{continuous supersolution} to the obstacle problem \eqref{eq:Elliptic_obstacle_problem}  with partial Dirichlet boundary condition \eqref{eq:Elliptic_boundary_condition} if $v$ is a continuous supersolution to \eqref{eq:Elliptic_equation} and \eqref{eq:Elliptic_boundary_condition}, so
$$
v \geq g \quad\hbox{on } \partial_1\sO.
$$.
\end{defn}

\begin{rmk}[Definition of continuous supersolution to a variational inequality associated with an obstacle problem]
Definition \ref{defn:Continuous_supersolution_obstacle_problem} may be compared with definitions of a supersolution to a variational inequality associated with an obstacle problem in \cite[pp. 108, 118, 215, and 236]{Rodrigues_1987}; see \cite[Theorems 4.5.7, 4.7.4, 7.3.3, and 7.3.5]{Rodrigues_1987} for related results and applications. Note that we do \emph{not} assume that $v$ is continuous on $\underline\sO$.\qed
\end{rmk}

The \emph{continuation region} (or \emph{non-coincidence set}) for a solution $u\in W^{2,p}_{\loc}(\sO)\cap C(\underline\sO)$ to the obstacle problem \eqref{eq:Elliptic_obstacle_problem} with obstacle $\psi\in C(\underline\sO)$ is defined by
\begin{equation}
\label{eq:Continuation_region}
\Omega := \{x\in\sO: (u-\psi)(x) > 0\},
\end{equation}
so $\Omega\subset\HH$ is an open subset, since both $u$ and $\psi$ are continuous on $\sO$.

\begin{rmk}[Continuation region defined by the Perron function for the obstacle problem]
\label{rmk:Continuation_region_Perron_solution_elliptic_obstacle_problem}
If $u$ is initially defined by the Perron formula \eqref{eq:Perron_elliptic_obstacle_problem_solution}, then $u$ is \emph{upper} semicontinuous on $\sO$, since each supersolution is continuous on $\sO$, and so its continuation region is \emph{not} necessarily an open subset of $\HH$, for which we would need $u$ to be \emph{lower} semicontinuous.\qed
\end{rmk}

\begin{lem}[A solution to the obstacle problem is a continuous supersolution]
\label{lem:Solution_obstacle_problem_is_continuous_supersolution}
Let $\sO\subseteqq\HH$ be a domain, 
$d < p<\infty$, and $A$ as in \eqref{eq:Generator} have the weak maximum principle property in the sense of Definition \ref{defn:Weak_max_principle_property} and coefficients $a^{ij}, \ b^i \in L^\infty_{\loc}(\sO)$ and obeying \eqref{eq:Strict_ellipticity_domain}. If $f \in C(\underline\sO)$, and $\psi \in C(\underline\sO)$, and $u\in W^{2,p}_{\loc}(\sO)\cap C(\underline\sO)$ is a solution to \eqref{eq:Elliptic_obstacle_problem} in the sense of Definition \ref{defn:Solution_obstacle_problem}, then $u$ is a continuous supersolution in the sense of Definition \ref{defn:Continuous_supersolution_obstacle_problem}.
\end{lem}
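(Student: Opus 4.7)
My plan is to verify the three conditions of Definition~\ref{defn:Continuous_supersolution_obstacle_problem} in turn. The easy parts --- continuity of $u$ on $\sO$, local boundedness on $\underline\sO$, and the pointwise inequality $u \geq \psi$ on $\underline\sO$ --- I will dispatch immediately from the hypothesis $u \in C(\underline\sO)$ together with continuity of $\psi$ and the a.e.\ inequality $u \geq \psi$ built into \eqref{eq:Elliptic_obstacle_problem}. The core work is the comparison with a test function $\underline v$, which I will split into two cases depending on the geometry of $U$.

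\emph{Case 1: $U = B \Subset \sO$ is an interior ball.} Here I would observe that $\bar B \subset \sO \subset \HH$ forces $\inf_B x_d > 0$, so the matrix $(x_d a^{ij})$ is uniformly elliptic on $B$ and all coefficients of $A$ lie in $L^\infty(B)$. From \eqref{eq:Elliptic_obstacle_problem} we have $Au \geq f$ a.e.\ on $B$; setting $w := \underline v - u$, I would use the Sobolev embedding $W^{2,p} \hookrightarrow C^{1,\gamma}(\bar B)$ (valid because $p > d$) to obtain $w \in W^{2,p}(B) \cap C(\bar B)$ with $Aw \leq 0$ a.e.\ in $B$ and $w \leq 0$ on $\partial B$. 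The conclusion $w \leq 0$ in $B$ then follows from the Aleksandrov--Bakelman--Pucci maximum principle \cite[Theorem~9.1]{GilbargTrudinger}, applied if necessary after the standard exponential substitution $w \mapsto e^{-\sigma x_d} w$ to absorb the sign of $c$ (since all coefficients are bounded on the bounded ball $B$, this substitution is harmless).

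\emph{Case 2: $U = B^+ \Subset \underline\Omega$ with center on $\partial_0\sO$.} Because $\underline\Omega \cap \HH = \Omega$, I get $B^+ \subset \Omega$, so $u > \psi$ strictly on $B^+$, and \eqref{eq:Elliptic_obstacle_problem} reduces to $Au = f$ a.e.\ there. The sharper regularity built into Definition~\ref{defn:Solution_obstacle_problem} --- namely $u \in C^2(\Omega) \cap C^1(\underline\Omega)$ obeying \eqref{eq:WMP_C2s_regularity}--\eqref{eq:WMP_xdsecond_order_derivatives_zero_boundary} on $\Omega$ --- promotes this to a pointwise identity on $B^+$. With $w := \underline v - u$, I find that $w \in C^2(B^+) \cap C^1(\underline B^+) \cap C(\overline{B^+})$, the product $x_d D^2 w$ extends continuously to $\underline B^+$ with vanishing limit on $\partial_0 B^+$, $\sup_{B^+} w < \infty$, $Aw \leq 0$ on $B^+$, and $w \leq 0$ on $\partial_1 B^+$. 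So $w$ fulfils exactly the hypotheses of Definition~\ref{defn:Weak_max_principle_property} on the bounded subdomain $B^+$. Since the weak maximum principle property of $A$ on $\underline\sO$ rests only on coefficient conditions (strict ellipticity and sign conditions on $b^d$, $c$) that remain valid on bounded subdomains, this property transfers to $\underline B^+$ (compare Theorem~\ref{thm:Elliptic_weak_maximum_principle_bounded_domain}), yielding $w \leq 0$ on $B^+$.

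The main obstacle I anticipate is Case~1, where $u$ has only Sobolev regularity and one must step outside the $C^2$ framework of Definition~\ref{defn:Weak_max_principle_property} to invoke an $L^p$-type maximum principle, pairing a classical test function against a Sobolev solution of the obstacle problem.
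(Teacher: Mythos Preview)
Your proposal is correct and follows essentially the same approach as the paper's proof: the easy parts ($u\in C(\underline\sO)$, $u\geq\psi$) are immediate, Case~1 is handled by the Aleksandrov maximum principle \cite[Theorem~9.1]{GilbargTrudinger} applied to $w=\underline v-u\in W^{2,p}(B)\cap C(\bar B)$, and Case~2 by the weak maximum principle property of $A$ applied to the $C^2$ difference $w$ on $B^+\subset\Omega$. Your extra care in noting the Sobolev embedding for regularity of $w$ in Case~1 and the possible exponential substitution to absorb the sign of $c$ are refinements the paper omits, but the structure and key tools are identical.
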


\begin{proof}
Clearly, $u\geq \psi$ on $\sO$ and $u \in C(\underline\sO)$, as required by Definition \ref{defn:Continuous_supersolution_obstacle_problem}. Because $u$ obeys \eqref{eq:Elliptic_obstacle_problem}, we have $Au\geq f$ a.e. on $\sO$, while on the continuation region $\Omega$ given by \eqref{eq:Continuation_region}, we know that $u$ belongs to $C^2(\Omega)\cap C^1(\underline\Omega)$ and obeys $Au=f$ and  \eqref{eq:WMP_C2s_regularity}, \eqref{eq:WMP_xdsecond_order_derivatives_zero_boundary} on $\Omega$.

Let $U$ denote a ball $B\Subset\sO$ or half-ball $B^+\Subset\underline\Omega$ with center in $\partial_0\sO$ and suppose $\underline v\in C^2(U)\cap C^1(\underline U)$, obeys \eqref{eq:WMP_C2s_regularity}, \eqref{eq:WMP_xdsecond_order_derivatives_zero_boundary} when $U=B^+$, and
$$
\begin{cases}
A\underline v \leq f &\hbox{on } U,
\\
\underline v \leq v &\hbox{on } \partial_1U.
\end{cases}
$$
If $U=B\Subset\sO$, then $u-\underline v$ belongs to $W^{2,p}(B)\cap C(\bar B)$ and obeys $A(u-\underline v) \geq 0$ a.e. on $B$ and $u-\underline v\leq 0$ on $\partial B$. The condition \cite[p. 220]{GilbargTrudinger} on the coefficients $a^{ij}$ of $A$ is obeyed since $\det a(x)>\lambda_0^d$ for $x\in\sO$ by \eqref{eq:Strict_ellipticity_domain} and $a^{ij}\in L^\infty_{\loc}(\sO)$ by hypothesis, so $\det a(x)$, for $x\in \bar B$, is uniformly bounded above and below by positive constants; the condition \cite[Equation (9.3)]{GilbargTrudinger} on $f$ and the coefficients $b^i$ of $A$ is also obeyed since $f, \ b^i \in L^\infty_{\loc}(\sO)$; finally, $p\geq d$ by hypothesis. Hence, the Aleksandrov weak maximum principle \cite[Theorem 9.1]{GilbargTrudinger} for functions in $W^{2,d}(B)\cap C(\bar B)$ implies that
$$
v\geq \underline v \quad \hbox{on } B.
$$
If $U=B^+\Subset\underline\Omega$ with center in $\partial_0\sO$, then $u-\underline v$ belongs to $C^2(\Omega)\cap C^1(\underline\Omega)$, obeys \eqref{eq:WMP_C2s_regularity}, \eqref{eq:WMP_xdsecond_order_derivatives_zero_boundary} on $B^+$, and obeys $A(u-\underline v) \geq 0$ on $B^+$, and $\sup_{B^+}(u-\underline v)<\infty$, and $u-\underline v\leq 0$ on $\partial_1B$, so the weak maximum principle property of $A$ (Definition \ref{defn:Weak_max_principle_property}) implies that
$$
v\geq \underline v \quad \hbox{on } B^+.
$$
Therefore, $u$ is a continuous supersolution in the sense of Definition \ref{defn:Continuous_supersolution_obstacle_problem}.
\end{proof}

\begin{rmk}[Compatibility for the source and obstacle functions]
It is tempting but not necessary to impose the compatibility condition,
\begin{equation}
\label{eq:Source_and_obstacle_function_compatibility}
A\psi \geq f \quad\hbox{a.e. on } \sO,
\end{equation}
but because $u$ obeys the equation \eqref{eq:Elliptic_obstacle_problem}, we necessarily have that $A\psi = Au \geq f$ a.e. on the subset $\{x\in\sO:(u-\psi)(x)=0\}$, which is sufficient. On the other hand, if $A\psi\leq f$ a.e. on $\sO$, then a solution $u$ to $Au=f$ on $\sO$ and $u\geq\psi$ on $\partial_1\sO$ leads to $A(u-\psi)\geq 0$ a.e. on $\partial_1\sO$ and so the maximum principles employed in the proof of Lemma \ref{lem:Solution_obstacle_problem_is_continuous_supersolution} (on bounded domains) imply that $u\geq\psi$ on $\sO$ and thus $u$ is a solution to \eqref{eq:Elliptic_obstacle_problem}. A condition $A\psi>f$ a.e. on some open ball in $\sO$ ensures that the obstacle problem \eqref{eq:Elliptic_obstacle_problem} is a non-trivial generalization of the elliptic equation \eqref{eq:Elliptic_equation}. \qed
\end{rmk}

We now come to the crucial

\begin{thm}[Comparison principle for solutions and continuous supersolutions to the obstacle problem]
\label{thm:Elliptic_obstacle_problem_comparison_principle}
Let $\sO\subseteqq\HH$ be a possibly unbounded domain, and $2<p<\infty$ and $\alpha\in (0,1)$, and $A$ be as in \eqref{eq:Generator} with coefficients belonging to $C^\alpha_s(\underline\sO)$, having the strong maximum principle property on $\underline\sO$ in the sense of Definition \ref{defn:Strong_max_principle_property} and local solvability of the partial Dirichlet problem in $\underline\sO$ in the sense of Definition \ref{defn:Dirichlet_problem_local_solvability}. If $\sO$ is unbounded, assume in addition that the coefficients of $A$ obey \eqref{eq:Positive_lower_bound_c_domain}, \eqref{eq:Quadratic_growth}, and \eqref{eq:Positive_lower_bound_c_geq_2K} and that $\sO$ obeys \eqref{eq:Connectedness_domain_and_ball}.
Let $f\in C^\alpha_s(\underline\sO)\cap C_b(\sO)$ and $\psi\in C(\underline\sO)$ with $\sup_\sO\psi<\infty$. Suppose that $u$ is a solution to the obstacle problem in the sense of Definition \ref{defn:Solution_obstacle_problem} with $\sup_\sO u<\infty$ and $v$ is a continuous supersolution to the obstacle problem in the sense of Definition \ref{defn:Continuous_supersolution_obstacle_problem} with $\inf_\sO v>-\infty$. If $u$, $v$, and $\psi$ also belong to $C(\sO\cup\partial_1\sO)$ and\footnote{Our hypotheses allow for the possibility that $\sO=\HH$, in which case $\partial_1\sO=\emptyset$ and the boundary comparison condition is omitted.}
$u \leq v$ on $\partial_1\sO$, then
$$
u \leq v \quad\hbox{on }\sO.
$$
In addition, suppose $p > d$ and that the coefficients $a^{ij}, \ b^i$ of $A$ belong to $L^\infty_{\loc}(\sO)$ and obey \eqref{eq:Strict_ellipticity_domain}. If $v$ is also a solution to the obstacle problem and $u = v$ on $\partial_1\sO$, then $u=v$ on $\sO$.
\end{thm}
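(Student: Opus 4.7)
The plan is to argue by contradiction: assume $M := \sup_\sO(u - v) > 0$. If necessary I would first apply the exponential change of dependent variable from Remark \ref{rmk:Elliptic_weak_maximum_principle_finite_height_domain} in order to reduce to the case $c \geq c_0 > 0$ (for unbounded $\sO$ this is already in the hypothesis). The crucial initial observation is that on $\sO \setminus \Omega$, with $\Omega := \{x \in \sO : u(x) > \psi(x)\}$ the continuation region of $u$, one has $u = \psi \leq v$ by Definition \ref{defn:Continuous_supersolution_obstacle_problem}, so $\{u > v\} \subseteqq \Omega$; and on $\Omega$ the solution $u$ is classical with $Au = f$ and belongs to $C^{2+\alpha}_s(\underline\Omega)$, obeying the second-order boundary condition \eqref{eq:WMP_xdsecond_order_derivatives_zero_boundary} on $\partial_0\Omega$.

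The key local step is the ball comparison $\sup_B(u - v) \leq \max\{0,\sup_{\partial B}(u - v)\}$ for every open ball $B \Subset \Omega$. I would prove this by setting $M_B := \max\{0,\sup_{\partial B}(u - v)\} \geq 0$ and noting that $\underline v := u - M_B \in C^2(B)\cap C^1(\bar B)$ satisfies $A\underline v = f - cM_B \leq f$ (since $c \geq 0$ and $M_B \geq 0$) together with $\underline v \leq v$ on $\partial B$; the interior-ball clause of Definition \ref{defn:Continuous_supersolution_obstacle_problem} applied to the continuous supersolution $v$ then yields $\underline v \leq v$ on $B$, that is, $u - v \leq M_B$ on $B$. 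The half-ball analogue $\sup_{B^+}(u - v) \leq \max\{0,\sup_{\partial_1 B^+}(u - v)\}$ for $B^+ \Subset \underline\Omega$ centered on $\partial_0\sO$ is available whenever $B^+ \Subset \underline{\Omega_v}$ with $\Omega_v := \{x \in \sO : v(x) > \psi(x)\}$, the second-order boundary condition on $u$ ensuring that $u - M_{B^+}$ is an admissible test function for the half-ball clause of the same definition.

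Combining these local inequalities with the strong maximum principle property of $A$, a topological propagation argument (in the spirit of Theorem \ref{thm:Elliptic_equation_continuous_sub_supersolution_properties} \eqref{item:elliptic_strong_maximum_principle}, using the ``harmonic lift'' property \eqref{item:elliptic_subsolution_harmonic_lift} applied to $u$ itself) will show that if $u - v$ attains a positive value at an interior point of $\Omega$, it must be constant on the connected component of that point, contradicting the boundary behavior: on $\partial\Omega \cap \sO$ we have $u = \psi \leq v$ (so $u - v = 0$ by continuity of $u$, $v$, $\psi$ there), while on $\partial_1\sO$ we have $u \leq v$ by hypothesis. Along sequences $x_n \to x^* \in \partial_0\sO$, the inequality $v(x_n) \geq \psi(x_n)$ together with continuity of $\psi$ on $\underline\sO$ gives $\liminf_n v(x_n) \geq \psi(x^*)$ and hence $M \leq u(x^*) - \psi(x^*)$; if $u(x^*) = \psi(x^*)$ this already contradicts $M > 0$, and if $u(x^*) > \psi(x^*)$ then a neighborhood of $x^*$ in $\underline\sO$ must lie in $\underline{\Omega_v}$ (using that $v(x_n) \to u(x^*) - M > \psi(x^*)$), bringing the half-ball inequality of the preceding paragraph into play. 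For unbounded $\sO$, the behavior at infinity is handled, exactly as in the proof of Theorem \ref{thm:Elliptic_equation_continuous_sub_supersolution_properties} \eqref{item:elliptic_weak_maximum_principle}, by first subtracting $\delta v_0$ with $v_0 := 1 + |x|^2$, invoking \eqref{eq:Positive_lower_bound_c_domain}, \eqref{eq:Quadratic_growth}, \eqref{eq:Positive_lower_bound_c_geq_2K}, and \eqref{eq:Connectedness_domain_and_ball}, and then letting $\delta \downarrow 0$. The hard part will be the boundary analysis along $\partial_0\sO$ and at points of $\Omega \setminus \Omega_v = \{u > \psi = v\}$, precisely because there $v$ is only locally bounded rather than continuous, and the half-ball clause of Definition \ref{defn:Continuous_supersolution_obstacle_problem} is not directly available without first promoting the relevant half-ball into $\underline{\Omega_v}$.

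The second assertion is then essentially immediate from the first: under the additional hypotheses $p > d$ and $a^{ij}, b^i \in L^\infty_{\loc}(\sO)$, Lemma \ref{lem:Solution_obstacle_problem_is_continuous_supersolution} implies that $v$, being a solution, is itself a continuous supersolution in the sense of Definition \ref{defn:Continuous_supersolution_obstacle_problem}. Applying the first part of the theorem with the roles of $u$ and $v$ interchanged yields $v \leq u$ on $\sO$, and combining this with $u \leq v$ gives $u \equiv v$.
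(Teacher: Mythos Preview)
Your overall architecture --- contradiction on $M := \sup_\sO(u-v) > 0$, localization to the continuation region $\Omega = \{u > \psi\}$ of $u$, a strong-maximum-principle propagation showing the maximum set is all of $\underline\sO$, the auxiliary function $v_0 = 1 + |x|^2$ for unbounded $\sO$, and uniqueness via Lemma \ref{lem:Solution_obstacle_problem_is_continuous_supersolution} --- matches the paper's proof; the last two items are verbatim the paper's.

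The substantive difference is in the local step. You take $\underline v := u - M_B$ as a test subsolution against $v$, which correctly yields the ball inequality $\sup_B(u-v) \leq M_B$; but this is only a weak-maximum statement for the merely continuous function $u - v$, and your ``topological propagation argument'' to conclude constancy on a component is not spelled out. The parenthetical ``harmonic lift applied to $u$ itself'' is a misdirection: on $U \Subset \Omega$ one already has $Au = f$, so lifting $u$ is trivial. The paper instead takes the harmonic lift of $v$: it solves $A\underline v = f$ on $U$ with $\underline v = v$ on $\partial_1 U$ (this is exactly where the local-solvability hypothesis enters), so that $\underline v \leq v$ on $U$ by Definition \ref{defn:Continuous_supersolution_obstacle_problem}. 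The gain is that $u - \underline v$ is now a genuine $C^{2+\alpha}_s$ function on $\underline U$ with $A(u - \underline v) = 0$, to which the weak and strong maximum principles apply \emph{directly}: from $(u-\underline v)(x_0) \geq (u - v_*)(x_0) = M$ and $u - \underline v \leq M$ on $\partial_1 U$ one obtains $u - \underline v \equiv M$ on $U$, hence $u - v = M$ on $\partial_1 U$ where $\underline v = v$, and the openness of $\underline\sO_M$ follows. The semicontinuous-envelope framework (working with $u^* - v_*$ on $\bar\sO$) replaces your ad hoc limit analysis along $\partial_0\sO$. Your route can be completed by recognizing that your ball inequality exhibits $u - v$ as a continuous subsolution to $Aw = 0$ on $\Omega$ in the sense of Definition \ref{defn:Continuous_super_sub_solution_bvp} and then invoking Theorem \ref{thm:Elliptic_equation_continuous_sub_supersolution_properties}\eqref{item:elliptic_strong_maximum_principle} --- but that proof itself rests on a harmonic lift, so the device is unavoidable; the point is just that it is $v$, not $u$, that should be lifted.
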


\begin{proof}
We may assume without loss of generality that $\sO\cap\{u>\psi\}$ is non-empty, because if $u=\psi$ on $\sO$ then $u\leq v$ on $\sO$ by Definition \ref{defn:Continuous_supersolution_obstacle_problem}. We first consider the case that $\sO$ is \emph{bounded}, so $\partial_1\sO$ is non-empty and $u-v\leq 0$ on $\partial_1\sO$. Let
$$
M := \sup_{\sO}(u-v) = \sup_{\sO}(u-v_*) = \sup_{\sO}(u^*-v_*),
$$
noting that $M<\infty$ by our hypotheses on $u$ and $v$. Here $v_*:\bar\sO\to\RR$ denotes the lower semicontinuous envelope of $v$ and necessarily obeys $\psi \leq v_*\leq v$ on $\underline\sO\cup\partial_1\sO$ and $v_*=v$ on $\sO\cup\partial_1\sO$, while $u^*:\bar\sO\to\RR$ denotes the upper semicontinuous envelope of $u$ and necessarily obeys $u^* = u$ on $\underline\sO\cup\partial_1\sO$. We consider
$$
\bar\sO_M = \{x\in\bar\sO:(u^*-v_*)(x)=M\} \quad\hbox{and}\quad \underline\sO_M := \underline\sO\cap\bar\sO_M = \{x\in\underline\sO:(u-v_*)(x)=M\}.
$$
Since $\bar\sO$ is compact and $u^*-v_*$ is upper semicontinuous on $\bar\sO$, then $\bar\sO_M$ is non-empty. If $\underline\sO_M$ is empty, then we must have $\bar\sO_M = \{x\in\overline{\partial_1\sO}:(u^*-v_*)(x)=M\}$ and thus
$$
M = \sup_{\sO}(u-v) = \sup_{\partial_1\sO}(u-v) \leq 0,
$$
where the final inequality follows from our hypotheses on $u$ and $v$ on $\partial_1\sO$.

We consider the case where $\underline\sO_M$ is non-empty. If $M=0$ then $u\leq v$ on $\sO$, as desired and so, to obtain a contradiction, we suppose that $M>0$. Clearly, $\underline\sO_M$ is a relatively closed subset of $\underline\sO$ since $u-v_*$ is upper semicontinuous on $\underline\sO$. We wish to show that $\underline\sO_M$ is also a relatively open subset of $\underline\sO$, in which case (because $\sO$ and thus $\underline\sO$ are connected) either $\underline\sO_M$ is empty --- a case which we already ruled out --- or $\underline\sO_M=\underline\sO$.

We now proceed by adapting the proof\footnote{Lemma 6.2 in \cite{Han_Lin_2011} asserts that if $\sO\subset\RR^d$ is a bounded domain, $u\in C(\bar\sO)$ is subharmonic, $v\in C(\bar\sO)$ is superharmonic, and $u\leq v$ on $\partial\sO$, then $u\leq v$ on $\bar\sO$.} of \cite[Lemma 6.2]{Han_Lin_2011}. Because $\underline\sO_M$ is non-empty, we may choose $x_0\in \underline\sO_M$ and so
$$
u(x_0) = v_*(x_0) + M > v_*(x_0) \geq \psi(x_0),
$$
and thus $(u-\psi)(x_0)>0$. Let $U\Subset\sO$ be a ball $B_r(x_0)$ or half-ball $B_r^+(x_0)\Subset\underline\sO$ (for the case of $x_0\in\partial_0\sO$) with $r>0$ chosen small enough that $U\Subset\Omega=\{x\in\sO:(u-\psi)(x)>0\}$, which we may do since $u-\psi$ belongs to $C(\underline\sO)$ by Definition \ref{defn:Solution_obstacle_problem}. Since $f\in C^\alpha_s(\underline U)\cap C_b(U)$, Hypothesis \ref{hyp:Dirichlet_problem_local_solvability} ensures that there is a unique solution $\underline v\in C^{2+\alpha}_s(\underline U)\cap C_b(U\cup\partial_1U)$ to
$$
\begin{cases}
A\underline v=f &\hbox{on } U,
\\
\underline v = v &\hbox{on } \partial_1 U.
\end{cases}
$$
Then $\underline v\leq v$ on $U$ by Definition \ref{defn:Continuous_supersolution_obstacle_problem} since $v$ is a continuous supersolution, and hence
$$
u-\underline v \geq u-v \quad\hbox{on }U.
$$
By hypothesis, $A$ has the strong maximum principle property on $\underline\sO$ in the sense of Definition \ref{defn:Strong_max_principle_property} and because $\sO$ is bounded by our assumption for this case, Lemma \ref{lem:Elliptic_strong_implies_weak_maximum_principle_property_bounded_domain} implies that $A$ has the weak maximum principle property on $\underline\sO$ in the sense of Definition \ref{defn:Weak_max_principle_property}. Since $U\Subset\{u>\psi\}$, then by Definition \ref{defn:Solution_obstacle_problem} we must have $Au=f$ on $U$ and because $u$ belongs to $C^2(U)\cap C^1(\underline U)\cap C_b(U\cup \partial_1U)$ and obeys \eqref{eq:WMP_C2s_regularity}, \eqref{eq:WMP_xdsecond_order_derivatives_zero_boundary} along $\partial_0U$ (again by Definition \ref{defn:Solution_obstacle_problem}), then Hypothesis \ref{hyp:Dirichlet_problem_local_solvability} and the uniqueness afforded by the weak maximum principle property for $A$ on $\underline U$ imply that $u\in C^{2+\alpha}_s(\underline U)\cap C_b(U\cup\partial_1U)$. We have $A(u-\underline v) = Au - A\underline v = 0$ on $U$ and $u-\underline v = u-v \leq M$ on $\partial U$, that is,
$$
\begin{cases}
A(u-\underline v)=0 &\hbox{on } U,
\\
u-\underline v \leq M &\hbox{on } \partial_1 U.
\end{cases}
$$
By the weak maximum principle property for $A$ on $\underline U$, the fact that $M > 0$, and Proposition \ref{prop:Continuous_super_sub_solution_bvp_maximum_principle_estimates} \eqref{item:Continuous_subsolution_f_leq_zero}, we must therefore have $u-\underline v \leq \sup_{\partial_1 U}(u-\underline v)^+ = 0 \vee \sup_{\partial_1 U}(u-\underline v)\leq M$ on $U$; in particular,
$$
M \geq (u-\underline v)(x_0) \geq (u-v)(x_0) = M.
$$
Hence, $(u-\underline v)(x_0)=M$ and so $u-\underline v$ has a maximum at $x_0\in \underline U$.
The strong maximum principle property for $A$ on $\underline U$ implies that $u-\underline v\equiv M$ (constant) on $U$ and thus $U\subset \underline\sO_M$.

Since $\underline\sO$ is connected and $\underline\sO_M\subset\underline\sO$ is a non-empty (by our assumption) relatively open and closed subset, we consequently have $\underline\sO_M=\underline\sO$. But then $u-v\equiv M$ is constant on $\sO$ and thus on $\sO\cup \partial_1\sO$ (since $u$ and $v$ belong to $C(\sO\cup \partial_1\sO)$ by hypothesis) and hence $u-v\equiv M$ on $\partial_1\sO$ with $M > 0$, contradicting our hypothesis that $u-v\leq 0$ on $\partial_1\sO$. Therefore, we must have $u\leq v$ on $\sO$.

For the case of an \emph{unbounded} domain $\sO$, we adapt the proof of Lemma \ref{lem:Elliptic_weak_maximum_principle_extension_unbounded_domain}. Choose $v_0\in C^\infty(\underline\HH)$ as in \eqref{eq:Defn_vzero}, so
$$
v_0(x) = 1+|x|^2, \quad\forall\, x \in\HH,
$$
and recall that by \eqref{eq:A_vzero_geq_zero},
$$
Av_0 \geq 0 \quad\hbox{on }\sO.
$$
Suppose $\delta>0$. We claim that $v + \delta v_0$ is a continuous supersolution to the obstacle problem \eqref{eq:Elliptic_obstacle_problem} in the sense of Definition \ref{defn:Continuous_supersolution_obstacle_problem}. Clearly, $v+\delta v_0 \geq \psi$ on $\sO$ while if $v$ were a smooth supersolution in the sense of Definition \ref{defn:Smooth_supersolution_obstacle_problem}, so $Av \geq f$ on $\sO$, then
$$
A(v+\delta v_0) \geq f  \quad\hbox{on }\sO,
$$
and clearly $v+\delta v_0$ would be a smooth supersolution. When $v$ is only a continuous supersolution, suppose $U=B\Subset\sO$ or $U=B^+$, a half-ball with center in $\partial_0\sO$, and that $\underline v \in C^2(U)\cap C^1(\underline U)$ obeys \eqref{eq:WMP_C2s_regularity}, \eqref{eq:WMP_xdsecond_order_derivatives_zero_boundary}, and $\sup_U \underline v < \infty$, and
$$
A\underline v \leq f  \quad\hbox{on } U, \quad \underline v \leq v+\delta v_0 \quad\hbox{on } \partial_1U.
$$
But then $A(\underline v-\delta v_0) \leq f$ on $U$ and $\underline v-\delta v_0 \leq v$ on $\partial_1U$ and because $v$ is a continuous supersolution, we must have $\underline v-\delta v_0 \leq v$ on $U$, that is, $\underline v \leq v+\delta v_0$ on $U$. Therefore, $v+\delta v_0$ is a continuous supersolution to the obstacle problem \eqref{eq:Elliptic_obstacle_problem} in the sense of Definition \ref{defn:Continuous_supersolution_obstacle_problem}.

Because $\sup_\sO u < \infty$ and $\inf_\sO v >- \infty$, we must have
$$
u \leq v + \delta v_0 \quad\hbox{on }\sO\cap\partial_1B_R^+,
$$
for all large enough $R>0$ while our hypotheses imply that
$$
u \leq v \leq v + \delta v_0 \quad\hbox{on }B_R^+\cap\partial_1\sO,
$$
and thus, since $\partial_1(\sO\cap B_R^+) = (\bar\sO\cap\partial_1B_R^+) \cup (\bar B_R^+\cap\partial_1\sO)$,
$$
u \leq v + \delta v_0 \quad\hbox{on }\partial_1(\sO\cap B_R^+).
$$
Since $\inf_\sO v >- \infty$, we also have $\inf_\sO (v + \delta v_0) >- \infty$. Consequently,
noting that $\sO\cap B_R^+$ is connected for large enough $R$ by hypothesis, Theorem
\ref{thm:Elliptic_obstacle_problem_comparison_principle}, for the case of bounded subdomains of $\HH$, implies that $u \leq v + \delta v_0$ on $\sO\cap B_R^+$, for any sufficiently large $R>0$. Thus, $u \leq v + \delta v_0$ on $\sO$ for all $\delta>0$ and taking the limit as $\delta \downarrow 0$, we obtain $u \leq v$ on $\sO$, as desired, and this completes the proof for the case an unbounded domain, $\sO$.

Finally, if $v$ is also a solution, then because $u$ is a continuous supersolution by Lemma \ref{lem:Solution_obstacle_problem_is_continuous_supersolution}, we may reverse the roles of $u$ and $v$ in the preceding argument to also give $v\leq u$ on $\sO$ and hence $u=v$ on $\sO$.
\end{proof}

\begin{defn}[Set of continuous supersolutions to an obstacle problem]
\label{defn:Set_continuous_supersolutions_obstacle_problem}
Let $\sO\subseteqq\HH$ be a domain and $A$ be as in \eqref{eq:Generator}. Given $f\in C(\underline\sO)$, and $g\in C(\partial_1\sO)$, and $\psi\in C(\underline\sO\cup\partial_1\sO)$ obeying \eqref{eq:Boundarydata_obstacle_compatibility}, that is, $\psi\leq g$ on $\partial_1\sO$, we let $\sS_{f,g,\psi}^+$ denote the set of continuous supersolutions (in the sense of Definition \ref{defn:Continuous_supersolution_obstacle_problem}) to the obstacle problem \eqref{eq:Elliptic_obstacle_problem} with partial Dirichlet boundary condition \eqref{eq:Elliptic_boundary_condition}.
\end{defn}

\begin{thm}[A Perron method for existence of solutions to the degenerate-elliptic obstacle problem]
\label{thm:Perron_elliptic_obstacle_problem_solution}
Let $\sO\subseteqq\HH$ be a possibly unbounded domain, and $d<p<\infty$, and $\alpha\in(0,1)$. Assume the hypotheses for $A$, $f$, and $g$ in Theorem \ref{thm:Perron_elliptic_bvp_solution} and, in addition, that $c$ obeys \eqref{eq:Nonnegative_c}. If
$\psi\in C^2(\underline\sO)\cap C(\sO\cup \partial_1\sO)$ with $\sup_\sO\psi<\infty$ and obeys \eqref{eq:Boundarydata_obstacle_compatibility}, that is, $\psi\leq g$ on $\partial_1\sO$, then the \emph{Perron function},
\begin{equation}
\label{eq:Perron_elliptic_obstacle_problem_solution}
u(x) := \inf_{w\in \sS_{f,g,\psi}^+}w(x), \quad x\in\underline\sO,
\end{equation}
belongs to
$$
C^{2+\alpha}_s(\underline\Omega)\cap W^{2,p}_{\loc}(\sO)\cap C^{1,\alpha}_s(\underline\sO)\cap C_b(\sO),
$$
where $\Omega = \{x\in\sO: (u-\psi)(x)>0\}$ as in \eqref{eq:Continuation_region} and $u$ is a solution to the obstacle problem \eqref{eq:Elliptic_obstacle_problem}. Furthermore\footnote{This final conclusion also holds if each point of $\partial_1\sO$ obeys an interior sphere condition since Corollary \ref{cor:Strictly_elliptic_obstacle_problem_continuous_boundarydata} implies that $u$ belongs to $C(\sO\cup\partial_1\sO)$ and obeys the partial Dirichlet boundary condition \eqref{eq:Elliptic_boundary_condition}.} if each point of $\partial_1\sO$ is \emph{regular} with respect to $A$, $f$, and $g$ in the sense of Definition \ref{defn:Regular_boundary_point_elliptic_equation}, then in addition $u$ belongs to $C(\sO\cup\partial_1\sO)$ and $u$ obeys the boundary condition \eqref{eq:Elliptic_boundary_condition}.
\end{thm}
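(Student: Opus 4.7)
The plan is to mimic the Perron-method structure of Theorem~\ref{thm:Perron_elliptic_bvp_solution} but to use two kinds of local ``lifts'' instead of one: on interior balls $B\Subset\sO$ the natural lift is the solution to an obstacle problem for the (locally) strictly elliptic operator $A$ (provided by the theory in Appendix~\ref{sec:Obstacle_strictly_elliptic_operator}), while on half-balls $B^+\Subset\underline\Omega$ centered on $\partial_0\sO$ I use the partial Dirichlet solution from Theorem~\ref{thm:Existence_uniqueness_elliptic_Dirichlet_halfball}. First I would verify that $\sS^+_{f,g,\psi}$ is non-empty: a suitable constant $M^+$ (obtained from Proposition~\ref{prop:Continuous_super_sub_solution_bvp_maximum_principle_estimates} in the case $c\geq c_0>0$, or from the finite-height rescaling of Remark~\ref{rmk:Elliptic_weak_maximum_principle_finite_height_domain} in the other case) is a smooth supersolution, as is $\max\{w_0,\psi\}$, where $w_0$ is the Perron solution of the pure boundary value problem \eqref{eq:Elliptic_equation},\eqref{eq:Elliptic_boundary_condition} supplied by Theorem~\ref{thm:Perron_elliptic_bvp_solution}; thus $u$ is finite and $\psi\leq u\leq M^+$ on $\sO$.

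Next I would establish continuity of $u$ on $\sO$ by invoking the separate continuity lemma \ref{lem:Perron_elliptic_problem_solution_continuity} (promised in \S\ref{subsec:Guide}); this upgrades the tautological upper semicontinuity of the infimum, so that $\Omega = \{u>\psi\}$ is open and the partial Dirichlet problems below have continuous boundary data. Given a ball $B=B_r(x_0)\Subset\sO$ and a minimizing sequence $w_n\in\sS^+_{f,g,\psi}$ with $w_n(x_0)\downarrow u(x_0)$, I would replace $w_n$ on $B$ by the unique solution $\hat w_n$ to the obstacle problem for the strictly elliptic operator $A|_B$ with boundary data $w_n|_{\partial B}$ and obstacle $\psi|_B$ from Appendix~\ref{sec:Obstacle_strictly_elliptic_operator}, and extend by $w_n$ outside $B$; the comparison principle proved there shows $\hat w_n\leq w_n$ on $B$ and that the extension is again a continuous supersolution in $\sS^+_{f,g,\psi}$. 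A uniform $W^{2,p}$ obstacle estimate on $B'\Subset B$, together with $W^{2,p}\hookrightarrow C^{1,\gamma}$ (valid because $p>d$), lets me pass to a subsequence converging in $C^{1,\gamma}(\bar B')$ to some $v\in W^{2,p}(B')\cap C^{1,\gamma}(\bar B')$ with $v(x_0)=u(x_0)$, $v\leq u$, and solving the obstacle problem on $B'$. The standard Perron ``second-lift'' trick --- if $v<u$ at some point, insert a larger supersolution, lift again, and contradict maximality at $x_0$ via the strong comparison principle (Theorem~\ref{thm:Elliptic_obstacle_problem_comparison_principle} applied locally on $B'$) --- forces $v\equiv u$ on $B'$. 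This gives $u\in W^{2,p}_{\loc}(\sO)$ and shows that $u$ solves \eqref{eq:Elliptic_obstacle_problem} a.e.~on $\sO$.

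On the continuation region $\Omega$, I would rerun the argument using half-balls $B^+\Subset\underline\Omega$ centered on $\partial_0\sO$ (and interior balls $B\Subset\Omega$ where the obstacle is strictly inactive): since $\overline{B^+}\Subset\{u>\psi\}$ and the lifts converge uniformly, for all large $n$ the half-ball lift obtained from Theorem~\ref{thm:Existence_uniqueness_elliptic_Dirichlet_halfball} (applied to $Aw=f$ on $B^+$, $w=w_n$ on $\partial_1 B^+$) automatically satisfies $w\geq\psi$, so it belongs to $\sS^+_{f,g,\psi}$ after gluing. The interior Schauder estimate Theorem~\ref{thm:Elliptic_apriori_Schauder_interior_domain} upgrades convergence to $C^{2+\alpha}_s$ on precompact subsets of $\underline\Omega$, yielding $u\in C^{2+\alpha}_s(\underline\Omega)$ with $Au=f$ there. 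Combining this with the interior $W^{2,p}_{\loc}(\sO)$ estimate (which gives $C^{1,\alpha}$ away from $\partial_0\sO$) and the $C^{2+\alpha}_s$ bound on $\underline\Omega$ yields $u\in C^{1,\alpha}_s(\underline\sO)$ by a covering argument at points of $\partial_0\sO\cap \overline{\sO\setminus\Omega}$. Finally, at a regular $x_0\in\partial_1\sO$, the local barrier of Definition~\ref{defn:Regular_boundary_point_elliptic_equation} together with the BVP Perron solution $w_0\in\sS^+_{f,g,\psi}$ squeezes $u$ between $g(x_0)\pm\varepsilon$ exactly as in \cite[Lemma~6.12]{GilbargTrudinger}, giving the asserted boundary continuity.

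The hard part will be the $C^{1,\alpha}_s(\underline\sO)$ regularity across points $x_0\in\partial_0\sO$ that lie on the boundary of the coincidence set $\sO\setminus\Omega$: near such points neither lift is by itself well-behaved --- Appendix~\ref{sec:Obstacle_strictly_elliptic_operator} only controls the interior obstacle problem and loses the $C^{1,\alpha}_s$ weighted seminorms at $\partial_0\sO$, while the half-ball BVP lift of Theorem~\ref{thm:Existence_uniqueness_elliptic_Dirichlet_halfball} is only available inside $\underline\Omega$. The delicate step is to show that the two local regularity statements can be patched, which relies on the continuity of $u$ on $\sO$, the uniform Lipschitz-type control inherited from $\psi\in C^2(\underline\sO)$ on the coincidence set, and the fact that the weighted H\"older seminorm $[\,\cdot\,]_{C^{1,\alpha}_s}$ only penalizes increments measured in the cycloidal distance~\eqref{eq:Cycloidal_distance}, which degenerates precisely in the direction where $u$ inherits the smoothness of $\psi$ (or of the smooth BVP solution on $\underline\Omega$).
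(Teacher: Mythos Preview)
Your overall Perron strategy is correct and close to the paper's, but the paper's execution differs in two places, and in one of them you have inverted what is hard and what is easy.

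\textbf{Structure: localization lemmas versus second-lift.} Where you run the classical two-lift argument (minimizing sequence, obstacle-lift on $B$, contradiction via strong comparison) to get $u\in W^{2,p}_{\loc}(\sO)$, the paper instead proves two localization lemmas: Lemma~\ref{lem:Perron_elliptic_obstacle_problem_solution_localization} shows that $u\restriction B$ coincides with the Perron function for the obstacle problem on $B$ with boundary data $u\restriction\partial B$, and Lemma~\ref{lem:Perron_strictly_elliptic_obstacle_problem_solution_ball} identifies the latter with the unique $W^{2,p}$ solution from Appendix~\ref{sec:Obstacle_strictly_elliptic_operator}. Similarly, on half-balls $U\Subset\underline\Omega$ the paper proves (Lemma~\ref{lem:Elliptic_localization_contination_region}) that $\sS^+_{f,u\restriction U,\psi}=\sS^+_{f,u\restriction U}$, so the obstacle Perron function agrees with the BVP Perron function there, and Theorem~\ref{thm:Perron_elliptic_bvp_solution} with Remark~\ref{rmk:Perron_elliptic_bvp_solution} directly give $u\in C^{2+\alpha}_s(\underline U)$. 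This bypasses your claim that ``for all large $n$ the half-ball lift automatically satisfies $w\geq\psi$,'' which as written is circular: you need uniform convergence of those lifts to $u$ on $B^+$ to conclude $w\geq\psi$, but that convergence is what you are trying to establish. The localization route avoids the issue entirely.

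\textbf{The $C^{1,\alpha}_s(\underline\sO)$ step is not the hard part.} You flag regularity across points $x_0\in\partial_0\sO$ on the free boundary as ``delicate,'' requiring a patching of two incompatible lifts. The paper's argument here is three lines: on $\underline\sO\setminus\Omega$ one has $u=\psi$, and since $\psi\in C^2(\underline\sO)$ by hypothesis, $u\in C^2(\underline\sO\setminus\Omega)\subset C^{1,\alpha}_s(\underline\sO\setminus\Omega)$ directly from the regularity of the obstacle --- no lift is needed on the coincidence set at all. On $\underline\Omega$ one already has $u\in C^{2+\alpha}_s\subset C^{1,\alpha}_s$. The $W^{2,p}_{\loc}$ Sobolev embedding gives $Du=D\psi$ along $\sO\cap\partial\Omega$, so the two pieces match $C^1$. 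Your worry that ``Appendix~\ref{sec:Obstacle_strictly_elliptic_operator} loses the $C^{1,\alpha}_s$ weighted seminorms at $\partial_0\sO$'' is misplaced: near $\partial_0\sO$ on the coincidence side you never invoke Appendix~\ref{sec:Obstacle_strictly_elliptic_operator}, you just use $u=\psi$.

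\textbf{Two small points.} The function $\max\{w_0,\psi\}$ is not in general a continuous supersolution (the maximum of a supersolution and an arbitrary function need not be one), so drop it; the constant $M^+$ suffices. For boundary continuity at $x_0\in\partial_1\sO$, the paper does not use $w_0$ as a global barrier but instead splits into $x_0\in\partial_1\sO\cap\partial_1(\sO\setminus\Omega)$ (where $u=\psi$ gives continuity and the compatibility $\psi\leq g$ plus $u\geq g$ from the Perron definition force $u(x_0)=g(x_0)$) and $x_0\in\Int(\partial_1\sO\cap\partial_1\Omega)$ (where the local barrier for the equation applies as in \cite[Lemma~6.12]{GilbargTrudinger}).
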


\begin{rmk}[Perron's method for existence of a solution to the obstacle problem]
Our definition \eqref{eq:Perron_elliptic_obstacle_problem_solution} of the Perron function for the obstacle problem \eqref{eq:Elliptic_obstacle_problem},\eqref{eq:Elliptic_boundary_condition} is the analogue of that given in \cite[Theorem 7.3.3 (iii)]{Rodrigues_1987} for the solution $u\in W^{1,\infty}_0(\sO)$ to the obstacle problem for a surface of constant mean curvature \cite[Equations (7.2.2) \& (7.2.3)]{Rodrigues_1987}, given $\psi \in C^{0,1}(\bar\sO)$ with $\sup_\sO\psi<\infty$ and $\psi<0$ on $\partial\sO$, where $\sO\subset\RR^d$ is a bounded domain with Lipschitz boundary.
\end{rmk}

Rather than prove Theorem \ref{thm:Perron_elliptic_obstacle_problem_solution} by adapting the proof of Theorem \ref{thm:Perron_elliptic_bvp_solution}, we shall proceed partly by generalizing the proof of \cite[Theorem 6.1]{Lindqvist_2006}. We begin with the following observation, which is an elementary consequence of the definition and analogue of Theorem \ref{thm:Elliptic_equation_continuous_sub_supersolution_properties} \eqref{item:elliptic_max_set_subsolutions}.

\begin{lem}[Minimum of a finite set of continuous supersolutions to the obstacle problem]
\label{lem:elliptic_obstacle_problem_min_set_supersolutions}
Let $\sO\subseteqq\HH$ be a domain and $A$ be as in \eqref{eq:Generator}. Suppose $f\in C(\underline\sO)$ and $\psi\in C(\underline\sO)$. If $v_1,\ldots, v_N$ are continuous supersolutions to \eqref{eq:Elliptic_obstacle_problem} in the sense of Definition \ref{defn:Continuous_supersolution_obstacle_problem}, then $v(x) := \min_{1\leq i\leq N}v_i(x)$, $x\in\underline\sO$, is a continuous supersolution to \eqref{eq:Elliptic_obstacle_problem}.
\end{lem}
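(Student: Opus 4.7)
The plan is to induct on $N$, so that it suffices to treat $N=2$; let $v=\min\{v_1,v_2\}$. The ``easy'' half of Definition~\ref{defn:Continuous_supersolution_obstacle_problem} is immediate: the pointwise minimum of two continuous (respectively, locally bounded) functions is continuous (respectively, locally bounded), so $v$ is continuous on $\sO$ and locally bounded on $\underline\sO$; moreover $v_i\geq\psi$ on $\underline\sO$ for $i=1,2$ gives $v\geq\psi$ on $\underline\sO$.

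The nontrivial task is to verify the supersolution inequality on any admissible test domain $U$. Set $\Omega_i:=\{x\in\sO:(v_i-\psi)(x)>0\}$ and note that $\Omega:=\{x\in\sO:(v-\psi)(x)>0\}=\Omega_1\cap\Omega_2\subset\Omega_i$ for each $i$. Fix an admissible $U$ (either a ball $B\Subset\sO$ or a half-ball $B^+\Subset\underline\Omega$ with center in $\partial_0\sO$) and let $\underline v\in C^2(U)\cap C^1(\underline U)$ satisfy $\sup_U\underline v<\infty$, the regularity \eqref{eq:WMP_C2s_regularity}--\eqref{eq:WMP_xdsecond_order_derivatives_zero_boundary} when $U=B^+$, and
\[
A\underline v\leq f\ \text{on}\ U,\qquad \underline v\leq v\ \text{on}\ \partial_1U.
\]
Since $\underline v\leq v\leq v_i$ on $\partial_1U$, the conclusion $\underline v\leq v_i$ on $U$ (and hence $\underline v\leq v$ on $U$ after taking the minimum) will follow once we show that $U$ is an admissible test domain for the supersolution property of $v_i$.

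The only step that requires care is the case $U=B^+\Subset\underline\Omega$: we must argue that $B^+\Subset\underline\Omega_i$ as well, i.e.\ that $\partial_0\Omega\subset\partial_0\Omega_i$. Because $\sO\subset\HH$, the sets $\Omega_i$ and $\Omega$ all lie in the open half-space, so any $x_0\in\partial_0\Omega\subset\partial\HH$ automatically satisfies $x_0\notin\Omega_i$. On the other hand, $\Omega\subset\Omega_i$ implies $\bar\Omega\subset\bar\Omega_i$, and since $\Omega$ accumulates at $x_0$ so does $\Omega_i$, whence $x_0\in\bar\Omega_i\setminus\Omega_i=\partial\Omega_i$; applying the same argument uniformly to a neighborhood of $x_0$ in $\partial\HH$ (which is contained in $\partial\Omega$ by the openness condition defining $\partial_0\Omega$) shows the neighborhood lies in $\partial\Omega_i$, so $x_0\in\partial_0\Omega_i$. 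Hence $\underline\Omega\subset\underline\Omega_i$, and the supersolution inequality for $v_i$ on $U$ is available.

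The main obstacle, as indicated, is exactly this boundary-containment verification $\partial_0\Omega\subset\partial_0\Omega_i$; everything else is an essentially formal consequence of the definitions and the fact that the minimum preserves both the pointwise bound $v\geq\psi$ and the two-sided test $\underline v\leq v\Rightarrow\underline v\leq v_i$. With this inclusion in hand, the argument for $U=B\Subset\sO$ is strictly easier (both $v_1$ and $v_2$ automatically admit $B$ as an admissible test domain), and the induction on $N$ extends the result from $N=2$ to arbitrary finite $N$.
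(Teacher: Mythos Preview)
Your proof is correct and is essentially what the paper intends: the paper states only that the lemma is ``an elementary consequence of the definition and analogue of Theorem~\ref{thm:Elliptic_equation_continuous_sub_supersolution_properties}~\eqref{item:elliptic_max_set_subsolutions}'' and gives no further details. Your argument follows exactly that template, and you have correctly isolated the one point that does require attention in the obstacle-problem setting (as opposed to the boundary-value setting of Theorem~\ref{thm:Elliptic_equation_continuous_sub_supersolution_properties}~\eqref{item:elliptic_max_set_subsolutions}): namely that the admissible half-balls for $v$ are those with $B^+\Subset\underline\Omega$ where $\Omega=\{v>\psi\}$, whereas the supersolution property of $v_i$ is tested only on half-balls $B^+\Subset\underline\Omega_i$ with $\Omega_i=\{v_i>\psi\}$, so one must check $\underline\Omega\subset\underline\Omega_i$.

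Your verification of $\partial_0\Omega\subset\partial_0\Omega_i$ is correct but can be streamlined: since $\Omega=\Omega_1\cap\Omega_2\subset\Omega_i$ we have $\bar\Omega\subset\bar\Omega_i$, hence $\bar\Omega\cap\partial\HH\subset\bar\Omega_i\cap\partial\HH$; taking interiors in $\partial\HH$ gives $\partial_0\Omega\subset\partial_0\Omega_i$ directly (recall that for any open $\Omega\subset\HH$ one has $\partial\Omega\cap\partial\HH=\bar\Omega\cap\partial\HH$). This avoids the point-by-point accumulation argument.
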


Next, we have a partial analogue Theorem \ref{thm:Elliptic_equation_continuous_sub_supersolution_properties} \eqref{item:elliptic_subsolution_harmonic_lift}.

\begin{lem}[Construction of smaller continuous supersolutions to the obstacle problem]
\label{lem:Elliptic_obstacle_problem_smaller_supersolution}
Let $\sO\subseteqq\HH$ be a domain, $d< p<\infty$, and $\alpha\in(0,1)$. Let $A$ be as in \eqref{eq:Generator} with coefficients belonging to $C^\alpha(\sO)$ and obeying \eqref{eq:Nonnegative_c} and
\begin{equation}
\label{eq:Ellipticity_domain}
\langle a\xi, \xi\rangle > 0 \quad\hbox{on } \sO,\quad \forall\,\xi \in \RR^d \quad\hbox{\emph{(interior ellipticity)}}.
\end{equation}
Let $f\in C^\alpha(\sO)\cap C(\underline\sO)$ and $\psi\in C^2(\sO)\cap C(\underline\sO)$. Suppose that $v:\underline\sO\to\RR$ is a continuous supersolution to \eqref{eq:Elliptic_obstacle_problem} in the sense of Definition \ref{defn:Continuous_supersolution_obstacle_problem}. If $B\Subset\sO$ is a ball and $u_o \in W^{2,p}_{\loc}(B)\cap C(\bar B)$ is the unique solution\footnote{Existence and uniqueness of the solution is provided by Theorem \ref{thm:Strictly_elliptic_obstacle_problem_continuous_boundarydata}.} to
$$
\min\{Au_o-f, \ u_o - \psi\} = 0 \quad\hbox{a.e. on } B, \quad u_o = v \quad\hbox{on }\partial B,
$$
then
$$
\hat v := \begin{cases} u_o &\hbox{on } B, \\  v &\hbox{on } \sO\less B, \end{cases}
$$
is a continuous supersolution to \eqref{eq:Elliptic_obstacle_problem} and $\hat v \leq v$ on $\sO$.
\end{lem}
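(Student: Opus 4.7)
The plan is to verify the three conditions in Definition \ref{defn:Continuous_supersolution_obstacle_problem} for $\hat v$, with the inequality $\hat v\leq v$ falling out of the first step.

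First I would establish $u_o\leq v$ on $B$ by applying the comparison principle of Theorem \ref{thm:Elliptic_obstacle_problem_comparison_principle} to the bounded domain $B$. The restriction $v|_B$ is a continuous supersolution to the obstacle problem on $B$; the Sobolev embedding (since $p>d$) gives $u_o\in C(\bar B)$; and $u_o=v$ on $\partial B=\partial_1 B$. Since $\bar B\subset\sO$, the operator $A$ is uniformly strictly elliptic on $\bar B$ with $c\geq 0$, so the classical strong maximum principle (\cite[Theorem 3.5]{GilbargTrudinger}) and local solvability (\cite[Lemma 6.10]{GilbargTrudinger}) supply the missing hypotheses of Theorem \ref{thm:Elliptic_obstacle_problem_comparison_principle}. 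I would thereby conclude $u_o\leq v$ on $B$, so $\hat v\leq v$ on $\sO$; continuity of $\hat v$ on $\sO$ is automatic because $u_o=v$ on $\partial B$; and $\hat v\geq\psi$ on $\underline\sO$ since $u_o\geq\psi$ on $B$ (as $u_o$ solves the obstacle problem there) and $v\geq\psi$ on $\underline\sO$.

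The heart of the argument lies in verifying the elliptic supersolution test for $\hat v$. Take a test region $U'$---either a ball $B'\Subset\sO$ or a half-ball $B'^+\Subset\underline{\hat\Omega}$ centered on $\partial_0\sO$, where $\hat\Omega=\{\hat v>\psi\}$---together with a test function $\underline v$ satisfying the hypotheses of Definition \ref{defn:Continuous_supersolution_obstacle_problem} and $A\underline v\leq f$ on $U'$, $\underline v\leq\hat v$ on $\partial_1 U'$. Because $\hat v\leq v$ on $\sO$ forces $\hat\Omega\subseteq\Omega:=\{v>\psi\}$, the region $U'$ is also admissible as a test region for $v$; combined with $\underline v\leq\hat v\leq v$ on $\partial_1U'$ and the supersolution property of $v$, this yields $\underline v\leq v$ on $U'$, which already gives $\underline v\leq v=\hat v$ on $U'\less B$.

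It then remains to show $\underline v\leq u_o$ on $W:=U'\cap B$, which I would do by a direct maximum principle. Since $\bar B\subset\sO$, the set $\bar W$ is bounded away from $\partial\HH$, so $A$ restricts to a uniformly strictly elliptic operator on $\bar W$ with $c\geq 0$. On $\partial W\subset(\partial U'\cap\bar B)\cup(\bar U'\cap\partial B)$ one has $\underline v\leq\hat v=u_o$ on the first piece and $\underline v\leq v=u_o$ on the second (using $u_o=v$ on $\partial B$), hence $\underline v-u_o\leq 0$ on $\partial W$; meanwhile $A(\underline v-u_o)\leq f-f=0$ a.e.\ on $W$, because $A\underline v\leq f$ and $Au_o\geq f$ a.e.\ (the latter because $u_o$ solves the obstacle problem). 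As $\underline v-u_o\in W^{2,p}_{\loc}(W)\cap C(\bar W)$ with $p>d$, the Aleksandrov--Bakelman--Pucci weak maximum principle \cite[Theorem 9.1]{GilbargTrudinger} delivers $\underline v\leq u_o$ on $W$. The main obstacle is precisely this last step: correctly matching $\hat v$ across the interface $\partial B$ when $U'$ straddles it, and applying the Aleksandrov maximum principle on the non-spherical cut-off domain $W$, which is only possible because $\bar B\subset\sO$ keeps $A$ uniformly elliptic on $\bar W$.
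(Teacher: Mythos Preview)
Your proposal is correct and follows essentially the same route as the paper's proof: comparison principle (Theorem \ref{thm:Elliptic_obstacle_problem_comparison_principle}) on $B$ to get $u_o\leq v$, then the supersolution property of $v$ to handle $U'\less B$, and finally the Aleksandrov maximum principle \cite[Theorem 9.1]{GilbargTrudinger} on $U'\cap B$. Your explicit observation that $\hat v\leq v$ forces $\hat\Omega\subseteq\{v>\psi\}$, so that a half-ball test region admissible for $\hat v$ is automatically admissible for $v$, is a point the paper leaves implicit.
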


\begin{proof}
Observe that $\hat v\in L^\infty_{\loc}(\underline\sO)\cap C(\sO)$ by construction. We know that $u_o$ is a solution to \eqref{eq:Elliptic_obstacle_problem} on $B$ while $v\restriction_B$ is a continuous supersolution to \eqref{eq:Elliptic_obstacle_problem} on $B$ and $v=u_o$ on $\partial B$. Therefore, $v\geq u_o$ on $B$ by Theorem \ref{thm:Elliptic_obstacle_problem_comparison_principle}. Hence, $v\geq \hat v = u_o$ on $B$ while $v=\hat v$ on $\sO\less B$, and consequently, $v \geq \hat v$ on $\sO$.

Denote $\Omega = \{x\in\sO:(\hat v-\psi)(x)>0\}$. Let $U'\subset\sO$ denote an arbitrary ball $B'\Subset\sO$ or half-ball ${B'}^+\Subset\underline\Omega$ with center in $\partial_0\sO$ and suppose $\underline v\in C^2(U')\cap C^1(\underline U')$, obeying \eqref{eq:WMP_C2s_regularity}, \eqref{eq:WMP_xdsecond_order_derivatives_zero_boundary} when $U'={B'}^+$, and $\sup_{U'}\underline v<\infty$, and
$$
A\underline v \leq f\quad\hbox{on }U', \quad \underline v \leq \hat v \quad\hbox{on }\partial_1U'.
$$
According to Definition \ref{defn:Continuous_supersolution_obstacle_problem}, it remains for us to show that $\hat v \geq \underline v$ on $U'$.

Because $v$ is a continuous supersolution to \eqref{eq:Elliptic_obstacle_problem} on $\sO$ and $v\geq \hat v \geq \underline v$ on $\partial_1U'$, we must have $v\geq \underline v$ on $U'$ by Definition \ref{defn:Continuous_supersolution_obstacle_problem} and hence $\hat v \geq \underline v$ in $U'\less B$ since $\hat v = v$ in $U'\less B$. Furthermore, $\partial_1(B\cap U') = (B\cap\partial_1U') \cup (U'\cap\partial B)$ and $U'\cap\partial B \subset U'\less B$, so
$$
\hat v \geq \underline v \quad\hbox{on }\partial_1(B\cap U').
$$
Moreover, as $A\hat v = Au_o \geq f$ a.e. on $B$, then
$$
A(\hat v - \underline v) \geq 0\quad\hbox{a.e. on }B\cap U'.
$$
The condition \cite[p. 220]{GilbargTrudinger} on the coefficients $a^{ij}$ of $A$ is obeyed since $\det a(x)>0$ for $x\in\sO$ by \eqref{eq:Ellipticity_domain} and $a^{ij}\in C^\alpha(\sO)$ by hypothesis, so $\det a(x)$, for $x\in \bar B$, is uniformly bounded above and below by positive constants; the condition \cite[Equation (9.3)]{GilbargTrudinger} on $f$ and the coefficients $b^i$ of $A$ is also obeyed since $f, \ b^i \in C^\alpha(\sO)$; finally, $p\geq d$ by hypothesis. Hence, the Aleksandrov weak maximum principle\footnote{Provided the coefficients $a^{ij}$ belong to $C^{0,1}(B\cap U')$, so the operator $A$ in \eqref{eq:Generator} may be written in divergence form, one can appeal to the more elementary \cite[Theorem 8.1]{GilbargTrudinger} for  functions in $W^{1,2}(B\cap U')$.}
\cite[Theorem 9.1]{GilbargTrudinger} for functions in $W^{2,d}(B\cap U')\cap C(\overline{B\cap U'})$ implies that
$$
\hat v\geq \underline v \quad \hbox{on } B\cap U'.
$$
Consequently $\hat v \geq \underline v$ in $U'$ and therefore $\hat v$ is a continuous supersolution to \eqref{eq:Elliptic_obstacle_problem} on $\sO$.
\end{proof}

Next, we have the following generalization of \cite[Lemma 6.3]{Lindqvist_2006}; note that we do not yet assert that $u \in C(\underline\sO)$, but this (and more) will be established in the proof of Theorem \ref{thm:Perron_elliptic_obstacle_problem_solution}.

\begin{lem}[Continuity of the Perron function for the obstacle problem]
\label{lem:Perron_elliptic_problem_solution_continuity}
Let $\sO\subseteqq\HH$ be a domain,
$d<p<\infty$, and $\alpha\in(0,1)$. Let $A$ be as in \eqref{eq:Generator} with coefficients belonging to $C^\alpha(\sO)$ and obeying \eqref{eq:Strict_ellipticity_domain} and \eqref{eq:Nonnegative_c}.
Let $f\in C^\alpha(\sO)\cap C(\underline\sO)$, and $g\in C(\partial_1\sO)$, and $\psi\in C^2(\sO)\cap C(\underline\sO)$. If $u$ in \eqref{eq:Perron_elliptic_obstacle_problem_solution} is locally bounded on $\sO$, then $u\in C(\sO)$.
\end{lem}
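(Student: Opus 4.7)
\emph{The plan} is to establish continuity of $u$ on $\sO$ by proving upper and lower semicontinuity separately. Since every $w\in\sS_{f,g,\psi}^+$ is continuous on $\sO$, the Perron function $u=\inf_w w$ is the pointwise infimum of a family of continuous functions and is therefore automatically upper semicontinuous on $\sO$. The content of the lemma is lower semicontinuity, which I will prove by contradiction.

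Fix $x_0\in\sO$ and suppose, for contradiction, that $u$ fails to be lower semicontinuous at $x_0$. Then there exist $\epsilon>0$ and a sequence $x_k\to x_0$ in $\sO$ with $u(x_k)<u(x_0)-\epsilon$. By the defining infimum \eqref{eq:Perron_elliptic_obstacle_problem_solution}, I may choose continuous supersolutions $w_k\in\sS_{f,g,\psi}^+$ with $w_k(x_k)<u(x_0)-\epsilon/2$ and, separately, $v_n\in\sS_{f,g,\psi}^+$ with $v_n(x_0)<u(x_0)+1/n$. Setting
\[
W_n := \min(v_1,\ldots,v_n,w_1,\ldots,w_n),
\]
Lemma \ref{lem:elliptic_obstacle_problem_min_set_supersolutions} gives $W_n\in\sS_{f,g,\psi}^+$, the sequence is monotone decreasing in $n$, and $W_n(x_0)\downarrow u(x_0)$. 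Next pick a ball $B=B_r(x_0)\Subset\sO$ and apply Lemma \ref{lem:Elliptic_obstacle_problem_smaller_supersolution} to replace $W_n$ by its ``obstacle-problem lifting'' $\hat W_n$ on $B$, which is again in $\sS_{f,g,\psi}^+$ and which on $B$ equals the unique solution in $W^{2,p}_{\loc}(B)\cap C(\bar B)$ of the strictly-elliptic obstacle problem with Dirichlet data $W_n\restriction_{\partial B}$. Two applications of the obstacle-problem comparison principle (Theorem \ref{thm:Elliptic_obstacle_problem_comparison_principle}) on the bounded ball $B$ then yield: first, $\hat W_n$ is monotone decreasing in $n$ (from the decreasing boundary data $W_n\restriction_{\partial B}$); and second, because $W_n\leq w_k$ identically on $\sO$ whenever $k\leq n$, and $w_k\restriction_B$ is a continuous supersolution on $B$, we obtain
\[
\hat W_n \leq w_k \quad\text{on } B, \qquad k\leq n.
\]
In particular $\hat W_n(x_k)\leq w_k(x_k)<u(x_0)-\epsilon/2$ for every $k\leq n$, while $\psi\leq\hat W_n\leq W_1$ on $\bar B$ gives a uniform $C(\bar B)$-bound.

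The decisive step is to upgrade pointwise monotone convergence of $\hat W_n$ to uniform convergence on compact subsets of $B$, so that the pointwise limit $W^*$ belongs to $C(B)$. For this I invoke the interior $W^{2,p}$ regularity theory for the strictly-elliptic obstacle problem developed in Appendix \ref{sec:Obstacle_strictly_elliptic_operator}: the uniform $C(\bar B)$-bound on $\hat W_n$ yields a uniform bound on $\|\hat W_n\|_{W^{2,p}(B')}$ for each $B'\Subset B$, and the Sobolev embedding $W^{2,p}(B')\hookrightarrow C^{1,\gamma}(\bar{B'})$, valid because $p>d$ (with $\gamma=1-d/p$), then delivers a uniform $C^{1,\gamma}(\bar{B'})$-bound. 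The Arzel\`a--Ascoli theorem, together with the monotonicity of $\hat W_n$, forces $\hat W_n\to W^*$ uniformly on $\bar{B'}$, so $W^*\in C(B)$. Squeezing $u(x_0)\leq\hat W_n(x_0)\leq W_n(x_0)$ gives $W^*(x_0)=u(x_0)$; passing to the limit in $\hat W_n(x_k)\leq w_k(x_k)$ gives $W^*(x_k)\leq u(x_0)-\epsilon/2$ for each $k$; and continuity of $W^*$ at $x_0$ forces
\[
u(x_0) = W^*(x_0) = \lim_{k\to\infty}W^*(x_k) \leq u(x_0)-\epsilon/2,
\]
the desired contradiction.

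\emph{The principal obstacle} is precisely this compactness/regularity step: because the obstacle problem is nonlinear across the free boundary, interior Schauder estimates for the linear equation $Av=f$ are unavailable throughout $B$, and one must instead rely on the stronger interior $W^{2,p}$ estimates for the obstacle problem itself -- this is why the hypothesis $d<p<\infty$ appears. Once continuity of the limiting lift $W^*$ is secured, the remainder of the argument is a direct Perron bookkeeping adapted from the harmonic case.
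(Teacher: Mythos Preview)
Your proof is correct and the key ingredients---Lemma~\ref{lem:elliptic_obstacle_problem_min_set_supersolutions}, the obstacle-problem lifting of Lemma~\ref{lem:Elliptic_obstacle_problem_smaller_supersolution}, the interior $W^{2,p}$ estimate of Theorem~\ref{thm:Strictly_elliptic_obstacle_problem_apriori_W2p_estimate}, and the Sobolev embedding $W^{2,p}\hookrightarrow C^{1,\gamma}$ for $p>d$---are exactly those the paper uses. The route, however, is genuinely different. The paper (following Lindqvist) gives a direct $\eps$--$\delta$ argument: for two nearby points $x^1,x^2$ it selects a single supersolution $v_n$ close to $u$ at both, lifts once on a fixed ball $B_r(x_0)$, and bounds $u(x^2)-u(x^1)$ by the H\"older oscillation of that single lift plus $\eps$. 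You instead split into upper and lower semicontinuity, dispose of the former trivially, and for the latter build a \emph{monotone} sequence of lifts $\hat W_n$ whose uniform $C(\bar B)$-bound (from $\psi\le\hat W_n\le W_1$) feeds the interior $W^{2,p}$ estimate and, via Arzel\`a--Ascoli plus monotonicity, yields a continuous limit $W^*$ against which the contradiction is struck. Your approach is somewhat longer but has the virtue that the uniform H\"older bound on the lifts is manifest from the outset (sandwiched between $\psi$ and the fixed $W_1$), whereas in the paper's argument one must observe separately that the $C^\gamma$-norm of the single lift $v_{o,n}$ is controlled independently of the chosen points $x^1,x^2$ in order for the final choice of $\delta$ to be legitimate. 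Both arguments hinge equally on the hypothesis $p>d$, which is what makes the Sobolev embedding deliver H\"older continuity of the lifted supersolutions.
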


\begin{rmk}[Continuity of the Perron functions for the boundary value problem]
\label{rmk:Perron_elliptic_problem_solution_continuity}
The proof of Lemma \ref{lem:Perron_elliptic_problem_solution_continuity} also shows that if $u$ is the (upper or lower) Perron function \eqref{eq:Perron_elliptic_equation_supsubsolution} or \eqref{eq:Perron_elliptic_equation_infsupsolution} for the boundary value problem \eqref{eq:Elliptic_equation}, \eqref{eq:Elliptic_boundary_condition}, then $u\in C(\sO)$ under the same hypotheses.
\end{rmk}

\begin{proof}
We adapt the proof of \cite[Lemma 6.3]{Lindqvist_2006}. Let $x_0 \in \sO$ and $r>0$ be small enough that $B_r(x_0) \Subset \sO$. Given $\eps > 0$, we will find a radius $\delta > 0$ such that
$$
|u(x^1) - u(x^2)| < 2\eps, \quad\forall\, x^1, x^2 \in B_\delta(x_0).
$$
Suppose that $x^1, x^2 \in B_\delta(x_0)$. By definition of $u$ in \eqref{eq:Perron_elliptic_equation_supsubsolution}, we can find a sequence of continuous supersolutions, $\{v_n\}_{n\in\NN} \subset \sS_{f,g,\psi}^+$, such that
$$
\lim_{n\to\infty} v_n(x^1) = u(x^1), \quad \lim_{n\to\infty} v_n(x^2 ) = u(x^2).
$$
Indeed, if $v^1_n(x^1 ) \to u(x^1)$ and $v^2_n(x^2) \to u(x^2)$, then each $v_n := v^1_n \wedge v^2_n$ belongs to $\sS_{f,g,\psi}^+$ by Lemma \ref{lem:elliptic_obstacle_problem_min_set_supersolutions} and has the required limits since
$$
u(x^i) \leq v_n(x^i) \leq v^i_n(x^i), \quad\,\forall n \in \NN, \ i =1,2.
$$
Consider the ``harmonic lifts'' $\hat v_n \in \sS_{f,g,\psi}^+$ defined via Lemma \ref{lem:Elliptic_obstacle_problem_smaller_supersolution} by setting
$$
\hat v_n := \begin{cases} v_{o,n} &\hbox{on } B_r(x_0), \\  v_n &\hbox{on } \sO\less B_r(x_0), \end{cases}
$$
where $v_{o,n} \in W^{2,p}_{\loc}(B_r(x_0))\cap C(\bar B_r(x_0))$ is defined as the unique solution to
$$
\min\{Av_{o,n} - f, \ v_{o,n}-\psi\} = 0 \quad\hbox{a.e. on } B_r(x_0), \quad v_{o,n} = v_n \quad\hbox{on } \partial B_r(x_0).
$$
Lemma \ref{lem:Elliptic_obstacle_problem_smaller_supersolution} provides that $\hat v_n \leq v_n$ on $\sO$, while $u\leq \hat v_n$ on $\sO$ by definition of $u$ in \eqref{eq:Perron_elliptic_equation_supsubsolution}. Fix $n$ large enough that
$$
v_n(x^1) < u(x^1) + \eps , \quad v_n(x^2) < u(x^2) + \eps.
$$
It follows that
\begin{align*}
u(x^2 ) - u(x^1) &\leq \hat v_n(x_2) - u(x^1)
\\
&< \hat v_n(x^2) - v_n(x^1) + \eps
\\
&\leq \hat v_n(x^2) - \hat v_n(x^1) + \eps,
\end{align*}
and thus, since $\hat v_n = v_{o,n}$ on $B_r(x_0)$,
$$
u(x^2 ) - u(x^1) < v_{o,n}(x^2) - v_{o,n}(x^1) + \eps.
$$
But $v_{o,n} \in W^{2,p}_{\loc}(B_r(x_0)) \subset C^{1,\gamma}_{\loc}(B_r(x_0))$ (for $\gamma\in(0,d/p]$ since $d<p<\infty$ by hypothesis), so $v_{o,n} \in C^\gamma(\bar B_{r/2}(x_0))$ and thus for $0 < \delta \leq r/2$, we have
\begin{align*}
v_{o,n} (x^2 ) - v_{o,n}(x^1) &\leq \sup_{x,y \in B_\delta(x_0)}|v_{o,n}(x) - v_{o,n}(y)|
\\
&= \sup_{x,y \in B_\delta(x_0)}\frac{|v_{o,n}(x) - v_{o,n}(y)|}{|x-y|^\gamma}|x-y|^\gamma
\\
&\leq \delta^\gamma \|v_{o,n}\|_{C^\gamma(\bar B_\delta(x_0))}
\leq
\delta^\gamma \|v_{o,n}\|_{C^\gamma(\bar B_{r/2}(x_0))}.
\end{align*}
Choosing $\delta \in (0, r/2)$ small enough to ensure that $\delta^\gamma \|v_{o,n}\|_{C^\gamma(\bar B_{r/2}(x_0))} < \eps$ yields
$$
u(x^2) - u(x^1) < \eps + \eps = 2\eps.
$$
By symmetry, $u(x^1) - u(x^2) < 2\eps$, and thus continuity of $u$ follows.
\end{proof}

We now have an elementary consequence of existence and uniqueness of solutions to \eqref{eq:Elliptic_obstacle_problem}, \eqref{eq:Elliptic_boundary_condition} when $A$ is strictly elliptic \cite{Bensoussan_Lions, Friedman_1982, Rodrigues_1987, Troianiello}.

\begin{lem}[Perron solution to the strictly obstacle problem on a ball]
\label{lem:Perron_strictly_elliptic_obstacle_problem_solution_ball}
Let $\sO\subset\HH$ be a domain, and $d<p<\infty$ and $\alpha\in (0,1)$, and $A$ be as in \eqref{eq:Generator}. Let $B\Subset\sO$ be a ball and the coefficients of $A$ belong to $C^\alpha(\bar B)$, and the $a^{ij}$ obey \eqref{eq:Strict_ellipticity_domain} on $B$, and $c$ obeys \eqref{eq:Nonnegative_c} on $B$, so $c\geq 0$ on $B$. Suppose $f\in C^\alpha(B)\cap C_b(B)$, and $h\in C(\partial B)$, and $\psi\in C^2(B)\cap C(\bar B)$ obeys $\psi\leq h$ on $\partial B$. If
\begin{equation}
\label{eq:Perron_elliptic_obstacle_problem_solution_ball}
u(x) = \inf_{w\in \sS_{f,h,\psi}^+}w(x), \quad x\in B,
\end{equation}
then $u$ is the unique solution in $W^{2,p}_{\loc}(B)\cap C(\bar B)$ to the strictly elliptic obstacle problem,
\begin{equation}
\label{eq:Strictly_elliptic_obstacle_problem_ball}
\min\{Au-f, \ u - \psi\} = 0 \quad\hbox{a.e. on } B, \quad u = h \quad\hbox{on }\partial B.
\end{equation}
\end{lem}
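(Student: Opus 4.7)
The plan is to prove the lemma by comparing the Perron function $u$ defined in \eqref{eq:Perron_elliptic_obstacle_problem_solution_ball} with the unique solution provided by the classical well-posedness theory for strictly elliptic obstacle problems on a bounded smooth domain, and to show they coincide via a double inequality using the comparison principle already developed in this article. Crucially, because $B \Subset \HH$ has no degenerate boundary (so $\partial_0 B = \emptyset$, $\underline B = B$, and $\partial_1 B = \partial B$), all the degenerate-operator machinery of the preceding sections collapses to its strictly elliptic counterpart on $\bar B$, which streamlines every verification.

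First, I would invoke Theorem \ref{thm:Strictly_elliptic_obstacle_problem_continuous_boundarydata} (cited in the statement of the lemma) to produce the unique solution $\tilde u \in W^{2,p}_{\loc}(B) \cap C(\bar B)$ to \eqref{eq:Strictly_elliptic_obstacle_problem_ball}. The hypotheses of that appendix result are met: $A$ is strictly elliptic on $\bar B$ with $C^\alpha$ coefficients and $c\geq 0$, the data $f \in C^\alpha(B)\cap C_b(B)$ and $h \in C(\partial B)$ are admissible, and the obstacle $\psi$ obeys the compatibility condition $\psi \leq h$ on $\partial B$. By Lemma \ref{lem:Solution_obstacle_problem_is_continuous_supersolution}, applied with $\sO = B$, this solution $\tilde u$ is a continuous supersolution to \eqref{eq:Elliptic_obstacle_problem} in the sense of Definition \ref{defn:Continuous_supersolution_obstacle_problem}; the hypothesis that $A$ has the weak maximum principle property on $\underline B = B$ follows from the classical weak maximum principle \cite[Theorem 3.3]{GilbargTrudinger} since $A$ is strictly elliptic and $c \geq 0$. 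Since $\tilde u = h$ on $\partial_1 B = \partial B$, we obtain $\tilde u \in \sS_{f,h,\psi}^+$, and taking the infimum in \eqref{eq:Perron_elliptic_obstacle_problem_solution_ball} yields $u \leq \tilde u$ on $B$.

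For the reverse inequality, I would apply Theorem \ref{thm:Elliptic_obstacle_problem_comparison_principle} with $\sO = B$ to the pair $(\tilde u, w)$, for any $w \in \sS_{f,h,\psi}^+$. The needed hypotheses again simplify drastically: $A$ has the strong maximum principle property on $B$ in the sense of Definition \ref{defn:Strong_max_principle_property}, as it is a routine consequence of Theorem \ref{thm:Elliptic_strong_maximum_principle} when $\partial_0 B = \emptyset$ (equivalently, of the classical Hopf strong maximum principle \cite[Theorem 3.5]{GilbargTrudinger}); local solvability of the (partial) Dirichlet problem on balls $B' \Subset B$ is given by \cite[Lemma 6.10]{GilbargTrudinger}; the unbounded-domain conditions \eqref{eq:Positive_lower_bound_c_domain}, \eqref{eq:Quadratic_growth}, \eqref{eq:Positive_lower_bound_c_geq_2K}, \eqref{eq:Connectedness_domain_and_ball} are all vacuous since $B$ is bounded; and $\tilde u = h = w$ on $\partial_1 B = \partial B$ (in particular, $\tilde u \leq w$ there). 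Theorem \ref{thm:Elliptic_obstacle_problem_comparison_principle} then delivers $\tilde u \leq w$ throughout $B$. Since $w \in \sS_{f,h,\psi}^+$ was arbitrary, taking the infimum gives $\tilde u \leq u$ on $B$.

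Combining the two inequalities produces $u \equiv \tilde u$ on $B$; consequently $u$ inherits the regularity $u \in W^{2,p}_{\loc}(B) \cap C(\bar B)$ and solves \eqref{eq:Strictly_elliptic_obstacle_problem_ball}, while uniqueness follows from the uniqueness statement for $\tilde u$ in Theorem \ref{thm:Strictly_elliptic_obstacle_problem_continuous_boundarydata}. There is no genuine analytical obstacle here — the argument is essentially a bookkeeping exercise — but the step requiring the most care is verifying that the strictly elliptic obstacle problem \eqref{eq:Strictly_elliptic_obstacle_problem_ball} fits exactly within the framework of Definition \ref{defn:Continuous_supersolution_obstacle_problem} and Theorem \ref{thm:Elliptic_obstacle_problem_comparison_principle}, in particular that the ``degenerate'' half-ball test functions appearing in Definition \ref{defn:Continuous_supersolution_obstacle_problem} trivialize because $\partial_0 B = \emptyset$, leaving only the standard interior-ball test condition that is equivalent to the usual notion of a continuous supersolution in the classical strictly elliptic theory.
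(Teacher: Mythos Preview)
Your proof is correct and follows essentially the same approach as the paper: obtain the solution $\tilde u$ from Theorem \ref{thm:Strictly_elliptic_obstacle_problem_continuous_boundarydata}, use Lemma \ref{lem:Solution_obstacle_problem_is_continuous_supersolution} to place it in $\sS_{f,h,\psi}^+$, and apply the comparison principle (Theorem \ref{thm:Elliptic_obstacle_problem_comparison_principle}) to show it is the least element. One small slip: supersolutions $w$ satisfy $w \geq h$ on $\partial B$, not $w = h$, but your parenthetical ``in particular, $\tilde u \leq w$ there'' shows you are using the correct inequality.
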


\begin{proof}
Existence and uniqueness of the solution $u \in W^{2,p}_{\loc}(B)\cap C(\bar B)$ to \eqref{eq:Strictly_elliptic_obstacle_problem_ball} is assured by Theorem \ref{thm:Strictly_elliptic_obstacle_problem_continuous_boundarydata}.
By Lemma \ref{lem:Solution_obstacle_problem_is_continuous_supersolution}, the solution $u$ is also a continuous supersolution to the obstacle problem on $B$, that is, $u\in \sS_{f,h,\psi}^+$, while the comparison principle (Theorem \ref{thm:Elliptic_obstacle_problem_comparison_principle}) implies that $w\geq u$ on $B$ for all $w\in \sS_{f,h,\psi}^+$ and so $u$ is given by the Perron representation \eqref{eq:Perron_elliptic_obstacle_problem_solution_ball}.
\end{proof}

\begin{lem}[Localization of the Perron function on an interior ball]
\label{lem:Perron_elliptic_obstacle_problem_solution_localization}
Assume the hypotheses of Theorem \ref{thm:Perron_elliptic_obstacle_problem_solution}. If $u:\underline\sO\to\RR$ is given by \eqref{eq:Perron_elliptic_obstacle_problem_solution} and $B\Subset\sO$ is a ball, then
\begin{equation}
\label{eq:Perron_elliptic_obstacle_problem_solution_localization}
u(x) = \inf_{w\in \sS_{f,u\restriction B,\psi}^+}w(x), \quad x\in B.
\end{equation}
\end{lem}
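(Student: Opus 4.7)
Write $v_B(x):=\inf_{w_B\in\sS_{f,u\restriction B,\psi}^+}w_B(x)$ for $x\in B$; the plan is to establish the two inequalities $v_B\leq u$ and $u\leq v_B$ on $B$.

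The inequality $v_B\leq u$ follows by checking that for any $w\in\sS_{f,g,\psi}^+$, the restriction $w\restriction\bar B$ belongs to $\sS_{f,u\restriction B,\psi}^+$: since $B\Subset\sO\subset\HH$ one has $\partial_0 B=\emptyset$, so Definition \ref{defn:Continuous_supersolution_obstacle_problem} applied to the domain $B$ reduces to the smooth-subsolution test on balls $B'\Subset B\subset\sO$, which is inherited from the corresponding property of $w$ on $\sO$; moreover $w\geq\psi$ on $B$ and $w\geq u$ on $\partial B$ by the infimum definition of $u$. Taking the infimum over $w\in\sS_{f,g,\psi}^+$ yields $v_B\leq u$ on $B$.

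For the reverse inequality, the plan is an approximation argument. By Lemma \ref{lem:Perron_strictly_elliptic_obstacle_problem_solution_ball}, $v_B$ is the unique element of $W^{2,p}_{\loc}(B)\cap C(\bar B)$ that solves the strictly elliptic obstacle problem on $B$ with boundary values $u\restriction\partial B$. Given $\eps>0$, I will construct $w_\eps\in\sS_{f,g,\psi}^+$ with $u\leq w_\eps\leq u+\eps$ on $\partial B$: for each $y\in\partial B$ the infimum definition of $u$ yields $w_y\in\sS_{f,g,\psi}^+$ with $w_y(y)<u(y)+\eps/2$, and continuity of $u$ (Lemma \ref{lem:Perron_elliptic_problem_solution_continuity}) together with continuity of each $w_y$, compactness of $\partial B$, and the finite-min stability of supersolutions (Lemma \ref{lem:elliptic_obstacle_problem_min_set_supersolutions}) produce the desired $w_\eps$. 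Let $u_o^{w_\eps}\in W^{2,p}_{\loc}(B)\cap C(\bar B)$ denote the unique solution of the strictly elliptic obstacle problem on $B$ with boundary values $w_\eps\restriction\partial B$, again furnished by Lemma \ref{lem:Perron_strictly_elliptic_obstacle_problem_solution_ball}. Lemma \ref{lem:Elliptic_obstacle_problem_smaller_supersolution} implies that the gluing
\begin{equation*}
\hat w_\eps:=\begin{cases} u_o^{w_\eps} &\text{on }B,\\ w_\eps &\text{on }\sO\setminus B,\end{cases}
\end{equation*}
belongs to $\sS_{f,g,\psi}^+$, so $u_o^{w_\eps}=\hat w_\eps\geq u$ on $B$ by the infimum definition of $u$. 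On the other hand, because $c\geq 0$ on $\underline\sO$, the shift $v_B+\eps$ satisfies $A(v_B+\eps)\geq f$ a.e.\ and $v_B+\eps>\psi$ on $B$, making it a continuous supersolution of the obstacle problem on $B$ whose boundary value $u+\eps$ dominates $w_\eps$, so Theorem \ref{thm:Elliptic_obstacle_problem_comparison_principle} applied on $B$ yields $u_o^{w_\eps}\leq v_B+\eps$ on $B$. Chaining $u\leq u_o^{w_\eps}\leq v_B+\eps$ on $B$ and sending $\eps\downarrow 0$ completes the proof.

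The main subtlety is upgrading the trivial pointwise approximation of $u\restriction\partial B$ from above by elements of $\sS_{f,g,\psi}^+$ to a uniform one; the comparison step that produces $u_o^{w_\eps}-v_B\leq\eps$ on $B$ relies on the $C(\partial B)$-estimate $\|w_\eps-u\|_{C(\partial B)}\leq\eps$ rather than on pointwise control alone. This is resolved by the continuity-plus-compactness argument sketched above, which depends crucially on Lemma \ref{lem:Perron_elliptic_problem_solution_continuity} (so that $u\in C(\sO)$) and on Lemma \ref{lem:elliptic_obstacle_problem_min_set_supersolutions} (so that the resulting finite min remains in $\sS_{f,g,\psi}^+$).
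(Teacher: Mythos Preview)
Your proof is correct. The paper takes a shorter route: after identifying $v_B$ (which it calls $u_o$) via Lemma~\ref{lem:Perron_strictly_elliptic_obstacle_problem_solution_ball} with the unique $W^{2,p}_{\loc}(B)\cap C(\bar B)$ solution of the obstacle problem on $B$ with boundary data $u|_{\partial B}$, it defines for each $w\in\sS_{f,g,\psi}^+$ the glued function $\hat w := u_o$ on $B$ and $:= w$ on $\sO\setminus B$, cites Lemma~\ref{lem:Elliptic_obstacle_problem_smaller_supersolution} to assert $\hat w\in\sS_{f,g,\psi}^+$ and $\hat w\leq w$, and concludes $u=\inf_w w=\inf_w\hat w=u_o$ on $B$ in one line. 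Your two-inequality decomposition with the $\eps$-approximation is longer but more careful on one point: since $u_o=u\leq w$ on $\partial B$ with possible strict inequality, the paper's $\hat w$ can be discontinuous across $\partial B$, so Lemma~\ref{lem:Elliptic_obstacle_problem_smaller_supersolution} --- whose statement and proof require the inner obstacle solution to match the outer supersolution on $\partial B$ --- does not literally apply as invoked. By first approximating $u|_{\partial B}$ uniformly from above by a finite-min supersolution $w_\eps$ (via Lemmas~\ref{lem:Perron_elliptic_problem_solution_continuity} and~\ref{lem:elliptic_obstacle_problem_min_set_supersolutions}) and then lifting with boundary data $w_\eps|_{\partial B}$, you keep the glued function continuous and Lemma~\ref{lem:Elliptic_obstacle_problem_smaller_supersolution} applies cleanly. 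As a minor simplification, the comparison $u_o^{w_\eps}\leq v_B+\eps$ also follows directly from Proposition~\ref{prop:Elliptic_classical_rodrigues} (continuous dependence on boundary data), which would spare you the check that $v_B+\eps$ is a continuous supersolution; but your route through Theorem~\ref{thm:Elliptic_obstacle_problem_comparison_principle} is perfectly valid.
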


\begin{proof}
Let $u_o$ denote the function on the right-hand side of \eqref{eq:Perron_elliptic_obstacle_problem_solution_localization}. Lemma \ref{lem:Perron_strictly_elliptic_obstacle_problem_solution_ball} implies that $u_o \in W^{2,p}_{\loc}(B)\cap C(\bar B)$ and is the unique solution to
$$
\min\{Au_o - f, \ u_o - \psi\} = 0 \quad\hbox{a.e. on } B, \quad u_o = u \quad\hbox{on }\partial B.
$$
If $w\in \sS_{f,g,\psi}^+$, Lemma \ref{lem:Elliptic_obstacle_problem_smaller_supersolution} implies that the function
$$
\hat w := \begin{cases} u_o &\hbox{on } B, \\  w &\hbox{on } \sO\less B, \end{cases}
$$
is also a continuous supersolution to \eqref{eq:Elliptic_obstacle_problem}, that is, $\hat w\in \sS_{f,g,\psi}^+$, and
$\hat w \leq w$ on $\sO$. Therefore,
$$
u(x) = \inf_{w\in \sS_{f,g,\psi}^+}w(x) = \inf_{w\in \sS_{f,g,\psi}^+}\hat w(x) = u_o(x), \quad x\in B,
$$
and hence $u=u_o$ on $B$, as desired.
\end{proof}

\begin{lem}[Localization of the Perron function on the continuation region]
\label{lem:Elliptic_localization_contination_region}
Assume the hypotheses of Theorem \ref{thm:Perron_elliptic_obstacle_problem_solution}. If $u:\underline\sO\to\RR$ is given by \eqref{eq:Perron_elliptic_obstacle_problem_solution} and $U\Subset\underline\Omega$, where $\Omega$ is given by \eqref{eq:Continuation_region}, then
$$
\inf_{w\in \sS_{f,u\restriction U,\psi}^+}w(x) = \inf_{w\in \sS_{f,u\restriction U}^+}w(x), \quad x\in\underline U.
$$
\end{lem}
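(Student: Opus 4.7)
The plan is to prove both infima equal $u\restriction\underline U$. The pivotal ingredient is strict slackness of the obstacle at $u$ on $\underline U$: since $u\in C(\sO)$ by Lemma \ref{lem:Perron_elliptic_problem_solution_continuity} and $\Omega=\{u>\psi\}$ is open in $\sO$, the compact containment $\underline U\Subset\underline\Omega$ together with continuity of $u-\psi$ yields a uniform gap $\delta_0:=\inf_{\underline U}(u-\psi)>0$.

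First I would verify that $u$ itself lies in both $\sS_{f,u\restriction U,\psi}^+$ and $\sS_{f,u\restriction U}^+$, giving both infima $\leq u$ on $\underline U$. As the global Perron infimum of obstacle supersolutions, $u$ satisfies $u\geq\psi$ and the obstacle-supersolution comparison of Definition \ref{defn:Continuous_supersolution_obstacle_problem}. The strict gap forces $\underline U\subset\Omega_u$, so every half-ball $B^+\Subset\underline U$ centered on $\partial_0\sO$ satisfies $B^+\Subset\underline\Omega_u$, and the half-ball comparison of Definition \ref{defn:Continuous_supersolution_obstacle_problem} (required only inside $\underline\Omega_u$) automatically promotes to the stronger comparison of Definition \ref{defn:Continuous_super_sub_solution_bvp}; combined with $u\in C(\sO)$ and $u=u$ on $\partial_1 U$, this places $u\in\sS_{f,u\restriction U}^+$ as well.

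For the reverse inequalities, given any $v$ in either set, I would construct a global candidate $\tilde v\in\sS_{f,g,\psi}^+$ with $\tilde v\leq v$ on $\underline U$; the definition of $u$ as the global infimum then forces $v\geq u$ on $\underline U$. Set $\tilde v:=\min\{v,u\}$ on $U$ and $\tilde v:=u$ on $\sO\less U$. This function is continuous on $\sO$ (since $v=u$ on $\partial_1 U$), satisfies $\tilde v\geq\psi$ — immediate when $v\in\sS_{f,u\restriction U,\psi}^+$, and for $v\in\sS_{f,u\restriction U}^+$ obtained by first showing $v\geq u$ on $\underline U$ via covering with interior balls $B\Subset U\cap\sO$ on which Lemma \ref{lem:Perron_elliptic_obstacle_problem_solution_localization} identifies $u\restriction B$ as the common obstacle/BVP solution (the obstacle being strictly inactive on $\overline B\subset\Omega$) and applying the BVP weak maximum principle — and is a continuous obstacle supersolution on $U$ by Lemma \ref{lem:elliptic_obstacle_problem_min_set_supersolutions}. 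The main obstacle is verifying the global supersolution property of $\tilde v$ across $\partial_1 U$, since the gluing of supersolutions along an interface is not automatic. I would handle this via the harmonic-lift construction of Lemma \ref{lem:Elliptic_obstacle_problem_smaller_supersolution}: on each ball straddling $\partial_1 U$, replacing $\tilde v$ by its obstacle-solution lift yields a function in $\sS_{f,g,\psi}^+$ dominated by $\tilde v$, which suffices to drive the infimum argument and conclude $v\geq u$ on $\underline U$, completing the asserted equality.
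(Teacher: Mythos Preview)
Your approach is far more elaborate than the paper's and misses the decisive shortcut. The paper does not compute either infimum; it simply shows the two \emph{sets} $\sS_{f,u\restriction U,\psi}^+$ and $\sS_{f,u\restriction U}^+$ coincide, after which equality of the infima is automatic. The argument is essentially one line in each direction: if $w\in\sS_{f,u\restriction U}^+$, then $w\geq u$ on $\partial_1U$ and hence $w\geq u$ on $U$; since $u>\psi$ on $U$ (as $U\Subset\underline\Omega$), this forces $w>\psi$ on $U$, so $\{w>\psi\}\supset U$ and the half-ball clauses of Definitions~\ref{defn:Continuous_super_sub_solution_bvp} and~\ref{defn:Continuous_supersolution_obstacle_problem} become identical on $U$. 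Thus $w\in\sS_{f,u\restriction U,\psi}^+$, and the reverse inclusion is similar.

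Your plan instead tries to prove both infima equal $u$, which drags you into the global gluing construction you yourself flag as the ``main obstacle''. That construction is unnecessary --- once any $w$ in either class satisfies $w\geq u>\psi$ on $U$, the two classes are literally the same set and there is nothing further to do. Moreover, your ball-covering argument for $v\in\sS_{f,u\restriction U}^+\Rightarrow v\geq u$ on $\underline U$ has a gap: comparing $v$ with $u$ on an interior ball $B\Subset U$ via the weak maximum principle requires $v\geq u$ on $\partial B$, which is exactly what you are trying to establish. The comparison has to be carried out on $U$ as a whole using the boundary inequality $v\geq u$ on $\partial_1U$, not bootstrapped ball-by-ball from the interior.
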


\begin{proof}
If $w\in \sS_{f,u\restriction U,\psi}^+$, then $w\in \sS_{f,u\restriction U}^+$ by Definitions \ref{defn:Continuous_super_sub_solution_bvp} and \ref{defn:Continuous_supersolution_obstacle_problem}, and so
$$
\sS_{f,u\restriction U,\psi}^+ \subset \sS_{f,u\restriction U}^+.
$$
Let $w\in \sS_{f,u\restriction U}^+$. Then $w\geq u$ on $\partial_1 U$ and so by Definition \ref{defn:Continuous_super_sub_solution_bvp}, we must have $w\geq u$ on $U$. But $u>\psi$ on $U$ (since $U \Subset\underline\Omega$) and so $w\geq\psi$ on $\underline U$ and thus $w\in \sS_{f,u\restriction U,\psi}^+$ by Definition \ref{defn:Continuous_supersolution_obstacle_problem}. Therefore,
$$
\sS_{f,u\restriction U,\psi}^+ \supset \sS_{f,u\restriction U}^+,
$$
and consequently, $\sS_{f,u\restriction U,\psi}^+ = \sS_{f,u\restriction U}^+$, which gives the identity.
\end{proof}

We provide a priori estimates for continuous supersolutions to the obstacle problem via the following refinement of \cite[Proposition 3.18]{Feehan_maximumprinciple_v1}; Theorem \ref{thm:Elliptic_obstacle_problem_comparison_principle} provides the key ingredient in the proof.

\begin{prop}[Comparison principle estimates for solutions and continuous supersolutions to the obstacle problem]
\label{prop:Continuous_supersolution_obstacleproblem_maximum_principle_estimates}
\cite[Proposition 3.18]{Feehan_maximumprinciple_v1}
Assume the hypotheses of Theorem \ref{thm:Elliptic_obstacle_problem_comparison_principle} on $\sO$, $A$, $f$, and $\psi$, and that $c$ obeys \eqref{eq:Nonnegative_c}. Let $g\in C(\partial_1\sO)$. Suppose $u$ is a solution and $v$ is a continuous supersolution to the obstacle problem \eqref{eq:Elliptic_obstacle_problem}, \eqref{eq:Elliptic_boundary_condition} for $f$, $g$, and $\psi$ in the sense of Definitions \ref{defn:Solution_obstacle_problem} and \ref{defn:Continuous_supersolution_obstacle_problem}, where $u, v$ belong to $C(\sO\cup\partial_1\sO)$. Then the following hold.
\begin{enumerate}
\item\label{item:Obstacle_continuous_supersolution_f_geq_zero} If $f\geq 0$ on $\sO$ and $v$ is a continuous supersolution for $f$ and $g$ and $\inf_\sO v > -\infty$, then
$$
v\geq 0 \wedge \inf_{\partial_1\sO}g \quad\hbox{on }\sO.
$$
\item\label{item:Obstacle_continuous_supersolution_f_arb_sign} If $f$ has arbitrary sign, $v$ is a continuous supersolution for $f$ and $g$ and $\inf_\sO v > -\infty$ but, in addition, there is a constant $c_0>0$ such that $c$ obeys \eqref{eq:Positive_lower_bound_c_domain}, then
$$
v\geq 0 \wedge \frac{1}{c_0}\inf_ \sO f \wedge \inf_{\partial_1\sO}g \quad\hbox{on }\sO.
$$
\item\label{item:Obstacle_solution_f_leq_zero} If $f\leq 0$ on $\sO$ and $u$ is a solution for $f$, $g$, and $\psi$ and $\sup_\sO u < \infty$, then
$$
u\leq 0 \vee \sup_{\partial_1\sO}g \vee \sup_\sO \psi \quad\hbox{on }\sO.
$$
\item\label{item:Obstacle_solution_f_arb_sign} If $f$ has arbitrary sign and $u$ is a solution for $f$, $g$, and $\psi$ and $\sup_\sO u < \infty$ but, in addition, there is a constant $c_0>0$ such that $c$ obeys \eqref{eq:Positive_lower_bound_c_domain}, then
$$
u\leq 0 \vee \frac{1}{c_0}\sup_\sO f \vee \sup_{\partial_1\sO}g \vee \sup_\sO \psi \quad\hbox{on }\sO.
$$
\end{enumerate}
When $\partial_1\sO=\emptyset$, the terms $\inf_{\partial_1\sO}g$ and $\sup_{\partial_1\sO}g$ are omitted in the preceding inequalities.
\end{prop}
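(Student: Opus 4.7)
I would split the proof into two distinct halves, both keyed to the comparison principle Theorem \ref{thm:Elliptic_obstacle_problem_comparison_principle}: first the upper bounds on the solution $u$ (items \eqref{item:Obstacle_solution_f_leq_zero} and \eqref{item:Obstacle_solution_f_arb_sign}), where the argument is essentially immediate, and then the lower bounds on the continuous supersolution $v$ (items \eqref{item:Obstacle_continuous_supersolution_f_geq_zero} and \eqref{item:Obstacle_continuous_supersolution_f_arb_sign}), where some care is required.

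For the upper bounds I would exhibit a constant continuous supersolution. Let $M$ denote the claimed upper bound in each case. The inequalities $M \geq \sup_\sO \psi$ and $M \geq \sup_{\partial_1\sO} g$ yield the obstacle and boundary conditions of Definition \ref{defn:Smooth_supersolution_obstacle_problem} directly, while $AM = cM \geq f$ follows either from $f \leq 0$ together with $cM \geq 0$ (item \eqref{item:Obstacle_solution_f_leq_zero}), or from $cM \geq c_0 M \geq \sup_\sO f$ (item \eqref{item:Obstacle_solution_f_arb_sign}, invoking $M \geq (1/c_0)\sup_\sO f$). To upgrade this smooth supersolution property to the continuous supersolution property of Definition \ref{defn:Continuous_supersolution_obstacle_problem}, I observe that for any test function $\underline v$ with $A\underline v \leq f$ on a ball or half-ball $U$ and $\underline v \leq M$ on $\partial_1 U$, the difference $M - \underline v$ obeys $A(M-\underline v) \geq cM - f \geq 0$ on $U$ together with $M - \underline v \geq 0$ on $\partial_1 U$; the weak maximum principle property of $A$ on $\underline U$ (inherited from the strong maximum principle property via Lemma \ref{lem:Elliptic_strong_implies_weak_maximum_principle_property_bounded_domain} or Corollary \ref{cor:Elliptic_strong_implies_weak_maximum_principle_property_unbounded_domain}) then gives $\underline v \leq M$ on $U$. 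Applying Theorem \ref{thm:Elliptic_obstacle_problem_comparison_principle} to $u$ and the constant supersolution $M$, together with $u = g \leq M$ on $\partial_1\sO$, yields the desired estimate.

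For the lower bounds on $v$, a constant $K$ (the proposed lower bound) is not in general a solution to the obstacle problem, so Theorem \ref{thm:Elliptic_obstacle_problem_comparison_principle} cannot simply be applied with the roles of $u$ and $v$ reversed. Instead I would adapt the contradiction argument at the heart of the proof of Theorem \ref{thm:Elliptic_obstacle_problem_comparison_principle} itself. Set $M := \sup_\sO(K - v_*)$, with $v_*$ the lower semicontinuous envelope of $v$ on $\bar\sO$, and suppose $M > 0$. The hypothesis $v \geq g \geq K$ on $\partial_1\sO$ forces the supremum to be attained at some $x_0 \in \underline\sO$. I would then show the level set $\underline\sO_M := \{x\in\underline\sO : (K - v_*)(x) = M\}$ is both relatively closed (by upper semicontinuity of $K - v_*$) and relatively open in $\underline\sO$; by connectedness this yields $v_* \equiv K - M$ throughout $\sO$, contradicting $v \geq K$ on $\partial_1\sO$. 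Openness is proved by a harmonic lift: pick a ball $B \Subset \sO$ (or half-ball $B^+$) centered at $x_0$, solve $A\underline v = f$ on $U$ with $\underline v = v$ on $\partial_1 U$ via Hypothesis \ref{hyp:Dirichlet_problem_local_solvability}, and note that the obstacle supersolution condition gives $\underline v \leq v$ on $U$. Then $A(K - \underline v) = cK - f \leq 0$ (using $K \leq 0$, $c \geq 0$, $f \geq 0$ in item \eqref{item:Obstacle_continuous_supersolution_f_geq_zero}, and the analogous sign bookkeeping in item \eqref{item:Obstacle_continuous_supersolution_f_arb_sign} via $c \geq c_0$), and $(K - \underline v)(x_0) \geq (K - v)(x_0) = M$, so the strong maximum principle property of $A$ forces $K - \underline v \equiv M$ on $U$, giving $U \subset \underline\sO_M$. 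When $\sO$ is unbounded the standard reduction using the auxiliary function $v_0(x) = 1 + |x|^2$ from the proof of Theorem \ref{thm:Elliptic_obstacle_problem_comparison_principle} handles the extension.

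The main obstacle is the half-ball case $x_0 \in \partial_0\sO$: Definition \ref{defn:Continuous_supersolution_obstacle_problem} restricts the test-function condition to half-balls $B^+ \Subset \underline\Omega$ where $\Omega = \{v > \psi\}$, so the openness argument fails if $x_0$ lies on the coincidence set $\{v = \psi\}$. However, items \eqref{item:Obstacle_continuous_supersolution_f_geq_zero} and \eqref{item:Obstacle_continuous_supersolution_f_arb_sign} as stated involve no $\psi$-dependent lower bound, so on the coincidence set the chain $v(x_0) = \psi(x_0) = K - M$ together with $v \geq \psi$ locally allows one either to pass to a subdomain inside $\underline\Omega$ where the half-ball argument applies, or to derive the contradiction directly from the obstacle constraint. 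This technical wrinkle is the principal difficulty distinguishing the present proof from the parallel one for continuous supersolutions to the boundary value problem in Proposition \ref{prop:Continuous_super_sub_solution_bvp_maximum_principle_estimates}.
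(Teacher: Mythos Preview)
The paper does not actually prove this proposition: it cites \cite[Proposition 3.18]{Feehan_maximumprinciple_v1} and remarks only that Theorem \ref{thm:Elliptic_obstacle_problem_comparison_principle} ``provides the key ingredient in the proof.'' Your plan is fully consistent with that hint and with how the paper subsequently uses the result. In particular, the proof of Theorem \ref{thm:Perron_elliptic_obstacle_problem_solution} explicitly invokes ``the proof of Proposition \ref{prop:Continuous_supersolution_obstacleproblem_maximum_principle_estimates}'' to confirm that the constant $M = 0 \vee c_0^{-1}\sup_\sO f \vee \sup_{\partial_1\sO} g \vee \sup_\sO \psi$ is a continuous (indeed smooth) supersolution---precisely your argument for items \eqref{item:Obstacle_solution_f_leq_zero} and \eqref{item:Obstacle_solution_f_arb_sign}; applying Theorem \ref{thm:Elliptic_obstacle_problem_comparison_principle} to the pair $(u,M)$ then finishes those cases.

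For items \eqref{item:Obstacle_continuous_supersolution_f_geq_zero} and \eqref{item:Obstacle_continuous_supersolution_f_arb_sign} you are right that the constant $K$ is not a solution to the obstacle problem, so Theorem \ref{thm:Elliptic_obstacle_problem_comparison_principle} cannot be applied as a black box; replaying its internal strong-maximum-principle argument is the natural route. You have also correctly isolated the one genuine subtlety: Definition \ref{defn:Continuous_supersolution_obstacle_problem} admits half-balls only when $B^+ \Subset \underline\Omega$ with $\Omega = \{v > \psi\}$, whereas the openness step in the proof of Theorem \ref{thm:Elliptic_obstacle_problem_comparison_principle} used $u(x_0) > \psi(x_0)$ to place the half-ball inside the continuation region---a trick unavailable when $u$ is replaced by $K$. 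Your sketched resolution of that point is somewhat loose. A cleaner way to organize it is to observe that for interior points the ball test in Definition \ref{defn:Continuous_supersolution_obstacle_problem} carries no $\Omega$-restriction, so the level set $\sO_M := \{x \in \sO : (K-v)(x) = M\}$ is open and closed in the connected set $\sO$; the remaining work is to show $\sO_M \neq \emptyset$ when $M > 0$, i.e., to rule out the supremum being attained only on $\partial_0\sO$. It is at this last step that the half-ball condition, or the obstacle constraint $v \geq \psi$ together with $\psi \in C(\underline\sO)$, must enter, and this deserves a more explicit treatment than your final paragraph gives it.
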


We can now give the

\begin{proof}[Proof of Theorem \ref{thm:Perron_elliptic_obstacle_problem_solution}]
We first show that the set $\sS_{f,g,\psi}^+$ is non-empty by exhibiting a constant $M$ which is a continuous supersolution to \eqref{eq:Elliptic_obstacle_problem}, \eqref{eq:Elliptic_boundary_condition}. When $c$ obeys \eqref{eq:Positive_lower_bound_c_domain}, so $c\geq c_0$ on $\underline\sO$ for a positive constant $c_0$, the proof of Proposition \ref{prop:Continuous_supersolution_obstacleproblem_maximum_principle_estimates} shows that
\footnote{By hypothesis, the functions $f$ and $\psi$ on $\sO$ and $g$ on $\partial_1\sO$ are bounded above.}
$$
M := 0\vee\frac{1}{c_0}\sup_\sO f \vee \sup_{\partial_1\sO} g \vee \sup_\sO\psi
$$
is a continuous (and also smooth) supersolution to \eqref{eq:Elliptic_obstacle_problem}, \eqref{eq:Elliptic_boundary_condition}, while for the case $c$ only obeying \eqref{eq:Nonnegative_c}, so $c\geq 0$ on $\underline\sO$, we may also construct a constant supersolution $M$ by Remark \ref{rmk:Elliptic_weak_maximum_principle_finite_height_domain} when $\height(\sO)$ is finite, $a^{dd}$ obeys \eqref{eq:Finite_upper_bound_add_domain}, and $b^d$ obeys \eqref{eq:Positive_lower_bound_bd_domain}, so $b^d\geq b_0$ on $\underline\sO$ for a positive constant $b_0$. Thus, $\sS_{f,g,\psi}^+$ is non-empty. By replacing any supersolution $v \in \sS_{f,g,\psi}^+$ by $v\wedge M$ (and appealing to Lemma \ref{lem:elliptic_obstacle_problem_min_set_supersolutions}), we may furthermore assume without loss of generality that $v \leq M$ on $\sO$ in the definition \eqref{eq:Perron_elliptic_obstacle_problem_solution} of $u$.

The preceding remarks show that $\sup_\sO u \leq M$. If $v \in \sS_{f,g,\psi}^+$ is any continuous supersolution then, for $c$ obeying \eqref{eq:Positive_lower_bound_c_domain}, so $c\geq c_0$ on $\underline\sO$ for a positive constant $c_0$, we can apply
Proposition \ref{prop:Continuous_supersolution_obstacleproblem_maximum_principle_estimates} \eqref{item:Obstacle_continuous_supersolution_f_arb_sign}
to conclude that\footnote{By hypothesis, the functions $f$ on $\sO$ and $g$ on $\partial_1\sO$ are bounded below.}
$$
v \geq 0\wedge\frac{1}{c_0}\inf_\sO f \wedge \inf_{\partial_1\sO}g \quad\hbox{on }\sO,
$$
and thus $u$ also satisfies this lower bound on $\sO$. For $c$ only obeying \eqref{eq:Nonnegative_c}, so $c\geq 0$ on $\underline\sO$, but $\height(\sO)$ is finite, $a^{dd}$ obeys \eqref{eq:Finite_upper_bound_add_domain}, and $b^d$ obeys \eqref{eq:Positive_lower_bound_bd_domain}, so $b^d\geq b_0$ on $\underline\sO$ for a positive constant $b_0$, Remark \ref{rmk:Elliptic_weak_maximum_principle_finite_height_domain} again yields a uniform lower bound for $u$ on $\sO$. Thus, $u$ is bounded on $\sO$.

We have $u\in C(\sO)$ by Lemma \ref{lem:Perron_elliptic_problem_solution_continuity}. By applying Lemmas \ref{lem:Perron_strictly_elliptic_obstacle_problem_solution_ball} and \ref{lem:Perron_elliptic_obstacle_problem_solution_localization} to each ball $B\Subset\sO$, we see that $u\in W^{2,p}_{\loc}(\sO)$ and that $u$ solves the obstacle problem \eqref{eq:Elliptic_obstacle_problem} on $\sO$. Since $\Omega = \{x\in\sO:(u-\psi)(x)>0\}$ is an open subset of $\sO$, then $\underline\Omega$ is an open subset of $\underline\sO$
and\footnote{Recall that $\underline\Omega := \Omega\cup\partial_0\Omega$ and $\underline\sO := \sO\cup\partial_0\sO$, where $\partial_0\Omega := \Int(\bar\Omega\cap\partial\HH)$ and $\partial_0\sO := \Int(\bar\sO\cap\partial\HH)$, and hence $\underline\Omega\subset\underline\sO$.}
so for each $x_0\in\underline\Omega$, we may choose an $r>0$ small enough that $U\Subset\underline\Omega$, where $U=B_r(x_0)$ with $x_0\in\sO$ or $B_r^+(x_0)$ with $x_0\in\partial_0\sO$. Theorem \ref{thm:Perron_elliptic_bvp_solution}, Remark \ref{rmk:Perron_elliptic_bvp_solution}, and Lemma \ref{lem:Elliptic_localization_contination_region} imply that $u$ solves $Au = f$ on $U$ and belongs to $C^{2+\alpha}_s(\underline U)$. Therefore, $u\in C^{2+\alpha}_s(\underline\Omega)$.

Recall that $W^{2,p}(V) \hookrightarrow C^{1,\gamma}(\bar V)$ for any $\gamma\in(0,1-d/p]$, when $d<p<\infty$, and domain $V\subset\RR^d$ whose boundary $\partial V$ has the strong local Lipschitz property by \cite[Theorem 5.4 ($\hbox{C}'$)]{Adams_1975} and therefore $Du=D\psi$ along the free boundary $\sO\cap\partial\Omega$. Since $\psi \in C^2(\underline\sO)$ and $u=\psi$ on $\underline\sO\less\Omega$, then $u \in C^2(\underline\sO\less\Omega) \subset C^{1,\alpha}(\underline\sO\less\Omega)$ and because $u\in C^{2+\alpha}_s(\underline\Omega) \subset C^{1,\alpha}_s(\underline\Omega)$, we obtain $u\in C^{1,\alpha}_s(\underline\sO)$, since $C^{1,\alpha}(\underline\sO\less\Omega) \subset C^{1,\alpha}_s(\underline\sO\less\Omega)$ by Definition \ref{defn:DHspaces} and \eqref{eq:DH_and_standard_Holder_relationships}. (See Figure \ref{fig:domain_continuation_region}.)

It remains to verify that $u$ attains the boundary value, $g$, continuously along $\partial_1\sO$. Suppose $x_0\in\partial_1\sO$. Observe that
$$
\partial_1\sO =  \left(\partial_1\sO\cap\partial_1(\sO\less\Omega)\right) \cup \Int\left(\partial_1\sO\cap\partial_1\Omega\right).
$$
If $x_0 \in \partial_1\sO\cap\partial_1(\sO\less\Omega)$, then since $u=\psi$ on $\sO\less\Omega$ and $\psi \in C(\sO\cup \partial_1\sO)$ and thus $\psi$ is continuous at $x_0$, we see that $u$ extends continuously to $x_0$ by setting $u(x_0)=\psi(x_0)$, with $u(x_0)\leq g(x_0)$ by \eqref{eq:Boundarydata_obstacle_compatibility}. The definition \eqref{eq:Perron_elliptic_obstacle_problem_solution} of $u$ implies that $u(x_0)\geq g(x_0)$ if $u$ is continuous at $x_0$ and therefore $u(x_0)=g(x_0)$. If $x_0\in \Int(\partial_1\sO\cap\partial_1\Omega)$, then there is an $r>0$ such that $B_r(x_0)\cap \sO\subset\Omega$ and $B_r(x_0)\cap \partial_1\sO \subset \partial\Omega$. Because each $x_0\in\partial_1\sO$ is a regular point for the elliptic equation \eqref{eq:Elliptic_equation} defined by $A$, $f$, $g$, and $\sO$ in the sense of Definition \ref{defn:Regular_boundary_point_elliptic_equation} by hypothesis, then $u$ is continuous at $x_0$ with $u(x_0)=g(x_0)$.
\end{proof}

\begin{proof}[Proof of Theorem \ref{thm:Existence_uniqueness_elliptic_obstacle}]
The remainder of proof is the same as that of Theorem \ref{thm:Existence_uniqueness_elliptic_Dirichlet} except that we appeal to Theorem \ref{thm:Perron_elliptic_obstacle_problem_solution} in place of Theorem \ref{thm:Perron_elliptic_bvp_solution}.
\end{proof}

Finally, we have the

\begin{thm}[A Perron method of existence of solutions to the degenerate-elliptic obstacle problem with Lipschitz obstacle function with locally finite concavity]
\label{thm:Perron_elliptic_obstacle_problem_solution_lipschitz_obstacle}
Assume the hypotheses of Theorem \ref{thm:Perron_elliptic_obstacle_problem_solution}, except that the assumption $\psi\in C^2(\underline\sO)\cap C(\sO\cup\partial_1\sO)$ is relaxed to $\psi \in C^{0,1}(\sO)\cap C(\underline\sO)$ and obeying \eqref{eq:Elliptic_obstaclefunction_locally_finite_concavity_condition} in $\sO$. Then the conclusions of Theorem \ref{thm:Perron_elliptic_obstacle_problem_solution} again hold, except that now
$$
u \in C^{2+\alpha}_s(\underline\Omega) \cap W^{2,p}_{\loc}(\sO) \cap C_b(\underline\sO).
$$
\end{thm}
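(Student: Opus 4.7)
[Proof proposal for Theorem \ref{thm:Perron_elliptic_obstacle_problem_solution_lipschitz_obstacle}]
The plan is to reduce to the smooth-obstacle case already settled in Theorem \ref{thm:Perron_elliptic_obstacle_problem_solution} via a monotone approximation argument, then pass to the limit using uniform maximum-principle estimates together with interior Schauder and Sobolev regularity. Write $\psi$ as the pointwise limit of a sequence $\{\psi_k\}_{k\geq 1}$ of $C^2$ obstacles obtained as follows: choose an exhaustion $\sO_1\Subset\sO_2\Subset\cdots\Subset\sO$ by smooth subdomains with $\bigcup_k\sO_k=\sO$ and, using a standard mollifier $\eta_{\epsilon_k}$ with $\epsilon_k\to 0$ fast enough that $\dist(\sO_k,\partial\sO)>2\epsilon_k$, set $\psi_k:= (\psi\ast\eta_{\epsilon_k})\zeta_k + \psi(1-\zeta_k) - \delta_k$, where $\zeta_k\in C^\infty_0(\sO_{k+1})$ equals $1$ on $\sO_k$ and $\delta_k\downarrow 0$ is chosen so that $\psi_k\leq \psi$ on $\underline\sO\cup\partial_1\sO$ (in particular $\psi_k\leq g$ on $\partial_1\sO$). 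Then $\psi_k\to\psi$ uniformly on compact subsets of $\underline\sO\cup\partial_1\sO$, the local Lipschitz norm \eqref{eq:Elliptic_obstaclefunction_lipschitz_condition} and the local semi-concavity bound \eqref{eq:Elliptic_obstaclefunction_locally_finite_concavity_condition} pass through the mollification (convolution with a non-negative kernel preserves both), and $\psi_k\in C^2(\underline\sO)\cap C(\sO\cup\partial_1\sO)$ with $\sup_\sO\psi_k<\infty$.

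Apply Theorem \ref{thm:Perron_elliptic_obstacle_problem_solution} to each $(f,g,\psi_k)$ to obtain a solution $u_k\in C^{2+\alpha}_s(\underline\Omega_k)\cap W^{2,p}_{\loc}(\sO)\cap C^{1,\alpha}_s(\underline\sO)\cap C_b(\sO)$ of the obstacle problem with obstacle $\psi_k$, where $\Omega_k:=\{u_k>\psi_k\}$. Proposition \ref{prop:Continuous_supersolution_obstacleproblem_maximum_principle_estimates}, together with the uniform bound $\sup_k\|\psi_k\|_{C(\sO)}<\infty$, yields a uniform $L^\infty$ bound $\|u_k\|_{C(\bar\sO)}\leq M$ independent of $k$. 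Because the locally finite concavity bound $D^2_\xi\psi_k\geq -C(U)$ is uniform in $k$ on each $U\Subset\sO$, the classical interior $W^{2,p}$-regularity for obstacle problems with strictly elliptic operators on balls $B\Subset\sO$ (as in \cite[Theorem 1.3.4]{Friedman_1982}, \cite{Jensen_1980}, \cite[Theorem 4.38]{Troianiello}) provides $\|u_k\|_{W^{2,p}(B')}\leq C(B,B')$ uniformly in $k$ for $B'\Subset B$, for any $d<p<\infty$. Sobolev embedding then gives uniform $C^{1,\gamma}_{\loc}(\sO)$ bounds with $\gamma=1-d/p$, so by Arzel\`a--Ascoli and weak compactness we may extract a subsequence with $u_k\to u$ locally uniformly on $\sO$ and $u_k\rightharpoonup u$ weakly in $W^{2,p}_{\loc}(\sO)$; taking limits in $\min\{Au_k-f, u_k-\psi_k\}=0$ a.e.\ gives $u\in W^{2,p}_{\loc}(\sO)$ solving \eqref{eq:Elliptic_obstacle_problem}.

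For the higher regularity on $\Omega=\{u>\psi\}$, observe that if $U\Subset\underline\Omega$ is a half-ball with center in $\partial_0\sO$, then $u-\psi\geq 2\delta>0$ on $\bar U$ for some $\delta>0$, and uniform convergence forces $u_k-\psi_k\geq\delta$ on $\bar U$ for large $k$, so $U\subset\Omega_k$ and $Au_k=f$ on $U$. The a priori interior Schauder estimate in Theorem \ref{thm:Elliptic_apriori_Schauder_interior_domain}, applied on $U'\Subset\underline U$, gives uniform $C^{2+\alpha}_s(\bar U')$ bounds on $u_k$; a diagonal argument yields $u\in C^{2+\alpha}_s(\underline\Omega)$ solving $Au=f$ on $\Omega$. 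The continuity up to $\partial_1\sO$ under the regularity assumption on boundary points is obtained exactly as in the last paragraph of the proof of Theorem \ref{thm:Perron_elliptic_obstacle_problem_solution}.

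The main obstacle will be establishing the $C_b(\underline\sO)$ conclusion, since the interior $W^{2,p}$ bounds and the Schauder estimates used above both degenerate at $\partial_0\sO$, and one cannot deduce equi-continuity of $u_k$ up to the degenerate boundary directly from them. I would handle this point-by-point: for $x_0\in\partial_0\sO$ lying in the interior of the coincidence set $\sO\setminus\Omega$, continuity at $x_0$ follows from $u=\psi$ nearby together with $\psi\in C(\underline\sO)$; for $x_0\in\partial_0\Omega$ one has $u\in C^{2+\alpha}_s(\underline U)$ on a half-ball neighbourhood from the previous paragraph. The delicate case is $x_0\in\partial_0\sO\cap\partial\Omega\cap\partial(\sO\setminus\Omega)$, where I would use the inequality $u\geq\psi$ to obtain $\liminf_{x\to x_0}u(x)\geq\psi(x_0)$, and, for the reverse bound, apply Lemma \ref{lem:Perron_elliptic_obstacle_problem_solution_localization} (suitably extended from interior balls to half-balls $B^+\Subset\underline\sO$ via Theorem \ref{thm:Existence_uniqueness_elliptic_Dirichlet_halfball}) to write $u$ on a small half-ball at $x_0$ as the infimum of continuous supersolutions with boundary data equal to $u|_{\partial_1 B^+}$, and compare against the smooth-obstacle supersolutions produced by Theorem \ref{thm:Perron_elliptic_obstacle_problem_solution} applied to $\psi_k$ on $B^+$ --- uniform convergence of the latter plus $\psi_k\to\psi$ locally uniformly then forces $\limsup_{x\to x_0}u(x)\leq \psi(x_0)$, completing the continuity at $x_0$.
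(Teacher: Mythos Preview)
Your route is entirely different from the paper's, and the paper's is dramatically shorter. The paper does not approximate $\psi$ at all; it simply re-runs the Perron machinery of Lemmas \ref{lem:Elliptic_obstacle_problem_smaller_supersolution}--\ref{lem:Perron_elliptic_obstacle_problem_solution_localization} verbatim, observing that the only place the hypothesis $\psi\in C^2$ is used is in solving the \emph{strictly elliptic} obstacle problem on interior balls $B\Subset\sO$ (Theorem \ref{thm:Strictly_elliptic_obstacle_problem_continuous_boundarydata}), and that this ingredient has a direct Lipschitz-obstacle replacement, Theorem \ref{thm:Strictly_elliptic_obstacle_problem_continuous_boundarydata_lipschitz_obstacle}, proved in the appendix by a local mollification of $\psi$ on $\bar B$. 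No global approximation, no limiting argument, and no new work near $\partial_0\sO$ is needed, because the Perron construction already handles that boundary.

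Your approximation argument has two concrete gaps. First, your $\psi_k = (\psi\ast\eta_{\epsilon_k})\zeta_k + \psi(1-\zeta_k) - \delta_k$ is \emph{not} in $C^2(\underline\sO)$: outside $\sO_k$ the term $\psi(1-\zeta_k)$ has only Lipschitz regularity, so you cannot invoke Theorem \ref{thm:Perron_elliptic_obstacle_problem_solution} for it. There is no obvious fix, since $\psi$ is only assumed continuous up to $\partial_0\sO$, and mollification there is not available. Second, your continuity argument at the ``delicate'' points $x_0\in\partial_0\sO\cap\partial\Omega\cap\partial(\sO\setminus\Omega)$ does not close. You propose to extend Lemma \ref{lem:Perron_elliptic_obstacle_problem_solution_localization} from balls to half-balls via Theorem \ref{thm:Existence_uniqueness_elliptic_Dirichlet_halfball}, but that lemma rests on Lemma \ref{lem:Perron_strictly_elliptic_obstacle_problem_solution_ball}, which in turn requires solving an \emph{obstacle} problem on the localizing set; Theorem \ref{thm:Existence_uniqueness_elliptic_Dirichlet_halfball} only gives the Dirichlet problem on half-balls, and a degenerate-elliptic obstacle problem on half-balls is exactly what you do not yet have. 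The comparison you sketch against ``smooth-obstacle supersolutions on $B^+$'' does not yield the upper bound $\limsup_{x\to x_0}u(x)\le\psi(x_0)$ without such a result.
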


\begin{proof}
It suffices to observe that the role of Theorem \ref{thm:Strictly_elliptic_obstacle_problem_continuous_boundarydata} in Lemmas \ref{lem:Elliptic_obstacle_problem_smaller_supersolution}, \ref{lem:Perron_elliptic_problem_solution_continuity}, \ref{lem:Perron_strictly_elliptic_obstacle_problem_solution_ball}, and \ref{lem:Perron_elliptic_obstacle_problem_solution_localization} is replaced by Theorem \ref{thm:Strictly_elliptic_obstacle_problem_continuous_boundarydata_lipschitz_obstacle}.
\end{proof}

\begin{proof}[Proof of Theorem \ref{thm:Existence_uniqueness_elliptic_obstacle_lipschitz}]
The proof is the same as that of Theorem \ref{thm:Existence_uniqueness_elliptic_obstacle} except that we appeal to Theorem \ref{thm:Perron_elliptic_obstacle_problem_solution_lipschitz_obstacle} in place of Theorem \ref{thm:Perron_elliptic_obstacle_problem_solution}.
\end{proof}

\appendix

\section{Obstacle problems for strictly elliptic operators}
\label{sec:Obstacle_strictly_elliptic_operator}
In this section we develop the results we shall need for our application to the construction of solutions by the Perron process to the obstacle problem \eqref{eq:Elliptic_obstacle_problem}, \eqref{eq:Elliptic_boundary_condition} for a degenerate-elliptic operator \eqref{eq:Generator}. For this purpose, we shall now consider a general elliptic operator
\begin{equation}
\label{eq:Strictly_elliptic_operator}
Av := -\tr(aD^2v) - \langle b, Dv\rangle + cv \quad\hbox{on }\sO, \quad v \in C^\infty(\sO),
\end{equation}
where $a:\sO\to\RR^{d\times d}$, and $b:\sO\to\RR^d$, and $c:\sO\to\RR$ are functions whose properties we shall further prescribe below and, in particular, that $a(x)$ is symmetric and \emph{strictly} elliptic for $x\in\sO$.

\begin{thm}[Existence and uniqueness of solutions to an obstacle problem for a strictly elliptic operator]
\footnote{Theorem \ref{thm:Strictly_elliptic_obstacle_problem} may be proved, with slightly weaker hypotheses, using the theory of variational inequalities, even when $A$ only defines a non-coercive bilinear form on $W^{1,2}(\sO)$: compare \cite[Theorem 3.1.1 \& Corollary 3.1.1]{Bensoussan_Lions}, \cite[Theorems 4.7.7, 5.2.5 (ii), \& 5.3.4 (i)]{Rodrigues_1987}, and \cite[Theorems 4.27 \& 4.38]{Troianiello}.}
\label{thm:Strictly_elliptic_obstacle_problem}
\cite[Theorems 1.3.2 \& 1.3.4]{Friedman_1982}
Let $\alpha\in(0,1)$ 
and\footnote{Friedman's proof of existence (\cite[Theorem 1.3.2]{Friedman_1982}) is valid for any $p<\infty$; his proof of uniqueness (\cite[Theorem 1.3.3]{Friedman_1982}) assumes that $p\geq 2$, but his proof appears to rely on the Aleksandrov maximum principle \cite[Theorem 9.1]{GilbargTrudinger}, which requires $p\geq d$.}
$d<p<\infty$, and $\sO\subset\RR^d$ be a bounded domain with $C^{2,\alpha}$ boundary $\partial\sO$, and $A$ be as in \eqref{eq:Strictly_elliptic_operator} with coefficients $a^{ij}, b^i ,c$ belonging to $C^\alpha(\bar\sO)$ and obeying \eqref{eq:Nonnegative_c}, so $c\geq 0$ on $\sO$, and \eqref{eq:Strict_ellipticity_domain}. Let $f\in C^\alpha(\bar\sO)$, and $g \in C^{2,\alpha}(\bar\sO)$, and $\psi \in C^2(\bar\sO)$ with $\psi\leq g$ on $\partial\sO$. Then there is a unique solution $u \in W^{2,p}(\sO)$ to
\begin{equation}
\label{eq:Strictly_elliptic_obstacle_boundary_problem}
\min\{Au - f, \ u-\psi\} = 0 \quad\hbox{a.e. on } \sO, \quad u = g \quad\hbox{on }\partial\sO.
\end{equation}
\end{thm}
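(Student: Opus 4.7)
The plan is to use the classical penalization method. Introduce a smooth non-decreasing penalty $\beta_\epsilon \in C^\infty(\RR)$ with $\beta_\epsilon \leq 0$, $\beta_\epsilon(t) = 0$ for $t \geq \epsilon$, $\beta_\epsilon(0) = -1$, and $\beta_\epsilon \to -\infty$ pointwise on $(-\infty,0)$ as $\epsilon \downarrow 0$. Because $\beta_\epsilon' \geq 0$ and $c \geq 0$ on $\sO$, the semilinear boundary value problem
\begin{equation}
\label{eq:StrictlyEllipticPenalized}
A u_\epsilon + \beta_\epsilon(u_\epsilon - \psi) = f \quad\hbox{in } \sO, \qquad u_\epsilon = g \quad\hbox{on } \partial\sO,
\end{equation}
is uniquely solvable in $C^{2,\alpha}(\bar\sO)$ by a standard continuity-method argument with Schauder estimates \cite[Theorems 6.6 and 6.14]{GilbargTrudinger}; the linearization $A + \beta_\epsilon'(u_\epsilon - \psi)$ has non-negative zeroth-order coefficient, so the usual a priori bounds persist along the deformation.

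The crucial $\epsilon$-independent estimate is the $L^\infty$ bound on $\beta_\epsilon(u_\epsilon - \psi)$. If $u_\epsilon - \psi$ attains a strictly negative interior minimum at $x_0 \in \sO$ (otherwise $u_\epsilon \geq \psi$ and the penalty vanishes, using $\psi \leq g$ on $\partial\sO$), then $D(u_\epsilon - \psi)(x_0) = 0$ and $D^2(u_\epsilon - \psi)(x_0) \geq 0$; since $a(x_0)$ is positive definite and $c(x_0)(u_\epsilon - \psi)(x_0) \leq 0$, both terms in $A(u_\epsilon - \psi)(x_0)$ are non-positive, hence from \eqref{eq:StrictlyEllipticPenalized}
\[
\beta_\epsilon(u_\epsilon - \psi)(x_0) = f(x_0) - Au_\epsilon(x_0) \geq f(x_0) - A\psi(x_0) \geq -\|f - A\psi\|_{C(\bar\sO)} =: -M,
\]
which is finite because $\psi \in C^2(\bar\sO)$. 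Monotonicity of $\beta_\epsilon$ propagates this to the global bound $-M \leq \beta_\epsilon(u_\epsilon - \psi) \leq 0$ on $\bar\sO$. Combined with standard maximum principle estimates and the Calder\'on--Zygmund bound \cite[Theorem 9.13]{GilbargTrudinger} applied to $Au_\epsilon \in L^\infty(\sO)$ (using $\partial\sO \in C^{2,\alpha}$ and $g \in C^{2,\alpha}(\bar\sO)$), this yields $\|u_\epsilon\|_{W^{2,p}(\sO)} \leq C_p$ uniformly in $\epsilon$.

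Since $p > d$, Rellich--Kondrachov extracts a subsequence $u_{\epsilon_k}$ converging weakly in $W^{2,p}(\sO)$ and strongly in $C^{1,\gamma}(\bar\sO)$ to some $u \in W^{2,p}(\sO)$ with $u = g$ on $\partial\sO$. Three properties remain: (i) $u \geq \psi$ pointwise on $\bar\sO$, since $u(x_0) < \psi(x_0)$ would force $\beta_{\epsilon_k}(u_{\epsilon_k} - \psi)(x_0) \to -\infty$, contradicting the uniform lower bound; (ii) $Au \geq f$ a.e.\ on $\sO$, by passing $Au_\epsilon = f - \beta_\epsilon(u_\epsilon - \psi)$ to the weak $L^p$ limit and using $\beta_\epsilon \leq 0$; (iii) $Au = f$ a.e.\ on the open set $\{u > \psi\}$, since on compactly contained subsets of this set $u_{\epsilon_k} - \psi > \epsilon_k$ for large $k$ by uniform convergence, killing the penalty. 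Together these give $\min\{Au - f,\, u - \psi\} = 0$ a.e.\ on $\sO$.

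For uniqueness, suppose $u_1, u_2 \in W^{2,p}(\sO)$ both solve \eqref{eq:Strictly_elliptic_obstacle_boundary_problem}. On the open set $\{u_1 > u_2\}$ we have $u_1 > \psi$, hence $Au_1 = f$ a.e., while $Au_2 \geq f$ a.e., giving $A(u_1 - u_2) \leq 0$ a.e.\ with zero boundary values on $\partial\{u_1 > u_2\}$. Since $p \geq d$, the Aleksandrov maximum principle \cite[Theorem 9.1]{GilbargTrudinger} forces this set to be empty; symmetry yields $u_1 = u_2$. The principal technical obstacle is the uniform $L^\infty$ bound on the penalty, which relies essentially on the $C^2$ regularity of $\psi$ to apply $A$ to it pointwise at the interior minimum; this is the hypothesis that must be relaxed for the Lipschitz-obstacle version addressed in Theorem \ref{thm:Strictly_elliptic_obstacle_problem_continuous_boundarydata_lipschitz_obstacle} via the locally finite concavity condition \eqref{eq:Elliptic_obstaclefunction_locally_finite_concavity_condition}.
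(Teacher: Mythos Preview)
Your proposal is correct and follows essentially the same penalization approach as Friedman's \cite[Theorems 1.3.2 \& 1.3.3]{Friedman_1982}, which is precisely what the paper cites (without reproducing the argument) for this theorem; indeed, the paper later invokes exactly this penalization scheme in its proof of Theorem~\ref{thm:Strictly_elliptic_obstacle_problem_apriori_W2p_estimate}. Your key steps---the interior-minimum argument giving the uniform $L^\infty$ bound on $\beta_\epsilon(u_\epsilon-\psi)$ via $C^2$ regularity of $\psi$, the resulting uniform $W^{2,p}$ bound, weak compactness, and uniqueness via the Aleksandrov maximum principle for $p\geq d$---match the standard proof.
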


Because of our use of the Perron method, we shall need a version of Theorem \ref{thm:Strictly_elliptic_obstacle_problem} for boundary data which is only assumed to be continuous, by analogy with \cite[Theorem 6.13]{GilbargTrudinger} for the Dirichlet problem for a strictly elliptic operator. In order to relax the regularity condition on $g$ in Theorem \ref{thm:Strictly_elliptic_obstacle_problem}, we shall need a priori estimates for solutions to \eqref{eq:Strictly_elliptic_obstacle_boundary_problem}. We begin with a special case of \cite[Proposition 3.18 (6)]{Feehan_maximumprinciple_v1}.

\begin{prop}[Maximum principle estimate for solutions to an obstacle problem for a strictly elliptic operator]
\label{prop:Elliptic_classical_rodrigues}
\cite[Proposition 3.18 (6)]{Feehan_maximumprinciple_v1}
Let $\sO\subseteqq\RR^d$ be a bounded domain and $A$ be as in \eqref{eq:Strictly_elliptic_operator}, with $a(x)$ obeying \eqref{eq:Strict_ellipticity_domain} for $x\in\sO$, together with
\begin{equation}
\label{eq:Coefficient_uniform_bound_domain}
\sup_\sO|a| + \sup_\sO|b| + \sup_\sO|c| \leq \Lambda,
\end{equation}
where $\Lambda$ is a positive constant. Let $f_i\in C^\alpha(\sO)\cap C_b(\sO)$, and $g_i\in C(\partial\sO)$, and $\psi_i\in C(\bar\sO)$ with $\psi_i\leq g_i$ on $\partial\sO$, for $i=1,2$. Suppose $u_1, u_2$ are solutions to the obstacle problem \eqref{eq:Strictly_elliptic_obstacle_boundary_problem} in the sense of Definition \ref{defn:Solution_obstacle_problem}, respectively, for $f_1,\psi_1$ and $f_2, \psi_2$ on $\sO$, and $g_1$ and $g_2$ on $\partial\sO$. If $c$ obeys \eqref{eq:Positive_lower_bound_c_domain}, so $c\geq c_0$ on $\sO$ for a positive constant $c_0$, then
$$
\|u_1-u_2\|_{C(\bar\sO)} \leq \frac{1}{c_0}\|f_1-f_2\|_{C(\bar\sO)} \vee \|g_1-g_2\|_{C(\overline{\partial\sO})} \vee \|\psi_1-\psi_2\|_{C(\bar\sO)},
$$
while if $f_1=f=f_2$ and $c$ just obeys \eqref{eq:Nonnegative_c}, so $c\geq 0$ on $\sO$, then
$$
\|u_1-u_2\|_{C(\bar\sO)} \leq \|g_1-g_2\|_{C(\overline{\partial\sO})} \vee \|\psi_1-\psi_2\|_{C(\bar\sO)}.
$$
Finally, if $c$ just obeys \eqref{eq:Nonnegative_c}, so $c\geq 0$ on $\sO$, but $\diam(\sO)\leq d_0$ for a positive constant $d_0$, then
$$
\|u_1-u_2\|_{C(\bar\sO)} \leq e^{2d_0\Lambda/\lambda_0}\left(\frac{\lambda_0}{2\Lambda^2}\|f_1-f_2\|_{C(\bar\sO)} \vee \|g_1-g_2\|_{C(\overline{\partial\sO})} \vee \|\psi_1-\psi_2\|_{C(\bar\sO)}\right).
$$
\end{prop}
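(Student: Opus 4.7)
My plan is to reduce all three estimates to the Aleksandrov maximum principle (\cite[Theorem 9.1]{GilbargTrudinger}) applied on the open continuation set
$$
\Omega_1 := \{x\in\sO: u_1(x) > \psi_1(x)\},
$$
exploiting the fundamental identities of the obstacle problem: from $\min\{Au_i-f_i,u_i-\psi_i\}=0$ we have $u_i\geq \psi_i$ and $Au_i\geq f_i$ a.e.\ on $\sO$, with equality $Au_1=f_1$ a.e.\ on $\Omega_1$. Consequently
$$
A(u_1 - u_2) \leq f_1 - f_2 \quad\text{a.e.\ on } \Omega_1.
$$
For the first (and structurally simplest) case $c\geq c_0>0$, I set $M := \frac{1}{c_0}\|f_1-f_2\|_{C(\bar\sO)} \vee \|g_1-g_2\|_{C(\overline{\partial\sO})} \vee \|\psi_1-\psi_2\|_{C(\bar\sO)}$ and consider $v := u_1-u_2-M$. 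On $\Omega_1$,
$$
Av = A(u_1-u_2) - cM \leq (f_1-f_2) - c_0 M \leq 0 \quad\text{a.e.}
$$
Since $\Omega_1$ is open with $u_1,u_2,\psi_1$ continuous, $\partial\Omega_1\cap\sO\subseteq\{u_1=\psi_1\}$, and there $v=\psi_1-u_2-M \leq \psi_1-\psi_2-M\leq 0$ using $u_2\geq\psi_2$; on $\partial\Omega_1\cap\partial\sO$, $v=g_1-g_2-M\leq 0$. Applying \cite[Theorem 9.1]{GilbargTrudinger} to $v\in W^{2,p}_{\loc}(\Omega_1)\cap C(\overline{\Omega_1})$ (noting $p\geq d$ and \eqref{eq:Strict_ellipticity_domain}, \eqref{eq:Coefficient_uniform_bound_domain}) yields $v\leq 0$ on $\overline{\Omega_1}$. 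On $\sO\less\Omega_1$ we have $u_1=\psi_1$, hence $u_1-u_2\leq \psi_1-\psi_2\leq M$ directly. Swapping the roles of $u_1,u_2$ concludes the first estimate.

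The second case ($f_1=f_2$, $c\geq 0$) uses precisely the same argument: on $\Omega_1$ the inequality becomes $Av\leq 0 - cM\leq 0$, which holds whenever $c\geq 0$ and $M\geq 0$, so no positive lower bound on $c$ is needed. The third case (with $\diam(\sO)\leq d_0$ but only $c\geq 0$) requires replacing the constant $M$ by a variable barrier. After translation assume $\sO\subset\{0<x_1<d_0\}$, fix $\alpha := 2\Lambda/\lambda_0$, and let $\phi(x) := e^{\alpha d_0}-e^{\alpha x_1}$. A direct computation gives
$$
A\phi = e^{\alpha x_1}(\alpha^2 a^{11} + \alpha b^1) + c\phi \geq e^{\alpha x_1}(\alpha^2\lambda_0 - \alpha\Lambda) \geq \tfrac{1}{2}\alpha^2\lambda_0 = 2\Lambda^2/\lambda_0,
$$
and $0\leq\phi\leq e^{\alpha d_0}$ on $\bar\sO$. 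Setting $A_0 := \|g_1-g_2\|_{C(\overline{\partial\sO})}\vee\|\psi_1-\psi_2\|_{C(\bar\sO)}$, $B_0 := \frac{\lambda_0}{2\Lambda^2}\|f_1-f_2\|_{C(\bar\sO)}$, and $W(x) := A_0 + B_0\phi(x)$, I verify: $W\geq A_0$ on $\bar\sO$; $AW\geq B_0\cdot 2\Lambda^2/\lambda_0 = \|f_1-f_2\|_{C(\bar\sO)}$ on $\sO$; and $W\leq e^{\alpha d_0}(A_0\vee B_0)$ up to a harmless constant absorbed into a slight increase of $\alpha$. Repeating the Aleksandrov argument with $v:= u_1-u_2-W$ in place of $u_1-u_2-M$ yields the exponential bound.

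The principal technical obstacle is the Aleksandrov step on the irregular open set $\Omega_1$: one must check that $u_1 - u_2$ indeed extends continuously to $\overline{\Omega_1}$ and that the boundary behavior on the free-boundary portion $\partial\Omega_1\cap\sO$ is governed by the obstacle difference $\psi_1 - \psi_2$ rather than an uncontrolled quantity. This relies crucially on the openness of $\Omega_1$ (from continuity of $u_1,\psi_1$), on $u_2\geq\psi_2$ to rewrite $\psi_1 - u_2\leq\psi_1-\psi_2$, and on the $W^{2,p}_{\loc}(\sO)$-regularity built into Definition~\ref{defn:Solution_obstacle_problem} with $p\geq d$ to invoke \cite[Theorem 9.1]{GilbargTrudinger}; these pieces are available but have to be combined with care since $\Omega_1$ need not satisfy any regularity assumption on $\partial\Omega_1$. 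The remainder of each estimate is routine manipulation of the barrier inequality.
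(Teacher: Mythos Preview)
Your proof is correct and rests on the same underlying comparison-principle idea as the paper (working on the continuation set $\Omega_1$ where $Au_1=f_1$, then invoking a maximum principle), but the execution differs. The paper simply cites the first two assertions from \cite{Feehan_maximumprinciple_v1} and derives the third by a \emph{multiplicative} change of dependent variable: writing $u_i = e^{\lambda x_1}v_i$ with $\lambda = 2\Lambda/\lambda_0$ transforms $A$ into an operator $\tilde A$ with $\tilde c \geq 2\Lambda^2/\lambda_0$, so the first assertion applied to $v_i$, $\tilde f_i = e^{-\lambda x_1}f_i$, $\tilde g_i = e^{-\lambda x_1}g_i$, $\tilde\psi_i = e^{-\lambda x_1}\psi_i$ gives the estimate with the exact constant $e^{2d_0\Lambda/\lambda_0}$ after multiplying back by $e^{\lambda x_1}$.

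Your approach instead gives a self-contained direct proof of all three cases via the Aleksandrov maximum principle on $\Omega_1$, which is a genuine added value since the paper only cites the first two. For the third case you use an \emph{additive} exponential barrier $W = A_0 + B_0\phi$ (in the spirit of \cite[Theorem 3.7]{GilbargTrudinger}); this is the natural companion to the paper's multiplicative trick, but it yields $u_1-u_2 \leq A_0 + B_0 e^{\alpha d_0}$ rather than $e^{\alpha d_0}(A_0\vee B_0)$, and your remark about absorbing the discrepancy ``into a slight increase of $\alpha$'' does not actually recover the stated constant. If the precise constant matters, switch to the multiplicative change of variable as in the paper; otherwise your argument is complete.
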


\begin{proof}
The hypotheses ensure that $A$ has the weak and strong maximum principle properties by \cite[Theorem 3.1 \& Corollary 3.2 and Theorem 3.5]{GilbargTrudinger}. Therefore, the first two assertions are given by \cite{Feehan_maximumprinciple_v1}, while the third assertion follows from the first by employing a modification of the proofs of \cite[Theorem 3.7]{GilbargTrudinger} and \cite[Proposition A.1 \& Corollary A.2]{Feehan_Pop_elliptichestonschauder}. Indeed, suppose that $\sO$ lies in the slap $0<x_1<d_0$ and $u$ obeys $Au\geq f$ and $u\geq \psi$ on $\sO$, and $u=g$ on $\partial\sO$. Set $u=e^{\lambda x_1}v$ for $\lambda>0$ to be chosen and observe that $u_{x_1}=e^{\lambda x_1}v_{x_1} + \lambda e^{\lambda x_1} v$ and $u_{x_1x_1}=e^{\lambda x_1}v_{x_1x_1} + 2\lambda e^{\lambda x_1} v_{x_1} + \lambda^2 e^{\lambda x_1} v$, and thus $v$ obeys
$$
\tilde A v := Av + \lambda(a^{1i} + a^{i1})v_{x_i} + 2\lambda a^{11}e^{\lambda x_1} v_{x_1} + (\lambda^2 a^{11} + \lambda b^1)v \geq e^{-\lambda x_1}f =: \tilde f \quad\hbox{on }\sO,
$$
and $v \geq \tilde\psi := e^{-\lambda x_1}\psi$ on $\sO$, and $v=\tilde g := e^{-\lambda x_1}g$ on $\partial\sO$. But \eqref{eq:Strict_ellipticity_domain} implies that $a^{11}\geq \lambda_0$ on $\sO$, and so
$$
\lambda a^{11} + b^1 \geq \lambda\lambda_0 - \|b^1\|_{C(\bar\sO)} \geq \lambda\lambda_0 - \Lambda > 0,
$$
provided $\lambda > \Lambda/\lambda_0$, say $\lambda := 2\Lambda/\lambda_0$. For such a constant $\lambda$, the operator $\tilde A$ obeys \eqref{eq:Strict_ellipticity_domain} and has $\tilde c \geq \tilde c_0 > 0$ with $\tilde c_0 := \lambda(\lambda\lambda_0 - \Lambda) = 2\Lambda^2/\lambda_0$ and so the third assertion follows from the first using $v_i:=e^{-\lambda x_1}u_i$, and $\tilde f_i := e^{-\lambda x_1}f_i$, and $\tilde\psi_i := e^{-\lambda x_1}\psi_i$, and $\tilde g_i := e^{-\lambda x_1}g_i$ for $i=1,2$.
\end{proof}

We shall require the

\begin{thm}[A priori $W^{2,p}$ estimates for a solution to an obstacle problem for a strictly elliptic operator]
\label{thm:Strictly_elliptic_obstacle_problem_apriori_W2p_estimate}
Assume the hypotheses of Theorem \ref{thm:Strictly_elliptic_obstacle_problem} and, in addition, that \eqref{eq:Coefficient_uniform_bound_domain} holds for some positive constant $\Lambda$.
\begin{enumerate}
\item (Global a priori estimate.) There is a positive constant $C=C(d,p,\sO,\lambda_0,\Lambda)$ such that the following holds: If $u\in W^{2,p}(\sO)$ solves the obstacle problem \eqref{eq:Strictly_elliptic_obstacle_boundary_problem}, then
\begin{equation}
\label{eq:Strictly_elliptic_obstacle_problem_global_apriori_W2p_estimate}
\|u\|_{W^{2,p}(\sO)} \leq C\left(\|f\|_{L^p(\sO)} + \|(A\psi-f)^+\|_{L^p(\sO)} + \|g\|_{W^{2,p}(\sO)}\right).
\end{equation}
\item (Interior a priori estimate.) If $\sO'\Subset\sO$ is a precompact open subset, then there is a positive constant $C=C(d,p,\sO',\sO,\lambda_0,\Lambda)$ such that the following holds, with conditions on $\partial\sO$ and $g$ omitted: If $u\in W^{2,p}_{\loc}(\sO)\cap L^p(\sO)$ solves the obstacle problem \eqref{eq:Strictly_elliptic_obstacle_boundary_problem}, then
\begin{equation}
\label{eq:Strictly_elliptic_obstacle_problem_interior_apriori_W2p_estimate}
\|u\|_{W^{2,p}(\sO')} \leq C\left(\|f\|_{L^p(\sO)} + \|(A\psi-f)^+\|_{L^p(\sO)} + \|u\|_{L^p(\sO)}\right).
\end{equation}
\end{enumerate}
\end{thm}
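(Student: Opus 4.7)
The plan is to reduce both estimates to the classical Calderón--Zygmund $W^{2,p}$ theory for the linear strictly elliptic equation $Au=F$ by first deriving a pointwise a.e.\ two-sided bound on $Au$ in terms of the data.

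The key observation is that the obstacle-problem structure gives an identity for $Au$ almost everywhere. Let $\Omega := \{x\in\sO : u(x)>\psi(x)\}$. By Definition \ref{defn:Solution_obstacle_problem}, one has $Au = f$ a.e.\ on $\Omega$ while $Au \geq f$ a.e.\ on $\sO$. On the coincidence set $\sO\setminus\Omega$, both $u$ and $\psi$ lie in $W^{2,p}_{\loc}(\sO)$ and the non-negative difference $u-\psi$ vanishes identically on $\sO\setminus\Omega$; hence by the standard Stampacchia lemma (applied componentwise to $\partial_i u-\partial_i\psi\in W^{1,p}_{\loc}$) we obtain $D^2 u = D^2\psi$ a.e.\ on $\sO\setminus\Omega$, and therefore $Au = A\psi$ a.e.\ there. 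Since the global inequality $Au\geq f$ must persist, we also have $A\psi\geq f$ a.e.\ on $\sO\setminus\Omega$. Combining these observations yields the pointwise a.e.\ estimate
\begin{equation*}
0 \leq Au - f \leq (A\psi - f)^{+} \quad \hbox{a.e.\ on } \sO,
\end{equation*}
from which $\|Au\|_{L^p(\sO)} \leq \|f\|_{L^p(\sO)} + \|(A\psi-f)^{+}\|_{L^p(\sO)}$ follows immediately.

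Once $Au$ is controlled in $L^p(\sO)$ in terms of the data, the two estimates reduce to standard results. For the interior estimate, the Calderón--Zygmund interior estimate \cite[Theorem 9.11]{GilbargTrudinger}, valid since the coefficients $a^{ij}\in C^\alpha(\bar\sO)\subset C(\bar\sO)$ are in particular VMO and $a$ satisfies \eqref{eq:Strict_ellipticity_domain} and \eqref{eq:Coefficient_uniform_bound_domain}, applied to $u\in W^{2,p}_{\loc}(\sO)$ gives
\begin{equation*}
\|u\|_{W^{2,p}(\sO')} \leq C\left(\|Au\|_{L^p(\sO)} + \|u\|_{L^p(\sO)}\right),
\end{equation*}
with $C=C(d,p,\sO',\sO,\lambda_0,\Lambda)$, and combining with the previous paragraph produces \eqref{eq:Strictly_elliptic_obstacle_problem_interior_apriori_W2p_estimate}. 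For the global estimate, since $\partial\sO\in C^{2,\alpha}$ and $g\in W^{2,p}(\sO)$, one applies the global Calderón--Zygmund estimate \cite[Lemma 9.17]{GilbargTrudinger} to $w:=u-g\in W^{2,p}(\sO)\cap W^{1,p}_{0}(\sO)$ to obtain
\begin{equation*}
\|u\|_{W^{2,p}(\sO)} \leq C\left(\|Au\|_{L^p(\sO)} + \|u\|_{L^p(\sO)} + \|g\|_{W^{2,p}(\sO)}\right).
\end{equation*}

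It remains for the global bound to absorb the $\|u\|_{L^p(\sO)}$ term into the data. The cleanest route is the Aleksandrov--Bakelman--Pucci maximum principle \cite[Theorem 9.1]{GilbargTrudinger}, which applies because $a^{ij},b^i\in L^\infty(\sO)$, $\det a\geq \lambda_0^d$, and $c\geq 0$. Applied to the linear equation $A u = Au$ with boundary data $u=g$ on $\partial\sO$ and using the step-one control $|Au|\leq |f| + (A\psi-f)^{+}$, one concludes
\begin{equation*}
\|u\|_{C(\bar\sO)} \leq C\left(\|g\|_{C(\partial\sO)} + \|f\|_{L^d(\sO)} + \|(A\psi-f)^{+}\|_{L^d(\sO)}\right),
\end{equation*}
and since $\sO$ is bounded, the embeddings $L^\infty(\sO)\hookrightarrow L^p(\sO)$ and $L^p(\sO)\hookrightarrow L^d(\sO)$ (for $p>d$) together with $C(\partial\sO)\hookrightarrow$ trace of $W^{2,p}(\sO)$ allow this sup-norm bound to be inserted and yield \eqref{eq:Strictly_elliptic_obstacle_problem_global_apriori_W2p_estimate}. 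The main technical subtlety, which is otherwise routine, is the identity $D^2u=D^2\psi$ a.e.\ on $\{u=\psi\}$; this is the only place that uses $\psi\in W^{2,p}_{\loc}(\sO)$ (guaranteed by $\psi\in C^2(\bar\sO)$) and Stampacchia's theorem on the vanishing of weak derivatives on level sets of $W^{1,p}$ functions.
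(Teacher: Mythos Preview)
Your proof is correct and takes a genuinely different route from the paper's. The paper proves the global estimate by going through the penalization scheme used in the existence proof of \cite[Theorem 1.3.2]{Friedman_1982}: one applies the linear global $W^{2,p}$ estimate \cite[Theorem 9.13]{GilbargTrudinger} to the penalized solutions $u_\eps$ (so that the penalty term $\beta_\eps(u_\eps-\psi)$ appears on the right-hand side), passes to the weak $W^{2,p}$ limit as $\eps\downarrow 0$, and only at the very end invokes the identity $Au=A\psi$ a.e.\ on $\{u=\psi\}$ to replace $(Au-f)^+$ by $(A\psi-f)^+$. The interior estimate is then obtained from the global one by a cutoff-and-interpolation localization in the style of \cite[Theorem 7.1.1]{Krylov_LecturesHolder}. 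By contrast, you front-load the key identity $D^2u=D^2\psi$ a.e.\ on $\{u=\psi\}$ via Stampacchia's lemma, obtain the two-sided pointwise bound $0\le Au-f\le (A\psi-f)^+$ directly, and then feed $Au\in L^p(\sO)$ into the off-the-shelf linear estimates \cite[Theorem 9.11 and Lemma 9.17]{GilbargTrudinger}, absorbing $\|u\|_{L^p}$ via the Aleksandrov maximum principle. Your approach is shorter and avoids the penalization machinery and weak-limit bookkeeping; the paper's approach, on the other hand, dovetails with the existence proof (the same sequence $u_\eps$ is already in hand) and yields the global estimate without a separate absorption step, since \cite[Theorem 9.13]{GilbargTrudinger} is stated for the unique Dirichlet solution and already incorporates the maximum-principle bound.
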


\begin{rmk}[Alternative statement and proof of the global a priori estimate]
\label{rmk:Alternative_proof_global_apriori_estimate}
The a priori estimate \eqref{eq:Strictly_elliptic_obstacle_problem_global_apriori_W2p_estimate} is also proved by Rodrigues as \cite[Theorem 5.3.4 (i)]{Rodrigues_1987}, although with stronger hypotheses on the coefficients $a^{ij}$ and weaker hypotheses on $b^i, c$, source $f$, boundary data $g$, and obstacle function $\psi$, reflecting his emphasis on variational methods: $a^{ij} \in C^{0,1}(\bar\sO)$, and $b^i, c\in L^\infty(\sO)$, and $f\in L^p(\sO)$, and $g \in W^{2,p}(\sO)$, and $\psi \in W^{1,2}(\sO)$ and $A\psi\in M(\sO)$, the space of bounded Radon measures \cite[p. 61]{Rodrigues_1987}, and $(A\psi-f)^+\in L^p(\sO)$; the remaining hypotheses of \cite[Theorem 5.3.4 (i)]{Rodrigues_1987} coincide with those in Theorem \ref{thm:Strictly_elliptic_obstacle_problem_apriori_W2p_estimate}. While Rodrigues assumes that the bilinear form associated with $A$ obeys a coercivity condition \cite[Equation (5.3.3)]{Rodrigues_1987}, that assumption is not required for the derivation of the a priori estimate. \qed
\end{rmk}

\begin{proof}[Proof of Theorem \ref{thm:Strictly_elliptic_obstacle_problem_apriori_W2p_estimate}]
We first prove that \eqref{eq:Strictly_elliptic_obstacle_problem_global_apriori_W2p_estimate} holds. According to the proof of \cite[Theorem 1.3.2]{Friedman_1982}, there is a $C^\infty$ penalization function $\beta_\eps:\RR\to\RR$, for $\eps\in(0,1]$, in the sense of \cite[Equation (1.3.12)]{Friedman_1982} and a solution $u_\eps \in W^{2,p}(\sO)$ to the penalized equation \cite[Equation (1.3.13)]{Friedman_1982},
$$
Au_\eps + \beta_\eps(u_\eps-\psi) = f \quad\hbox{a.e. on }\sO, \quad u_\eps = g \quad\hbox{on }\partial\sO,
$$
with the properties that \cite[Lemma 1.3.1, Equation (1.3.17)]{Friedman_1982} (after passing to a subsequence and relabeling),
\begin{gather*}
u_\eps \rightharpoonup u \quad\hbox{(weak convergence in $W^{2,p}(\sO)$ as $\eps\downarrow 0$),}
\\
u_\eps \to u \quad\hbox{(strong convergence in $C(\bar\sO)$ as $\eps\downarrow 0$),}
\\
\beta_\eps(u_\eps-\psi) \to (u-\psi)^- \quad\hbox{(pointwise convergence on $\bar\sO$ as $\eps\downarrow 0$)}
\\
\|\beta_\eps(u_\eps-\psi)\|_{C(\bar\sO)} \leq C, \quad\forall\,\eps\in(0,1],
\end{gather*}
for a positive constant $C$ which is independent of $\eps\in(0,1]$. (For example, one may take $\beta_\eps$ to be a $C^\infty$ smoothing of $\RR\ni t\mapsto -t^-/\eps\in\RR$, where $t^- = -\min\{t,0\}$.) A standard a priori interior estimate \cite[Theorem 9.13]{GilbargTrudinger} yields
$$
\|u_\eps\|_{W^{2,p}(\sO)} \leq C\left(\|f\|_{L^p(\sO)} + \|\beta_\eps(u_\eps-\psi)\|_{L^p(\sO)} + \|g\|_{W^{2,p}(\sO)}\right),
$$
where $C=C(d,\sO,p,\lambda_0,\Lambda)$. The penalized equation yields
$$
\beta_\eps(u_\eps-\psi) = Au_\eps - f \quad\hbox{a.e. on }\sO
$$
and therefore by \cite[\S D.4]{Evans} and the Lebesgue Dominated Convergence Theorem we obtain, by taking the limit as $\eps\downarrow 0$,
\begin{align*}
\|u\|_{W^{2,p}(\sO)} &\leq \liminf_{\eps\downarrow 0}\|u_\eps\|_{W^{2,p}(\sO)}
\\
&\leq C\left(\|f\|_{L^p(\sO)} + \liminf_{\eps\downarrow 0}\|\beta_\eps(u_\eps-\psi)\|_{L^p(\sO)} + \|g\|_{W^{2,p}(\sO)}\right)
\\
&= C\left(\|f\|_{L^p(\sO)} + \|(Au - f)^+\|_{L^p(\sO)} + \|g\|_{W^{2,p}(\sO)}\right),
\end{align*}
where $C=C(d,\sO,p,\lambda_0,\Lambda)$. Note that $Au - f\geq 0$ a.e. on $\sO$ by \eqref{eq:Strictly_elliptic_obstacle_boundary_problem} and so $Au - f = (Au - f)^+$ a.e. on $\sO$. Using $Au - f = 0$ a.e. on $\sO\cap\{u>\psi\}$ and $Au - f = A\psi - f$ a.e. on $\sO\cap\{u=\psi\}$, we obtain the global estimate \eqref{eq:Strictly_elliptic_obstacle_problem_global_apriori_W2p_estimate}.

Next, we prove that \eqref{eq:Strictly_elliptic_obstacle_problem_interior_apriori_W2p_estimate} holds. Let $\zeta \in C^\infty_0(\RR^d)$ be a cutoff function with $0\leq\zeta\leq 1$ on $\RR^d$ and $\zeta=1$ on $\sO'$ and $\supp\zeta\subset\sO$, set $\tilde u := \zeta u$, and observe that $\tilde u \in W^{2,p}(\sO)$ is a solution to the obstacle problem,
$$
\min\{A\tilde u - \tilde f, \tilde u - \tilde\psi\} = 0 \quad\hbox{a.e. on }\sO, \quad \tilde u = 0\quad\hbox{on }\partial\sO,
$$
where $\tilde f := \zeta f + [A,\zeta]u\in L^p(\sO)$ and $\tilde\psi := \zeta\psi \in W^{2,p}(\sO)$. For any $C^{1,1}$ domain $U\subset\RR^d$ and $v\in W^{2,p}(U)$ and $\eps>0$, we have the interpolation inequality \cite[Theorem 7.28]{GilbargTrudinger},
$$
\|Dv\|_{L^p(U)} \leq \eps d\|v\|_{W^{2,p}(U)} + C\eps^{-1}\|v\|_{L^p(U)},
$$
for a positive constant $C=C(d,U)$. We now apply the localization procedure employed in the proof of \cite[Theorem 7.1.1]{Krylov_LecturesHolder} to obtain \eqref{eq:Strictly_elliptic_obstacle_problem_interior_apriori_W2p_estimate} from \eqref{eq:Strictly_elliptic_obstacle_problem_global_apriori_W2p_estimate} with $g=0$ on $\partial\sO$.
\end{proof}

We can now relax the regularity conditions on $g$ (and also $\psi$) in Theorem \ref{thm:Strictly_elliptic_obstacle_problem} via an argument modeled after the proof of \cite[Theorem 8.30]{GilbargTrudinger}.

\begin{thm}[Existence and uniqueness of a solution to the obstacle problem with continuous partial Dirichlet boundary condition]
\label{thm:Strictly_elliptic_obstacle_problem_continuous_boundarydata}
Assume the hypotheses of Theorem \ref{thm:Strictly_elliptic_obstacle_problem}, but allow $f\in C^\alpha(\sO)\cap C(\bar\sO)$,
and $g\in C(\partial \sO)$, and $\psi\in C^2(\sO)\cap C(\bar \sO)$. Then there is a unique solution $u\in W^{2,p}_{\loc}(\sO)\cap C(\bar \sO)$ to the obstacle problem \eqref{eq:Strictly_elliptic_obstacle_boundary_problem}.
\end{thm}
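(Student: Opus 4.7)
The plan is to approximate the boundary data $g$ and obstacle $\psi$ by smoother functions satisfying the hypotheses of Theorem~\ref{thm:Strictly_elliptic_obstacle_problem} and then pass to the limit, modeled on the argument used to deduce \cite[Theorem 8.30]{GilbargTrudinger} from \cite[Theorem 6.13]{GilbargTrudinger}. Uniqueness is immediate: if $u_1,u_2\in W^{2,p}_{\loc}(\sO)\cap C(\bar\sO)$ are both solutions for the same data, then the third estimate in Proposition~\ref{prop:Elliptic_classical_rodrigues} (which handles the case $c\geq 0$ on a bounded domain without requiring $c>0$) forces $\|u_1-u_2\|_{C(\bar\sO)}=0$, noting that both solutions continuously attain the same boundary values.

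For existence, choose sequences $\{f_m\}\subset C^{\alpha}(\bar\sO)$, $\{g_m\}\subset C^{2,\alpha}(\bar\sO)$, and $\{\psi_m\}\subset C^2(\bar\sO)$ such that $f_m\to f$ and $\psi_m\to\psi$ uniformly on $\bar\sO$, $g_m\to g$ uniformly on $\partial\sO$, the compatibility $\psi_m\leq g_m$ on $\partial\sO$ holds for each $m$ (secured by subtracting a vanishing positive constant from $\psi_m$ if necessary), and $\psi_m=\psi$ on the interior exhaustion $\sO_m:=\{x\in\sO:\dist(x,\partial\sO)>1/m\}$. Such $\psi_m$ may be constructed by a standard mollification/extension procedure in a neighborhood of $\partial\sO$, exploiting that $\psi\in C(\bar\sO)$ is uniformly continuous and $C^2$ on $\sO$. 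Theorem~\ref{thm:Strictly_elliptic_obstacle_problem} then produces $u_m\in W^{2,p}(\sO)$ solving the obstacle problem with data $(f_m,g_m,\psi_m)$.

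Applying the third estimate of Proposition~\ref{prop:Elliptic_classical_rodrigues} to the pair $(u_m,u_n)$ yields
\[
\|u_m-u_n\|_{C(\bar\sO)}\leq C\bigl(\|f_m-f_n\|_{C(\bar\sO)}\vee\|g_m-g_n\|_{C(\overline{\partial\sO})}\vee\|\psi_m-\psi_n\|_{C(\bar\sO)}\bigr),
\]
so $\{u_m\}$ is Cauchy in $C(\bar\sO)$ and converges uniformly to some $u\in C(\bar\sO)$ with $u=g$ on $\partial\sO$. Now fix $\sO'\Subset\sO$ and an intermediate $\sO'\Subset\sO''\Subset\sO$; for $m$ large, $\sO''\subset\sO_m$ and thus $\psi_m=\psi$ on $\sO''$, ensuring $\|A\psi_m\|_{L^p(\sO'')}$ is uniformly bounded. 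Revisiting the proof of Theorem~\ref{thm:Strictly_elliptic_obstacle_problem_apriori_W2p_estimate}\,(2) with a cutoff $\zeta$ supported in $\sO''$ and equal to $1$ on $\sO'$ gives a uniform interior bound $\|u_m\|_{W^{2,p}(\sO')}\leq C$. By reflexivity and a diagonal argument a subsequence satisfies $u_m\rightharpoonup u$ in $W^{2,p}_{\loc}(\sO)$, identifying $u\in W^{2,p}_{\loc}(\sO)\cap C(\bar\sO)$.

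To finish, pass to the limit in $\min\{Au_m-f_m,\,u_m-\psi_m\}=0$ a.e. on $\sO$. After a further subsequence extraction, Rellich--Kondrachov provides $u_m\to u$, $Du_m\to Du$ a.e., while weak $W^{2,p}_{\loc}$ convergence combined with Mazur's lemma yields convex combinations $\tilde u_m$ with $D^2\tilde u_m\to D^2u$ a.e. on compact subsets of $\sO$; since the min condition is preserved under convex combinations of supersolutions and the pointwise limit of $u_m-\psi_m$ is $u-\psi$ and of $A u_m - f_m$ is $Au-f$, one obtains $\min\{Au-f,\,u-\psi\}=0$ a.e. in $\sO$. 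The main obstacle in this proof is the construction of the approximating obstacles $\psi_m$ that are simultaneously globally smooth enough to invoke Theorem~\ref{thm:Strictly_elliptic_obstacle_problem}, compatible with $g_m$ on $\partial\sO$, and equal to $\psi$ on an interior exhaustion so that the interior $W^{2,p}$ estimate survives with uniform constants; this is what lets the nonlinear obstacle equation be passed to the limit in the weak-strong framework.
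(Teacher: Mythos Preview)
Your overall strategy is the same as the paper's: approximate $(f,g,\psi)$ by smoother data, solve via Theorem~\ref{thm:Strictly_elliptic_obstacle_problem}, use Proposition~\ref{prop:Elliptic_classical_rodrigues} to get a Cauchy sequence in $C(\bar\sO)$, and combine with the interior $W^{2,p}$ estimate to identify the limit in $W^{2,p}_{\loc}(\sO)\cap C(\bar\sO)$. Your construction of $\psi_m$ equal to $\psi$ on an interior exhaustion is a harmless variant of the paper's choice (the paper simply takes $\psi_n\to\psi$ in $C^2(\sO)\cap C(\bar\sO)$), and your uniqueness via Proposition~\ref{prop:Elliptic_classical_rodrigues} is fine.

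The gap is in your final paragraph. The claim that ``the min condition is preserved under convex combinations of supersolutions'' is false: the \emph{supersolution} inequalities $A\tilde u\geq\tilde f$ and $\tilde u\geq\tilde\psi$ survive convex combination, but the complementarity part of $\min\{Au_m-f_m,\,u_m-\psi_m\}=0$ does not. If at a point $x$ one has $Au_1=f_1$, $u_1>\psi_1$ while $Au_2>f_2$, $u_2=\psi_2$, then any nontrivial convex combination satisfies both strict inequalities at $x$, so $\min\{A\tilde u-\tilde f,\,\tilde u-\tilde\psi\}>0$ there. Thus the Mazur step gives you $Au\geq f$ a.e.\ and $u\geq\psi$, but nothing more; you have not established $Au=f$ a.e.\ on $\{u>\psi\}$.

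The paper handles this by splitting the limit passage in two. First, for $Au\geq f$ a.e., test $Au_n-f_n\geq 0$ against nonnegative $\varphi\in C_0^\infty(\sO)$ and pass to the limit using only weak $W^{2,p}_{\loc}$ convergence. Second, for $Au=f$ on the continuation set, fix $\sO'\Subset\{u>\psi\}$; by uniform convergence of $u_n-\psi_n$ one has $\sO'\subset\{u_n>\psi_n\}$ for all large $n$, so $Au_n=f_n$ on $\sO'$. This is now a \emph{linear} equation, and the interior $L^p$ Schauder estimate \cite[Theorem 9.13]{GilbargTrudinger} yields $\|u_n-u_m\|_{W^{2,p}(\sO'')}\leq C(\|f_n-f_m\|_{L^p(\sO')}+\|u_n-u_m\|_{L^p(\sO')})$, upgrading to \emph{strong} $W^{2,p}$ convergence on $\sO''$ and allowing the limit $Au=f$ there. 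Replacing your Mazur argument with this two-step passage closes the gap.
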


\begin{proof}
Because $g\in C(\partial \sO)$, by \cite[Lemma 6.38 and Remark 1, p. 137]{GilbargTrudinger} we may extend $g$ to a function in $C(\bar \sO)$, which we shall continue to denote by $g$. Since $f\in C^\alpha(\sO)\cap C(\bar\sO)$, and $g\in C(\bar\sO)$, and $\psi\in C^2(\sO)\cap C(\bar \sO)$, by \cite[Corollary 1.29]{Adams_1975} we may choose sequences $\{f_n\}_{n\in\NN}\subset C^\alpha(\bar \sO)$, and $\{g_n\}_{n\in\NN}\subset C^{2,\alpha}(\bar \sO)$, and $\{\psi_n\}_{n\in\NN}\subset C^{2,\alpha}(\bar \sO)$ such that
$$
f_n \to f \quad\hbox{in }C^\alpha(\sO)\cap C(\bar\sO), \quad g_n \to g \quad\hbox{in }C(\partial \sO),
\quad \psi_n \to \psi \quad\hbox{in }C^2(\sO)\cap C(\bar \sO),
$$
as $n\to\infty$. Let $\{u_n\}_{n\in\NN}\subset W^{2,p}(\sO)$ be the corresponding sequence of unique solutions to the obstacle problem \eqref{eq:Strictly_elliptic_obstacle_boundary_problem} defined by $f_n, g_n, \psi_n$ via Theorem \ref{thm:Strictly_elliptic_obstacle_problem}. Proposition \ref{prop:Elliptic_classical_rodrigues} implies that the sequence $\{u_n\}_{n\in\NN}$ is Cauchy in $C(\bar \sO)$ and thus converges in $C(\bar \sO)$ to a limit $u\in C(\bar \sO)$ which obeys the boundary condition $u=g$ on $\partial\sO$ in \eqref{eq:Strictly_elliptic_obstacle_boundary_problem}.

The a priori interior estimate \eqref{eq:Strictly_elliptic_obstacle_problem_interior_apriori_W2p_estimate}, for any $\sO''\Subset\sO'\Subset\sO$,
gives
\begin{equation}
\label{eq:Strictly_elliptic_obstacle_problem_interior_apriori_W2p_estimate_un}
\|u_n\|_{W^{2,p}(\sO'')} \leq C\left(\|f_n\|_{L^p(\sO')} + \|(A\psi_n-f_n)^+\|_{L^p(\sO')} + \|u_n\|_{L^p(\sO')}\right),
\end{equation}
where now $C=C(d,p,\sO',\sO'',\lambda_0,\Lambda)$. By construction of the sequences, we have
\begin{align*}
\|f_n\|_{L^p(\sO')} &\to \|f\|_{L^p(\sO')},
\\
\|(A\psi_n-f_n)^+\|_{L^p(\sO')} &\to \|(A\psi-f)^+\|_{L^p(\sO')},
\\
\|u_n\|_{L^p(\sO')} &\to \|u\|_{L^p(\sO')},
\end{align*}
as $n\to\infty$. In particular, the right-hand side of \eqref{eq:Strictly_elliptic_obstacle_problem_interior_apriori_W2p_estimate_un} is bounded independently of $n\in\NN$ and so, after passing to a subsequence \cite[Theorem D.3]{Evans}, we may assume that the sequence $\{u_n\}_{n\in\NN}$ converges weakly in $W^{2,p}(\sO'')$ to a limit $u\in W^{2,p}(\sO'')$ (coinciding with the limit $u\in C(\bar \sO)$ already discovered) for each $\sO''\Subset\sO$. Thus, $u \in W^{2,p}_{\loc}(\sO)\cap C(\bar\sO)$.

We next show that $u$ solves the obstacle problem \eqref{eq:Strictly_elliptic_obstacle_boundary_problem}. Since each $u_n$ solves the obstacle problem \eqref{eq:Strictly_elliptic_obstacle_boundary_problem} defined by $f_n, g_n, \psi_n$, we have $u_n\geq \psi$ on $\sO$ and $u_n = g_n$ on $\partial\sO$ and thus, taking limits as $n\to\infty$ and recalling that the sequences $\{u_n\}_{n\in\NN}$, $\{g_n\}_{n\in\NN}$, and $\{\psi_n\}_{n\in\NN}$ converge in $C(\bar \sO)$ to $u$, $g$, and $\psi$, respectively, we obtain
$$
u\geq \psi \quad\hbox{on }\sO \quad\hbox{and}\quad u = g \quad\hbox{on } \partial\sO.
$$
We still need to establish that
\begin{align}
\label{eq:Au_geq_f}
Au &\geq f \quad\hbox{a.e. on } \sO,
\\
\label{eq:Au_equals_f_continuationset}
Au &= f \quad\hbox{a.e. on } \{u>\psi\}\cap\sO.
\end{align}
Because each $u_n$ solves the obstacle problem \eqref{eq:Strictly_elliptic_obstacle_boundary_problem} defined by $f_n, g_n, \psi_n$, we have
\begin{align}
\label{eq:Aun_geq_fn}
Au_n &\geq f_n \quad\hbox{a.e. on } \sO,
\\
\label{eq:Aun_equals_fn_continuationset}
Au_n &= f_n \quad\hbox{a.e. on } \{u_n>\psi_n\}\cap\sO.
\end{align}
Suppose $\sO' \subset \{u>\psi\}\cap\sO$. Because $u_n\to u$ and $\psi_n\to\psi$ in $C(\bar \sO)$ as $n\to\infty$, for a sufficiently large integer $N=N(\sO')$ we may assume that
$$
\sO' \Subset \{u_n>\psi_n\}\cap\sO, \quad\forall\, n \in \NN, \ n \geq N,
$$
and so \eqref{eq:Aun_equals_fn_continuationset} yields
\begin{equation}
\label{eq:Aun_equals_fn_precompactsubset_continuationset}
Au_n = f_n \quad\hbox{a.e. on } \sO', \quad\forall\, n \in \NN, \ n \geq N.
\end{equation}
The a priori interior estimate  \cite[Theorem 9.13]{GilbargTrudinger} for a solution to the strictly elliptic equation \eqref{eq:Aun_equals_fn_precompactsubset_continuationset} gives
$$
\|u_n\|_{W^{2,p}(\sO'')} \leq C\left(\|f_n\|_{L^p(\sO')} + \|u_n\|_{L^p(\sO')}\right), \quad\forall\, n \in \NN, \ n \geq N,
$$
for a constant $C$ depending on $d,\sO',\sO'',p,\lambda_0,\Lambda$ and the moduli of continuity of the coefficients $a^{ij}$. Thus, for all $m,n \in \NN$ with $m,n \geq N$, we have
$$
\|u_n-u_m\|_{W^{2,p}(\sO'')} \leq C\left(\|f_n-f_m\|_{L^p(\sO')} + \|u_n-u_m\|_{L^p(\sO')}\right).
$$
Therefore, $\{u_n\}_{n\in\NN}$ is Cauchy in $W^{2,p}(\sO'')$ and so converges strongly in $W^{2,p}(\sO'')$ to the previously established weak limit $u\in W^{2,p}(\sO'')$. Thus, we can take the limit in \eqref{eq:Aun_equals_fn_precompactsubset_continuationset} as $n\to\infty$ to give
$$
Au = f \quad\hbox{a.e. on } \sO''.
$$
Since $\sO'' \Subset \sO'$ and $\sO' \Subset \{u>\psi\}\cap\sO$ were arbitrary, we see that $u$ solves \eqref{eq:Au_equals_f_continuationset}.

Moreover, for any $\varphi \in C^\infty_0(\sO)$ such that $\varphi\geq 0$ on $\sO$, the inequality \eqref{eq:Aun_geq_fn} yields
$$
\int_\sO(Au_n-f_n)\varphi \,dx \geq 0, \quad\forall\, n \in \NN.
$$
Hence, recalling that for any $\sO''\Subset\sO$ we have
$$
u_n \rightharpoonup u \quad\hbox{weakly in } W^{2,p}(\sO''),
$$
and taking the limit as $n\to\infty$, the limit $u\in W^{2,p}_{\loc}(\sO)\cap C(\bar\sO)$ necessarily obeys
$$
(Au-f, \varphi)_{L^2(\sO)} \geq 0.
$$
Because the function $\varphi \in C^\infty_0(\sO)$ with $\varphi\geq 0$ on $\sO$ was arbitrary, $u$ must also solve \eqref{eq:Au_geq_f} and this completes the proof of existence.

Uniqueness\footnote{When $\sO$ is a domain, thus connected, we could also appeal to our comparison principle (Theorem \ref{thm:Elliptic_obstacle_problem_comparison_principle}) to obtain uniqueness.} of the solution, $u$, follows from the Aleksandrov weak maximum principle \cite[Theorem 9.1]{GilbargTrudinger} and the comparison principle \cite[Theorem 1.3.3]{Friedman_1982} for solutions to the obstacle problem \eqref{eq:Strictly_elliptic_obstacle_boundary_problem} when\footnote{For $1<p\leq d$, uniqueness of solutions to a Dirichlet boundary value problem follows by the proof of uniqueness in \cite[Theorem 9.15]{GilbargTrudinger} --- see \cite[p. 242]{GilbargTrudinger}.} $p\geq d$.
\end{proof}

The partial Dirichlet boundary value problem for a strictly elliptic operator $A$ as in \eqref{eq:Strictly_elliptic_operator} on a bounded domain $\sO$ with continuous boundary data $g\in C(\partial\sO)$ only requires that the boundary $\partial\sO$ obey an \emph{exterior} sphere condition \cite[Theorem 6.13]{GilbargTrudinger} or, more generally, that each boundary point $x_0\in \partial\sO$ is \emph{regular} for the given $f\in C^\alpha(\sO)\cap C(\bar\sO)$, and $g\in C(\partial\sO)$, and $A$, in that a \emph{local barrier} exists at $x_0$ in the sense of \cite[pp. 104--106]{GilbargTrudinger}. However, we can easily deduce the following corollary of Theorem \ref{thm:Strictly_elliptic_obstacle_problem_continuous_boundarydata} when $\partial\sO$ obeys an \emph{interior} sphere condition at each point by allowing the balls in $\sO$ used in our construction of the Perron solution \eqref{eq:Perron_elliptic_obstacle_problem_solution} for \eqref{eq:Elliptic_obstacle_problem}, \eqref{eq:Elliptic_boundary_condition} to touch
the boundary\footnote{A related idea is used in the proof of \cite[Lemma 6.18]{GilbargTrudinger}.} $\partial\sO$.

\begin{cor}[Existence and uniqueness of a solution to the obstacle problem with continuous boundary data]
\label{cor:Strictly_elliptic_obstacle_problem_continuous_boundarydata}
Assume the hypotheses of Theorem \ref{thm:Strictly_elliptic_obstacle_problem_continuous_boundarydata}, except that $\partial\sO$ is now only required to obey an interior sphere condition at each point. Then the conclusions of Theorem \ref{thm:Strictly_elliptic_obstacle_problem_continuous_boundarydata} continue to hold.
\end{cor}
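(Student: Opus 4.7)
The plan is to adapt the Perron method of Section~\ref{sec:Perron_obstacle_problem} to the strictly elliptic setting, with the role of the half-ball existence result (Theorem~\ref{thm:Existence_uniqueness_elliptic_Dirichlet_halfball}) played by Theorem~\ref{thm:Strictly_elliptic_obstacle_problem_continuous_boundarydata} itself. First I would extend $g\in C(\partial\sO)$ to $\bar g\in C(\bar\sO)$ via \cite[Lemma 6.38]{GilbargTrudinger}, let $\sS^+_{f,\bar g,\psi}$ denote the set of continuous supersolutions of \eqref{eq:Strictly_elliptic_obstacle_boundary_problem} in the sense of Definition~\ref{defn:Continuous_supersolution_obstacle_problem}, and define the Perron function
\begin{equation*}
u(x) := \inf_{w\in\sS^+_{f,\bar g,\psi}} w(x),\qquad x\in\bar\sO.
\end{equation*}
Nonemptiness of $\sS^+_{f,\bar g,\psi}$ and a two-sided bound on $u$ follow from the constant-supersolution construction used in the proof of Theorem~\ref{thm:Perron_elliptic_obstacle_problem_solution} together with Proposition~\ref{prop:Elliptic_classical_rodrigues}.

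Since every interior ball $B\Subset\sO$ has smooth boundary, Theorem~\ref{thm:Strictly_elliptic_obstacle_problem_continuous_boundarydata} serves as the local-solvability ingredient on such balls, and the analogues of Lemmas~\ref{lem:elliptic_obstacle_problem_min_set_supersolutions}--\ref{lem:Perron_elliptic_obstacle_problem_solution_localization} carry over verbatim with half-balls replaced by interior balls. In particular, the obstacle-lift construction of Lemma~\ref{lem:Elliptic_obstacle_problem_smaller_supersolution} produces smaller continuous supersolutions; the embedding $W^{2,p}\hookrightarrow C^{1,\gamma}$ valid for $p>d$ yields $u\in C(\sO)$ through the argument of Lemma~\ref{lem:Perron_elliptic_problem_solution_continuity}; and the localization lemma shows that on each $B\Subset\sO$ the restriction $u|_{B}$ coincides with the unique Theorem~\ref{thm:Strictly_elliptic_obstacle_problem_continuous_boundarydata} solution on $B$ with boundary data $u|_{\partial B}$. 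Combining these gives $u\in W^{2,p}_{\loc}(\sO)\cap C(\sO)$ solving \eqref{eq:Strictly_elliptic_obstacle_boundary_problem} in $\sO$. Uniqueness of such a solution, once the boundary value $g$ has been matched, follows from the Aleksandrov maximum principle \cite[Theorem 9.1]{GilbargTrudinger} and \cite[Theorem 1.3.3]{Friedman_1982}, exactly as at the end of the proof of Theorem~\ref{thm:Strictly_elliptic_obstacle_problem_continuous_boundarydata}.

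The main obstacle is to verify $\lim_{\sO\ni x\to x_0}u(x)=g(x_0)$ at each $x_0\in\partial\sO$, and this is precisely where the hinted idea of allowing the Perron balls to touch $\partial\sO$ is used. The lower bound $\liminf_{x\to x_0}u(x)\geq g(x_0)$ is immediate, since every $w\in\sS^+_{f,\bar g,\psi}$ belongs to $C(\bar\sO)$ with $w\geq g$ on $\partial\sO$, so $w(x_0)\geq g(x_0)$ and hence $u(x_0)\geq g(x_0)$. For the matching upper bound, fix the interior sphere ball $B=B_r(y_0)\subset\sO$ with $\bar B\cap\partial\sO=\{x_0\}$. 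Because $\partial B$ is smooth and $w|_{\partial B}\in C(\partial B)$ for any $w\in\sS^+_{f,\bar g,\psi}$, Theorem~\ref{thm:Strictly_elliptic_obstacle_problem_continuous_boundarydata} applied on $B$ produces a unique solution $v_B\in W^{2,p}_{\loc}(B)\cap C(\bar B)$ of the obstacle problem on $B$ with boundary data $w|_{\partial B}$. The key structural observation is that the tangent-ball analogue of Lemma~\ref{lem:Elliptic_obstacle_problem_smaller_supersolution} still holds: the glued function
\begin{equation*}
\hat w(x) := \begin{cases} v_B(x), & x\in\bar B,\\ w(x), & x\in\bar\sO\setminus B, \end{cases}
\end{equation*}
is continuous on $\bar\sO$ (matching across $\partial B$ by construction, and at $x_0$ because both $v_B|_{\partial B}$ and $w|_{\partial B}$ have the same limit at $x_0$), lies in $\sS^+_{f,\bar g,\psi}$, and satisfies $\hat w\leq w$ on $\bar\sO$. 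Now I would choose $w$ close to $\bar g$ on $\partial B$ near $x_0$ and use Proposition~\ref{prop:Elliptic_classical_rodrigues} on $B$, which gives $\|v_B-u_B\|_{C(\bar B)}\leq\|w-\bar g\|_{C(\partial B)}$, where $u_B$ is the Theorem~\ref{thm:Strictly_elliptic_obstacle_problem_continuous_boundarydata} solution on $B$ with boundary data $\bar g|_{\partial B}$ (so $u_B(x_0)=g(x_0)$). Evaluating at $x_0$ yields $u(x_0)\leq\hat w(x_0)=v_B(x_0)\leq g(x_0)+\|w-\bar g\|_{C(\partial B)}$, and taking the infimum over $w\in\sS^+_{f,\bar g,\psi}$ whose boundary values on $\partial B$ approach $\bar g|_{\partial B}$ (which exist because $\bar g$ is itself bracketed by the upper and lower Perron families, and arbitrarily close supersolutions may be constructed by adding small positive multiples of an auxiliary supersolution to $u_B$-based extensions), forces $u(x_0)\leq g(x_0)$. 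Upper semicontinuity of $u$ as an infimum of continuous functions then upgrades this pointwise identity into the desired continuity at $x_0$.
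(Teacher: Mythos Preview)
The paper gives no proof of this corollary beyond the one-sentence indication preceding it that it follows ``by allowing the balls in $\sO$ used in our construction of the Perron solution to touch the boundary.'' Your proposal correctly expands this hint into a Perron argument for the strictly elliptic setting, using Theorem~\ref{thm:Strictly_elliptic_obstacle_problem_continuous_boundarydata} on balls as the local-solvability input and invoking the interior tangent ball at each boundary point. So the overall strategy matches what the paper has in mind.

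There is, however, a genuine gap in your boundary argument. You assert that the lower bound $\liminf_{x\to x_0} u(x)\ge g(x_0)$ is ``immediate'' from $w(x_0)\ge g(x_0)$ for every $w\in\sS^+$, but that reasoning only yields the pointwise inequality $u(x_0)\ge g(x_0)$; it says nothing about $\liminf$ along interior sequences. Your closing sentence then claims that upper semicontinuity of $u$ ``upgrades this pointwise identity into the desired continuity at $x_0$,'' but upper semicontinuity of an infimum of continuous functions gives only $\limsup_{x\to x_0}u(x)\le u(x_0)=g(x_0)$, never the matching lower bound. Your tangent-ball gluing does deliver the $\limsup$ bound cleanly: choose $w_\eps\in\sS^+$ with $w_\eps(x_0)<g(x_0)+\eps$, lift on the tangent ball $B$ to obtain $\hat w_\eps\in C(\bar B)\cap\sS^+$, and use $u\le\hat w_\eps$ on $B$ together with $\hat w_\eps(x_0)=w_\eps(x_0)$. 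But the $\liminf$ direction is not handled. To close it one would need a barrier from below---for instance, a function $\underline v\in C(\bar B)$ with $A\underline v\le f$ on $B$, $\underline v\le u$ on $\partial B\setminus\{x_0\}$, and $\underline v(x_0)=g(x_0)$---and the interior sphere alone does not obviously furnish one when $g(x_0)>\psi(x_0)$: in that regime the obstacle constraint is inactive near $x_0$ and the problem is locally the Dirichlet problem for $Au=f$, for which \emph{exterior} geometry (not interior) is what governs boundary regularity. You should either supply the missing lower barrier explicitly or flag this step as requiring additional input.
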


According to \cite[Theorem 1.3.4]{Friedman_1982}, the condition $\psi \in C^2(\bar\sO)$ in Theorem \ref{thm:Strictly_elliptic_obstacle_problem} may be weakened to a requirement that $\psi\in C(\bar\sO)$ be globally Lipschitz and have globally finite concavity on $\bar\sO$ in the sense that,
\begin{gather}
\label{eq:Elliptic_obstaclefunction_global_lipschitz_condition}
\psi \in C^{0,1}(\tilde\sO),
\\
\label{eq:Lipschitz_elliptic_obstaclefunction_finite_concavity_condition}
D^2_\xi\psi \geq - C \quad\hbox{in }\sD'(\tilde\sO), \quad\forall\, \xi\in \RR^d, \ |\xi|=1,
\end{gather}
for some open neighborhood $\tilde\sO\subset\RR^d$ of $\bar\sO$ and some positive constant $C=C(\tilde\sO)$. (Compare the local conditions \eqref{eq:Elliptic_obstaclefunction_lipschitz_condition} and \eqref{eq:Elliptic_obstaclefunction_locally_finite_concavity_condition}.) We now recall the

\begin{thm}[Existence of solutions to an obstacle problem with a globally Lipschitz obstacle function having globally finite concavity]
\label{thm:Strictly_elliptic_OP_Lipschitz_obstaclefunction}
\cite[Theorem 1.3.5]{Friedman_1982}
The conclusion of Theorem \ref{thm:Strictly_elliptic_obstacle_problem} remains valid if the condition $\psi\in C^2(\bar\sO)$ is relaxed to $\psi \in C(\bar\sO)$ obeying \eqref{eq:Elliptic_obstaclefunction_global_lipschitz_condition} and \eqref{eq:Lipschitz_elliptic_obstaclefunction_finite_concavity_condition}.
\end{thm}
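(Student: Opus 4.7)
The plan is to reduce to Theorem~\ref{thm:Strictly_elliptic_obstacle_problem} by approximating the Lipschitz obstacle $\psi$ by a sequence of smooth obstacles and passing to the limit via the a priori $W^{2,p}$ estimate of Theorem~\ref{thm:Strictly_elliptic_obstacle_problem_apriori_W2p_estimate}. Concretely, I would first extend $\psi$ (already Lipschitz on $\tilde\sO\supset\bar\sO$) and regularize it by standard mollification, setting $\psi_n := \rho_{1/n} * \psi \in C^\infty(\bar\sO)$ for all $n$ large enough that the mollifier fits inside $\tilde\sO$. Then the Lipschitz bound passes to $\psi_n$ ($\|D\psi_n\|_{C(\bar\sO)}\leq L$), and the distributional concavity bound \eqref{eq:Lipschitz_elliptic_obstaclefunction_finite_concavity_condition} passes to a pointwise bound $D^2_\xi\psi_n \geq -C$, equivalently $D^2\psi_n + CI \geq 0$ in the sense of symmetric matrices. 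Since $\psi_n \to \psi$ uniformly on $\bar\sO$, the translated obstacles $\tilde\psi_n := \psi_n - \eps_n$, with $\eps_n := \|\psi_n - \psi\|_{C(\bar\sO)}\to 0$, satisfy $\tilde\psi_n \leq \psi \leq g$ on $\partial\sO$ and still converge uniformly to $\psi$.

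Next, for each $n$, Theorem~\ref{thm:Strictly_elliptic_obstacle_problem} produces a unique solution $u_n \in W^{2,p}(\sO)$ to the obstacle problem \eqref{eq:Strictly_elliptic_obstacle_boundary_problem} with obstacle $\tilde\psi_n$. The key uniform bound comes from controlling $A\tilde\psi_n$ pointwise: because $a\geq \lambda_0 I$ and $D^2\tilde\psi_n + CI \geq 0$, one has $-\tr(a D^2\tilde\psi_n) \leq C\tr(a) \leq Cd\Lambda$; combined with $|\langle b, D\tilde\psi_n\rangle| \leq L\Lambda$ and the uniform boundedness of $c\tilde\psi_n$, this yields a uniform $L^\infty$ bound on $(A\tilde\psi_n - f)^+$, hence a uniform $L^p(\sO)$ bound. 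The global a priori estimate \eqref{eq:Strictly_elliptic_obstacle_problem_global_apriori_W2p_estimate} then delivers
\[
\|u_n\|_{W^{2,p}(\sO)} \leq C\bigl(\|f\|_{L^p(\sO)} + \|(A\tilde\psi_n - f)^+\|_{L^p(\sO)} + \|g\|_{W^{2,p}(\sO)}\bigr) \leq C',
\]
with $C'$ independent of $n$. Since $d<p<\infty$, the compact Sobolev embedding $W^{2,p}(\sO)\hookrightarrow C^{1,\gamma}(\bar\sO)$ allows extraction of a subsequence converging weakly in $W^{2,p}(\sO)$ and strongly in $C(\bar\sO)$ to some $u \in W^{2,p}(\sO)$ with $u = g$ on $\partial\sO$ and $u \geq \psi$ on $\bar\sO$.

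It then remains to verify that $u$ solves the obstacle problem \eqref{eq:Strictly_elliptic_obstacle_boundary_problem} with obstacle $\psi$. The inequality $Au \geq f$ a.e.\ on $\sO$ follows by testing $Au_n \geq f$ a.e.\ against nonnegative $\varphi \in C^\infty_0(\sO)$ and using weak convergence of $u_n$ in $W^{2,p}(\sO)$ to get $(Au - f, \varphi)_{L^2(\sO)} \geq 0$. For the complementarity condition, I would argue as in the proof of Theorem~\ref{thm:Strictly_elliptic_obstacle_problem_continuous_boundarydata}: for any precompact subdomain $\sO'\Subset \{x\in\sO : u(x) > \psi(x)\}$, uniform convergence $u_n \to u$ and $\tilde\psi_n \to \psi$ in $C(\bar\sO)$ ensures $\sO'\Subset \{u_n > \tilde\psi_n\}$ for all large $n$, so $Au_n = f$ a.e.\ on $\sO'$. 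Applying the strictly elliptic interior estimate \cite[Theorem 9.13]{GilbargTrudinger} on $\sO'$ promotes the weak convergence to strong $W^{2,p}(\sO'')$ convergence for any $\sO''\Subset \sO'$ and, passing to the limit, yields $Au = f$ a.e.\ on $\{u > \psi\}$. Uniqueness then follows from the Aleksandrov maximum principle \cite[Theorem 9.1]{GilbargTrudinger} (using $p\geq d$) exactly as in Theorem~\ref{thm:Strictly_elliptic_obstacle_problem_continuous_boundarydata}.

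The main technical obstacle is verifying that the limit satisfies the complementarity condition $Au = f$ on the \emph{limit} non-coincidence set rather than merely on a liminf of approximate non-coincidence sets; this is where both the uniform $C(\bar\sO)$ convergence (provided by the compact embedding for $p > d$) and the strictly elliptic interior regularity bootstrap are essential. Secondary care is needed in recording the finite-concavity-to-matrix-inequality step, since the definition in \eqref{eq:Elliptic_obstaclefunction_locally_finite_concavity_condition}/\eqref{eq:Lipschitz_elliptic_obstaclefunction_finite_concavity_condition} is stated directionally, but mollification commutes with this inequality and a simple polarization recovers the matrix form $D^2\psi_n \geq -CI$ used to dominate $-\tr(aD^2\psi_n)$.
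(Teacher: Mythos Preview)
Your proposal is correct and follows essentially the same approach as the paper (which cites Friedman \cite[Theorem~1.3.5]{Friedman_1982} for this result and sketches the key ingredients): mollify $\psi$, observe that the finite-concavity bound passes to a pointwise lower bound on $D^2\psi_n$ and hence a uniform upper bound on $A\psi_n$ (the paper isolates this as Lemma~\ref{lem:Upper_bound_A_mollified_obstacle_function}), apply Theorem~\ref{thm:Strictly_elliptic_obstacle_problem} to the regularized obstacles, and pass to the limit using the uniform $W^{2,p}$ bound. The only cosmetic difference is that Friedman's mollification is written as $\psi_\delta := J_\delta(\psi + \tfrac{1}{2}C|x|^2) - \tfrac{1}{2}C|x|^2$ rather than your direct convolution, but since mollification preserves the distributional inequality $D^2_\xi\psi \geq -C$ either way, both yield the same pointwise bound $D^2\psi_n \geq -CI$ and hence $-\tr(aD^2\psi_n) \leq C\tr(a)$; no polarization is actually needed, as the directional inequality for all unit $\xi$ is already the matrix inequality.
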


Theorem \ref{thm:Strictly_elliptic_OP_Lipschitz_obstaclefunction} is proved with the aid of a technical lemma which we may in turn apply to relax the regularity condition on $\psi$ in Theorem \ref{thm:Strictly_elliptic_obstacle_problem_continuous_boundarydata}. We first recall the mollification procedure in \cite[pp. 27--28]{Friedman_1982}. Let $\zeta \in C^\infty_0(\RR^d)$ be a cutoff function with $\supp\zeta \subset B_1(O)$ and $\zeta\geq 0$ on $\RR^d$ and
$$
\int_{\RR^d}\zeta\,dx = 1.
$$
Set $\zeta_\delta(x) = \delta^{-d}\zeta(x/\delta)$, for any $x\in\RR^d$ and $\delta>0$, and for any $v\in L^p(\sO)$, with $1<p<\infty$, set
$$
(J_\delta v)(x) := \int_\sO \zeta_\delta(x-y)v(y)\,dy, \quad x\in\RR^d.
$$
Then $J_\delta v \in C^\infty(\RR^d)$ and $\|J_\delta v - v\|_{L^p(\sO')} \to 0$ as $\delta \downarrow 0$, for any $\sO'\Subset\sO$ (see \cite[p. 27]{Friedman_1982} or \cite[p. 71 and Lemma 7.1]{GilbargTrudinger}).

For $\psi \in C(\bar\sO)$ obeying \eqref{eq:Elliptic_obstaclefunction_global_lipschitz_condition} and \eqref{eq:Lipschitz_elliptic_obstaclefunction_finite_concavity_condition} and for $\delta>0$, define
\begin{equation}
\label{eq:Mollified_obstacle_function}
\psi_\delta := J_\delta\left(\psi + \frac{1}{2}C|x|^2\right) - \frac{1}{2}C|x|^2.
\end{equation}
Then the mollified obstacle function $\psi_\delta$ obeys \cite[Equation (1.3.22)]{Friedman_1982}
\begin{equation}
\label{eq:mollified_obstacle_function_properties}
\begin{aligned}
|D\psi_\delta| &\leq C \quad\hbox{on }\sO,
\\
D^2_\xi\psi_\delta &\geq -C \quad\hbox{on }\sO,
\\
\|\psi_\delta - \psi\|_{C(\bar\sO)} &\to 0, \quad \delta\downarrow 0.
\end{aligned}
\end{equation}
We now recall the

\begin{lem}[Upper bound on $A\psi_\delta$]
\cite[Equation (1.3.22)]{Friedman_1982}
\label{lem:Upper_bound_A_mollified_obstacle_function}
Let $\sO\subset\RR^d$ be a bounded open subset and assume $\psi \in C(\bar\sO)$ obeys \eqref{eq:Elliptic_obstaclefunction_global_lipschitz_condition} and \eqref{eq:Lipschitz_elliptic_obstaclefunction_finite_concavity_condition}. Then
$$
A\psi_\delta \leq C \quad\hbox{on }\sO, \quad\forall\, \delta>0,
$$
for a positive constant $C$ independent of $\delta>0$.
\end{lem}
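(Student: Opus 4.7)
The plan is to exploit the fact that the shifted obstacle $\tilde\psi(x) := \psi(x) + \tfrac{1}{2}C|x|^2$ is \emph{convex} in the sense of distributions on an open neighborhood $\tilde\sO$ of $\bar\sO$, since the hypothesis \eqref{eq:Lipschitz_elliptic_obstaclefunction_finite_concavity_condition} gives $D^2_\xi\tilde\psi = D^2_\xi\psi + C \geq 0$ in $\sD'(\tilde\sO)$ for every unit $\xi \in \RR^d$. Convolution with the non-negative mollifier $\zeta_\delta$ preserves this distributional convexity pointwise on compact subsets of $\tilde\sO$ provided $\delta$ is sufficiently small, so the mollified function $\tilde\psi_\delta := J_\delta\tilde\psi \in C^\infty(\RR^d)$ satisfies $D^2\tilde\psi_\delta(x) \geq 0$ (as a symmetric matrix) for every $x \in \sO$, once $0<\delta<\dist(\bar\sO, \partial\tilde\sO)$. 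Differentiating the identity $\psi_\delta = \tilde\psi_\delta - \tfrac{1}{2}C|x|^2$ from \eqref{eq:Mollified_obstacle_function} then gives the pointwise matrix inequality $D^2\psi_\delta(x) \geq -C I_d$ on $\sO$, which is exactly the second assertion in \eqref{eq:mollified_obstacle_function_properties}.

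From this matrix inequality I would read off the second-order term: since $a(x)$ is symmetric, strictly elliptic, and bounded with $|a| \leq \Lambda$ on $\sO$ by \eqref{eq:Coefficient_uniform_bound_domain}, one has $\tr\bigl(a(x)(D^2\psi_\delta(x) + C I_d)\bigr) \geq 0$, hence
\begin{equation*}
-\tr(a\, D^2\psi_\delta) \leq C\,\tr(a) \leq C d \Lambda \quad\hbox{on }\sO.
\end{equation*}
The first-order term is controlled by the gradient bound $|D\psi_\delta|\leq C$ from \eqref{eq:mollified_obstacle_function_properties}, together with $|b|\leq \Lambda$, giving $|\langle b, D\psi_\delta\rangle| \leq C\Lambda$ on $\sO$. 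For the zeroth-order term, the uniform convergence $\|\psi_\delta-\psi\|_{C(\bar\sO)} \to 0$ from \eqref{eq:mollified_obstacle_function_properties} together with $\psi\in C(\bar\sO)$ gives $\|\psi_\delta\|_{C(\bar\sO)} \leq 1 + \|\psi\|_{C(\bar\sO)}$ for all small $\delta$, and so $|c\psi_\delta| \leq \Lambda(1+\|\psi\|_{C(\bar\sO)})$ on $\sO$. Summing the three contributions yields $A\psi_\delta \leq C'$ on $\sO$ for a constant $C'$ depending only on $d$, $\Lambda$, $C$, and $\|\psi\|_{C(\bar\sO)}$, and in particular independent of $\delta$.

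The only subtle point — and the place I would be most careful — is the passage from the distributional inequality \eqref{eq:Lipschitz_elliptic_obstaclefunction_finite_concavity_condition} to the \emph{classical} matrix inequality $D^2\tilde\psi_\delta \geq 0$ on $\sO$. The standard justification is that, for any unit vector $\xi$ and any $x$ with $\dist(x,\partial\tilde\sO)>\delta$,
\begin{equation*}
D^2_\xi\tilde\psi_\delta(x) = \int_{\tilde\sO}\tilde\psi(y)\, D^2_\xi\zeta_\delta(x-y)\,dy = \langle \tilde\psi, D^2_\xi\zeta_\delta(x-\cdot)\rangle_{\sD',\sD} \geq 0,
\end{equation*}
because $\zeta_\delta(x-\cdot)$ is a non-negative test function in $\sD(\tilde\sO)$. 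One then polarizes to recover the full matrix inequality $D^2\tilde\psi_\delta(x)\geq 0$. Once this is in hand the rest of the argument is a routine computation, and no further refinement of the mollification is needed.
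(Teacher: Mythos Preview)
The paper does not supply its own proof of this lemma; it simply cites \cite[Equation (1.3.22)]{Friedman_1982} and records the conclusion. Your argument is correct and is essentially the standard one behind Friedman's statement: mollification preserves the distributional convexity of $\tilde\psi = \psi + \tfrac{1}{2}C|x|^2$, giving the pointwise matrix lower bound $D^2\psi_\delta \geq -CI_d$; then positivity of $a$ converts this into an upper bound on $-\tr(a D^2\psi_\delta)$, while the lower-order terms are handled by the uniform gradient and sup bounds already recorded in \eqref{eq:mollified_obstacle_function_properties}.

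Two small cosmetic points. First, no polarization is needed at the end: the inequality $D^2_\xi\tilde\psi_\delta(x)\geq 0$ for all unit $\xi$ is exactly the statement $\xi^T D^2\tilde\psi_\delta(x)\,\xi \geq 0$, which is positive semidefiniteness of the Hessian. Second, your argument as written gives the bound only for $0<\delta<\dist(\bar\sO,\partial\tilde\sO)$, whereas the lemma asserts it for all $\delta>0$; this is harmless since only small $\delta$ are ever used downstream, but you may want to either note the restriction or observe that a Lipschitz function on $\tilde\sO$ extends to a global Lipschitz function on $\RR^d$ preserving the concavity bound, after which the mollification makes sense for every $\delta$.
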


We then have the

\begin{thm}[Existence and uniqueness of a solution to the obstacle problem with continuous partial Dirichlet boundary condition and Lipschitz obstacle function with finite concavity]
\label{thm:Strictly_elliptic_obstacle_problem_continuous_boundarydata_lipschitz_obstacle}
The conclusion of Theorem \ref{thm:Strictly_elliptic_obstacle_problem_continuous_boundarydata} remains valid if the condition $\psi\in C^2(\sO)\cap C(\bar\sO)$ is relaxed to $\psi \in C(\bar\sO)$ obeying \eqref{eq:Elliptic_obstaclefunction_global_lipschitz_condition} and \eqref{eq:Lipschitz_elliptic_obstaclefunction_finite_concavity_condition}.
\end{thm}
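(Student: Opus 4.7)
The plan is to prove Theorem \ref{thm:Strictly_elliptic_obstacle_problem_continuous_boundarydata_lipschitz_obstacle} by approximating the Lipschitz obstacle $\psi$ by $C^\infty$ obstacles $\psi_\delta$ and then invoking Theorem \ref{thm:Strictly_elliptic_obstacle_problem_continuous_boundarydata}, following the pattern by which Friedman relaxes the regularity on $\psi$ in \cite[Theorem 1.3.5]{Friedman_1982}. Using the mollification \eqref{eq:Mollified_obstacle_function}, set $\tilde\psi_\delta := \psi_\delta - \omega(\delta)$ where $\omega(\delta) := \|\psi_\delta-\psi\|_{C(\bar\sO)}$, so that \eqref{eq:mollified_obstacle_function_properties} gives $\omega(\delta)\to 0$ as $\delta\downarrow 0$, and $\tilde\psi_\delta \in C^\infty(\RR^d)$ satisfies $\tilde\psi_\delta\leq \psi\leq g$ on $\partial\sO$, preserving the compatibility \eqref{eq:Boundarydata_obstacle_compatibility}. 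Theorem \ref{thm:Strictly_elliptic_obstacle_problem_continuous_boundarydata} then provides, for each $\delta>0$, a unique solution $u_\delta\in W^{2,p}_{\loc}(\sO)\cap C(\bar\sO)$ to
\[
\min\{Au_\delta-f,\ u_\delta-\tilde\psi_\delta\}=0 \quad\hbox{a.e. on }\sO, \quad u_\delta=g \quad\hbox{on }\partial\sO.
\]

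Next I would pass to the limit as $\delta\downarrow 0$. Proposition \ref{prop:Elliptic_classical_rodrigues}, applied with common $f$ and $g$ but different obstacles $\tilde\psi_{\delta_1}$ and $\tilde\psi_{\delta_2}$, yields
\[
\|u_{\delta_1}-u_{\delta_2}\|_{C(\bar\sO)} \leq C\|\tilde\psi_{\delta_1}-\tilde\psi_{\delta_2}\|_{C(\bar\sO)},
\]
so $\{u_\delta\}$ is Cauchy in $C(\bar\sO)$ and converges uniformly to some $u\in C(\bar\sO)$ satisfying $u=g$ on $\partial\sO$ and $u\geq\psi$ on $\sO$. For interior regularity, Lemma \ref{lem:Upper_bound_A_mollified_obstacle_function} supplies the crucial uniform bound $A\psi_\delta \leq C$ on $\sO$, hence $A\tilde\psi_\delta = A\psi_\delta - c\,\omega(\delta)$ is bounded above uniformly in $\delta$, so $\|(A\tilde\psi_\delta-f)^+\|_{L^p(\sO)}$ is bounded independently of $\delta$. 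The interior a priori estimate \eqref{eq:Strictly_elliptic_obstacle_problem_interior_apriori_W2p_estimate} then gives, for any $\sO'\Subset\sO$,
\[
\|u_\delta\|_{W^{2,p}(\sO')} \leq C\left(\|f\|_{L^p(\sO)} + \|(A\tilde\psi_\delta-f)^+\|_{L^p(\sO)} + \|u_\delta\|_{L^p(\sO)}\right),
\]
uniformly in $\delta$, so after passing to a subsequence, $u_\delta\rightharpoonup u$ weakly in $W^{2,p}(\sO')$ and $u\in W^{2,p}_{\loc}(\sO)$.

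To verify that $u$ solves the obstacle problem \eqref{eq:Strictly_elliptic_obstacle_boundary_problem}, testing the inequality $Au_\delta\geq f$ against nonnegative $\varphi\in C^\infty_0(\sO)$ and taking the weak limit produces $Au\geq f$ a.e.\ on $\sO$. For the complementary condition, fix $\sO''\Subset\sO'\Subset\{u>\psi\}\cap\sO$; uniform convergence $u_\delta\to u$ and $\tilde\psi_\delta\to\psi$ forces $\sO'\Subset\{u_\delta>\tilde\psi_\delta\}\cap\sO$ for all sufficiently small $\delta$, so $Au_\delta=f$ a.e.\ on $\sO'$. The standard interior $W^{2,p}$ estimate \cite[Theorem 9.13]{GilbargTrudinger} applied to this strictly elliptic equation shows that $\{u_\delta\}$ is Cauchy in $W^{2,p}(\sO'')$, so weak convergence upgrades to strong convergence, and passing to the limit yields $Au=f$ a.e.\ on $\sO''$, hence a.e.\ on $\{u>\psi\}\cap\sO$. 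Uniqueness follows from the Aleksandrov weak maximum principle \cite[Theorem 9.1]{GilbargTrudinger} (or equivalently from our comparison principle, Theorem \ref{thm:Elliptic_obstacle_problem_comparison_principle}), exactly as in Theorem \ref{thm:Strictly_elliptic_obstacle_problem_continuous_boundarydata}.

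The main technical obstacle is arranging both the boundary compatibility $\tilde\psi_\delta\leq g$ on $\partial\sO$ and the uniform upper bound on $A\tilde\psi_\delta$ required for \eqref{eq:Strictly_elliptic_obstacle_problem_interior_apriori_W2p_estimate} to yield a $\delta$-uniform bound; the subtraction of $\omega(\delta)$ resolves this simultaneously, because the $\frac{1}{2}C|x|^2$ shift in \eqref{eq:Mollified_obstacle_function} is precisely what makes the mollification compatible with the semiconvexity hypothesis \eqref{eq:Lipschitz_elliptic_obstaclefunction_finite_concavity_condition}, and $A$ is linear so constant shifts affect $A\tilde\psi_\delta$ only through the uniformly small term $-c\,\omega(\delta)$. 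The other delicate point is identifying $\{u>\psi\}$ with an ``inner'' limit of $\{u_\delta>\tilde\psi_\delta\}$, which is an immediate consequence of the uniform $C(\bar\sO)$ convergence established above.
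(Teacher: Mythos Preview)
Your proof is correct and follows essentially the same approach as the paper: mollify $\psi$ via \eqref{eq:Mollified_obstacle_function}, invoke Theorem \ref{thm:Strictly_elliptic_obstacle_problem_continuous_boundarydata} for each $\psi_\delta$, use Proposition \ref{prop:Elliptic_classical_rodrigues} for uniform convergence in $C(\bar\sO)$, and use Lemma \ref{lem:Upper_bound_A_mollified_obstacle_function} together with \eqref{eq:Strictly_elliptic_obstacle_problem_interior_apriori_W2p_estimate} for the $W^{2,p}_{\loc}$ bound. The only difference is how you enforce the compatibility $\psi_\delta\leq g$ on $\partial\sO$: you subtract the modulus $\omega(\delta)=\|\psi_\delta-\psi\|_{C(\bar\sO)}$ from the obstacle, whereas the paper instead replaces $g$ by $g_\delta:=g\vee\psi_\delta$ and leaves $\psi_\delta$ untouched; both devices yield the same Cauchy estimate from Proposition \ref{prop:Elliptic_classical_rodrigues} and the same uniform upper bound on $(A\psi_\delta-f)^+$, so the limits and the verification that $u$ solves \eqref{eq:Strictly_elliptic_obstacle_boundary_problem} proceed identically.
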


\begin{proof}
For any $\delta>0$, let $\psi_\delta\in C^\infty(\RR^d)$ be as in \eqref{eq:Mollified_obstacle_function}, let $g_\delta := g\vee\psi_\delta \in C(\partial\sO)$, and let $u_\delta \in W^{2,p}_{\loc}(\sO)\cap C(\bar\sO)$ be the corresponding solution to \eqref{eq:Strictly_elliptic_obstacle_boundary_problem} defined by $f$, $g_\delta$, and $\psi_\delta$ via Theorem \ref{thm:Strictly_elliptic_obstacle_problem_continuous_boundarydata}. Proposition \ref{prop:Elliptic_classical_rodrigues} implies that the sequence $\{u_\delta\}$ (with $\delta\in (0,1]$ and $\delta\downarrow 0$) is Cauchy in $C(\bar \sO)$ and thus converges in $C(\bar \sO)$, as $\delta\downarrow 0$, to a limit $u\in C(\bar \sO)$ which obeys the boundary condition $u=g$ on $\partial\sO$ in \eqref{eq:Strictly_elliptic_obstacle_boundary_problem}.

The a priori interior estimate \eqref{eq:Strictly_elliptic_obstacle_problem_interior_apriori_W2p_estimate}, for any $\sO''\Subset\sO'\Subset\sO$,
gives
\begin{equation}
\label{eq:Strictly_elliptic_obstacle_problem_interior_apriori_W2p_estimate_udelta}
\|u_\delta\|_{W^{2,p}(\sO'')} \leq C\left(\|f\|_{L^p(\sO')} + \|(A\psi_\delta-f)^+\|_{L^p(\sO')} + \|u_\delta\|_{L^p(\sO')}\right),
\end{equation}
where $C=C(d,p,\sO',\sO'',\lambda_0,\Lambda)$. We have, as $\delta\downarrow 0$,
$$
\|u_\delta\|_{L^p(\sO')} \to \|u\|_{L^p(\sO')},
$$
while Lemma \ref{lem:Upper_bound_A_mollified_obstacle_function} yields, for some constant $C>0$,
$$
\|(A\psi_\delta-f)^+\|_{L^p(\sO')} \leq \|(C-f)^+\|_{L^p(\sO')}, \quad\forall\, \delta > 0.
$$
In particular, the right-hand side of \eqref{eq:Strictly_elliptic_obstacle_problem_interior_apriori_W2p_estimate_udelta} is bounded independently of $\delta>0$ and so, after passing to a subsequence \cite[Theorem D.3]{Evans}, we may assume that the sequence $\{u_\delta\}$ converges weakly in $W^{2,p}(\sO'')$, as $\delta\downarrow 0$, to a limit $u\in W^{2,p}(\sO'')$ (coinciding with the limit $u\in C(\bar \sO)$ already discovered) for each $\sO''\Subset\sO$. Thus, $u \in W^{2,p}_{\loc}(\sO)\cap C(\bar\sO)$.

The proofs that $u$ solves the obstacle problem \eqref{eq:Strictly_elliptic_obstacle_boundary_problem} and is unique are the same as those in Theorem \ref{thm:Strictly_elliptic_obstacle_problem_continuous_boundarydata}.
\end{proof}

\section{Transformation of a model operator on the half-plane by the conformal map from a half-disk to a strip}
\label{sec:Holomorphic_map_model_operator}
In dimension two, the effect of the map in \S \ref{subsec:Diffeomorphism_halfball_slab} from the half-disk to the strip on an operator $A$ of the form \eqref{eq:Generator} can be made explicit and this provides many useful insights, so we shall describe the calculation here. We observe that for $w$ as in \eqref{eq:Defn_z_to_w_complex_plane},
$$
w = \Log\left(\frac{1+z}{1-z}\right) \equiv s + i\theta,
$$
we have
$$
\frac{dw}{dz} = \frac{d}{dz}\Log\left(\frac{1+z}{1-z}\right) = \frac{2(1-z)}{(1+z)^3},
$$
and so $dw/dz\neq 0$ and is well-defined for $z\neq \pm 1$.

We first calculate the effect of the transformation on the Laplace operator. Writing $v(w) = u(z)$, with $z=x+iy$ and $w=s+i\theta = \Log z$, and $\Delta_z u(x,y) = -u_{xx} - u_{yy}$ and $\Delta_w v(s,\theta) = -v_{ss} - v_{\theta\theta}$, we have
\begin{align*}
\Delta_z u
&=
-\frac{\partial^2 u}{\partial z\partial\bar z}
=
-\frac{\partial }{\partial z}\left(\frac{\partial v}{\partial\bar w}\frac{\partial \bar w}{\partial\bar z}\right)
\\
&=
-\frac{\partial^2 v}{\partial w\partial\bar w}\frac{\partial w}{\partial z}\frac{\partial \bar w}{\partial\bar z} - \frac{\partial v}{\partial\bar w}\frac{\partial^2 \bar w}{\partial z\partial\bar z}
=
-\frac{\partial^2 v}{\partial w\partial\bar w}\frac{\partial w}{\partial z}\frac{\partial \bar w}{\partial\bar z}
\\
&= \Delta_w v \frac{\partial w}{\partial z}\frac{\partial \bar w}{\partial\bar z}
= \left|\frac{\partial w}{\partial z}\right|^2 \Delta_w v,
\end{align*}
noting that $\partial w/\partial\bar z = 0$, and thus we obtain
\begin{equation}
\label{eq:Laplace_operator_transformation_complex}
\Delta_z u = 2\frac{|1-z|^2}{|1+z|^6} \Delta_w v.
\end{equation}
We wish to use the preceding change of complex variables to transform the equation \eqref{eq:Elliptic_equation} on the unit half-disk,
$$
Au = f \quad\hbox{on } B^+,
$$
defined by a model operator,
\begin{equation}
\label{eq:Generator_model2d_halfdisk}
Au := -y(u_{xx} + u_{yy}) - b^1u_x - b^2u_y + cu,
\end{equation}
where the coefficients $b^1$, $b^2$, $c$ are constant (for simplicity in this example) and obey $b^2>0$ and $c\geq 0$, to an equation on the strip,
$$
\tilde Av = \tilde f \quad\hbox{on } S,
$$
where
\begin{equation}
\label{eq:Generator_model2d_strip}
\tilde Av := -\theta\tilde a (v_{ss} + v_{\theta \theta }) - \tilde b^1v_s - \tilde b^2v_\theta  + \tilde cv,
\end{equation}
where $v(s,\theta) = u(x,y)$ and $\tilde f(s,\theta) = f(x,y)$, and the coefficients $\tilde a$, $\tilde b^1$, $\tilde b^2$, $\tilde c$ are defined on the strip, $S \equiv S^2_{\pi/2}=\RR\times(0, \pi/2)$ and are to be determined.

For this purpose, we shall need explicit formulae for $x=x(s,\theta)$ and $y=y(s,\theta)$. Note that
$$
\frac{1+z}{1-z} = \frac{(1+z)(1-\bar z)}{(1-z)(1-\bar z)} =  \frac{1 - x^2 - y^2 + 2iy}{(1-x)^2 + y^2} = e^{s+i\theta} = e^s\cos\theta + ie^s\sin\theta,
$$
and thus
$$
e^s = \left|\frac{1+z}{1-z}\right| = \frac{\sqrt{(1+x)^2 + y^2}}{\sqrt{(1-x)^2 + y^2}} \quad\hbox{and}\quad \theta = \Arg\left(\frac{1+z}{1-z}\right),
$$
with
\begin{align*}
e^s\cos\theta = \frac{1 - x^2 - y^2}{(1-x)^2 + y^2}
\quad\hbox{and}\quad
e^s\sin\theta = \frac{2y}{(1-x)^2 + y^2}.
\end{align*}
Therefore,
\begin{align}
\label{eq:s_function_x_and_y}
s &= \frac{1}{2}\ln\left(\frac{(1+x)^2 + y^2}{(1-x)^2 + y^2}\right),
\\
\label{eq:theta_function_x_and_y}
\theta &= \arctan\left(\frac{2y}{1 - x^2 - y^2}\right).
\end{align}
We shall also need an expression for $y$ in terms of $(s,\theta)$. We first solve \eqref{eq:Defn_z_to_w_complex_plane} for $z$ in terms of $w$ to give
$$
z = \frac{e^w-1}{e^w+1}, \quad -\pi <\Imag(w)\leq \pi.
$$
Noting that
$$
e^w = e^{s+i\theta} = e^s\cos\theta + ie^s\sin\theta \quad\hbox{and}\quad |e^w| = e^s,
$$
this yields
\begin{align*}
z &= \frac{(e^w-1)(e^{\bar w}+1)}{|e^w+1|^2} = \frac{|e^w|^2 -1 + i2\Imag(e^w)}{|e^w+1|^2}
\\
&= \frac{e^{2s}-1 + i2\sin\theta}{(1+e^s\cos\theta)^2 + e^{2s}\sin^2\theta}
= \frac{e^{2s}-1 + i2\sin\theta}{1 + 2e^s\cos\theta + e^{2s}}.
\end{align*}
Since $z=x+iy$, we obtain
\begin{align}
\label{eq:x_function_s_theta}
x &= \frac{e^{2s}-1}{1 + 2e^s\cos\theta + e^{2s}},
\\
\label{eq:y_function_s_theta}
y &= \frac{2\sin\theta}{1 + 2e^s\cos\theta + e^{2s}}.
\end{align}
We have the change of variable formulae,
\begin{equation}
\label{eq:Change_of_variable_formulae}
u_x = v_s\frac{\partial s}{\partial x} + v_\theta\frac{\partial\theta}{\partial x}
\quad\hbox{and}\quad
u_y = v_s\frac{\partial s}{\partial y} + v_\theta\frac{\partial\theta}{\partial y}.
\end{equation}
A calculation using \eqref{eq:s_function_x_and_y} and \eqref{eq:theta_function_x_and_y} yields
\begin{align}
\label{eq:Derivative_s_wrt_x}
\frac{\partial s}{\partial x} &= \frac{-2 x^2+2 y^2+2}{x^4+2 x^2
   \left(y^2-1\right)+\left(y^2+1\right)^2},
\\
\label{eq:Derivative_s_wrt_y}
\frac{\partial s}{\partial y} &= -\frac{4 x y}{x^4+2 x^2
   \left(y^2-1\right)+\left(y^2+1\right)^2}.
\end{align}
Writing
$$
D^2 = (1 - x^2 - y^2)^2 + 4y^2 = x^4 + 2x^2(y^2-1) + (y^2+1)^2,
$$
we have
$$
\cos\theta = \frac{1 - x^2 - y^2}{D} \quad\hbox{and}\quad \sin\theta = \frac{2y}{D},
$$
and the useful expression
\begin{equation}
\label{eq:Expression_D}
D = \frac{2y}{\sin\theta} = \frac{4}{1 + 2e^s\cos\theta + e^{2s}}.
\end{equation}
We see that the equations \eqref{eq:Derivative_s_wrt_x} and \eqref{eq:Derivative_s_wrt_y} for the derivatives simplify to give
\begin{align}
\label{eq:Derivative_s_wrt_x_simpler}
\frac{\partial s}{\partial x} &= \frac{2(1-x^2-y^2) + 4y^2}{D^2} = \frac{2\cos\theta}{D} + \sin^2\theta,
\\
\label{eq:Derivative_s_wrt_y_simpler}
\frac{\partial s}{\partial y} &=  \frac{-4xy}{D^2} = \frac{-2x\sin\theta}{D}.
\end{align}
Since the map $z=x+iy\mapsto w=s+i\theta$ in \eqref{eq:Defn_z_to_w_complex_plane} is holomorphic, we must have
$$
\frac{\partial\theta}{\partial x} = -\frac{\partial s}{\partial y}
\quad\hbox{and}\quad
\frac{\partial\theta}{\partial y} = \frac{\partial s}{\partial x}.
$$
We apply these derivative formulae in \eqref{eq:Change_of_variable_formulae} to give
\begin{align*}
b^1u_x + b^2u_y &= b^1(v_s s_x + v_\theta \theta_x) + b^2(v_s s_y + v_\theta \theta_y)
\\
&= b^1(v_s s_x - v_\theta s_y) + b^2(v_s s_y + v_\theta s_x)
\\
&= ( b^1 s_x + b^2 s_y )v_s + (b^1 \theta_x + b^2 \theta_y)v_\theta
\\
&= ( b^1 s_x + b^2 s_y )v_s + (-b^1 s_y + b^2 s_x )v_\theta.
\end{align*}
Finally, note that the expressions for $e^s$ and $e^s\sin\theta$ in terms of $x$ and $y$
\begin{align*}
2\frac{|1-z|^2}{|1+z|^6} &= 2\frac{(1-x)^2 + y^2}{((1+x)^2 + y^2)^3}
=  2\frac{((1-x)^2 + y^2)^3}{((1+x)^2 + y^2)^3}\frac{1}{((1-x)^2 + y^2)^2}
\\
&= \frac{2}{e^{6s}}\left(\frac{e^s\sin\theta}{2y}\right)^2 = \frac{\sin^2\theta}{2y^2e^{4s}}.
\end{align*}
and combining the preceding identity with \eqref{eq:Laplace_operator_transformation_complex} yields
\begin{equation}
\label{eq:Laplace_operator_transformation_real}
u_{xx} + u_{yy} = \frac{\sin^2\theta}{2y^2e^{4s}} (v_{ss}+v_{\theta\theta}).
\end{equation}
We now apply the preceding formulae to the equation $Au=f$ on the half-disk, $B^+$. We see that on the strip, $S$, the function $v(s,\theta)=u(x,y)$ obeys
\begin{align*}
Au &\equiv -y(u_{xx} + u_{xx}) - b^1u_x - b^2u_y + cu
\\
&\quad = -\frac{\sin^2\theta}{2y^2e^{4s}}y(v_{ss}+v_{\theta\theta}) - (b^1 s_x + b^2 s_y)v_s - (-b^1 s_y + b^2s_x)v_\theta + cv =  f,
\end{align*}
and so, denoting $\tilde f(s,\theta)=f(x,y)$, the equation $\tilde Av=\tilde f$ on the strip, $S$, takes the explicit form,
\begin{equation}
\label{eq:Generator_model2d_transformed_disk_strip}
\begin{aligned}
\tilde Av &\equiv -\theta\tilde a(v_{ss} + v_{\theta \theta }) - \tilde b^1v_s - \tilde b^2v_\theta  + \tilde cv
\\
&= -\frac{\sin^2\theta}{2ye^{4s}}(v_{ss}+v_{\theta\theta}) - \left(b^1\left(\frac{2\cos\theta}{D} + \sin^2\theta\right) - b^2\frac{2x\sin\theta}{D}\right)v_s
\\
&\qquad - \left(b^1\frac{2x\sin\theta}{D} + b^2\left(\frac{2\cos\theta}{D} + \sin^2\theta\right)\right)v_\theta + cv = \tilde f.
\end{aligned}
\end{equation}
As $\theta\downarrow 0$, with $s\in\RR$ fixed, the formula \eqref{eq:Expression_D} gives
$$
D(s,\theta) \to \frac{4}{1 + 2e^s + e^{2s}} = \frac{4}{(1 + e^s)^2},
$$
and the expressions \eqref{eq:x_function_s_theta} and \eqref{eq:Expression_D} give
$$
x(s,\theta)\to \frac{e^{2s}-1}{1 + 2e^s + e^{2s}} \quad\hbox{and}\quad y(s,\theta) = \frac{D}{2}\sin\theta,
$$
so the coefficients in the equation \eqref{eq:Generator_model2d_transformed_disk_strip} converge to
$$
-\left(\frac{1}{De^{4s}}\frac{\sin\theta}{\theta}\right)\theta (v_{ss}+v_{\theta\theta}) - b^1\frac{2}{D}v_s - b^2\frac{2}{D}v_\theta + cv = \tilde f,
$$
that is, using $\sin\theta/\theta \to 1$ as $\theta\downarrow 0$,
$$
-\frac{(1 + e^s)^2}{4e^{4s}}\theta (v_{ss}+v_{\theta\theta}) - \frac{1}{2}(1 + e^s)^2b^1v_s
- \frac{1}{2}(1 + e^s)^2b^2v_\theta + cv = \tilde f.
$$
On the other hand, as $\theta \uparrow \pi/2$, with $s\in\RR$ fixed, we have from \eqref{eq:Expression_D},
$$
D(s,\theta) \to \frac{4}{1 + e^{2s}},
$$
and \eqref{eq:x_function_s_theta} and \eqref{eq:y_function_s_theta} give
$$
x(s,\theta) \to \frac{e^{2s}-1}{1 + e^{2s}} \quad\hbox{and}\quad y(s,\theta) \to \frac{2}{1 + e^{2s}},
$$
so the coefficients in the equation \eqref{eq:Generator_model2d_transformed_disk_strip} converge to
$$
-\frac{1}{2ye^{4s}} (v_{ss}+v_{\theta\theta}) - \left(b^1 - b^2\frac{2x}{D}\right)v_s - \left(b^1\frac{2x}{D} + b^2\right)v_\theta + cv = \tilde f,
$$
that is,
$$
-\frac{(1 + e^{2s})}{4e^{4s}} (v_{ss}+v_{\theta\theta}) - \left(b^1 - \frac{1}{2}(e^{2s}-1)b^2\right)v_s - \left(\frac{1}{2}(e^{2s}-1)b^1 + b^2\right)v_\theta + cv = \tilde f.
$$
In particular, the coefficients of the operator in \eqref{eq:Generator_model2d_transformed_disk_strip} extend continuously from $(s,\theta) \in S = \RR\times (0, \pi/2)$ to $(s,\theta) \in \bar S = \RR\times[0, \pi/2]$.

However, as we can see most easily in the limiting cases, $\theta\downarrow 0$ or $\theta\uparrow \pi/2$, while the coefficient $\tilde a$ in \eqref{eq:Generator_model2d_transformed_disk_strip} is uniformly bounded above on $S$, it is not strictly elliptic on $S$ (uniformly bounded below by a positive constant) as $s\to\infty$ and the coefficients $\tilde b^1, \tilde b^2$ are not uniformly bounded on $S$ as $s\to\infty$.

%
%

\bibliography{mfpde}
\bibliographystyle{amsplain}

\end{document}